\newcommand{\pathtotrunk}{./}
\newcommand{\pathtodiagrams}{\pathtotrunk diagrams/}
\newcommand{\mathfig}[2]{\ensuremath{\hspace{-3pt}\begin{array}{c}%
  \raisebox{-2.5pt}{\includegraphics[width=#1\textwidth]{\pathtodiagrams #2}}%
\end{array}\hspace{-3pt}}}
\newcommand{\arxiv}[1]{\href{http://arxiv.org/abs/#1}{\tt arXiv:\nolinkurl{#1}}}
\newcommand{\googlebooks}[1]{(preview at \href{http://books.google.com/books?id=#1}{google books})}
\theoremstyle{plain}
\newtheorem{prop}{Proposition}[subsection]
\newtheorem{thm}[prop]{Theorem}
\newtheorem{lem}[prop]{Lemma}
\newtheorem*{lem*}{Lemma}
\newtheorem{cor}[prop]{Corollary}
\newtheorem*{cor*}{Corollary}
\newtheorem{defn}[prop]{Definition}         
\newtheorem*{defn*}{Definition}             
\newtheorem{property}[prop]{Property}
\newtheorem{axiom}[prop]{Axiom}
\newtheorem{module-axiom}[prop]{Module Axiom}
\newenvironment{rem}{\noindent\textsl{Remark.}}{}  
\newtheorem{rem*}[prop]{Remark}
\numberwithin{equation}{section}
\newcounter{comment}
\newcommand{\noop}[1]{}
\newcommand{\todo}[1]{\textbf{\color[rgb]{.8,.2,.5}\small TODO: #1}}
\def\clap#1{\hbox to 0pt{\hss#1\hss}}
\def\mathclap{\mathpalette\mathclapinternal}
\def\mathclapinternal#1#2{%
\clap{$\mathsurround=0pt#1{#2}$}}
\newcommand{\Real}{\mathbb R}
\newcommand{\id}{\boldsymbol{1}}
\renewcommand{\imath}{\mathfrak{i}}
\renewcommand{\jmath}{\mathfrak{j}}
\newcommand{\HC}{\operatorname{Hoch}}
\newcommand{\HH}{\operatorname{HH}}
\newcommand{\cell}{\mathfrak{D}}
\def\bc{{\mathcal B}}
\def\btc{{\mathcal{BT}}}
\newcommand{\into}{\hookrightarrow}
\newcommand{\onto}{\twoheadrightarrow}
\newcommand{\iso}{\cong}
\newcommand{\htpy}{\simeq}
\newcommand{\xto}[1]{\xrightarrow{#1}}
\newcommand{\isoto}{\xto{\iso}}
\newcommand{\quismto}{\xrightarrow[\text{q.i.}]{\iso}}
\newcommand{\htpyto}{\xrightarrow[\text{htpy}]{\htpy}}
\newcommand{\setc}[2]{\setcl{#1}{#2}}
\newcommand{\setcl}[2]{\left\{ \left. #1 \;\right| \; #2 \right\}}
\newcommand{\bdy}{\partial}
\newcommand{\compose}{\circ}
\newcommand{\DirectSum}{\bigoplus}
\newcommand{\tensor}{\otimes}
\newcommand{\Tensor}{\bigotimes}
\newcommand{\selfarrow}{\ensuremath{\smash{\tikz[baseline]{\clip (0,0.36) rectangle (0.48,-0.16); \draw[->] (0,0.2) .. controls (0.6,0.8) and (0.6,-0.6) .. (0,0);}}}}
\newcommand{\CM}[2]{C_*(\Maps(#1 \to #2))}
\newcommand{\CD}[1]{C_*(\Diff(#1))}
\newcommand{\CH}[1]{C_*(\Homeo(#1))}
\newcommand{\cl}[1]{\underrightarrow{#1}}
\newcommand{\Set}{\text{\textbf{Set}}}
\newcommand{\Vect}{\text{\textbf{Vect}}}
\newcommand{\Kom}{\text{\textbf{Kom}}}
\newcommand{\Bord}{\operatorname{Bord}}
\newcommand{\Hom}[3]{\operatorname{Hom}_{#1}\left(#2,#3\right)}
\newcommand{\Obj}{\operatorname{Obj}}
	\newtheorem{example}[prop]{Example}	
	\newtheorem{remark}[prop]{Remark}
	\newtheorem{lemma}[prop]{Lemma}
\definecolor{dark-red}{rgb}{0.6,0.15,0.15}
\definecolor{dark-blue}{rgb}{0.15,0.15,0.55}
\definecolor{medium-blue}{rgb}{0,0,0.65}
\definecolor{kw-blue-a}{rgb}{0.1,0.4,0.8}
\title{Blob Homology}
\author{Scott~Morrison and Kevin~Walker}
\def\r{\mathbb{R}}
\def\c{\mathbb{C}}
\def\t{\mathbb{T}}
\def\ebb{\mathbb{E}}
\def\k{{\bf k}}
\def\du{\sqcup}
\def\bd{\partial}
\def\sub{\subset}
\def\subeq{\subseteq}
\def\sup{\supset}
\def\setmin{\setminus}
\def\ep{\epsilon}
\def\sgl{_\mathrm{gl}}
\def\deq{\stackrel{\mathrm{def}}{=}}
\def\pd#1#2{\frac{\partial #1}{\partial #2}}
\def\lf{\cC}
\def\ot{\otimes}
\def\vphi{\varphi}
\def\inv{^{-1}}
\def\ol{\overline}
\def\BD{BD}
\def\bbc{{\mathcal{BBC}}}
\def\mbc{{\mathcal{MBC}}}
\def\vcone{\text{V-Cone}}
\def\spl{_\pitchfork}
\def\trans#1{_{\pitchfork #1}}
\def\nn#1{{{\color[rgb]{.2,.5,.6} \small [[#1]]}}}
\long\def\noop#1{}
\newcommand{\eq}[1]{\begin{displaymath}#1\end{displaymath}}
\newcommand{\eqar}[1]{\begin{eqnarray*}#1\end{eqnarray*}}
\def\semicolon{;}
\def\applytolist#1{
    \expandafter\def\csname multi#1\endcsname##1{
        \def\multiack{##1}\ifx\multiack\semicolon
            \def\next{\relax}
        \else
            \csname #1\endcsname{##1}
            \def\next{\csname multi#1\endcsname}
        \fi
        \next}
    \csname multi#1\endcsname}
\def\calc#1{\expandafter\def\csname c#1\endcsname{{\mathcal #1}}}
\def\declaremathop#1{\expandafter\DeclareMathOperator\csname #1\endcsname{#1}}
\DeclareMathOperator*{\colim}{colim}
\DeclareMathOperator*{\hocolim}{hocolim}
\DeclareMathOperator{\kone}{cone}
\title{The Blob Complex}
\begin{document}

\makeatletter
\@addtoreset{equation}{section}
\gdef\theequation{\thesection.\arabic{equation}}
\makeatother

\maketitle

\begin{abstract}
Given an $n$-manifold $M$ and an $n$-category $\cC$, we define a chain complex
(the ``blob complex") $\bc_*(M; \cC)$.
The blob complex can be thought of as a derived category analogue of the Hilbert space of a TQFT, 
and also as a generalization of Hochschild homology to $n$-categories and $n$-manifolds.
It enjoys a number of nice formal properties, including a higher dimensional
generalization of Deligne's conjecture about the action of the little disks operad on Hochschild cochains.
Along the way, we give a definition of a weak $n$-category with strong duality which
is particularly well suited for work with TQFTs.
\end{abstract}

\hypersetup{
    colorlinks, linkcolor={black},
    citecolor={dark-blue}, urlcolor={medium-blue}
}

\tableofcontents

\hypersetup{
    colorlinks, linkcolor={dark-red},
    citecolor={dark-blue}, urlcolor={medium-blue}
}


\section{Introduction}

We construct a chain complex $\bc_*(M; \cC)$ --- the ``blob complex'' --- 
associated to an $n$-manifold $M$ and a linear $n$-category $\cC$ with strong duality.
This blob complex provides a simultaneous generalization of several well known constructions:
\begin{itemize}
\item The 0-th homology $H_0(\bc_*(M; \cC))$ is isomorphic to the usual 
topological quantum field theory invariant of $M$ associated to $\cC$.
(See Proposition \ref{thm:skein-modules} later in the introduction and \S \ref{sec:constructing-a-tqft}.)
\item When $n=1$ and $\cC$ is just a 1-category (e.g.\ an associative algebra), 
the blob complex $\bc_*(S^1; \cC)$ is quasi-isomorphic to the Hochschild complex $\HC_*(\cC)$.
(See Theorem \ref{thm:hochschild} and \S \ref{sec:hochschild}.)
\item When $\cC$ is $\pi^\infty_{\leq n}(T)$, the $A_\infty$ version of the fundamental $n$-groupoid of
the space $T$ (Example \ref{ex:chains-of-maps-to-a-space}), 
$\bc_*(M; \cC)$ is homotopy equivalent to $C_*(\Maps(M\to T))$,
the singular chains on the space of maps from $M$ to $T$.
(See Theorem \ref{thm:map-recon}.)
\end{itemize}

The blob complex definition is motivated by the desire for a derived analogue of the usual TQFT Hilbert space 
(replacing the quotient of fields by local relations with some sort of resolution), 
and for a generalization of Hochschild homology to higher $n$-categories.
One can think of it as the push-out of these two familiar constructions.
More detailed motivations are described in \S \ref{sec:motivations}.

The blob complex has good formal properties, summarized in \S \ref{sec:properties}.
These include an action of $\CH{M}$, 
extending the usual $\Homeo(M)$ action on the TQFT space $H_0$ (Theorem \ref{thm:evaluation}) and a gluing 
formula allowing calculations by cutting manifolds into smaller parts (Theorem \ref{thm:gluing}).

We expect applications of the blob complex to contact topology and Khovanov homology 
but do not address these in this paper.

Throughout, we have resisted the temptation to work in the greatest possible generality.
In most of the places where we say ``set" or ``vector space", any symmetric monoidal category 
with sufficient limits and colimits would do.
We could also replace many of our chain complexes with topological spaces (or indeed, work at the generality of model categories).

{\bf Note:} For simplicity, we will assume that all manifolds are unoriented and piecewise linear, unless stated otherwise.
In fact, all the results in this paper also hold for smooth manifolds, 
as well as manifolds equipped with an orientation, spin structure, or $\mathrm{Pin}_\pm$ structure.  
We will use ``homeomorphism" as a shorthand for ``piecewise linear homeomorphism".
The reader could also interpret ``homeomorphism" to mean an isomorphism in whatever category of manifolds we happen to 
be working in (e.g.\ spin piecewise linear, oriented smooth, etc.).
In the smooth case there are additional technical details concerning corners and gluing 
which we have omitted, since 
most of the examples we are interested in require only a piecewise linear structure.

\subsection{Structure of the paper}
The subsections of the introduction explain our motivations in defining the blob complex (see \S \ref{sec:motivations}), 
summarize the formal properties of the blob complex (see \S \ref{sec:properties}), 
describe known specializations (see \S \ref{sec:specializations}), 
and outline the major results of the paper (see \S \ref{sec:structure} and \S \ref{sec:applications}).

The first part of the paper (sections \S \ref{sec:fields}--\S \ref{sec:evaluation}) gives the definition of the blob complex, 
and establishes some of its properties.
There are many alternative definitions of $n$-categories, and part of the challenge of defining the blob complex is 
simply explaining what we mean by an ``$n$-category with strong duality'' as one of the inputs.
At first we entirely avoid this problem by introducing the notion of a ``system of fields", and define the blob complex 
associated to an $n$-manifold and an $n$-dimensional system of fields.
We sketch the construction of a system of fields from a *-$1$-category and from a pivotal $2$-category.

Nevertheless, when we attempt to establish all of the observed properties of the blob complex, 
we find this situation unsatisfactory.
Thus, in the second part of the paper (\S\S \ref{sec:ncats}-\ref{sec:ainfblob}) we give yet another 
definition of an $n$-category, or rather a definition of an $n$-category with strong duality.
(Removing the duality conditions from our definition would make it more complicated rather than less.) 
We call these ``disk-like $n$-categories'', to differentiate them from previous versions.
Moreover, we find that we need analogous $A_\infty$ $n$-categories, and we define these as well following very similar axioms.
(See \S \ref{n-cat-names} below for a discussion of $n$-category terminology.)

The basic idea is that each potential definition of an $n$-category makes a choice about the ``shape" of morphisms.
We try to be as lax as possible: a disk-like $n$-category associates a 
vector space to every $B$ homeomorphic to the $n$-ball.
These vector spaces glue together associatively, and we require that there is an action of the homeomorphism groupoid.
For an $A_\infty$ $n$-category, we associate a chain complex instead of a vector space to 
each such $B$ and ask that the action of 
homeomorphisms extends to a suitably defined action of the complex of singular chains of homeomorphisms.
The axioms for an $A_\infty$ $n$-category are designed to capture two main examples: 
the blob complexes of $n$-balls labelled by a 
disk-like $n$-category, and the complex $\CM{-}{T}$ of maps to a fixed target space $T$.

In \S \ref{ssec:spherecat} we explain how $n$-categories can be viewed as objects in an $n{+}1$-category 
of sphere modules.
When $n=1$ this just the familiar 2-category of 1-categories, bimodules and intertwiners.

In \S \ref{ss:ncat_fields}  we explain how to construct a system of fields from a disk-like $n$-category 
(using a colimit along certain decompositions of a manifold into balls). 
With this in hand, we write $\bc_*(M; \cC)$ to indicate the blob complex of a manifold $M$ 
with the system of fields constructed from the $n$-category $\cC$. 
In \S \ref{sec:ainfblob} we give an alternative definition 
of the blob complex for an $A_\infty$ $n$-category on an $n$-manifold (analogously, using a homotopy colimit).
Using these definitions, we show how to use the blob complex to ``resolve" any ordinary $n$-category as an 
$A_\infty$ $n$-category, and relate the first and second definitions of the blob complex.
We use the blob complex for $A_\infty$ $n$-categories to establish important properties of the blob complex (in both variants), 
in particular the ``gluing formula" of Theorem \ref{thm:gluing} below.

The relationship between all these ideas is sketched in Figure \ref{fig:outline}.

\tikzstyle{box} = [rectangle, rounded corners, draw,outer sep = 5pt, inner sep = 5pt, line width=0.5pt]

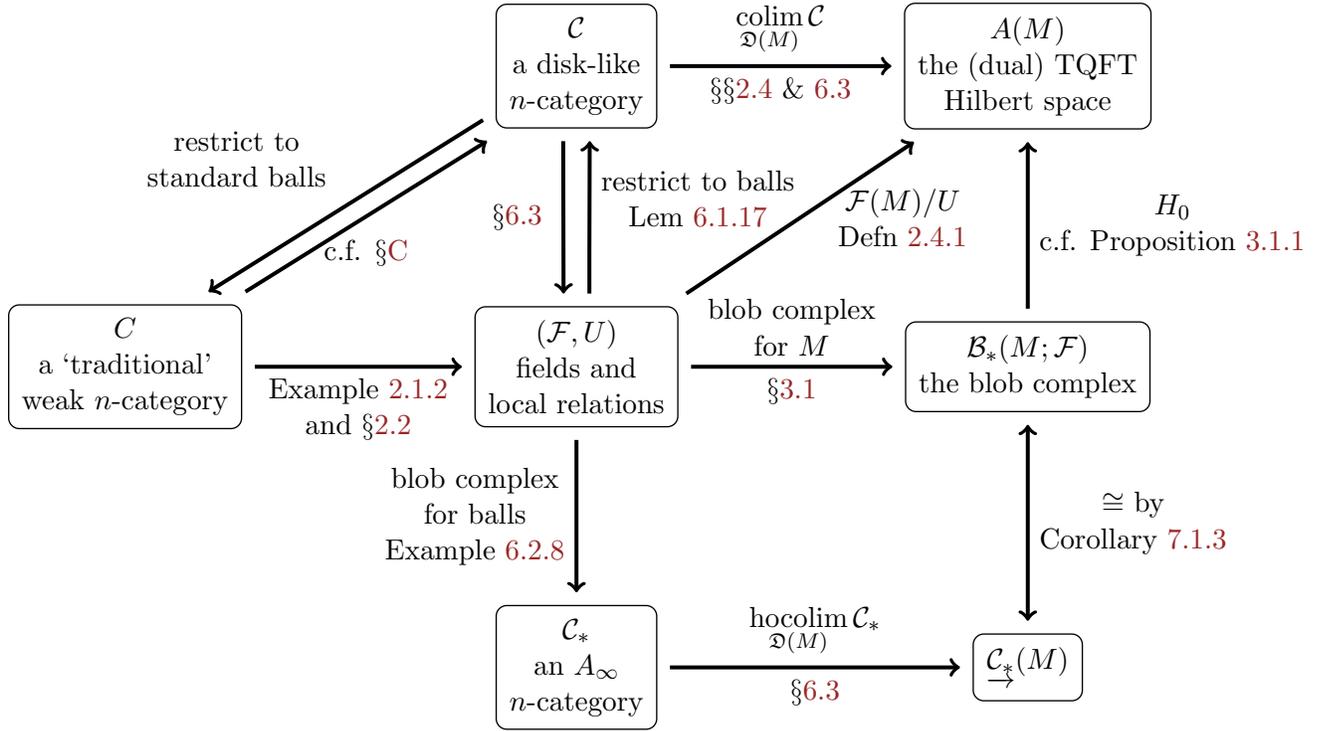
\begin{figure}[t]
{\center
\beginpgfgraphicnamed{gadgets-external}%
\begin{tikzpicture}[align=center,line width = 1.5pt]
\newcommand{\xxa}{2}
\newcommand{\xxb}{8}
\newcommand{\yya}{14}
\newcommand{\yyb}{10}
\newcommand{\yyc}{6}

\node[box] at (-4,\yyb) (tC) {$C$ \\ a `traditional' \\ weak $n$-category};
\node[box] at (\xxa,\yya) (C) {$\cC$ \\ a disk-like \\ $n$-category};
\node[box] at (\xxb,\yya) (A) {$A(M)$ \\ the (dual) TQFT \\ Hilbert space};
\node[box] at (\xxa,\yyb) (FU) {$(\cF, U)$ \\ fields and\\ local relations};
\node[box] at (\xxb,\yyb) (BC) {$\bc_*(M; \cF)$ \\ the blob complex};
\node[box] at (\xxa,\yyc) (Cs) {$\cC_*$ \\ an $A_\infty$ \\$n$-category};
\node[box] at (\xxb,\yyc) (BCs) {$\underrightarrow{\cC_*}(M)$};

\draw[->] (C) -- node[above] {$\displaystyle \colim_{\cell(M)} \cC$} node[below] {\S\S \ref{sec:constructing-a-tqft} \& \ref{ss:ncat_fields}} (A);

\draw[->] (FU) -- node[above] {blob complex \\ for $M$} node[below]{\S \ref{sec:blob-definition}} (BC);
\draw[->] (Cs) -- node[above] {$\displaystyle \hocolim_{\cell(M)} \cC_*$} node[below] {\S \ref{ss:ncat_fields}} (BCs);

\draw[->] (FU) -- node[right=10pt] {$\cF(M)/U$ \\ Defn \ref{defn:TQFT-invariant}} (A);

\draw[->] (tC) -- node[below] {Example \ref{ex:traditional-n-categories(fields)}\\ and \S \ref{sec:example:traditional-n-categories(fields)}} (FU);

\draw[->] (C.-100) -- node[left] {
	\S \ref{ss:ncat_fields}
   } (FU.100);
\draw[->] (C.210) -- node[above left=3pt] {restrict to \\ standard balls} (tC.42);
\draw[->] (tC) -- node[below=4.5pt] {c.f. \S \ref{sec:comparing-defs}} (C.220);
\draw[->] (FU.80) -- +(0,0.5) -- node[right] {restrict to balls \\ Lem \ref{lem:ncat-from-fields}} (C.-80);
\draw[->] (BC) -- node[right] {$H_0$ \\ c.f. Proposition \ref{thm:skein-modules}} (A);

\draw[->] (FU) -- node[left] {blob complex \\ for balls \\ Example \ref{ex:blob-complexes-of-balls}} (Cs);
\draw[<->] (BC) -- node[right] {$\iso$ by \\ Corollary \ref{cor:new-old}} (BCs);
\end{tikzpicture}
\endpgfgraphicnamed%
\mbox{} 
}
\caption{The main gadgets and constructions of the paper.}
\label{fig:outline}
\end{figure}

Later sections address other topics.
Section \S \ref{sec:deligne} gives
a higher dimensional generalization of the Deligne conjecture 
(that the little discs operad acts on Hochschild cochains) in terms of the blob complex.
The appendices prove technical results about $\CH{M}$ and
make connections between our definitions of $n$-categories and familiar definitions for $n=1$ and $n=2$, 
as well as relating the $n=1$ case of our $A_\infty$ $n$-categories with usual $A_\infty$ algebras. 

\subsection{Motivation}
\label{sec:motivations}

We will briefly sketch our original motivation for defining the blob complex.

As a starting point, consider TQFTs constructed via fields and local relations.
(See \S\ref{sec:tqftsviafields} or \cite{kw:tqft}.)
This gives a satisfactory treatment for semisimple TQFTs
(i.e.\ TQFTs for which the cylinder 1-category associated to an
$n{-}1$-manifold $Y$ is semisimple for all $Y$).

For non-semi-simple TQFTs, this approach is less satisfactory.
Our main motivating example (though we will not develop it in this paper)
is the $(4{+}\varepsilon)$-dimensional TQFT associated to Khovanov homology.
It associates a bigraded vector space $A_{Kh}(W^4, L)$ to a 4-manifold $W$ together
with a link $L \subset \bd W$.
The original Khovanov homology of a link in $S^3$ is recovered as $A_{Kh}(B^4, L)$.

How would we go about computing $A_{Kh}(W^4, L)$?
For the Khovanov homology of a link in $S^3$ the main tool is the exact triangle (long exact sequence)
relating resolutions of a crossing.
Unfortunately, the exactness breaks if we glue $B^4$ to itself and attempt
to compute $A_{Kh}(S^1\times B^3, L)$.
According to the gluing theorem for TQFTs, gluing along $B^3 \subset \bd B^4$
corresponds to taking a coend (self tensor product) over the cylinder category
associated to $B^3$ (with appropriate boundary conditions).
The coend is not an exact functor, so the exactness of the triangle breaks.

The obvious solution to this problem is to replace the coend with its derived counterpart, 
Hochschild homology.
This presumably works fine for $S^1\times B^3$ (the answer being the Hochschild homology
of an appropriate bimodule), but for more complicated 4-manifolds this leaves much to be desired.
If we build our manifold up via a handle decomposition, the computation
would be a sequence of derived coends.
A different handle decomposition of the same manifold would yield a different
sequence of derived coends.
To show that our definition in terms of derived coends is well-defined, we
would need to show that the above two sequences of derived coends yield 
isomorphic answers, and that the isomorphism does not depend on any
choices we made along the way.
This is probably not easy to do.

Instead, we would prefer a definition for a derived version of $A_{Kh}(W^4, L)$
which is manifestly invariant.
In other words, we want a definition that does not
involve choosing a decomposition of $W$.
After all, one of the virtues of our starting point --- TQFTs via field and local relations ---
is that it has just this sort of manifest invariance.

The solution is to replace $A_{Kh}(W^4, L)$, which is a quotient
\[
 \text{linear combinations of fields} \;\big/\; \text{local relations} ,
\]
with an appropriately free resolution (the blob complex)
\[
	\cdots\to \bc_2(W, L) \to \bc_1(W, L) \to \bc_0(W, L) .
\]
Here $\bc_0$ is linear combinations of fields on $W$,
$\bc_1$ is linear combinations of local relations on $W$,
$\bc_2$ is linear combinations of relations amongst relations on $W$,
and so on. 
We now have a long exact sequence of chain complexes relating resolutions of the link $L$ 
(c.f. Lemma \ref{lem:hochschild-exact} which shows exactness 
with respect to boundary conditions in the context of Hochschild homology).

\subsection{Formal properties}
\label{sec:properties}
The blob complex enjoys the following list of formal properties.

\begin{property}[Functoriality]
\label{property:functoriality}%
The blob complex is functorial with respect to homeomorphisms.
That is, 
for a fixed $n$-dimensional system of fields $\cF$, the association
\begin{equation*}
X \mapsto \bc_*(X; \cF)
\end{equation*}
is a functor from $n$-manifolds and homeomorphisms between them to chain 
complexes and isomorphisms between them.
\end{property}
As a consequence, there is an action of $\Homeo(X)$ on the chain complex $\bc_*(X; \cF)$; 
this action is extended to all of $C_*(\Homeo(X))$ in Theorem \ref{thm:evaluation} below.

The blob complex is also functorial with respect to $\cF$, 
although we will not address this in detail here.

\begin{property}[Disjoint union]
\label{property:disjoint-union}
The blob complex of a disjoint union is naturally isomorphic to the tensor product of the blob complexes.
\begin{equation*}
\bc_*(X_1 \du X_2) \iso \bc_*(X_1) \tensor \bc_*(X_2)
\end{equation*}
\end{property}

If an $n$-manifold $X$ contains $Y \sqcup Y^\text{op}$ as a codimension $0$ submanifold of its boundary, 
write $X_\text{gl} = X \bigcup_{Y}\selfarrow$ for the manifold obtained by gluing together $Y$ and $Y^\text{op}$.
Note that this includes the case of gluing two disjoint manifolds together.
\begin{property}[Gluing map]
\label{property:gluing-map}%
Given a gluing $X \to X_\mathrm{gl}$, there is an injective natural map
\[
	\bc_*(X) \to \bc_*(X_\mathrm{gl}) 
\]
(natural with respect to homeomorphisms, and also associative with respect to iterated gluings).
\end{property}

\begin{property}[Contractibility]
\label{property:contractibility}%
With field coefficients, the blob complex on an $n$-ball is contractible in the sense 
that it is homotopic to its $0$-th homology.
Moreover, the $0$-th homology of balls can be canonically identified with the vector spaces 
associated by the system of fields $\cF$ to balls.
\begin{equation*}
\xymatrix{\bc_*(B^n;\cF) \ar[r]^(0.4){\iso}_(0.4){\text{qi}} & H_0(\bc_*(B^n;\cF)) \ar[r]^(0.6)\iso & A_\cF(B^n)}
\end{equation*}
\end{property}

Property \ref{property:functoriality} will be immediate from the definition given in
\S \ref{sec:blob-definition}, and we'll recall it at the appropriate point there.
Properties \ref{property:disjoint-union}, \ref{property:gluing-map} and 
\ref{property:contractibility} are established in \S \ref{sec:basic-properties}.

\subsection{Specializations}
\label{sec:specializations}

The blob complex is a simultaneous generalization of the TQFT skein module construction and of Hochschild homology.

\newtheorem*{thm:skein-modules}{Proposition \ref{thm:skein-modules}}

\begin{thm:skein-modules}[Skein modules]
The $0$-th blob homology of $X$ is the usual 
(dual) TQFT Hilbert space (a.k.a.\ skein module) associated to $X$
by $\cF$.
(See \S \ref{sec:local-relations}.)
\begin{equation*}
H_0(\bc_*(X;\cF)) \iso A_{\cF}(X)
\end{equation*}
\end{thm:skein-modules}

\newtheorem*{thm:hochschild}{Theorem \ref{thm:hochschild}}

\begin{thm:hochschild}[Hochschild homology when $X=S^1$]
The blob complex for a $1$-category $\cC$ on the circle is
quasi-isomorphic to the Hochschild complex.
\begin{equation*}
\xymatrix{\bc_*(S^1;\cC) \ar[r]^(0.47){\iso}_(0.47){\text{qi}} & \HC_*(\cC).}
\end{equation*}
\end{thm:hochschild}

Proposition \ref{thm:skein-modules} is immediate from the definition, and
Theorem \ref{thm:hochschild} is established in \S \ref{sec:hochschild}.

\subsection{Structure of the blob complex}
\label{sec:structure}

In the following $\CH{X}$ is the singular chain complex of the space of homeomorphisms of $X$, fixed on $\bdy X$.

\newtheorem*{thm:CH}{Theorem \ref{thm:CH}}

\begin{thm:CH}[$C_*(\Homeo(-))$ action]
There is a chain map
\begin{equation*}
e_X: \CH{X} \tensor \bc_*(X) \to \bc_*(X).
\end{equation*}
such that
\begin{enumerate}
\item Restricted to $C_0(\Homeo(X))$ this is the action of homeomorphisms described in Property \ref{property:functoriality}. 

\item For
any codimension $0$-submanifold $Y \sqcup Y^\text{op} \subset \bdy X$ the following diagram
(using the gluing maps described in Property \ref{property:gluing-map}) commutes (up to homotopy).
\begin{equation*}
\xymatrix@C+2cm{
     \CH{X} \otimes \bc_*(X)
        \ar[r]_{e_{X}}  \ar[d]^{\gl^{\Homeo}_Y \otimes \gl_Y}  &
            \bc_*(X) \ar[d]_{\gl_Y} \\
     \CH{X \bigcup_Y \selfarrow} \otimes \bc_*(X \bigcup_Y \selfarrow) \ar[r]^<<<<<<<<<<<<{e_{(X \bigcup_Y \scalebox{0.5}{\selfarrow})}}    & \bc_*(X \bigcup_Y \selfarrow)
}
\end{equation*}
\end{enumerate}
\end{thm:CH}

\newtheorem*{thm:CH-associativity}{Theorem \ref{thm:CH-associativity}}

Further,
\begin{thm:CH-associativity}
The chain map of Theorem \ref{thm:CH} is associative, in the sense that the following diagram commutes (up to homotopy).
\begin{equation*}
\xymatrix{
\CH{X} \tensor \CH{X} \tensor \bc_*(X) \ar[r]^<<<<<{\id \tensor e_X} \ar[d]^{\compose \tensor \id} & \CH{X} \tensor \bc_*(X) \ar[d]^{e_X} \\
\CH{X} \tensor \bc_*(X) \ar[r]^{e_X} & \bc_*(X)
}
\end{equation*}
\end{thm:CH-associativity}

Since the blob complex is functorial in the manifold $X$, this is equivalent to having chain maps
$$ev_{X \to Y} : \CH{X \to Y} \tensor \bc_*(X) \to \bc_*(Y)$$
for any homeomorphic pair $X$ and $Y$, 
satisfying corresponding conditions.

In \S \ref{sec:ncats} we introduce the notion of disk-like $n$-categories, 
from which we can construct systems of fields.
Traditional $n$-categories can be converted to disk-like $n$-categories by taking string diagrams
(see \S\ref{sec:example:traditional-n-categories(fields)}).
Below, when we talk about the blob complex for a disk-like $n$-category, 
we are implicitly passing first to this associated system of fields.
Further, in \S \ref{sec:ncats} we also have the notion of a disk-like $A_\infty$ $n$-category. 
In that section we describe how to use the blob complex to 
construct disk-like $A_\infty$ $n$-categories from ordinary disk-like $n$-categories:

\newtheorem*{ex:blob-complexes-of-balls}{Example \ref{ex:blob-complexes-of-balls}}

\begin{ex:blob-complexes-of-balls}[Blob complexes of products with balls form a disk-like $A_\infty$ $n$-category]
Let $\cC$ be  an ordinary disk-like $n$-category.
Let $Y$ be an $n{-}k$-manifold. 
There is a disk-like $A_\infty$ $k$-category $\bc_*(Y;\cC)$, defined on each $m$-ball $D$, for $0 \leq m < k$, 
to be the set $$\bc_*(Y;\cC)(D) = \cC(Y \times D)$$ and on $k$-balls $D$ to be the set 
$$\bc_*(Y;\cC)(D) = \bc_*(Y \times D; \cC).$$ 
(When $m=k$ the subsets with fixed boundary conditions form a chain complex.) 
These sets have the structure of a disk-like $A_\infty$ $k$-category, with compositions coming from the gluing map in 
Property \ref{property:gluing-map} and with the action of families of homeomorphisms given in Theorem \ref{thm:evaluation}.
\end{ex:blob-complexes-of-balls}

\begin{rem}
Perhaps the most interesting case is when $Y$ is just a point; 
then we have a way of building a disk-like $A_\infty$ $n$-category from an ordinary $n$-category. 
We think of this disk-like $A_\infty$ $n$-category as a free resolution of the ordinary $n$-category.
\end{rem}

There is a version of the blob complex for $\cC$ a disk-like $A_\infty$ $n$-category
instead of an ordinary $n$-category; this is described in \S \ref{sec:ainfblob}.
The definition is in fact simpler, almost tautological, and we use a different notation, $\cl{\cC}(M)$. 
The next theorem describes the blob complex for product manifolds
in terms of the $A_\infty$ blob complex of the disk-like $A_\infty$ $n$-categories constructed as in the previous example.

\newtheorem*{thm:product}{Theorem \ref{thm:product}}

\begin{thm:product}[Product formula]
Let $W$ be a $k$-manifold and $Y$ be an $n-k$ manifold.
Let $\cC$ be an $n$-category.
Let $\bc_*(Y;\cC)$ be the disk-like $A_\infty$ $k$-category associated to $Y$ via blob homology 
(see Example \ref{ex:blob-complexes-of-balls}).
Then
\[
	\bc_*(Y\times W; \cC) \simeq \cl{\bc_*(Y;\cC)}(W).
\]
\end{thm:product}
The statement can be generalized to arbitrary fibre bundles, and indeed to arbitrary maps
(see \S \ref{ss:product-formula}).

Fix a disk-like $n$-category $\cC$, which we'll omit from the notation.
Recall that for any $(n{-}1)$-manifold $Y$, the blob complex $\bc_*(Y)$ is naturally an $A_\infty$ 1-category.
(See Appendix \ref{sec:comparing-A-infty} for the translation between disk-like $A_\infty$ $1$-categories 
and the usual algebraic notion of an $A_\infty$ category.)

\newtheorem*{thm:gluing}{Theorem \ref{thm:gluing}}

\begin{thm:gluing}[Gluing formula]
\mbox{}
\begin{itemize}
\item For any $n$-manifold $X$, with $Y$ a codimension $0$-submanifold of its boundary, the blob complex of $X$ is naturally an
$A_\infty$ module for $\bc_*(Y)$.

\item For any $n$-manifold $X_\text{gl} = X\bigcup_Y \selfarrow$, the blob complex $\bc_*(X_\text{gl})$ 
is the $A_\infty$ self-tensor product of
$\bc_*(X)$ as an $\bc_*(Y)$-bimodule:
\begin{equation*}
\bc_*(X_\text{gl}) \simeq \bc_*(X) \Tensor^{A_\infty}_{\mathclap{\bc_*(Y)}} \selfarrow
\end{equation*}
\end{itemize}
\end{thm:gluing}

Theorem \ref{thm:product} is proved in \S \ref{ss:product-formula}, and Theorem \ref{thm:gluing} in \S \ref{sec:gluing}.

\subsection{Applications}
\label{sec:applications}
Finally, we give two applications of the above machinery.

\newtheorem*{thm:map-recon}{Theorem \ref{thm:map-recon}}

\begin{thm:map-recon}[Mapping spaces]
Let $\pi^\infty_{\le n}(T)$ denote the disk-like $A_\infty$ $n$-category based on singular chains on maps 
$B^n \to T$.
(The case $n=1$ is the usual $A_\infty$-category of paths in $T$.)
Then 
\[
	\bc_*(X; \pi^\infty_{\le n}(T)) \simeq \CM{X}{T},
\]
where $C_*$ denotes singular chains.
\end{thm:map-recon}

This says that we can recover (up to homotopy) the space of maps to $T$ via blob homology from local data. 
Note that there is no restriction on the connectivity of $T$.
The proof appears in \S \ref{sec:map-recon}.

\newtheorem*{thm:deligne}{Theorem \ref{thm:deligne}}

\begin{thm:deligne}[Higher dimensional Deligne conjecture]
The singular chains of the $n$-dimensional surgery cylinder operad act on blob cochains
(up to coherent homotopy).
Since the little $n{+}1$-balls operad is a suboperad of the $n$-dimensional surgery cylinder operad,
this implies that the little $n{+}1$-balls operad acts on blob cochains of the $n$-ball.
\end{thm:deligne}
See \S \ref{sec:deligne} for a full explanation of the statement, and the proof.

\noop{ 
\subsection{Future directions}
\label{sec:future}
\nn{KW: Perhaps we should delete this subsection and salvage only the first few sentences.}
Throughout, we have resisted the temptation to work in the greatest generality possible.
(Don't worry, it wasn't that hard.)
In most of the places where we say ``set" or ``vector space", any symmetric monoidal category would do.
We could also replace many of our chain complexes with topological spaces (or indeed, work at the generality of model categories).
And likely it will prove useful to think about the connections between what we do here and $(\infty,k)$-categories.
More could be said about finite characteristic 
(there appears in be $2$-torsion in $\bc_1(S^2; \cC)$ for any spherical $2$-category $\cC$, for example).
Much more could be said about other types of manifolds, in particular oriented, 
$\operatorname{Spin}$ and $\operatorname{Pin}^{\pm}$ manifolds, where boundary issues become more complicated.
(We'd recommend thinking about boundaries as germs, rather than just codimension $1$ manifolds.) 
We've also take the path of least resistance by concentrating on PL manifolds; 
there may be some differences for topological manifolds and smooth manifolds.

The paper ``Skein homology'' \cite{MR1624157} has similar motivations, and it may be 
interesting to investigate if there is a connection with the material here.

Many results in Hochschild homology can be understood ``topologically" via the blob complex.
For example, we expect that the shuffle product on the Hochschild homology of a commutative algebra $A$ 
(see \cite[\S 4.2]{MR1600246}) simply corresponds to the gluing operation on $\bc_*(S^1 \times [0,1]; A)$, 
but haven't investigated the details.

Most importantly, however, \nn{applications!} \nn{cyclic homology, $n=2$ cases, contact, Kh} \nn{stabilization} 
\nn{stable categories, generalized cohomology theories}
} 

\subsection{\texorpdfstring{$n$}{n}-category terminology}
\label{n-cat-names}

Section \S \ref{sec:ncats} adds to the zoo of $n$-category definitions, and the new creatures need names.
Unfortunately, we have found it difficult to come up with terminology which satisfies all
of the colleagues whom we have consulted, or even satisfies just ourselves.

One distinction we need to make is between $n$-categories which are associative in dimension $n$ and those
that are associative only up to higher homotopies.
The latter are closely related to $(\infty, n)$-categories (i.e.\ $\infty$-categories where all morphisms
of dimension greater than $n$ are invertible), but we don't want to use that name
since we think of the higher homotopies not as morphisms of the $n$-category but
rather as belonging to some auxiliary category (like chain complexes)
that we are enriching in.
We have decided to call them ``$A_\infty$ $n$-categories", since they are a natural generalization 
of the familiar $A_\infty$ 1-categories.
We also considered the names ``homotopy $n$-categories" and ``infinity $n$-categories".
When we need to emphasize that we are talking about an $n$-category which is not $A_\infty$ in this sense
we will say ``ordinary $n$-category".

Another distinction we need to make is between our style of definition of $n$-categories and
more traditional and combinatorial definitions.
We will call instances of our definition ``disk-like $n$-categories", since $n$-dimensional disks
play a prominent role in the definition.
(In general we prefer ``$k$-ball" to ``$k$-disk", but ``ball-like" doesn't roll off 
the tongue as well as ``disk-like''.)

Another thing we need a name for is the ability to rotate morphisms around in various ways.
For 2-categories, ``strict pivotal" is a standard term for what we mean. (See \cite{MR1686423, 0908.3347}, although note there the definition is only for monoidal categories; one can think of a monoidal category as a 2-category with only one $0$-morphism, then relax this requirement, to obtain the sensible notion of pivotal (or strict pivotal) for 2-categories. Compare also \cite{1009.0186} which addresses this issue explicitly.)
A more general term is ``duality", but duality comes in various flavors and degrees.
We are mainly interested in a very strong version of duality, where the available ways of
rotating $k$-morphisms correspond to all the ways of rotating $k$-balls.
We sometimes refer to this as ``strong duality", and sometimes we consider it to be implied
by ``disk-like".
(But beware: disks can come in various flavors, and some of them, such as framed disks,
don't actually imply much duality.)
Another possibility considered here was ``pivotal $n$-category", but we prefer to preserve pivotal for its usual sense. 
It will thus be a theorem that our disk-like 2-categories 
are equivalent to pivotal 2-categories, c.f. \S \ref{ssec:2-cats}.

Finally, we need a general name for isomorphisms between balls, where the balls could be
piecewise linear or smooth or Spin or framed or etc., or some combination thereof.
We have chosen to use ``homeomorphism" for the appropriate sort of isomorphism, so the reader should
keep in mind that ``homeomorphism" could mean PL homeomorphism or diffeomorphism (and so on)
depending on context.

\subsection{Thanks and acknowledgements}
We'd like to thank 
Justin Roberts (for helpful discussions in the very early stages of this work), 
Michael Freedman, 
Peter Teichner (for helping us improve an earlier version of the $n$-category definition), 
David Ben-Zvi, 
Vaughan Jones, 
Chris Schommer-Pries, 
Thomas Tradler,
Kevin Costello, 
Chris Douglas,
Alexander Kirillov,
Michael Shulman,
and
Rob Kirby
for many interesting and useful conversations. 
Peter Teichner ran a reading course based on an earlier draft of this paper, and the detailed feedback
we got from the student lecturers lead to very many improvements in later drafts.
So big thanks to
Aaron Mazel-Gee,
Nate Watson,
Alan Wilder,
Dmitri Pavlov,
Ansgar Schneider,
and
Dan Berwick-Evans.
We thank the anonymous referee for numerous suggestions which improved this paper.
During this work, Kevin Walker has been at Microsoft Station Q, and Scott Morrison has been at 
Microsoft Station Q and the Miller Institute for Basic Research at UC Berkeley. 
We'd like to thank the Aspen Center for Physics for the pleasant and productive 
environment provided there during the final preparation of this manuscript.


\section{TQFTs via fields}
\label{sec:fields}
\label{sec:tqftsviafields}

In this section we review the construction of TQFTs from fields and local relations.
For more details see \cite{kw:tqft}.
For our purposes, a TQFT is {\it defined} to be something which arises
from this construction.
This is an alternative to the more common definition of a TQFT
as a functor on cobordism categories satisfying various conditions.
A fully local (``down to points") version of the cobordism-functor TQFT definition
should be equivalent to the fields-and-local-relations definition.

A system of fields is very closely related to an $n$-category.
In one direction, Example \ref{ex:traditional-n-categories(fields)}
shows how to construct a system of fields from a (traditional) $n$-category.
We do this in detail for $n=1,2$ (\S\ref{sec:example:traditional-n-categories(fields)}) 
and more informally for general $n$.
In the other direction, 
our preferred definition of an $n$-category in \S\ref{sec:ncats} is essentially
just a system of fields restricted to balls of dimensions 0 through $n$;
one could call this the ``local" part of a system of fields.

Since this section is intended primarily to motivate
the blob complex construction of \S\ref{sec:blob-definition}, 
we suppress some technical details.
In \S\ref{sec:ncats} the analogous details are treated more carefully.

\medskip

We only consider compact manifolds, so if $Y \sub X$ is a closed codimension 0
submanifold of $X$, then $X \setmin Y$ implicitly means the closure
$\overline{X \setmin Y}$.

\subsection{Systems of fields}
\label{ss:syst-o-fields}

Let $\cM_k$ denote the category with objects 
unoriented PL manifolds of dimension
$k$ and morphisms homeomorphisms.
(We could equally well work with a different category of manifolds ---
oriented, smooth, spin, etc. --- but for simplicity we
will stick with unoriented PL.)

Fix a symmetric monoidal category $\cS$.
Fields on $n$-manifolds will be enriched over $\cS$.
Good examples to keep in mind are $\cS = \Set$ or $\cS = \Vect$.
The presentation here requires that the objects of $\cS$ have an underlying set, 
but this could probably be avoided if desired.

An $n$-dimensional {\it system of fields} in $\cS$
is a collection of functors $\cC_k : \cM_k \to \Set$ for $0 \leq k \leq n$
together with some additional data and satisfying some additional conditions, all specified below.

Before finishing the definition of fields, we give two motivating examples of systems of fields.

\begin{example}
\label{ex:maps-to-a-space(fields)}
Fix a target space $T$, and let $\cC(X)$ be the set of continuous maps
from $X$ to $T$.
\end{example}

\begin{example}
\label{ex:traditional-n-categories(fields)}
Fix an $n$-category $C$, and let $\cC(X)$ be 
the set of embedded cell complexes in $X$ with codimension-$j$ cells labeled by
$j$-morphisms of $C$.
One can think of such embedded cell complexes as dual to pasting diagrams for $C$.
This is described in more detail in \S \ref{sec:example:traditional-n-categories(fields)}.
\end{example}

Now for the rest of the definition of system of fields.
(Readers desiring a more precise definition should refer to \S\ref{ss:n-cat-def}
and replace $k$-balls with $k$-manifolds.)
\begin{enumerate}
\item There are boundary restriction maps $\cC_k(X) \to \cC_{k-1}(\bd X)$, 
and these maps comprise a natural
transformation between the functors $\cC_k$ and $\cC_{k-1}\circ\bd$.
For $c \in \cC_{k-1}(\bd X)$, we will denote by $\cC_k(X; c)$ the subset of 
$\cC(X)$ which restricts to $c$.
In this context, we will call $c$ a boundary condition.
\item The subset $\cC_n(X;c)$ of top-dimensional fields 
with a given boundary condition is an object in our symmetric monoidal category $\cS$.
(This condition is of course trivial when $\cS = \Set$.) 
If the objects are sets with extra structure (e.g. $\cS = \Vect$ or $\Kom$ (chain complexes)), 
then this extra structure is considered part of the definition of $\cC_n$.
Any maps mentioned below between fields on $n$-manifolds must be morphisms in $\cS$.
\item $\cC_k$ is compatible with the symmetric monoidal
structures on $\cM_k$, $\Set$ and $\cS$.
For $k<n$ we have $\cC_k(X \du W) \cong \cC_k(X)\times \cC_k(W)$,
compatibly with homeomorphisms and restriction to boundary.
For $k=n$ we require $\cC_n(X \du W; c\du d) \cong \cC_k(X, c)\ot \cC_k(W, d)$.
We will call the projections $\cC_k(X_1 \du X_2) \to \cC_k(X_i)$
restriction maps.
\item Gluing without corners.
Let $\bd X = Y \du Y \du W$, where $Y$ and $W$ are closed $k{-}1$-manifolds.
Let $X\sgl$ denote $X$ glued to itself along the two copies of $Y$.
Using the boundary restriction and disjoint union
maps, we get two maps $\cC_k(X) \to \cC(Y)$, corresponding to the two
copies of $Y$ in $\bd X$.
Let $\Eq_Y(\cC_k(X))$ denote the equalizer of these two maps.
(When $X$ is a disjoint union $X_1\du X_2$ the equalizer is the same as the fibered product
$\cC_k(X_1)\times_{\cC(Y)} \cC_k(X_2)$.)
Then (here's the axiom/definition part) there is an injective ``gluing" map
\[
	\Eq_Y(\cC_k(X)) \hookrightarrow \cC_k(X\sgl) ,
\]
and this gluing map is compatible with all of the above structure (actions
of homeomorphisms, boundary restrictions, disjoint union).
Furthermore, up to homeomorphisms of $X\sgl$ isotopic to the identity 
and collaring maps,
the gluing map is surjective.
We say that fields on $X\sgl$ in the image of the gluing map
are transverse to $Y$ or splittable along $Y$.
\item Gluing with corners.
Let $\bd X = (Y \du Y) \cup W$, where the two copies of $Y$ 
are disjoint from each other and $\bd(Y\du Y) = \bd W$.
Let $X\sgl$ denote $X$ glued to itself along the two copies of $Y$
(Figure \ref{fig:gluing-with-corners}).
\begin{figure}[t]
\begin{center}
\begin{tikzpicture}

\node(A) at (-4,0) {
\begin{tikzpicture}[scale=.8, fill=blue!15!white]
\filldraw[line width=1.5pt] (-.4,1) .. controls +(-1,-.1) and +(-1,0) .. (0,-1)
		.. controls +(1,0) and +(1,-.1) .. (.4,1) -- (.4,3)
		.. controls +(3,-.4) and +(3,0) .. (0,-3)
		.. controls +(-3,0) and +(-3,-.1) .. (-.4,3) -- cycle;
\node at (0,-2) {$X$};
\node (W) at (-2.7,-2) {$W$};
\node (Y1) at (-1.2,3.5) {$Y$};
\node (Y2) at (1.4,3.5) {$Y$};
\node[outer sep=2.3] (y1e) at (-.4,2) {};
\node[outer sep=2.3] (y2e) at (.4,2) {};
\node (we1) at (-2.2,-1.1) {};
\node (we2) at (-.6,-.7) {};
\draw[->] (Y1) -- (y1e);
\draw[->] (Y2) -- (y2e);
\draw[->] (W) .. controls +(0,.5) and +(-.5,-.2) .. (we1);
\draw[->] (W) .. controls +(.5,0) and +(-.2,-.5) .. (we2);
\end{tikzpicture}
};

\node(B) at (4,0) {
\begin{tikzpicture}[scale=.8, fill=blue!15!white]
\fill (0,1) .. controls +(-1,0) and +(-1,0) .. (0,-1)
		.. controls +(1,0) and +(1,0) .. (0,1) -- (0,3)
		.. controls +(3,0) and +(3,0) .. (0,-3)
		.. controls +(-3,0) and +(-3,0) .. (0,3) -- cycle;
\draw[line width=1.5pt] (0,1) .. controls +(-1,0) and +(-1,0) .. (0,-1)
		.. controls +(1,0) and +(1,0) .. (0,1);
\draw[line width=1.5pt] (0,3) .. controls +(3,0) and +(3,0) .. (0,-3)
		.. controls +(-3,0) and +(-3,0) .. (0,3);
\draw[line width=.5pt, black!65!white] (0,1) -- (0,3);
\node at (0,-2) {$X\sgl$};
\node (W) at (2.7,-2) {$W\sgl$};
\node (we1) at (2.2,-1.1) {};
\node (we2) at (.6,-.7) {};
\draw[->] (W) .. controls +(0,.5) and +(.5,-.2) .. (we1);
\draw[->] (W) .. controls +(-.5,0) and +(.2,-.5) .. (we2);
\end{tikzpicture}
};

\draw[->, red!50!green, line width=2pt] (A) -- node[above, black] {glue} (B);

\end{tikzpicture}
\end{center}
\caption{Gluing with corners}
\label{fig:gluing-with-corners}
\end{figure}
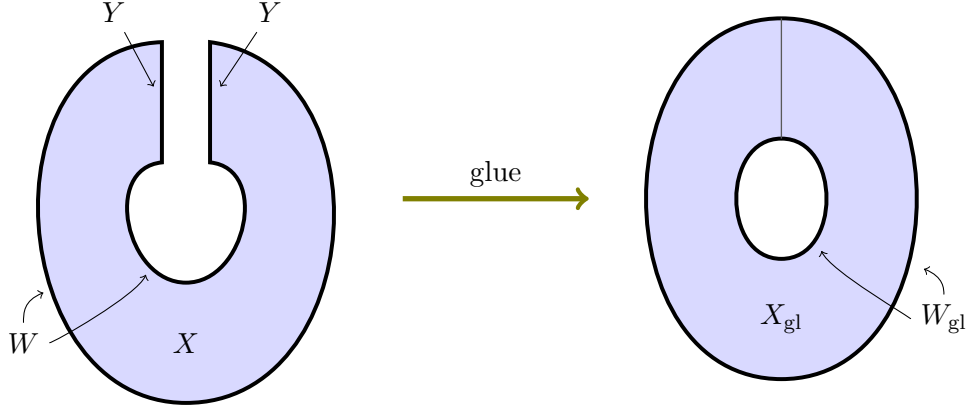
Note that $\bd X\sgl = W\sgl$, where $W\sgl$ denotes $W$ glued to itself
(without corners) along two copies of $\bd Y$.
Let $c\sgl \in \cC_{k-1}(W\sgl)$ be a be a splittable field on $W\sgl$ and let
$c \in \cC_{k-1}(W)$ be the cut open version of $c\sgl$.
Let $\cC^c_k(X)$ denote the subset of $\cC(X)$ which restricts to $c$ on $W$.
(This restriction map uses the gluing without corners map above.)
Using the boundary restriction and gluing without corners maps, 
we get two maps $\cC^c_k(X) \to \cC(Y)$, corresponding to the two
copies of $Y$ in $\bd X$.
Let $\Eq^c_Y(\cC_k(X))$ denote the equalizer of these two maps.
Then (here's the axiom/definition part) there is an injective ``gluing" map
\[
	\Eq^c_Y(\cC_k(X)) \hookrightarrow \cC_k(X\sgl, c\sgl) ,
\]
and this gluing map is compatible with all of the above structure (actions
of homeomorphisms, boundary restrictions, disjoint union).
Furthermore, up to homeomorphisms of $X\sgl$ isotopic to the identity
and collaring maps,
the gluing map is surjective.
We say that fields in the image of the gluing map
are transverse to $Y$ or splittable along $Y$.
\item Splittings.
Let $c\in \cC_k(X)$ and let $Y\sub X$ be a codimension 1 properly embedded submanifold of $X$.
Then for most small perturbations of $Y$ (e.g.\ for an open dense
subset of such perturbations, or for all perturbations satisfying
a transversality condition; c.f. Axiom \ref{axiom:splittings} much later) $c$ splits along $Y$.
(In Example \ref{ex:maps-to-a-space(fields)}, $c$ splits along all such $Y$.
In Example \ref{ex:traditional-n-categories(fields)}, $c$ splits along $Y$ so long as $Y$ 
is in general position with respect to the cell decomposition
associated to $c$.)
\item Product fields.
There are maps $\cC_{k-1}(Y) \to \cC_k(Y \times I)$, denoted
$c \mapsto c\times I$.
These maps comprise a natural transformation of functors, and commute appropriately
with all the structure maps above (disjoint union, boundary restriction, etc.).
Furthermore, if $f: Y\times I \to Y\times I$ is a fiber-preserving homeomorphism
covering $\bar{f}:Y\to Y$, then $f(c\times I) = \bar{f}(c)\times I$.
\end{enumerate}

There are two notations we commonly use for gluing.
One is 
\[
	x\sgl \deq \gl(x) \in \cC(X\sgl) , 
\]
for $x\in\cC(X)$.
The other is
\[
	x_1\bullet x_2 \deq \gl(x_1\otimes x_2) \in \cC(X\sgl) , 
\]
in the case that $X = X_1 \du X_2$, with $x_i \in \cC(X_i)$.

\medskip

Let $M$ be an $n$-manifold and $Y \subset \bd M$ be a codimension zero submanifold
of $\bd M$.
Let $M \cup (Y\times I)$ denote $M$ glued to $Y\times I$ along $Y$.
Extend the product structure on $Y\times I$ to a bicollar neighborhood of 
$Y$ inside $M \cup (Y\times I)$.
We call a homeomorphism
\[
	f: M \cup (Y\times I) \to M
\]
a {\it collaring homeomorphism} if $f$ is the identity outside of the bicollar
and $f$ preserves the fibers of the bicollar.

Using the functoriality and product field properties above, together
with collaring homeomorphisms, we can define 
{\it collar maps} $\cC(M)\to \cC(M)$.
Let $M$ and $Y \sub \bd M$ be as above.
Let $x \in \cC(M)$ be a field on $M$ and such that $\bd x$ is splittable along $\bd Y$.
Let $c$ be $x$ restricted to $Y$.
Then we have the glued field $x \bullet (c\times I)$ on $M \cup (Y\times I)$.
Let $f: M \cup (Y\times I) \to M$ be a collaring homeomorphism.
Then we call the map $x \mapsto f(x \bullet (c\times I))$ a {\it collar map}.

We call the equivalence relation generated by collar maps and
homeomorphisms isotopic to the identity {\it extended isotopy}, since the collar maps
can be thought of (informally) as the limit of homeomorphisms
which expand an infinitesimally thin collar neighborhood of $Y$ to a thicker
collar neighborhood.

\subsection{Systems of fields from \texorpdfstring{$n$}{n}-categories}
\label{sec:example:traditional-n-categories(fields)}
We now describe in more detail Example \ref{ex:traditional-n-categories(fields)}, 
systems of fields coming from embedded cell complexes labeled
by $n$-category morphisms.

Given an $n$-category $C$ with the right sort of duality,
e.g., a *-1-category (that is, a 1-category with an involution of the morphisms
reversing source and target) or a pivotal 2-category,
(\cite{MR1686423, 0908.3347,1009.0186}),
we can construct a system of fields as follows.
Roughly speaking, $\cC(X)$ will the set of all embedded cell complexes in $X$
with codimension $i$ cells labeled by $i$-morphisms of $C$.
We'll spell this out for $n=1,2$ and then describe the general case.

This way of decorating an $n$-manifold with an $n$-category is sometimes referred to
as a ``string diagram".
It can be thought of as (geometrically) dual to a pasting diagram.
One of the advantages of string diagrams over pasting diagrams is that one has more
flexibility in slicing them up in various ways.
In addition, string diagrams are traditional in quantum topology.
The diagrams predate by many years the terms ``string diagram" and 
``quantum topology", e.g. \cite{MR0281657,MR776784} 

If $X$ has boundary, we require that the cell decompositions are in general
position with respect to the boundary --- the boundary intersects each cell
transversely, so cells meeting the boundary are mere half-cells.
Put another way, the cell decompositions we consider are dual to standard cell
decompositions of $X$.

We will always assume that our $n$-categories have linear $n$-morphisms.

For $n=1$, a field on a 0-manifold $P$ is a labeling of each point of $P$ with
an object (0-morphism) of the 1-category $C$.
A field on a 1-manifold $S$ consists of
\begin{itemize}
    \item a cell decomposition of $S$ (equivalently, a finite collection
of points in the interior of $S$);
    \item a labeling of each 1-cell (and each half 1-cell adjacent to $\bd S$)
by an object (0-morphism) of $C$;
    \item a transverse orientation of each 0-cell, thought of as a choice of
``domain" and ``range" for the two adjacent 1-cells; and
    \item a labeling of each 0-cell by a 1-morphism of $C$, with
domain and range determined by the transverse orientation and the labelings of the 1-cells.
\end{itemize}

We want fields on 1-manifolds to be enriched over $\Vect$, so we also allow formal linear combinations
of the above fields on a 1-manifold $X$ so long as these fields restrict to the same field on $\bd X$.

In addition, we mod out by the relation which replaces
a 1-morphism label $a$ of a 0-cell $p$ with $a^*$ and reverse the transverse orientation of $p$.

If $C$ is a *-algebra (i.e. if $C$ has only one 0-morphism) we can ignore the labels
of 1-cells, so a field on a 1-manifold $S$ is a finite collection of points in the
interior of $S$, each transversely oriented and each labeled by an element (1-morphism)
of the algebra.

\medskip

For $n=2$, fields are just the sort of pictures based on 2-categories (e.g.\ tensor categories)
that are common in the literature.
We describe these carefully here.

A field on a 0-manifold $P$ is a labeling of each point of $P$ with
an object of the 2-category $C$.
A field of a 1-manifold is defined as in the $n=1$ case, using the 0- and 1-morphisms of $C$.
A field on a 2-manifold $Y$ consists of
\begin{itemize}
    \item a cell decomposition of $Y$ (equivalently, a graph embedded in $Y$ such
that each component of the complement is homeomorphic to a disk);
    \item a labeling of each 2-cell (and each partial 2-cell adjacent to $\bd Y$)
by a 0-morphism of $C$;
    \item a transverse orientation of each 1-cell, thought of as a choice of
``domain" and ``range" for the two adjacent 2-cells;
    \item a labeling of each 1-cell by a 1-morphism of $C$, with
domain and range determined by the transverse orientation of the 1-cell
and the labelings of the 2-cells;
    \item for each 0-cell, a homeomorphism of the boundary $R$ of a small neighborhood
of the 0-cell to $S^1$ such that the intersections of the 1-cells with $R$ are not mapped
to $\pm 1 \in S^1$
(this amounts to splitting of the link of the 0-cell into domain and range); and
    \item a labeling of each 0-cell by a 2-morphism of $C$, with domain and range
determined by the labelings of the 1-cells and the parameterizations of the previous
bullet.
\end{itemize}

As in the $n=1$ case, we allow formal linear combinations of fields on 2-manifolds, 
so long as their restrictions to the boundary coincide.

In addition, we regard the labelings as being equivariant with respect to the * structure
on 1-morphisms and pivotal structure on 2-morphisms.
That is, we mod out by the relation which flips the transverse orientation of a 1-cell 
and replaces its label $a$ by $a^*$, as well as the relation which changes the parameterization of the link
of a 0-cell and replaces its label by the appropriate pivotal conjugate.

\medskip

For general $n$, a field on a $k$-manifold $X^k$ consists of
\begin{itemize}
    \item A cell decomposition of $X$;
    \item an explicit general position homeomorphism from the link of each $j$-cell
to the boundary of the standard $(k-j)$-dimensional bihedron; and
    \item a labeling of each $j$-cell by a $(k-j)$-dimensional morphism of $C$, with
domain and range determined by the labelings of the link of $j$-cell.
\end{itemize}

It is customary when drawing string diagrams to omit identity morphisms.
In the above context, this corresponds to erasing cells which are labeled by identity morphisms.
The resulting structure might not, strictly speaking, be a cell complex.
So when we write ``cell complex" above we really mean a stratification which can be
refined to a genuine cell complex.

\subsection{Local relations}
\label{sec:local-relations}

For convenience we assume that fields are enriched over $\Vect$.

Local relations are subspaces $U(B; c)\sub \cC(B; c)$ of the fields on balls which form an ideal under gluing.
Again, we give the examples first.

\addtocounter{subsection}{-2}
\begin{example}[contd.]
For maps into spaces, $U(B; c)$ is generated by fields of the form $a-b \in \lf(B; c)$,
where $a$ and $b$ are maps (fields) which are homotopic rel boundary.
\end{example}

\begin{example}[contd.]
For $n$-category pictures, $U(B; c)$ is equal to the kernel of the evaluation map
$\lf(B; c) \to \mor(c', c'')$, where $(c', c'')$ is some (any) division of $c$ into
domain and range.
\end{example}
\addtocounter{subsection}{2}
\addtocounter{prop}{-2}

These motivate the following definition.

\begin{defn}
A {\it local relation} is a collection of subspaces $U(B; c) \sub \lf(B; c)$,
for all $n$-manifolds $B$ which are
homeomorphic to the standard $n$-ball and all $c \in \cC(\bd B)$, 
satisfying the following properties.
\begin{enumerate}
\item Functoriality: 
$f(U(B; c)) = U(B', f(c))$ for all homeomorphisms $f: B \to B'$
\item Local relations imply extended isotopy invariance: 
if $x, y \in \cC(B; c)$ and $x$ is extended isotopic 
to $y$, then $x-y \in U(B; c)$.
\item Ideal with respect to gluing:
if $B = B' \cup B''$, $x\in U(B')$, and $r\in \cC(B'')$, then $x\bullet r \in U(B)$
\end{enumerate}
\end{defn}
See \cite{kw:tqft} for further details.

\subsection{Constructing a TQFT}
\label{sec:constructing-a-tqft}

In this subsection we briefly review the construction of a TQFT from a system of fields and local relations.
As usual, see \cite{kw:tqft} for more details.

We can think of a path integral $Z(W)$ of an $n+1$-manifold 
(which we're not defining in this context; this is just motivation) as assigning to each
boundary condition $x\in \cC(\bd W)$ a complex number $Z(W)(x)$.
In other words, $Z(W)$ lies in $\c^{\lf(\bd W)}$, the vector space of linear
maps $\lf(\bd W)\to \c$.

The locality of the TQFT implies that $Z(W)$ in fact lies in a subspace
$Z(\bd W) \sub \c^{\lf(\bd W)}$ defined by local projections.
The linear dual to this subspace, $A(\bd W) = Z(\bd W)^*$,
can be thought of as finite linear combinations of fields modulo local relations.
(In other words, $A(\bd W)$ is a sort of generalized skein module.)
This is the motivation behind the definition of fields and local relations above.

In more detail, let $X$ be an $n$-manifold.
\begin{defn}
\label{defn:TQFT-invariant}
The TQFT invariant of $X$ associated to a system of fields $\cC$ and local relations $U$ is 
	$$A(X) \deq \lf(X) / U(X),$$
where $U(X) \sub \lf(X)$ is the space of local relations in $\lf(X)$:
$U(X)$ is generated by fields of the form $u\bullet r$, where
$u\in U(B)$ for some embedded $n$-ball $B\sub X$ and $r\in \cC(X\setmin B)$.
\end{defn}
The blob complex, defined in the next section, 
is in some sense the derived version of $A(X)$.
If $X$ has boundary we can similarly define $A(X; c)$ for each 
boundary condition $c\in\cC(\bd X)$.

The above construction can be extended to higher codimensions, assigning
a $k$-category $A(Y)$ to an $n{-}k$-manifold $Y$, for $0 \le k \le n$.
These invariants fit together via actions and gluing formulas.
We describe only the case $k=1$ below. We describe these extensions in the more general setting of the blob complex later, in particular in Examples \ref{ex:ncats-from-tqfts} and \ref{ex:blob-complexes-of-balls} and  in \S \ref{sec:modules}.

The construction of the $n{+}1$-dimensional part of the theory (the path integral) 
requires that the starting data (fields and local relations) satisfy additional
conditions.
(Specifically, $A(X; c)$ is finite dimensional for all $n$-manifolds $X$ and the inner products
on $A(B^n; c)$ induced by the path integral of $B^{n+1}$ are positive definite for all $c$.)
We do not assume these conditions here, so when we say ``TQFT" we mean a ``decapitated" TQFT
that lacks its $n{+}1$-dimensional part. 
Such a decapitated TQFT is sometimes also called an $n{+}\epsilon$ or 
$n{+}\frac{1}{2}$ dimensional TQFT, referring to the fact that it assigns linear maps to $n{+}1$-dimensional
mapping cylinders between $n$-manifolds, but nothing to general $n{+}1$-manifolds.

Let $Y$ be an $n{-}1$-manifold.
Define a linear 1-category $A(Y)$ as follows.
The set of objects of $A(Y)$ is $\cC(Y)$.
The morphisms from $a$ to $b$ are $A(Y\times I; a, b)$, 
where $a$ and $b$ label the two boundary components of the cylinder $Y\times I$.
Composition is given by gluing of cylinders.

Let $X$ be an $n$-manifold with boundary and consider the collection of vector spaces
$A(X; -) \deq \{A(X; c)\}$ where $c$ ranges through $\cC(\bd X)$.
This collection of vector spaces affords a representation of the category $A(\bd X)$, where
the action is given by gluing a collar $\bd X\times I$ to $X$.

Given a splitting $X = X_1 \cup_Y X_2$ of a closed $n$-manifold $X$ along an $n{-}1$-manifold $Y$,
we have left and right actions of $A(Y)$ on $A(X_1; -)$ and $A(X_2; -)$.
The gluing theorem for $n$-manifolds states that there is a natural isomorphism
\[
	A(X) \cong A(X_1; -) \otimes_{A(Y)} A(X_2; -) .
\]
A proof of this gluing formula appears in \cite{kw:tqft}, but it also becomes a 
special case of Theorem \ref{thm:gluing} by taking $0$-th homology.


\section{The blob complex}
\subsection{Definitions}
\label{sec:blob-definition}
Let $X$ be an $n$-manifold.
Let $(\cF,U)$ be a fixed system of fields and local relations.
We'll assume it is enriched over \textbf{Vect}; 
if it is not we can make it so by allowing finite
linear combinations of elements of $\cF(X; c)$, for fixed $c\in \cF(\bd X)$.


We want to replace the quotient
\[
	A(X) \deq \cF(X) / U(X)
\]
of Definition \ref{defn:TQFT-invariant} with a resolution
\[
	\cdots \to \bc_2(X) \to \bc_1(X) \to \bc_0(X) .
\]

We will define $\bc_0(X)$, $\bc_1(X)$ and $\bc_2(X)$, then give the general case $\bc_k(X)$. 
In fact, on the first pass we will intentionally describe the definition in a misleadingly simple way, 
then explain the technical difficulties, and finally give a cumbersome but complete definition in 
Definition \ref{defn:blobs}. 
If (we don't recommend it) you want to keep track of the ways in which 
this initial description is misleading, or you're reading through a second time to understand the 
technical difficulties, keep note that later we will give precise meanings to ``a ball in $X$'', 
``nested'' and ``disjoint'', that are not quite the intuitive ones. 
Moreover some of the pieces 
into which we cut manifolds below are not themselves manifolds, and it requires special attention 
to define fields on these pieces.

We of course define $\bc_0(X) = \cF(X)$.
In other words, $\bc_0(X)$ is just the vector space of all fields on $X$.

(If $X$ has nonempty boundary, instead define $\bc_0(X; c) = \cF(X; c)$ for $c \in \cF(\bdy X)$.
The blob complex $\bc_*(X; c)$ will depend on a fixed boundary condition $c\in \cF(\bdy X)$.
We'll omit such boundary conditions from the notation in the rest of this section.)

We want the vector space $\bc_1(X)$ to capture 
``the space of all local relations that can be imposed on $\bc_0(X)$".
Thus we say  a $1$-blob diagram consists of:
\begin{itemize}
\item A closed ball in $X$ (``blob") $B \sub X$.
\item A boundary condition $c \in \cF(\bdy B) = \cF(\bd(X \setmin B))$.
\item A field $r \in \cF(X \setmin B; c)$.
\item A local relation field $u \in U(B; c)$.
\end{itemize}
(See Figure \ref{blob1diagram}.) Since $c$ is implicitly determined by $u$ or $r$, we usually omit it from the notation.
\begin{figure}[t]
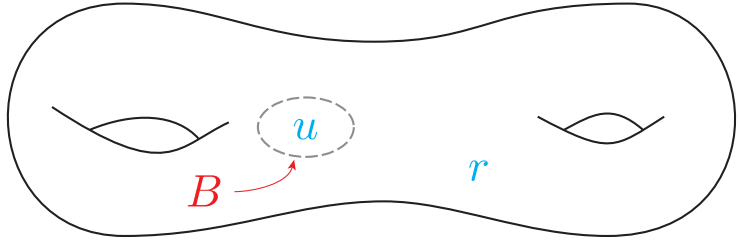
\begin{equation*}
\mathfig{.6}{definition/single-blob}
\end{equation*}\caption{A 1-blob diagram.}\label{blob1diagram}\end{figure}
In order to get the linear structure correct, we define
\[
	\bc_1(X) \deq \bigoplus_B \bigoplus_c U(B; c) \otimes \cF(X \setmin B; c) .
\]
The first direct sum is indexed by all blobs $B\subset X$, and the second
by all boundary conditions $c \in \cF(\bd B)$.
Note that $\bc_1(X)$ is spanned by 1-blob diagrams $(B, u, r)$.

Define the boundary map $\bd : \bc_1(X) \to \bc_0(X)$ by 
\[ 
	(B, u, r) \mapsto u\bullet r, 
\]
where $u\bullet r$ denotes the field on $X$ obtained by gluing $u$ to $r$.
In other words $\bd : \bc_1(X) \to \bc_0(X)$ is given by
just erasing the blob from the picture
(but keeping the blob label $u$).

Note that directly from the definition we have
\begin{prop}
\label{thm:skein-modules}
The skein module $A(X)$ is naturally isomorphic to $\bc_0(X)/\bd(\bc_1(X))) = H_0(\bc_*(X))$.
\end{prop}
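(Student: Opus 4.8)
The plan is to unwind both sides and observe that they are literally the same quotient. First I would recall the definition: $A(X) = \cF(X)/U(X)$, where $U(X) \subseteq \cF(X)$ is generated by elements of the form $u \bullet r$ with $u \in U(B;c)$ for some embedded $n$-ball $B \subseteq X$ and $r \in \cF(X \setmin B; c)$. On the other side, $\bc_0(X) = \cF(X)$ by definition, and $H_0(\bc_*(X)) = \bc_0(X)/\bd(\bc_1(X))$ since $\bc_0$ sits in degree zero (there is nothing in degree $-1$). So the task reduces to showing that the subspace $\bd(\bc_1(X)) \subseteq \cF(X)$ coincides with $U(X)$.

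Next I would compare the two subspaces term by term. By construction $\bc_1(X) = \bigoplus_B \bigoplus_c U(B;c) \otimes \cF(X\setmin B; c)$, and the boundary map sends a generator $(B,u,r)$ to $u \bullet r$. Hence $\bd(\bc_1(X))$ is exactly the span of all fields $u \bullet r$ with $B \subseteq X$ a ball, $c \in \cF(\bd B)$, $u \in U(B;c)$, and $r \in \cF(X\setmin B; c)$ --- which is precisely the definition of $U(X)$ from Definition \ref{defn:TQFT-invariant}. Therefore $\bd(\bc_1(X)) = U(X)$ as subspaces of $\cF(X) = \bc_0(X)$, and taking the quotient gives the claimed natural isomorphism
\[
	A(X) = \cF(X)/U(X) = \bc_0(X)/\bd(\bc_1(X)) = H_0(\bc_*(X)).
\]
Naturality in $X$ (with respect to homeomorphisms) follows because both the fields functor $\cF$ and the blob construction are manifestly functorial, and the gluing operation $u \bullet r$ is preserved by homeomorphisms, so the identification commutes with the induced maps.

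The only subtlety --- and the one place where care is genuinely needed --- is the caveat flagged in the surrounding text: the precise meaning of ``a ball in $X$'' used in the full Definition \ref{defn:blobs} of the blob complex differs slightly from the naive notion, and the pieces $X \setmin B$ into which we cut need not be manifolds, so ``fields on $X \setmin B$'' requires the extended sense of $\cF$. The main obstacle is thus checking that these technical refinements do not change the span $\bd(\bc_1(X))$: one must verify that enlarging (or restricting) the class of admissible balls, and using the correct notion of fields on the complements, still produces exactly the subspace $U(X)$ generated by local relations. This is where I would lean on the compatibility of $U$ with gluing and on the fact that the generating set for $U(X)$ in Definition \ref{defn:TQFT-invariant} already ranges over all embedded $n$-balls, so no relations are lost or gained.
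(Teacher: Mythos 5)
Your proposal is correct and matches the paper, which simply observes that the proposition is immediate from the definitions: $\bc_0(X)=\cF(X)$, $\bd$ sends $(B,u,r)\mapsto u\bullet r$, so $\bd(\bc_1(X))=U(X)$ and the quotient is $A(X)$. Your closing worry about non-manifold complements is moot here, since $1$-blob diagrams involve only a single embedded ball, so the subtleties of Definition \ref{defn:configuration} do not enter in degrees $\le 1$.
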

This also establishes the second 
half of Property \ref{property:contractibility}.

Next, we want the vector space $\bc_2(X)$ to capture ``the space of all relations 
(redundancies, syzygies) among the 
local relations encoded in $\bc_1(X)$''.
A $2$-blob diagram comes in one of two types, disjoint and nested.
A disjoint 2-blob diagram consists of
\begin{itemize}
\item A pair of closed balls (blobs) $B_1, B_2 \sub X$ with disjoint interiors.
\item A field $r \in \cF(X \setmin (B_1 \cup B_2); c_1, c_2)$
(where $c_i \in \cF(\bd B_i)$).
\item Local relation fields $u_i \in U(B_i; c_i)$, $i=1,2$.
\end{itemize}
(See Figure \ref{blob2ddiagram}.)
\begin{figure}[t]
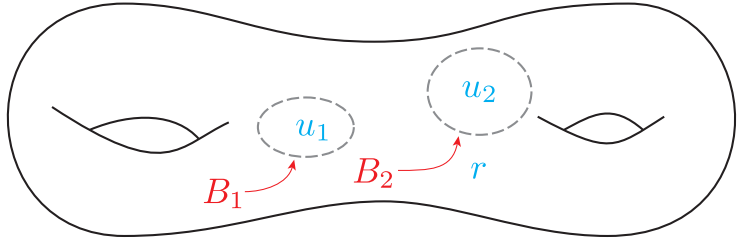
\begin{equation*}
\mathfig{.6}{definition/disjoint-blobs}
\end{equation*}\caption{A disjoint 2-blob diagram.}\label{blob2ddiagram}\end{figure}
We also identify $(B_1, B_2, u_1, u_2, r)$ with $-(B_2, B_1, u_2, u_1, r)$;
reversing the order of the blobs changes the sign.
Define $\bd(B_1, B_2, u_1, u_2, r) = 
(B_2, u_2, u_1\bullet r) - (B_1, u_1, u_2\bullet r) \in \bc_1(X)$.
In other words, the boundary of a disjoint 2-blob diagram
is the sum (with alternating signs)
of the two ways of erasing one of the blobs.
It's easy to check that $\bd^2 = 0$.

A nested 2-blob diagram consists of
\begin{itemize}
\item A pair of nested balls (blobs) $B_1 \subseteq B_2 \subseteq X$.
\item A field $r' \in \cF(B_2 \setminus B_1; c_1, c_2)$ 
(for some $c_1 \in \cF(\bdy B_1)$ and $c_2 \in \cF(\bdy B_2)$).
\item A field $r \in \cF(X \setminus B_2; c_2)$.
\item A local relation field $u \in U(B_1; c_1)$.
\end{itemize}
(See Figure \ref{blob2ndiagram}.)
\begin{figure}[t]
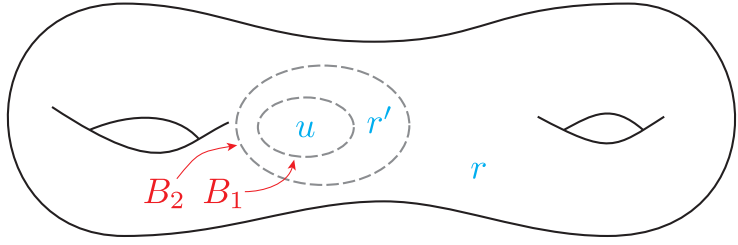
\begin{equation*}
\mathfig{.6}{definition/nested-blobs}
\end{equation*}\caption{A nested 2-blob diagram.}\label{blob2ndiagram}\end{figure}
Define $\bd(B_1, B_2, u, r', r) = (B_2, u\bullet r', r) - (B_1, u, r' \bullet r)$.
As in the disjoint 2-blob case, the boundary of a nested 2-blob is the alternating
sum of the two ways of erasing one of the blobs.
When  we erase the inner blob, the outer blob inherits the label $u\bullet r'$.
It is again easy to check that $\bd^2 = 0$. Note that the requirement that
local relations are an ideal with respect to gluing guarantees that $u\bullet r' \in U(B_2)$.

As with the $1$-blob diagrams, in order to get the linear structure correct the actual definition is 
\begin{eqnarray*}
	\bc_2(X) & \deq &
	\left( 
		\bigoplus_{B_1, B_2\; \text{disjoint}} \bigoplus_{c_1, c_2}
			U(B_1; c_1) \otimes U(B_2; c_2) \otimes \cF(X\setmin (B_1\cup B_2); c_1, c_2)
	\right)  \bigoplus \\
	&& \quad\quad  \left( 
		\bigoplus_{B_1 \subset B_2} \bigoplus_{c_1, c_2}
			U(B_1; c_1) \otimes \cF(B_2 \setmin B_1; c_1, c_2) \tensor \cF(X \setminus B_2; c_2)
	\right) .
\end{eqnarray*}

\medskip

Roughly, $\bc_k(X)$ is generated by configurations of $k$ blobs, pairwise disjoint or nested, 
along with fields on all the components that the blobs divide $X$ into. 
Blobs which have no other blobs inside are called `twig blobs', 
and the fields on the twig blobs must be local relations.
The boundary is the alternating sum of erasing one of the blobs.
In order to describe this general case in full detail, we must give a more precise description of
which configurations of balls inside $X$ we permit.
These configurations are generated by two operations:
\begin{itemize}
\item For any (possibly empty) configuration of blobs on an $n$-ball $D$, we can add
$D$ itself as an outermost blob.
(This is used in the proof of Proposition \ref{bcontract}.)
\item If $X\sgl$ is obtained from $X$ by gluing, then any permissible configuration of blobs
on $X$ gives rise to a permissible configuration on $X\sgl$.
(This is necessary for Proposition \ref{blob-gluing}.)
\end{itemize}
Combining these two operations can give rise to configurations of blobs whose complement in $X$ is not
a manifold.
Thus we will need to be more careful when speaking of a field $r$ on the complement of the blobs.

\begin{example} \label{sin1x-example}
Consider the four subsets of $\Real^3$,
\begin{align*}
A & = [0,1] \times [0,1] \times [0,1] \\
B & = [0,1] \times [-1,0] \times [0,1] \\
C & = [-1,0] \times \setc{(y,z)}{e^{-1/z^2} \sin(1/z) \leq y \leq 1, z \in [0,1]} \\
D & = [-1,0] \times \setc{(y,z)}{-1 \leq y \leq e^{-1/z^2} \sin(1/z), z \in [0,1]}.
\end{align*}
Here $A \cup B = [0,1] \times [-1,1] \times [0,1]$ and $C \cup D = [-1,0] \times [-1,1] \times [0,1]$. 
Now, $\{A\}$ is a valid configuration of blobs in $A \cup B$, 
and $\{D\}$ is a valid configuration of blobs in $C \cup D$, 
so we must allow $\{A, D\}$ as a configuration of blobs in $[-1,1]^2 \times [0,1]$. 
Note however that the complement is not a manifold. See Figure \ref{fig:blocks}.
\end{example}

\begin{figure}[t]
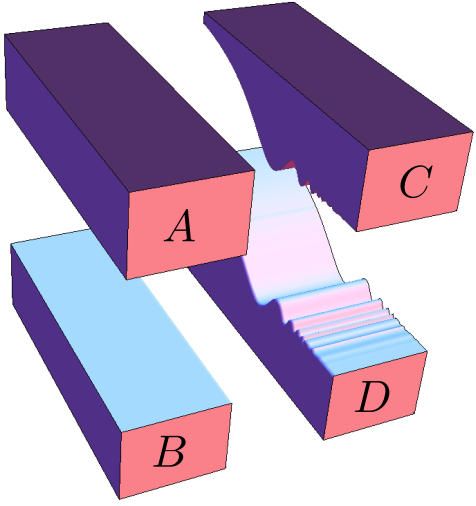
\begin{equation*}
\mathfig{.4}{definition/blocks}
\end{equation*}\caption{The subsets $A$, $B$, $C$ and $D$ from Example \ref{sin1x-example}. The pair $\{A, D\}$ is a valid configuration of blobs, even though the complement is not a manifold.}\label{fig:blocks}\end{figure}

\begin{defn}
\label{defn:gluing-decomposition}
A \emph{gluing decomposition} of an $n$-manifold $X$ is a sequence of manifolds 
$M_0 \to M_1 \to \cdots \to M_m = X$ such that each $M_k$ is obtained from $M_{k-1}$ 
by gluing together some disjoint pair of homeomorphic $n{-}1$-manifolds in the boundary of $M_{k-1}$.
If, in addition, $M_0$ is a disjoint union of balls, we call it a \emph{ball decomposition}.
\end{defn}

Let $M_0 \to M_1 \to \cdots \to M_m = X$ be a gluing decomposition of $X$, 
and let $M_0^0,\ldots,M_0^k$ be the connected components of $M_0$.
We say that a field 
$a\in \cF(X)$ is splittable along the decomposition if $a$ is the image 
under gluing and disjoint union of fields $a_i \in \cF(M_0^i)$, $0\le i\le k$.
Note that if $a$ is splittable in this sense then it makes sense to talk about the restriction of $a$ to any
component $M'_j$ of any $M_j$ of the decomposition.

In the example above, note that
\[
	A \sqcup B \sqcup C \sqcup D \to (A \cup B) \sqcup (C \cup D) \to A \cup B \cup C \cup D
\]
is a  ball decomposition, but other sequences of gluings starting from $A \sqcup B \sqcup C \sqcup D$
have intermediate steps which are not manifolds.

We'll now slightly restrict the possible configurations of blobs.
\begin{defn}
\label{defn:configuration}
A configuration of $k$ blobs in $X$ is an ordered collection of $k$ subsets $\{B_1, \ldots, B_k\}$ 
of $X$ such that there exists a gluing decomposition $M_0  \to \cdots \to M_m = X$ of $X$ 
with the property that 
for each subset $B_i$ there is some $0 \leq l \leq m$ and some connected component $M_l'$ of 
$M_l$ which is a ball, such that $B_i$ is the image of $M_l'$ in $X$. 
We say that such a gluing decomposition 
is \emph{compatible} with the configuration. 
A blob $B_i$ is a twig blob if no other blob $B_j$ is a strict subset of it. 
\end{defn}
In particular, this implies what we said about blobs above: 
that for any two blobs in a configuration of blobs in $X$, 
they either have disjoint interiors, or one blob is contained in the other. 
We describe these as disjoint blobs and nested blobs. 
Note that nested blobs may have boundaries that overlap, or indeed coincide. 
Blobs may meet the boundary of $X$.
Further, note that blobs need not actually be embedded balls in $X$, since parts of the 
boundary of the ball $M_l'$ may have been glued together.

Note that often the gluing decomposition for a configuration of blobs may just be the trivial one: 
if the boundaries of all the blobs cut $X$ into pieces which are all manifolds, 
we can just take $M_0$ to be these pieces, and $M_1 = X$.

In the initial informal definition of a $k$-blob diagram above, we allowed any 
collection of $k$ balls which were pairwise disjoint or nested. 
We now further require that the balls are a configuration in the sense of Definition \ref{defn:configuration}. 
We also specified a local relation on each twig blob, and a field on the complement of the twig blobs; 
this is unsatisfactory because that complement need not be a manifold. Thus, the official definitions are
\begin{defn}
\label{defn:blob-diagram}
A $k$-blob diagram on $X$ consists of
\begin{itemize}
\item a configuration $\{B_1, \ldots, B_k\}$ of $k$ blobs in $X$,
\item and a field $r \in \cF(X)$ which is splittable along some gluing decomposition compatible with that configuration,
\end{itemize}
such that
the restriction $u_i$ of $r$ to each twig blob $B_i$ lies in the subspace 
$U(B_i) \subset \cF(B_i)$. 
(See Figure \ref{blobkdiagram}.) 
More precisely, each twig blob $B_i$ is the image of some ball $M_l'$ as above, 
and it is really the restriction to $M_l'$ that must lie in the subspace $U(M_l')$.
\end{defn}
\begin{figure}[t]
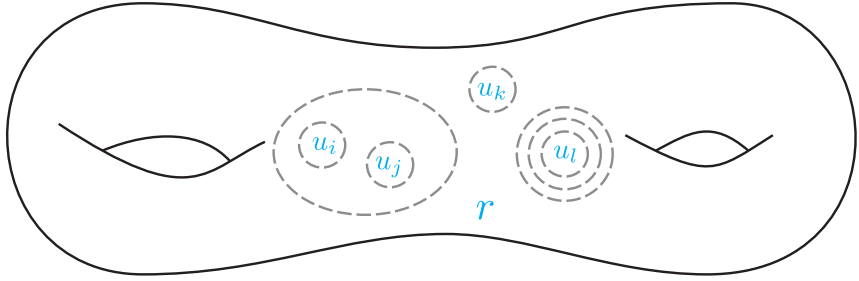
\begin{equation*}
\mathfig{.7}{definition/k-blobs}
\end{equation*}\caption{A $k$-blob diagram.}\label{blobkdiagram}\end{figure}

\begin{defn}
\label{defn:blobs}
The $k$-th vector space $\bc_k(X)$ of the \emph{blob complex} of $X$ is the direct sum over all 
configurations of $k$ blobs in $X$ of the vector space of $k$-blob diagrams with that configuration, 
modulo identifying the vector spaces for configurations that only differ by a permutation of the blobs 
by the sign of that permutation. 
The differential $\bc_k(X) \to \bc_{k-1}(X)$ is, as above, the signed sum of ways of 
forgetting one blob from the configuration, preserving the field $r$:
\begin{equation*}
\bdy(\{B_1, \ldots B_k\}, r) = \sum_{i=1}^{k} (-1)^{i+1} (\{B_1, \ldots, \widehat{B_i}, \ldots, B_k\}, r)
\end{equation*}
\end{defn}
We readily see that if a gluing decomposition is compatible with some configuration of blobs, 
then it is also compatible with any configuration obtained by forgetting some blobs, 
ensuring that the differential in fact lands in the space of $k{-}1$-blob diagrams.
A slight compensation to the complication of the official definition arising from attention 
to splitting is that the differential now just preserves the entire field $r$ without 
having to say anything about gluing together fields on smaller components.

Note that Property \ref{property:functoriality}, that the blob complex is functorial with respect to homeomorphisms, 
is immediately obvious from the definition.
A homeomorphism acts in an obvious way on blobs and on fields.

\begin{remark} \label{blobsset-remark} \rm
We note that blob diagrams in $X$ have a structure similar to that of a simplicial set,
but with simplices replaced by a more general class of combinatorial shapes.
Let $P$ be the minimal set of (isomorphisms classes of) polyhedra which is closed under products
and cones, and which contains the point.
We can associate an element $p(b)$ of $P$ to each blob diagram $b$ 
(equivalently, to each rooted tree) according to the following rules:
\begin{itemize}
\item $p(\emptyset) = pt$, where $\emptyset$ denotes a 0-blob diagram or empty tree;
\item $p(a \du b) = p(a) \times p(b)$, where $a \du b$ denotes the distant (non-overlapping) union 
of two blob diagrams (equivalently, join two trees at the roots); and
\item $p(\bar{b}) = \kone(p(b))$, where $\bar{b}$ is obtained from $b$ by adding an outer blob which 
encloses all the others (equivalently, add a new edge to the root, with the new vertex becoming the root
of the new tree).
\end{itemize}
For example, a diagram of $k$ strictly nested blobs corresponds to a $k$-simplex, while
a diagram of $k$ disjoint blobs corresponds to a $k$-cube.
(When the fields come from an $n$-category, this correspondence works best if we think of each 
twig label $u_i$ as having the form
$x - s(e(x))$, where $x$ is an arbitrary field on $B_i$, $e: \cF(B_i) \to C$ is the evaluation map, 
and $s:C \to \cF(B_i)$ is some fixed section of $e$.)

For lack of a better name, 
we'll call elements of $P$ cone-product polyhedra, 
and say that blob diagrams have the structure of a cone-product set (analogous to simplicial set).
\end{remark}


\subsection{Basic properties}
\label{sec:basic-properties}

In this section we complete the proofs of Properties \ref{property:disjoint-union}--\ref{property:contractibility}.
Throughout the paper, where possible, we prove results using Properties \ref{property:functoriality}--\ref{property:contractibility}, 
rather than the actual definition of the blob complex.
This allows the possibility of future improvements on or alternatives to our definition.
In fact, we hope that there may be a characterization of the blob complex in 
terms of Properties \ref{property:functoriality}--\ref{property:contractibility}, but at this point we are unaware of one.

Recall Property \ref{property:disjoint-union}, 
that there is a natural isomorphism $\bc_*(X \du Y) \cong \bc_*(X) \otimes \bc_*(Y)$.

\begin{proof}[Proof of Property \ref{property:disjoint-union}]
Given blob diagrams $b_1$ on $X$ and $b_2$ on $Y$, we can combine them
(putting the $b_1$ blobs before the $b_2$ blobs in the ordering) to get a
blob diagram $(b_1, b_2)$ on $X \du Y$.
Because of the blob reordering relations, all blob diagrams on $X \du Y$ arise this way.
In the other direction, any blob diagram on $X\du Y$ is equal (up to sign)
to one that puts $X$ blobs before $Y$ blobs in the ordering, and so determines
a pair of blob diagrams on $X$ and $Y$.
These two maps are compatible with our sign conventions.
(We follow the usual convention for tensors products of complexes, 
as in e.g. \cite{MR1438306}: $d(a \tensor b) = da \tensor b + (-1)^{\deg(a)} a \tensor db$.)
The two maps are inverses of each other.
\end{proof}

For the next proposition we will temporarily restore $n$-manifold boundary
conditions to the notation.

Suppose that for all $c \in \cC(\bd B^n)$
we have a splitting $s: H_0(\bc_*(B^n; c)) \to \bc_0(B^n; c)$
of the quotient map
$p: \bc_0(B^n; c) \to H_0(\bc_*(B^n; c))$.
For example, this is always the case if the coefficient ring is a field.
Then
\begin{prop} \label{bcontract}
For all $c \in \cC(\bd B^n)$ the natural map $p: \bc_*(B^n; c) \to H_0(\bc_*(B^n; c))$
is a chain homotopy equivalence
with inverse $s: H_0(\bc_*(B^n; c)) \to \bc_*(B^n; c)$.
Here we think of $H_0(\bc_*(B^n; c))$ as a 1-step complex concentrated in degree 0.
\end{prop}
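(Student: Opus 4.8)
The goal is to show that $p: \bc_*(B^n; c) \to H_0(\bc_*(B^n; c))$ and $s$ (extended by zero in positive degrees) are mutually inverse chain homotopy equivalences. Since $p \circ s = \id$ on the one-step complex $H_0$ by construction, the entire content is to build a chain homotopy $h: \bc_*(B^n;c) \to \bc_{*+1}(B^n;c)$ with $\bd h + h \bd = \id - s \circ p$. The idea is the standard ``cone off everything by adding an outermost blob equal to all of $B^n$'' trick, which is available precisely because $B^n$ is a ball: given any $k$-blob diagram $(\{B_1,\dots,B_k\}, r)$ on $B^n$, the whole ball $B^n$ is itself a legitimate outermost blob (this is one of the two generating operations for configurations, as noted just before Definition \ref{defn:configuration}), so we may define $h(\{B_1,\dots,B_k\}, r) = (\{B^n, B_1,\dots,B_k\}, r)$ whenever this is a valid blob diagram, i.e.\ whenever the new outermost twig/non-twig structure is consistent.

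The subtlety is what ``valid'' means: adding $B^n$ as an outermost blob is only a blob diagram in the sense of Definition \ref{defn:blob-diagram} if, when there were originally no blobs at all ($k=0$), the field $r$ on $B^n$ becomes a twig label and hence must lie in $U(B^n;c)$ --- which it need not. This is exactly where $s$ and $p$ enter. So I would define $h$ on a $0$-blob diagram $r \in \bc_0(B^n;c) = \cF(B^n;c)$ by $h(r) = (\{B^n\}, r - s(p(r)))$; this is legitimate because $r - s(p(r))$ lies in the kernel of the evaluation-to-$H_0$ map, hence (using Proposition \ref{thm:skein-modules}, which identifies $H_0$ with $\cF/U$) lies in $U(B^n;c)$. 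For $k \geq 1$, every $B_i$ is already a proper sub-blob of the new outermost $B^n$, so $B^n$ is non-twig and no local relation condition is imposed on $r$; thus $h(\{B_1,\dots,B_k\},r) = (\{B^n, B_1,\dots,B_k\}, r)$ is unconditionally a valid $(k{+}1)$-blob diagram.

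Then I would verify the chain homotopy identity by a direct computation with the differential from Definition \ref{defn:blobs}. For $k\geq 1$, $\bd h(\{B_i\}, r)$ erases one blob from $\{B^n, B_1,\dots,B_k\}$: erasing $B^n$ gives $(\{B_i\},r)$, and erasing each $B_j$ gives, up to sign, $h$ applied to the corresponding term of $\bd(\{B_i\},r)$; so $\bd h + h \bd = \id$ in positive degrees (where $s\circ p = 0$), with the signs working out by the usual cone argument. In degree $0$, $\bd h(r) = \bd(\{B^n\}, r - s(p(r))) = (r - s(p(r))) \bullet (\text{empty field}) = r - s(p(r))$ --- here I use that $u \bullet r$ with $r$ the empty field on $B^n \setminus B^n$ is just $u$ --- while $h\bd(r) = 0$ since $\bc_{-1} = 0$; so $\bd h + h\bd = \id - s\circ p$ on $\bc_0$, as required. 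One also checks $p h = 0$ trivially (the image of $h$ lies in positive degrees when $k\geq 1$, and for $k=0$, $p$ kills boundaries so the identity is automatically consistent) and $h s = 0$ (since $s$ lands in degree $0$ and $h$ of $s(p(r))$-type elements... actually one should check $h$ is well-defined on all of $\bc_0$, which the above does).

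\textbf{Main obstacle.} The delicate point is purely bookkeeping: one must make sure that $h$ is well-defined with respect to the configuration relations (permuting blobs, and the identification of diagrams differing by the ``add an outermost blob'' and ``gluing'' generators), and that the outermost blob $B^n$ genuinely yields a \emph{configuration} in the technical sense of Definition \ref{defn:configuration} --- since the complement pieces here are all honestly manifolds (we never glued $B^n$ to itself), the trivial gluing decomposition works and this is fine. The one genuinely substantive input, rather than bookkeeping, is the identification of $U(B^n;c)$ with the kernel of $\bc_0(B^n;c) \to H_0(\bc_*(B^n;c))$, i.e.\ Proposition \ref{thm:skein-modules} together with the definition of $U(X)$, which is what licenses $h(r) = (\{B^n\}, r - s(p(r)))$ to be an honest $1$-blob diagram; everything else is a routine cone-contraction sign chase.
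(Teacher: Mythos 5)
Your proposal is correct and is essentially the paper's own proof: the paper likewise notes $p\circ s=\id$ and constructs the homotopy $h$ by adding an outermost blob equal to all of $B^n$ in degrees $\ge 1$, and in degree $0$ by sending $x$ to the $1$-blob diagram with blob $B^n$ and label $x - s(p(x)) \in U(B^n;c)$. Your extra remarks (why $x - s(p(x))$ is a local relation, the sign/cone check, and well-definedness of the configuration) are just explicit versions of steps the paper leaves implicit.
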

\begin{proof}
By assumption $p\circ s = \id$, so all that remains is to find a degree 1 map
$h : \bc_*(B^n; c) \to \bc_*(B^n; c)$ such that $\bd h + h\bd = \id - s \circ p$.
For $i \ge 1$, define $h_i : \bc_i(B^n; c) \to \bc_{i+1}(B^n; c)$ by adding
an $(i{+}1)$-st blob equal to all of $B^n$.
In other words, add a new outermost blob which encloses all of the others.
Define $h_0 : \bc_0(B^n; c) \to \bc_1(B^n; c)$ by setting $h_0(x)$ equal to
the 1-blob with blob $B^n$ and label $x - s(p(x)) \in U(B^n; c)$.
\end{proof}
This proves Property \ref{property:contractibility} (the second half of the 
statement of this Property was immediate from the definitions).
Note that even when there is no splitting $s$, we can let $h_0 = 0$ and get a homotopy
equivalence to the 2-step complex $U(B^n; c) \to \cC(B^n; c)$.

For fields based on $n$-categories, $H_0(\bc_*(B^n; c)) \cong \mor(c', c'')$,
where $(c', c'')$ is some (any) splitting of $c$ into domain and range.

\begin{cor} \label{disj-union-contract}
If $X$ is a disjoint union of $n$-balls, then $\bc_*(X; c)$ is contractible.
\end{cor}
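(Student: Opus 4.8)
The plan is to deduce the corollary from the two preceding results: Property \ref{property:disjoint-union} (the disjoint union formula $\bc_*(X_1 \du X_2) \iso \bc_*(X_1) \tensor \bc_*(X_2)$) and Proposition \ref{bcontract} (contractibility of the blob complex of a single $n$-ball). First I would write $X = B_1 \du \cdots \du B_m$ as a disjoint union of $n$-balls. Iterating Property \ref{property:disjoint-union} gives a natural isomorphism of chain complexes
\[
	\bc_*(X; c) \iso \bc_*(B_1; c_1) \tensor \cdots \tensor \bc_*(B_m; c_m),
\]
where $c = c_1 \du \cdots \du c_m$ is the induced splitting of the boundary condition. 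Then, by Proposition \ref{bcontract} (or, if no splitting $s$ is available, by the remark immediately following its proof, which gives a homotopy equivalence to the two-step complex $U(B^n;c)\to\cC(B^n;c)$), each factor $\bc_*(B_i; c_i)$ is chain homotopy equivalent to a complex concentrated in degree $0$, namely $H_0(\bc_*(B_i;c_i))$.

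The remaining step is to observe that a tensor product of chain homotopy equivalences is a chain homotopy equivalence, so that $\bc_*(X;c)$ is homotopy equivalent to the tensor product of the $H_0(\bc_*(B_i;c_i))$, again concentrated in degree $0$; in particular it has homology concentrated in degree $0$ and, being homotopy equivalent to a complex of vector spaces in a single degree, is contractible in the sense used here (homotopy equivalent to its $H_0$). One mild point to be careful about: ``contractible'' for these complexes means homotopy equivalent to $H_0$, not acyclic, so I should phrase the conclusion as exhibiting the homotopy equivalence $\bc_*(X;c) \htpy H_0(\bc_*(X;c))$ rather than claiming the complex is acyclic (which would be false unless $H_0 = 0$).

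I don't anticipate a serious obstacle here — this is a short formal consequence. The only thing requiring a sentence of care is the compatibility of the iterated disjoint-union isomorphism with boundary conditions (so that we are really splitting a fixed $c \in \cC(\bd X)$ across the components), and noting that Property \ref{property:disjoint-union} was stated for two-fold unions, so strictly speaking one inducts on $m$. If one prefers to avoid even invoking a splitting $s$, the cleanest route is to use the two-step-complex version from the remark after Proposition \ref{bcontract} and tensor those together, which still yields a bounded complex homotopy equivalent to $\bc_*(X;c)$ whose homology vanishes above degree $0$.
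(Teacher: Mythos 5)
Your proposal is correct and follows essentially the same route as the paper, which deduces the corollary in one line from Property \ref{property:disjoint-union} together with the contractibility of the blob complex of a single ball (Property \ref{property:contractibility}, established by Proposition \ref{bcontract}). Your extra remarks on boundary-condition splitting, induction on the number of components, and the precise sense of ``contractible'' are just careful elaborations of that same argument.
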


\begin{proof}
This follows from Properties \ref{property:disjoint-union} and \ref{property:contractibility}.
\end{proof}

We define the {\it support} of a blob diagram $b$, $\supp(b) \sub X$, 
to be the union of the blobs of $b$.
For $y \in \bc_*(X)$ with $y = \sum c_i b_i$ ($c_i$ a non-zero number, $b_i$ a blob diagram),
we define $\supp(y) \deq \bigcup_i \supp(b_i)$.

\noop{ 
For future use we prove the following lemma.

\begin{lemma} \label{support-shrink}
Let $L_* \sub \bc_*(X)$ be a subcomplex generated by some
subset of the blob diagrams on $X$, and let $f: L_* \to L_*$
be a chain map which does not increase supports and which induces an isomorphism on
$H_0(L_*)$.
Then $f$ is homotopic (in $\bc_*(X)$) to the identity $L_*\to L_*$.
\end{lemma}

\begin{proof}
We will use the method of acyclic models.
Let $b$ be a blob diagram of $L_*$, let $S\sub X$ be the support of $b$, and let
$r$ be the restriction of $b$ to $X\setminus S$.
Note that $S$ is a disjoint union of balls.
Assign to $b$ the acyclic (in positive degrees) subcomplex $T(b) \deq r\bullet\bc_*(S)$.
Note that if a diagram $b'$ is part of $\bd b$ then $T(b') \sub T(b)$.
Both $f$ and the identity are compatible with $T$ (in the sense of acyclic models, \S\ref{sec:moam}), 
so $f$ and the identity map are homotopic.
\end{proof}
} 

For the next proposition we will temporarily restore $n$-manifold boundary
conditions to the notation. Let $X$ be an $n$-manifold, with $\bd X = Y \cup Y \cup Z$.
Gluing the two copies of $Y$ together yields an $n$-manifold $X\sgl$
with boundary $Z\sgl$.
Given compatible fields (boundary conditions) $a$, $b$ and $c$ on $Y$, $Y$ and $Z$,
we have the blob complex $\bc_*(X; a, b, c)$.
If $b = a$, then we can glue up blob diagrams on
$X$ to get blob diagrams on $X\sgl$.
This proves Property \ref{property:gluing-map}, which we restate here in more detail.

\begin{prop} \label{blob-gluing}
There is a natural chain map
\eq{
    \gl: \bigoplus_a \bc_*(X; a, a, c) \to \bc_*(X\sgl; c\sgl).
}
The sum is over all fields $a$ on $Y$ compatible at their
($n{-}2$-dimensional) boundaries with $c$.
``Natural" means natural with respect to the actions of homeomorphisms.
In degree zero the map agrees with the gluing map coming from the underlying system of fields.
\end{prop}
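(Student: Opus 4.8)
The plan is to construct the gluing map at the level of blob diagrams and then check it respects the differential and is natural. First I would describe the map on generators: given a $k$-blob diagram $(\{B_1,\ldots,B_k\}, r)$ on $X$ with field $r$ splittable along a gluing decomposition $M_0 \to \cdots \to M_m = X$ compatible with the configuration, and with boundary conditions $a$ on both copies of $Y$ and $c$ on $Z$, I would append one more gluing step to the decomposition: form $M_{m+1} = X\sgl$ by gluing the two copies of $Y$. Because the two restrictions of $r$ to the copies of $Y$ agree (both equal $a$), $r$ lies in the appropriate equalizer, so the underlying system-of-fields gluing map (axiom 4 or 5 in \S\ref{ss:syst-o-fields}) produces a field $r\sgl = \gl(r) \in \cF(X\sgl; c\sgl)$ which is splittable along the extended decomposition. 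The images $B_i\sgl$ of the balls $M_l'$ in $X\sgl$ then form a configuration of $k$ blobs in $X\sgl$ in the sense of Definition \ref{defn:configuration}, with the extended decomposition compatible with it; and the twig-blob condition is preserved since the restriction of $r\sgl$ to each twig blob's defining ball $M_l'$ is unchanged (the gluing happens at a later stage $M_{m+1}$, not inside any $M_l'$). So $(\{B_1\sgl,\ldots,B_k\sgl\}, r\sgl)$ is a genuine $k$-blob diagram on $X\sgl$, and we extend linearly over the direct sum over $a$.

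Next I would verify this is a chain map. Since the differential on both sides is the signed sum of ways of forgetting a blob, preserving the entire field $r$ (this is exactly the ``slight compensation'' noted after Definition \ref{defn:blobs}), the verification is essentially formal: forgetting $B_i$ from the configuration on $X$ and then gluing gives the same blob diagram as gluing and then forgetting $B_i\sgl$, with matching signs, because the signs depend only on the position $i$ in the ordering, which is preserved. One must note that a gluing decomposition compatible with a configuration is still compatible after forgetting blobs, which is already observed in the text after Definition \ref{defn:blobs}, so the forgotten-blob diagram on $X\sgl$ is still legitimate. This gives $\gl \circ \bd = \bd \circ \gl$. Naturality with respect to homeomorphisms follows because a homeomorphism $f$ of $X$ commuting with the gluing data induces a homeomorphism $f\sgl$ of $X\sgl$, and all the constructions (forming $r\sgl$, taking images of balls) are manifestly functorial; this reduces to the corresponding naturality already built into the system-of-fields gluing maps. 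In degree zero, a $0$-blob diagram is just a field $r\in\cF(X)$, and $\gl$ sends it to $\gl(r)\in\cF(X\sgl)$, which is by definition the gluing map of the underlying system of fields, establishing the last sentence.

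The main obstacle — really the only nontrivial point — is bookkeeping around the subtlety that the pieces cut out by blobs need not be manifolds, so one cannot naively say ``restrict $r$ to the complement of the blobs and glue''. The resolution is precisely the reformulation via gluing decompositions and splittability: one works with the whole field $r$ and the sequence $M_0 \to \cdots \to M_m$, and the gluing operation is just prepending/appending steps to this sequence rather than literally cutting along blob boundaries. I would be careful to spell out that the extended decomposition $M_0 \to \cdots \to M_m \to X\sgl$ is indeed a gluing decomposition of $X\sgl$ (the new step glues a disjoint homeomorphic pair $Y \sqcup Y$ in $\bd M_m = \bd X$), that it is compatible with $\{B_i\sgl\}$, and that $r\sgl$ is splittable along it because $r$ was splittable along $M_0 \to \cdots \to M_m$ and gluing preserves splittability. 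Everything else is routine, and injectivity (asserted in Property \ref{property:gluing-map} but not in this Proposition) would be handled separately, presumably using that the underlying field-level gluing maps are injective together with the direct-sum structure over boundary conditions $a$.
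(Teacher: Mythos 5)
Your proposal is correct and follows the same route the paper intends: the paper gives no detailed argument, simply noting that blob diagrams on $X$ glue up to blob diagrams on $X\sgl$, precisely because Definition \ref{defn:configuration} was designed (via the second generating operation for configurations) so that gluing a configuration and its splittable field is automatic. Your write-up just fills in the bookkeeping — extending the gluing decomposition by one step, checking compatibility, twig labels, the differential, and degree zero — which is exactly what the paper treats as immediate from the definitions.
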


This map is very far from being an isomorphism, even on homology.
We eliminate this deficit in \S\ref{sec:gluing} below.


\section{Hochschild homology when \texorpdfstring{$n=1$}{n=1}}
\label{sec:hochschild}

\subsection{Outline}

So far we have provided no evidence that blob homology is interesting in degrees 
greater than zero.
In this section we analyze the blob complex in dimension $n=1$.

Recall (\S \ref{sec:example:traditional-n-categories(fields)}) 
that from a *-1-category $C$ we can construct a system of fields $\cC$.
In this section we prove that $\bc_*(S^1, \cC)$ is homotopy equivalent to the 
Hochschild complex of $C$.
Thus the blob complex is a natural generalization of something already
known to be interesting in higher homological degrees.

It is also worth noting that the original idea for the blob complex came from trying
to find a more ``local" description of the Hochschild complex.

\medskip

Let $C$ be a *-1-category.
Then specializing the definition of the associated system of fields from \S \ref{sec:example:traditional-n-categories(fields)} above to the case $n=1$ we have:
\begin{itemize}
\item $\cC(pt) = \ob(C)$ .
\item Let $R$ be a 1-manifold and $c \in \cC(\bd R)$.
Then an element of $\cC(R; c)$ is a collection of (transversely oriented)
points in the interior
of $R$, each labeled by a morphism of $C$.
The intervals between the points are labeled by objects of $C$, consistent with
the boundary condition $c$ and the domains and ranges of the point labels.
\item There is an evaluation map $e: \cC(I; a, b) \to \mor(a, b)$ given by
composing the morphism labels of the points.
Note that we also need the * of *-1-category here in order to make all the morphisms point
the same way.
\item For $x \in \mor(a, b)$ let $\chi(x) \in \cC(I; a, b)$ be the field with a single
point (at some standard location) labeled by $x$.
Then the kernel of the evaluation map $U(I; a, b)$ is generated by things of the
form $y - \chi(e(y))$.
Thus we can, if we choose, restrict the blob twig labels to things of this form.
\end{itemize}

We want to show that $\bc_*(S^1)$ is homotopy equivalent to the
Hochschild complex of $C$.
In order to prove this we will need to extend the 
definition of the blob complex to allow points to also
be labeled by elements of $C$-$C$-bimodules.
(See Subsections \ref{moddecss} and \ref{ssec:spherecat} for a more general version of this construction that applies in all dimensions.)

Fix points $p_1, \ldots, p_k \in S^1$ and $C$-$C$-bimodules $M_1, \ldots M_k$.
We define a blob-like complex $K_*(S^1, (p_i), (M_i))$.
The fields have elements of $M_i$ labeling 
the fixed points $p_i$ and elements of $C$ labeling other (variable) points.
As before, the regions between the marked points are labeled by
objects of $C$.
The blob twig labels lie in kernels of evaluation maps.
(The range of these evaluation maps is a tensor product (over $C$) of $M_i$'s,
corresponding to the $p_i$'s that lie within the twig blob.)
Let $K_*(M) = K_*(S^1, (*), (M))$, where $* \in S^1$ is some standard base point.
In other words, fields for $K_*(M)$ have an element of $M$ at the fixed point $*$
and elements of $C$ at variable other points.

In the theorems, propositions and lemmas below we make various claims
about complexes being homotopy equivalent.
In all cases the complexes in question are free (and hence projective), 
so it suffices to show that they are quasi-isomorphic.

We claim that
\begin{thm}
\label{thm:hochschild}
The blob complex $\bc_*(S^1; C)$ on the circle is homotopy equivalent to the
usual Hochschild complex for $C$.
\end{thm}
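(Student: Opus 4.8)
The plan is to exhibit a chain map between $\bc_*(S^1;C)$ and the Hochschild complex $\HH_*(C)$ and show it is a quasi-isomorphism, using the auxiliary complexes $K_*(M)$ and $K_*(S^1,(p_i),(M_i))$ introduced above as an intermediary. The key observation is that both complexes can be characterized (up to quasi-isomorphism) by a short list of properties as a functor of the bimodule argument $M$, so that rather than comparing them directly we verify that each satisfies the same universal characterization. Specifically, I would establish that $K_*(-)$, regarded as a functor from $C$-$C$-bimodules to chain complexes, satisfies: (i) it sends the canonical free resolution-type sequences to exact sequences — more precisely, it is exact in an appropriate sense with respect to bimodule maps (this is the content of the $\operatorname{Hoch}$-style \emph{Lemma \ref{lem:hochschild-exact}} referenced in the introduction); (ii) $K_*(C\otimes C)$ (the free bimodule on one generator) is quasi-isomorphic to $C$ concentrated in degree $0$; and (iii) $H_0(K_*(M)) \cong M/[C,M]$, the coinvariants, functorially in $M$. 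Since the Hochschild complex $\HH_*(C)$ with coefficients in $M$ is the unique (up to q.i.) functor with exactly these three properties — it is the derived functor of $M \mapsto M \otimes_{C\otimes C^{op}} C$ — this forces $K_*(M) \simeq \HH_*(C; M)$, and in particular $K_*(C) \simeq \HH_*(C;C) = \HH_*(C)$.

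**Next I would** connect this back to $\bc_*(S^1;C)$ itself. There is a natural map $\bc_*(S^1;C) \to K_*(C) = K_*(S^1,(*),(C))$ induced by taking a standard base point $* \in S^1$, regarding $C$ as the trivial $C$-$C$-bimodule, and inserting the appropriate field at $*$ (or, going the other direction, forgetting the distinguished marked point). I would show this map is a homotopy equivalence by a direct argument: a blob diagram on $S^1$ can be pushed, up to homotopy, off of any fixed point, so the presence or absence of a distinguished (identity-labeled or $C$-labeled) marked point does not change the homotopy type. This uses the same kind of "sweeping" / collar-map manipulation available from the fields axioms (extended isotopy invariance), together with the contractibility of $\bc_*$ on intervals (Corollary \ref{disj-union-contract} and Property \ref{property:contractibility}) to handle the local modifications near $*$. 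Composing, $\bc_*(S^1;C) \simeq K_*(C) \simeq \HH_*(C)$.

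**The main obstacle** I expect is proving property (ii), that $K_*(C\otimes C) \simeq C$ in degree zero — i.e. that the blob-like complex on $S^1$ with one point labeled by the \emph{free} bimodule is acyclic in positive degrees with $H_0 = C$. Geometrically, labeling a point by $C \otimes C$ amounts to cutting the circle open at that point, turning $S^1$ into an interval $I$ with the two free-module tensor factors living at the two endpoints; so this should reduce to the statement that $\bc_*(I; a,b)$ is contractible with $H_0 = \mor(a,b)$, which is exactly Property \ref{property:contractibility} / Proposition \ref{bcontract}. Making this reduction precise — exhibiting the explicit quasi-isomorphism between $K_*(S^1,(*),(C\otimes C))$ and $\bigoplus_{a,b}\bc_*(I;a,b)$ and checking it respects the relevant structure — is where the real work lies. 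The exactness property (i) is the other delicate point, requiring one to track how the evaluation-map kernels (the twig labels) behave under short exact sequences of bimodules; I would handle it by the same style of argument used for Proposition \ref{blob-gluing}, realizing $K_*$ of a quotient bimodule as a quotient complex and identifying the kernel complex with $K_*$ of the sub-bimodule up to homotopy.
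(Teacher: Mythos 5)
Your proposal follows essentially the same route as the paper: Theorem \ref{thm:hochschild} is proved there by combining Lemma \ref{lem:module-blob} (the marked-point complex $K_*(C)$ is homotopy equivalent to $\bc_*(S^1;C)$, via exactly the push-blobs-off-the-basepoint argument you sketch) with Proposition \ref{prop:hoch} ($K_*(M)\simeq \HC_*(M)$, established by the same ``derived functor of coinvariants'' characterization, with Lemmas \ref{lem:hochschild-exact}, \ref{lem:hochschild-coinvariants} and \ref{lem:hochschild-free} playing the roles of your properties (i)--(iii)). The only differences are minor: the paper also records the (trivially verified) additivity $K_*(M_1\oplus M_2)\cong K_*(M_1)\oplus K_*(M_2)$ among the characterizing properties, since free resolutions involve direct sums, and, as you anticipate, your property (ii) is where the real work lies --- the paper's Lemma \ref{lem:hochschild-free} handles it through intermediate subcomplexes (label $1\otimes 1$ at $*$, then no blob containing $*$) rather than a literal cut-open-the-circle reduction.
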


This follows from two results.
First, we see that
\begin{lem}
\label{lem:module-blob}%
The complex $K_*(C)$ (here $C$ is being thought of as a
$C$-$C$-bimodule, not a category) is homotopy equivalent to the blob complex
$\bc_*(S^1; C)$.
\end{lem}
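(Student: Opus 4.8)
The plan is to build an explicit chain homotopy equivalence between $K_*(C)$ and $\bc_*(S^1;C)$ by identifying $K_*(C)$ with a subcomplex of $\bc_*(S^1;C)$ and showing the inclusion is a quasi-isomorphism. Recall that a field for $K_*(C)$ has an element of $C$ (viewed as a $C$-$C$-bimodule) sitting at the fixed basepoint $* \in S^1$ and elements of $C$ (morphisms of the $*$-$1$-category) labelling the other variable points. Since the bimodule in question is $C$ itself, a field with the label $m \in C$ at $*$ is "the same data" as an ordinary field on $S^1$ which happens to have a labelled point located at $*$. First I would make this precise: there is an obvious map $K_*(C) \to \bc_*(S^1;C)$ which simply reinterprets the bimodule label at $*$ as an ordinary morphism label, and which sends a blob diagram to the corresponding blob diagram (the twig-blob conditions match up because the evaluation maps used to define the kernels agree, modulo the identification $M = C$). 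This map is clearly a chain map and is injective.

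Next I would show this inclusion is surjective up to homotopy, i.e. that every ordinary blob diagram on $S^1$ is equivalent to one in the image. The key tool is that a general field on $S^1$ has its labelled points at arbitrary locations, whereas $K_*(C)$ insists that exactly one marked point sit at $*$. The idea is to use an isotopy of $S^1$ (together with the functoriality of the blob complex, Property \ref{property:functoriality}, and more precisely the $C_*(\Homeo(S^1))$-action in the direction of Theorem \ref{thm:CH}, though at this level we really only need that moving points around by isotopy acts by chain homotopies) to slide labelled points so that at least one of them lands on $*$; and a small-blob / collar argument (as in Proposition \ref{bcontract}) to handle the case where there are no labelled points at all, by introducing an identity-labelled point. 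This realizes $\bc_*(S^1;C)$ as homotopy equivalent to the subcomplex where one marked point is pinned at $*$, which is exactly $K_*(C)$ after the identification $M = C$.

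The main obstacle, I expect, is the bookkeeping around the gluing/splitting conventions in the official definition of the blob complex (Definition \ref{defn:blobs}): a blob in a configuration on $S^1$ need not literally be an embedded ball, and one must check that the twig-blob condition "restriction lies in $U$" transports correctly under the identification between $K_*$-diagrams and $\bc_*(S^1)$-diagrams, in particular when a twig blob contains the basepoint $*$. One has to verify that the evaluation map into $M_1 \tensor_C \cdots \tensor_C M_k$ used to define the twig labels of $K_*$, specialized to all $M_i = C$, really is the same as the ordinary evaluation map $\cF(B;c) \to \mor(c',c'')$ used in the local relations $U$ for the system of fields coming from $C$; this is essentially the statement that tensoring a string of morphisms over $C$ with the regular bimodule $C$ inserted does nothing, which is true but needs to be spelled out carefully. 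Once that compatibility is in hand, the homotopy equivalence follows from the isotopy-invariance and contractibility-type arguments already available (Properties \ref{property:functoriality} and \ref{property:contractibility}, and Proposition \ref{bcontract}), applied locally near the basepoint.
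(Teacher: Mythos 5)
Your inclusion $i\colon K_*(C) \to \bc_*(S^1)$ is the right starting point and matches the paper, but the mechanism you propose for the homotopy inverse --- sliding an existing labelled point onto $*$ by an isotopy, invoking functoriality and the $C_*(\Homeo(S^1))$ action --- has a genuine gap. Which labelled point gets slid, and along which isotopy, is a non-canonical choice made diagram by diagram; different summands of a single chain (and the different fields $z_i$ appearing in a single twig-blob label $u = \sum z_i$) have incompatible point configurations, so no such assignment commutes with the blob boundary map, and no single homeomorphism of $S^1$ works simultaneously for all summands of a cycle. Isotopy invariance tells you that isotopic diagrams are homologous, but it does not by itself produce a chain-level retraction onto the subcomplex where $*$ is labelled. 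The paper's homotopy inverse is different and simpler: for \emph{every} diagram lacking a label at $*$ (not only those with no labelled points at all) one inserts the identity morphism $1$ as a label at $*$; this defines a chain map $s$ with $s\circ i = \id$, and $i\circ s \htpy \id$ is proved by an explicit degree-one map $j_\ep$ that adds a small blob $N_\ep$ around $*$ with twig label $y - s(y)$. In your proposal the identity insertion appears only as a patch for the empty case, and no candidate chain homotopy is identified.

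A second omission: you never address blob diagrams in which $*$ lies on the \emph{boundary} of a blob. The image of $i$ avoids such diagrams, so before any retraction can be defined one must show that the subcomplex $J_*$ of diagrams whose blob boundaries miss $*$ is homotopy equivalent to $\bc_*(S^1)$; the paper does this via an auxiliary subcomplex $F_*^\ep$ and a map pushing blob boundaries off $*$, shown homotopic to the identity using Corollary \ref{disj-union-contract}. By contrast, the issue you single out as the main obstacle --- compatibility of the evaluation maps when the bimodule is the regular bimodule $C$ --- is essentially immediate and is not where the work lies.
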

The proof appears below.

Next, we show that for any $C$-$C$-bimodule $M$,
\begin{prop} \label{prop:hoch}
The complex $K_*(M)$ is homotopy equivalent to $\HC_*(M)$, the usual
Hochschild complex of $M$.
\end{prop}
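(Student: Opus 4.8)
The plan is to compare $K_*(M)$, which is a ``blob-like'' complex built from decorated points on $S^1$ with one point marked by an element of the bimodule $M$, with the standard Hochschild complex $\HC_*(M)$ whose degree-$k$ part is $M \otimes C^{\otimes k}$. First I would set up the comparison map. The Hochschild complex maps into $K_*(M)$ in a natural way: an element $m \otimes a_1 \otimes \cdots \otimes a_k$ is sent to the field on $S^1$ with $m$ at the base point $*$ and $a_1, \dots, a_k$ at $k$ standard points arranged in order around the circle, carrying no blobs (or, thinking in the other direction, one can build a map $K_*(M) \to \HC_*(M)$ by evaluating). The most natural thing is to construct a map $\HC_*(M) \to K_*(M)$ sending the Hochschild differential to the blob differential restricted to $0$-blob diagrams, and then show it is a quasi-isomorphism by a filtration/spectral sequence or by an explicit homotopy argument.

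The key steps, in order, would be: (1) Define a subcomplex or quotient structure on $K_*(M)$ by the number of variable labeled points (not counting the marked point $*$), giving a filtration; the blob differential interacts with this filtration in a controlled way. (2) Identify the comparison map $i\colon \HC_*(M) \to K_*(M)$ and check it is a chain map --- this amounts to matching the Hochschild boundary $\sum (-1)^j$(multiply adjacent labels) with the effect of the blob boundary together with the relations coming from twig blob labels of the form $y - \chi(e(y))$, which let one ``fuse'' adjacent point labels at the cost of a blob. (3) Construct an explicit deformation retraction of $K_*(M)$ onto the image of $i$: the idea is to use the contractibility of the blob complex on intervals (Property \ref{property:contractibility}, Proposition \ref{bcontract}) to push all the ``action'' to a neighborhood of the marked point, collapsing configurations with blobs and with extra structure down to the standard Hochschild form. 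Concretely one builds a homotopy that, roughly, absorbs blobs into the complement of a small arc around $*$ where the complex is contractible, and normalizes the positions of the remaining labeled points. (4) Check that the retraction is compatible with the differential, i.e. verify $\partial h + h \partial = \id - i \circ (\text{projection})$.

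An alternative, and perhaps cleaner, route is the ``small models'' / acyclic models approach sketched in the commented-out Lemma \ref{support-shrink}: one shows both $K_*(M)$ and $\HC_*(M)$ (or rather $i(\HC_*(M)) \subset K_*(M)$) compute the same thing by exhibiting a chain map not increasing supports and inducing an isomorphism on $H_0$, then invoking an acyclic-models argument to get a homotopy. For $H_0$ one computes directly: $H_0(K_*(M)) = M \otimes_{C \otimes C^{\mathrm{op}}} C \cong M/[C,M]$, which matches $HH_0(M)$. One would then do an induction on a cell-decomposition filtration of $S^1$, using that on each interval piece the relevant complex is contractible onto its degree-$0$ part.

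The hard part will be step (3)/(4): writing down a homotopy on $K_*(M)$ that is simultaneously compatible with the blob differential, handles arbitrary configurations of nested and disjoint blobs on the circle, and terminates (the circle is not a ball, so one cannot apply Proposition \ref{bcontract} directly --- one must cut $S^1$ at or near the marked point, use contractibility on the resulting interval, and carefully track the bimodule label $m$ sitting at the cut). Managing the bookkeeping of signs and of the $C$-$C$-bimodule tensor structure through this homotopy, and checking that the ``fuse adjacent labels'' moves generated by twig blobs of the form $y - \chi(e(y))$ exactly reproduce the Hochschild face maps (including the cyclic face map that wraps around through $*$ and uses the bimodule structure on both sides), is where the real work lies. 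I would expect to organize this as: first prove it for $M = C$ where extra symmetry helps, or conversely prove the general $M$ statement first since it is needed for Lemma \ref{lem:module-blob} anyway, and then feed it back in.
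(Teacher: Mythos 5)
Your plan has a genuine gap, and it starts with the comparison map itself. An element $m \otimes a_1 \otimes \cdots \otimes a_k$ of degree $k$ in $\HC_*(M)$ cannot be sent to ``the field with $m$ at $*$ and the $a_i$ at standard points, carrying no blobs'': a blob-free field lies in degree $0$ of $K_*(M)$, so your prescription is not degree-preserving and is not a chain map. The correct map must land in linear combinations of $k$-blob diagrams, and it is genuinely intricate --- in the paper's later subsection on an explicit chain map, $m\otimes a$ already goes to a sum of two $1$-blob diagrams and $m\otimes a\otimes b$ to a sum of $24$ two-blob diagrams, and the paper explicitly declines to prove that this extends to a homotopy equivalence. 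Your step (3) inherits a second, related problem: the ``absorb blobs near $*$'' homotopies exist only on subcomplexes of diagrams whose labeled points and blob boundaries avoid an $\ep$-neighborhood of $*$, with $\ep$ depending on the chain; there is no single global homotopy, and stitching the $j_\ep$'s together is exactly the difficulty the authors sidestep (they settle for quasi-isomorphisms between free complexes rather than explicit deformation retractions). So as written, steps (2)--(4) are not a proof but a restatement of the hard part.

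The paper's actual route is quite different and avoids any explicit comparison map. It characterizes $\HC_*$ axiomatically as the derived functor of coinvariants: any functor $\cP_*$ on $C$-$C$ bimodules satisfying (i) additivity, (ii) exactness on short exact sequences, (iii) $H_0(\cP_*(M)) \cong \coinv(M)$, and (iv) $\cP_*(C\otimes C)$ quasi-isomorphic to its $H_0$, is quasi-isomorphic to $\coinv(F_*)$ for a free resolution $F_*\to M$ (a standard double-complex argument). The proposition then reduces to four lemmas verifying these properties for $K_*$: additivity and the coinvariants computation are direct (your $H_0$ computation matches the latter), exactness follows from writing $K_*$ as a direct sum of functors built from $M \mapsto \ker(C^{\tensor k}\tensor M\tensor C^{\tensor l}\to M)$, and the only place where blob-manipulation homotopies are needed is the single acyclic object $K_*(C\otimes C)$, where the marked-point label $1\otimes 1$ can be normalized and the $\ep$-issues are managed at the level of cycles and boundaries rather than a global retraction. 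If you want to salvage a direct approach, your acyclic-models alternative is closer in spirit, but you would still need the exactness and free-module inputs that do the real work in the paper's argument.
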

\begin{proof}
Recall that the usual Hochschild complex of $M$ is uniquely determined,
up to quasi-isomorphism, by the following properties:
\begin{enumerate}
\item \label{item:hochschild-additive}%
$\HC_*(M_1 \oplus M_2) \cong \HC_*(M_1) \oplus \HC_*(M_2)$.
\item \label{item:hochschild-exact}%
An exact sequence $0 \to M_1 \into M_2 \onto M_3 \to 0$ gives rise to an
exact sequence $0 \to \HC_*(M_1) \into \HC_*(M_2) \onto \HC_*(M_3) \to 0$.
\item \label{item:hochschild-coinvariants}%
$\HH_0(M)$ is isomorphic to the coinvariants of $M$, $\coinv(M) =
M/\langle cm-mc \rangle$.
\item \label{item:hochschild-free}%
$\HC_*(C\otimes C)$ is contractible.
(Here $C\otimes C$ denotes
the free $C$-$C$-bimodule with one generator.)
That is, $\HC_*(C\otimes C)$ is
quasi-isomorphic to its $0$-th homology (which in turn, by \ref{item:hochschild-coinvariants}
above, is just $C$) via the quotient map $\HC_0 \onto \HH_0$.
\end{enumerate}
(Together, these just say that Hochschild homology is ``the derived functor of coinvariants".)
We'll first recall why these properties are characteristic.

Take some $C$-$C$ bimodule $M$, and choose a free resolution
\begin{equation*}
\cdots \to F_2 \xrightarrow{f_2} F_1 \xrightarrow{f_1} F_0.
\end{equation*}
We will show that for any functor $\cP$ satisfying properties
\ref{item:hochschild-additive}, \ref{item:hochschild-exact},
\ref{item:hochschild-coinvariants} and \ref{item:hochschild-free}, there
is a quasi-isomorphism
$$\cP_*(M) \iso \coinv(F_*).$$
Observe that there's a quotient map $\pi: F_0 \onto M$, and by
construction the cone of the chain map $\pi: F_* \to M$ is acyclic. 
Now construct the total complex $\cP_i(F_j)$, with $i,j \geq 0$, graded by $i+j$. 
We have two chain maps
\begin{align*}
\cP_i(F_*) & \xrightarrow{\cP_i(\pi)} \cP_i(M) \\
\intertext{and}
\cP_*(F_j) & \xrightarrow{\cP_0(F_j) \onto H_0(\cP_*(F_j))} \coinv(F_j).
\end{align*}
The cone of each chain map is acyclic.
In the first case, this is because the ``rows" indexed by $i$ are acyclic since $\cP_i$ is exact.
In the second case, this is because the ``columns" indexed by $j$ are acyclic, since $F_j$ is free.
Because the cones are acyclic, the chain maps are quasi-isomorphisms.
Composing one with the inverse of the other, we obtain the desired quasi-isomorphism
$$\cP_*(M) \quismto \coinv(F_*).$$

%

Proposition \ref{prop:hoch} then follows from the following lemmas, 
establishing that $K_*$ has precisely these required properties.
\begin{lem}
\label{lem:hochschild-additive}%
Directly from the definition, $K_*(M_1 \oplus M_2) \cong K_*(M_1) \oplus K_*(M_2)$.
\end{lem}
\begin{lem}
\label{lem:hochschild-exact}%
An exact sequence $0 \to M_1 \into M_2 \onto M_3 \to 0$ gives rise to an
exact sequence $0 \to K_*(M_1) \into K_*(M_2) \onto K_*(M_3) \to 0$.
\end{lem}
\begin{lem}
\label{lem:hochschild-coinvariants}%
$H_0(K_*(M))$ is isomorphic to the coinvariants of $M$.
\end{lem}
\begin{lem}
\label{lem:hochschild-free}%
$K_*(C\otimes C)$ is quasi-isomorphic to $H_0(K_*(C \otimes C)) \iso C$.
\end{lem}

The remainder of this section is devoted to proving Lemmas
\ref{lem:module-blob},
\ref{lem:hochschild-exact}, \ref{lem:hochschild-coinvariants} and
\ref{lem:hochschild-free}.
\end{proof}

\subsection{Technical details}
\begin{proof}[Proof of Lemma \ref{lem:module-blob}]
We show that $K_*(C)$ is quasi-isomorphic to $\bc_*(S^1)$.
$K_*(C)$ differs from $\bc_*(S^1)$ only in that the base point *
is always a labeled point in $K_*(C)$, while in $\bc_*(S^1)$ it may or may not be.
In particular, there is an inclusion map $i: K_*(C) \to \bc_*(S^1)$.

We want to define a homotopy inverse to the above inclusion, but before doing so
we must replace $\bc_*(S^1)$ with a homotopy equivalent subcomplex.
Let $J_* \sub \bc_*(S^1)$ be the subcomplex where * does not lie on the boundary
of any blob.
Note that the image of $i$ is contained in $J_*$.
Note also that in $\bc_*(S^1)$ (away from $J_*$) 
a blob diagram could have multiple (nested) blobs whose
boundaries contain *, on both the right and left of *.

We claim that $J_*$ is homotopy equivalent to $\bc_*(S^1)$.
Let $F_*^\ep \sub \bc_*(S^1)$ be the subcomplex where either
(a) the point * is not on the boundary of any blob or
(b) there are no labeled points or blob boundaries within distance $\ep$ of *,
other than blob boundaries at * itself.
Note that all blob diagrams are in some $F_*^\ep$ for $\ep$ sufficiently small.
Let $b$ be a blob diagram in $F_*^\ep$.
Define $f(b)$ to be the result of moving any blob boundary points which lie on *
to distance $\ep$ from *.
(Move right or left so as to shrink the blob.)
Extend to get a chain map $f: F_*^\ep \to F_*^\ep$.
By Corollary \ref{disj-union-contract}, 
$f$ is homotopic to the identity.
(Use the facts that $f$ factors though a map from a disjoint union of balls
into $S^1$, and that $f$ is the identity in degree 0.)
Since the image of $f$ is in $J_*$, and since any blob chain is in $F_*^\ep$
for $\ep$ sufficiently small, we have that $J_*$ is homotopic to all of $\bc_*(S^1)$.

We now define a homotopy inverse $s: J_* \to K_*(C)$ to the inclusion $i$.
If $y$ is a field defined on a neighborhood of *, define $s(y) = y$ if
* is a labeled point in $y$.
Otherwise, define $s(y)$ to be the result of adding a label 1 (identity morphism) at *.
Extending linearly, we get the desired map $s: J_* \to K_*(C)$.
It is easy to check that $s$ is a chain map and $s \circ i = \id$.
What remains is to show that $i \circ s$ is homotopic to the identity.

Let $N_\ep$ denote the ball of radius $\ep$ around *.
Let $L_*^\ep \sub J_*$ be the subcomplex 
spanned by blob diagrams
where there are no labeled points
in $N_\ep$, except perhaps $*$, and $N_\ep$ is either disjoint from or contained in 
every blob in the diagram.
Note that for any chain $x \in J_*$, $x \in L_*^\ep$ for sufficiently small $\ep$.

We define a degree $1$ map $j_\ep: L_*^\ep \to L_*^\ep$ as follows.
Let $x \in L_*^\ep$ be a blob diagram.
If $*$ is not contained in any twig blob, we define $j_\ep(x)$ by adding 
$N_\ep$ as a new twig blob, with label $y - s(y)$ where $y$ is the restriction
of $x$ to $N_\ep$.
If $*$ is contained in a twig blob $B$ with label $u=\sum z_i$, 
write $y_i$ for the restriction of $z_i$ to $N_\ep$, and let
$x_i$ be equal to $x$ on $S^1 \setmin B$, equal to $z_i$ on $B \setmin N_\ep$,
and have an additional blob $N_\ep$ with label $y_i - s(y_i)$.
Define $j_\ep(x) = \sum x_i$.

It is not hard to show that on $L_*^\ep$
\[
	\bd j_\ep  + j_\ep \bd = \id - i \circ s .
\]
(To get the signs correct here, we add $N_\ep$ as the first blob.)
Since for $\ep$ small enough $L_*^\ep$ captures all of the
homology of $J_*$, 
it follows that the mapping cone of $i \circ s$ is acyclic and therefore (using the fact that
these complexes are free) $i \circ s$ is homotopic to the identity.
\end{proof}

\begin{proof}[Proof of Lemma \ref{lem:hochschild-exact}]
We now prove that $K_*$ is an exact functor.

As a warm-up, we prove
that the functor on $C$-$C$ bimodules
\begin{equation*}
M \mapsto \ker(C \tensor M \tensor C \xrightarrow{c_1 \tensor m \tensor c_2 \mapsto c_1 m c_2} M)
\end{equation*}
is exact.
Suppose we have a short exact sequence of $C$-$C$ bimodules $$\xymatrix{0 \ar[r] & K \ar@{^{(}->}[r]^f & E \ar@{->>}[r]^g & Q \ar[r] & 0}.$$
We'll write $\hat{f}$ and $\hat{g}$ for the image of $f$ and $g$ under the functor, so 
\[
	\hat{f}(\textstyle\sum_i a_i \tensor k_i \tensor b_i) = 
						\textstyle\sum_i a_i \tensor f(k_i) \tensor b_i ,
\]
and similarly for $\hat{g}$.
Most of what we need to check is easy.
Suppose we have $\sum_i (a_i \tensor k_i \tensor b_i) \in \ker(C \tensor K \tensor C \to K)$, 
assuming without loss of generality that $\{a_i \tensor b_i\}_i$ is linearly independent in $C \tensor C$, 
and $\hat{f}(a \tensor k \tensor b) = 0 \in \ker(C \tensor E \tensor C \to E)$.
We must then have $f(k_i) = 0 \in E$ for each $i$, which implies $k_i=0$ itself. 
If $\sum_i (a_i \tensor e_i \tensor b_i) \in \ker(C \tensor E \tensor C \to E)$ 
is in the image of $\ker(C \tensor K \tensor C \to K)$ under $\hat{f}$, 
again by assuming the set  $\{a_i \tensor b_i\}_i$ is linearly independent we can deduce that each
$e_i$ is in the image of the original $f$, and so is in the kernel of the original $g$, 
and so $\hat{g}(\sum_i a_i \tensor e_i \tensor b_i) = 0$.
If $\hat{g}(\sum_i a_i \tensor e_i \tensor b_i) = 0$, then each $g(e_i) = 0$, so $e_i = f(\widetilde{e_i})$ 
for some $\widetilde{e_i} \in K$, and $\sum_i a_i \tensor e_i \tensor b_i = \hat{f}(\sum_i a_i \tensor \widetilde{e_i} \tensor b_i)$.
Finally, the interesting step is in checking that any $q = \sum_i a_i \tensor q_i \tensor b_i$ 
such that $\sum_i a_i q_i b_i = 0$ is in the image of $\ker(C \tensor E \tensor C \to C)$ under $\hat{g}$.
For each $i$, we can find $\widetilde{q_i}$ so $g(\widetilde{q_i}) = q_i$.
However $\sum_i a_i \widetilde{q_i} b_i$ need not be zero.
Consider then $$\widetilde{q} = \sum_i \left(a_i \tensor \widetilde{q_i} \tensor b_i\right) - 1 \tensor \left(\sum_i a_i \widetilde{q_i} b_i\right) \tensor 1.$$ Certainly
$\widetilde{q} \in \ker(C \tensor E \tensor C \to E)$.
Further,
\begin{align*}
\hat{g}(\widetilde{q}) & = \sum_i \left(a_i \tensor g(\widetilde{q_i}) \tensor b_i\right) - 1 \tensor \left(\sum_i a_i g(\widetilde{q_i}) b_i\right) \tensor 1 \\
                       & = q - 0
\end{align*}
(here we used that $g$ is a map of $C$-$C$ bimodules, and that $\sum_i a_i q_i b_i = 0$).

Similar arguments show that the functors
\begin{equation}
\label{eq:ker-functor}%
M \mapsto \ker(C^{\tensor k} \tensor M \tensor C^{\tensor l} \to M)
\end{equation}
are all exact too.
Moreover, tensor products of such functors with each
other and with $C$ or $\ker(C^{\tensor k} \to C)$ (e.g., producing the functor $M \mapsto \ker(M \tensor C \to M)
\tensor C \tensor \ker(C \tensor C \to M)$) are all still exact.

Finally, then we see that the functor $K_*$ is simply an (infinite)
direct sum of copies of this sort of functor.
The direct sum is indexed by
configurations of nested blobs and of labels; for each such configuration, we have one of 
the above tensor product functors,
with the labels of twig blobs corresponding to tensor factors as in \eqref{eq:ker-functor} 
or $\ker(C^{\tensor k} \to C)$ (depending on whether they contain a marked point $p_i$), and all other labelled points corresponding
to tensor factors of $C$ and $M$.
\end{proof}
\begin{proof}[Proof of Lemma \ref{lem:hochschild-coinvariants}]
We show that $H_0(K_*(M))$ is isomorphic to the coinvariants of $M$.

We define a map $\ev: K_0(M) \to M$.
If $x \in K_0(M)$ has the label $m \in M$ at $*$, and labels $c_i \in C$ at the other 
labeled points of $S^1$, reading clockwise from $*$,
we set $\ev(x) = m c_1 \cdots c_k$.
We can think of this as $\ev : M \tensor C^{\tensor k} \to M$, for each direct summand of 
$K_0(M)$ indexed by a configuration of labeled points.

There is a quotient map $\pi: M \to \coinv{M}$.
We claim that the composition $\pi \compose \ev$ is well-defined on the quotient $H_0(K_*(M))$; 
i.e.\ that $\pi(\ev(\bd y)) = 0$ for all $y \in K_1(M)$.
There are two cases, depending on whether the blob of $y$ contains the point *.
If it doesn't, then
suppose $y$ has label $m$ at $*$, labels $c_i$ at other labeled points outside the blob, 
and the field inside the blob is a sum, with the $j$-th term having
labeled points $d_{j,i}$.
Then $\sum_j d_{j,1} \tensor \cdots \tensor d_{j,k_j} \in \ker(\DirectSum_k C^{\tensor k} \to C)$, and so
$\ev(\bdy y) = 0$, because $$C^{\tensor \ell_1} \tensor \ker(\DirectSum_k C^{\tensor k} \to C) \tensor C^{\tensor \ell_2} \subset \ker(\DirectSum_k C^{\tensor k} \to C).$$
Similarly, if $*$ is contained in the blob, then the blob label is a sum, with the 
$j$-th term have labelled points $d_{j,i}$ to the left of $*$, $m_j$ at $*$, and $d_{j,i}'$ to the right of $*$,
and there are labels $c_i$ at the labeled points outside the blob.
We know that
$$\sum_j d_{j,1} \tensor \cdots \tensor d_{j,k_j} \tensor m_j \tensor d_{j,1}' \tensor \cdots \tensor d_{j,k'_j}' \in \ker(\DirectSum_{k,k'} C^{\tensor k} \tensor M \tensor C^{\tensor k'} \to M),$$
and so
\begin{align*}
\pi\left(\ev(\bdy y)\right) & = \pi\left(\sum_j m_j d_{j,1}' \cdots d_{j,k'_j}' c_1 \cdots c_k d_{j,1} \cdots d_{j,k_j}\right) \\
            & = \pi\left(\sum_j d_{j,1} \cdots d_{j,k_j} m_j d_{j,1}' \cdots d_{j,k'_j}' c_1 \cdots c_k\right) \\
            & = 0
\end{align*}
where this time we use the fact that we're mapping to $\coinv{M}$, not just $M$.

The map $\pi \compose \ev: H_0(K_*(M)) \to \coinv{M}$ is clearly 
surjective ($\ev$ surjects onto $M$); we now show that it's injective.
This is equivalent to showing that 
\[
	\ev\inv(\ker(\pi)) \sub \bd K_1(M) .
\]
The above inclusion follows from
\[
	\ker(\ev) \sub \bd K_1(M)
\]
and
\[
	\ker(\pi) \sub \ev(\bd K_1(M)) .
\]
Let $x = \sum x_i$ be in the kernel of $\ev$, where each $x_i$ is a configuration of 
labeled points in $S^1$.
Since the sum is finite, we can find an interval (blob) $B$ in $S^1$
such that for each $i$ the $C$-labeled points of $x_i$ all lie to the right of the 
base point *.
Let $y_i$ be the restriction of $x_i$ to $B$ and $y = \sum y_i$.
Let $r$ be the ``empty" field on $S^1 \setmin B$.
It follows that $y \in U(B)$ and 
\[
	\bd(B, y, r) = x .
\]
$\ker(\pi)$ is generated by elements of the form $cm - mc$.
As shown in Figure \ref{fig:hochschild-1-chains}, $cm - mc$ lies in $\ev(\bd K_1(M))$.
\end{proof}

\begin{proof}[Proof of Lemma \ref{lem:hochschild-free}]
We show that $K_*(C\otimes C)$ is
quasi-isomorphic to the 0-step complex $C$.
We'll do this in steps, establishing quasi-isomorphisms and homotopy equivalences
$$K_*(C \tensor C) \quismto K'_* \htpyto K''_* \quismto C.$$

Let $K'_* \sub K_*(C\otimes C)$ be the subcomplex where the label of
the point $*$ is $1 \otimes 1 \in C\otimes C$.
We will show that the inclusion $i: K'_* \to K_*(C\otimes C)$ is a quasi-isomorphism.

Fix a small $\ep > 0$.
Let $N_\ep$ be the ball of radius $\ep$ around $* \in S^1$.
Let $K_*^\ep \sub K_*(C\otimes C)$ be the subcomplex
generated by blob diagrams $b$ such that $N_\ep$ is either disjoint from
or contained in each blob of $b$, and the only labeled point inside $N_\ep$ is $*$.
For a field $y$ on $N_\ep$, let $s_\ep(y)$ be the equivalent picture with~$*$
labeled by $1\otimes 1$ and the only other labeled points at distance $\pm\ep/2$ from $*$.
(See Figure \ref{fig:sy}.)
Note that $y - s_\ep(y) \in U(N_\ep)$. 
Let $\sigma_\ep: K_*^\ep \to K_*^\ep$ be the chain map
given by replacing the restriction $y$ to $N_\ep$ of each field
appearing in an element of  $K_*^\ep$ with $s_\ep(y)$.
Note that $\sigma_\ep(x) \in K'_*$.
\begin{figure}[t]
\begin{align*}
y & = \mathfig{0.2}{hochschild/y} &
s_\ep(y) & = \mathfig{0.2}{hochschild/sy}
\end{align*}
\caption{Defining $s_\ep$.}
\label{fig:sy}
\end{figure}

Define a degree 1 map $j_\ep : K_*^\ep \to K_*^\ep$ as follows.
Let $x \in K_*^\ep$ be a blob diagram.
If $*$ is not contained in any twig blob, $j_\ep(x)$ is obtained by adding $N_\ep$ to
$x$ as a new twig blob, with label $y - s_\ep(y)$, where $y$ is the restriction of $x$ to $N_\ep$.
If $*$ is contained in a twig blob $B$ with label $u = \sum z_i$, $j_\ep(x)$ is obtained as follows.
Let $y_i$ be the restriction of $z_i$ to $N_\ep$.
Let $x_i$ be equal to $x$ outside of $B$, equal to $z_i$ on $B \setmin N_\ep$,
and have an additional blob $N_\ep$ with label $y_i - s_\ep(y_i)$.
Define $j_\ep(x) = \sum x_i$.
Note that if $x \in K'_* \cap K_*^\ep$ then $j_\ep(x) \in K'_*$ also.

The key property of $j_\ep$ is
\eq{
    \bd j_\ep + j_\ep \bd = \id - \sigma_\ep.
}
(Again, to get the correct signs, $N_\ep$ must be added as the first blob.)
If $j_\ep$ were defined on all of $K_*(C\otimes C)$, this would show that $\sigma_\ep$
is a homotopy inverse to the inclusion $K'_* \to K_*(C\otimes C)$.
One strategy would be to try to stitch together various $j_\ep$ for progressively smaller
$\ep$ and show that $K'_*$ is homotopy equivalent to $K_*(C\otimes C)$.
Instead, we'll be less ambitious and just show that
$K'_*$ is quasi-isomorphic to $K_*(C\otimes C)$.

If $x$ is a cycle in $K_*(C\otimes C)$, then for sufficiently small $\ep$ we have
$x \in K_*^\ep$.
(This is true for any chain in $K_*(C\otimes C)$, since chains are sums of
finitely many blob diagrams.)
Then $x$ is homologous to $\sigma_\ep(x)$, which is in $K'_*$, so the inclusion map
$K'_* \sub K_*(C\otimes C)$ is surjective on homology.
If $y \in K_*(C\otimes C)$ and $\bd y = x \in K_*(C\otimes C)$, then $y \in K_*^\ep$ for some $\ep$
and
\eq{
    \bd y = \bd (\sigma_\ep(y) + j_\ep(x)) .
}
Since $\sigma_\ep(y) + j_\ep(x) \in K'_*$, it follows that the inclusion map is injective on homology.
This completes the proof that $K'_*$ is quasi-isomorphic to $K_*(C\otimes C)$.

Let $K''_* \sub K'_*$ be the subcomplex of $K'_*$ where $*$ is not contained in any blob.
We will show that the inclusion $i: K''_* \to K'_*$ is a homotopy equivalence.

First, a lemma:  Let $G''_*$ and $G'_*$ be defined similarly to $K''_*$ and $K'_*$, except with
$S^1$ replaced by some neighborhood $N$ of $* \in S^1$.
($G''_*$ and $G'_*$ depend on $N$, but that is not reflected in the notation.)
Then $G''_*$ and $G'_*$ are both contractible
and the inclusion $G''_* \sub G'_*$ is a homotopy equivalence.
For $G'_*$ the proof is the same as in Lemma \ref{bcontract}, except that the splitting
$G'_0 \to H_0(G'_*)$ concentrates the point labels at two points to the right and left of $*$.
For $G''_*$ we note that any cycle is supported away from $*$.
Thus any cycle lies in the image of the normal blob complex of a disjoint union
of two intervals, which is contractible by Lemma \ref{bcontract} and Corollary \ref{disj-union-contract}.
Finally, it is easy to see that the inclusion
$G''_* \to G'_*$ induces an isomorphism on $H_0$.

Next we construct a degree 1 map (homotopy) $h: K'_* \to K'_*$ such that
for all $x \in K'_*$ we have
\eq{
    x - \bd h(x) - h(\bd x) \in K''_* .
}
Since $K'_0 = K''_0$, we can take $h_0 = 0$.
Let $x \in K'_1$, with single blob $B \sub S^1$.
If $* \notin B$, then $x \in K''_1$ and we define $h_1(x) = 0$.
If $* \in B$, then we work in the image of $G'_*$ and $G''_*$ (with $B$ playing the role of $N$ above).
Choose $x'' \in G''_1$ such that $\bd x'' = \bd x$.
Since $G'_*$ is contractible, there exists $y \in G'_2$ such that $\bd y = x - x''$.
Define $h_1(x) = y$.
The general case is similar, except that we have to take lower order homotopies into account.
Let $x \in K'_k$.
If $*$ is not contained in any of the blobs of $x$, then define $h_k(x) = 0$.
Otherwise, let $B$ be the outermost blob of $x$ containing $*$.
We can decompose $x = x' \bullet p$, 
where $x'$ is supported on $B$ and $p$ is supported away from $B$.
So $x' \in G'_l$ for some $l \le k$.
Choose $x'' \in G''_l$ such that $\bd x'' = \bd (x' - h_{l-1}\bd x')$.
Choose $y \in G'_{l+1}$ such that $\bd y = x' - x'' - h_{l-1}\bd x'$.
Define $h_k(x) = y \bullet p$.
This completes the proof that $i: K''_* \to K'_*$ is a homotopy equivalence.

Finally, we show that $K''_*$ is contractible with $H_0\cong C$.
This is similar to the proof of Proposition \ref{bcontract}, but a bit more
complicated since there is no single blob which contains the support of all blob diagrams
in $K''_*$.
Let $x$ be a cycle of degree greater than zero in $K''_*$.
The union of the supports of the diagrams in $x$ does not contain $*$, so there exists a
ball $B \subset S^1$ containing the union of the supports and not containing $*$.
Adding $B$ as an outermost blob to each summand of $x$ gives a chain $y$ with $\bd y = x$.
Thus $H_i(K''_*) \cong 0$ for $i> 0$ and $K''_*$ is contractible.

To see that $H_0(K''_*) \cong C$, consider the map $p: K''_0 \to C$ which sends a 0-blob
diagram to the product of its labeled points.
$p$ is clearly surjective.
It's also easy to see that $p(\bd K''_1) = 0$.
Finally, if $p(y) = 0$ then there exists a blob $B \sub S^1$ which contains
all of the labeled points (other than *) of all of the summands of $y$.
This allows us to construct $x\in K''_1$ such that $\bd x = y$.
(The label of $B$ is the restriction of $y$ to $B$.)
It follows that $H_0(K''_*) \cong C$.
\end{proof}

\subsection{An explicit chain map in low degrees}

For purposes of illustration, we describe an explicit chain map
$\HC_*(M) \to K_*(M)$
between the Hochschild complex and the blob complex (with bimodule point)
for degree $\le 2$.
This map can be completed to a homotopy equivalence, though we will not prove that here.
There are of course many such maps; what we describe here is one of the simpler possibilities.

Recall that in low degrees $\HC_*(M)$ is
\[
	\cdots \stackrel{\bd}{\to} M \otimes C\otimes C \stackrel{\bd}{\to} 
			M \otimes C \stackrel{\bd}{\to} M
\]
with
\eqar{
	\bd(m\otimes a)  & = & ma - am \\
	\bd(m\otimes a \otimes b) & = & ma\otimes b - m\otimes ab + bm \otimes a .
}
In degree 0, we send $m\in M$ to the 0-blob diagram $\mathfig{0.04}{hochschild/0-chains}$; the base point
in $S^1$ is labeled by $m$ and there are no other labeled points.
In degree 1, we send $m\ot a$ to the sum of two 1-blob diagrams
as shown in Figure \ref{fig:hochschild-1-chains}.

\begin{figure}[t]
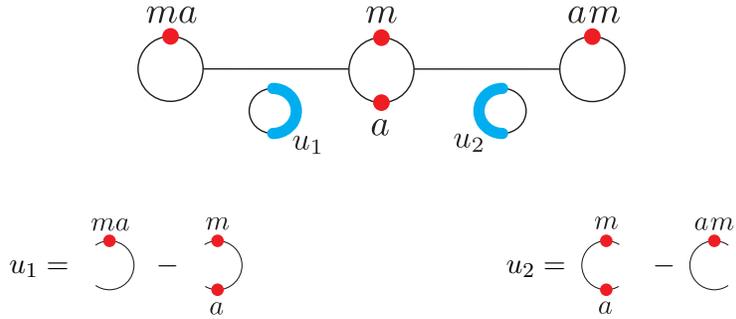

\begin{equation*}
\mathfig{0.4}{hochschild/1-chains}
\end{equation*}
\begin{align*}
u_1 & = \mathfig{0.05}{hochschild/u_1-1} - \mathfig{0.05}{hochschild/u_1-2} & u_2 & = \mathfig{0.05}{hochschild/u_2-1} - \mathfig{0.05}{hochschild/u_2-2} 
\end{align*}
\caption{The image of $m \tensor a$ in the blob complex.}
\label{fig:hochschild-1-chains}
\end{figure}

\begin{figure}[t]
\begin{equation*}
\mathfig{0.6}{hochschild/2-chains-0}
\end{equation*}
\caption{The 0-chains in the image of $m \tensor a \tensor b$.}
\label{fig:hochschild-2-chains-0}
\end{figure}
\begin{figure}[t]
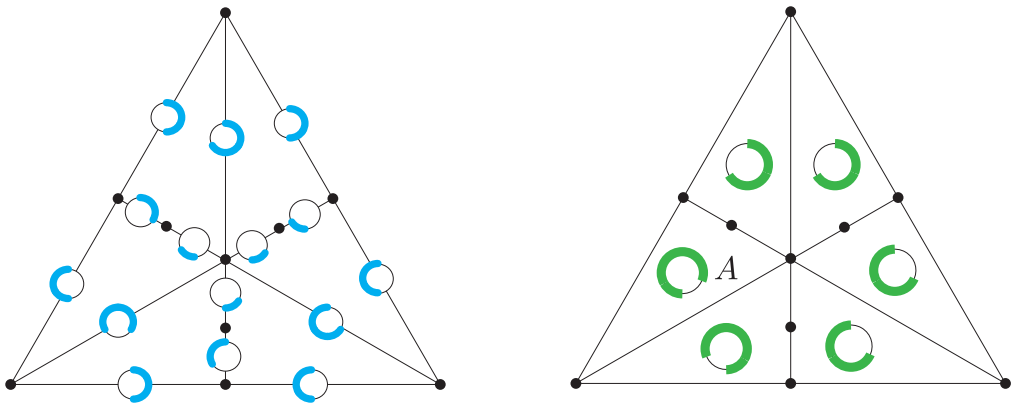

\begin{equation*}
\mathfig{0.4}{hochschild/2-chains-1} \qquad \mathfig{0.4}{hochschild/2-chains-2}
\end{equation*}
\caption{The 1- and 2-chains in the image of $m \tensor a \tensor b$.
Only the supports of the blobs are shown, but see Figure \ref{fig:hochschild-example-2-cell} for an example of a $2$-cell label.}
\label{fig:hochschild-2-chains-12}
\end{figure}

\begin{figure}[t]
\begin{equation*}
A = \mathfig{0.1}{hochschild/v_1} + \mathfig{0.1}{hochschild/v_2} + \mathfig{0.1}{hochschild/v_3} + \mathfig{0.1}{hochschild/v_4}
\end{equation*}
\begin{align*}
v_1 & = \mathfig{0.05}{hochschild/v_1-1} -  \mathfig{0.05}{hochschild/v_1-2} &  v_2 & = \mathfig{0.05}{hochschild/v_2-1} -  \mathfig{0.05}{hochschild/v_2-2} \\ 
v_3 & = \mathfig{0.05}{hochschild/v_3-1} -  \mathfig{0.05}{hochschild/v_3-2} &  v_4 & = \mathfig{0.05}{hochschild/v_4-1} -  \mathfig{0.05}{hochschild/v_4-2}
\end{align*}
\caption{One of the 2-cells from Figure \ref{fig:hochschild-2-chains-12}.}
\label{fig:hochschild-example-2-cell}
\end{figure}

In degree 2, we send $m\ot a \ot b$ to the sum of 24 ($=6\cdot4$) 2-blob diagrams as shown in
Figures \ref{fig:hochschild-2-chains-0} and \ref{fig:hochschild-2-chains-12}.
In Figure \ref{fig:hochschild-2-chains-12} the 1- and 2-blob diagrams are indicated only by their support.
We leave it to the reader to determine the labels of the 1-blob diagrams.
Each 2-cell in the figure is labeled by a ball $V$ in $S^1$ which contains the support of all
1-blob diagrams in its boundary.
Such a 2-cell corresponds to a sum of the 2-blob diagrams obtained by adding $V$
as an outer (non-twig) blob to each of the 1-blob diagrams in the boundary of the 2-cell.
Figure \ref{fig:hochschild-example-2-cell} shows this explicitly for the 2-cell
labeled $A$ in Figure \ref{fig:hochschild-2-chains-12}.
Note that the (blob complex) boundary of this sum of 2-blob diagrams is
precisely the sum of the 1-blob diagrams corresponding to the boundary of the 2-cell.
(Compare with the proof of \ref{bcontract}.)


\section{Action of \texorpdfstring{$\CH{X}$}{C*(Homeo(M))}}
\label{sec:evaluation}

In this section we extend the action of homeomorphisms on $\bc_*(X)$
to an action of {\it families} of homeomorphisms.
That is, for each pair of homeomorphic manifolds $X$ and $Y$
we define a chain map
\[
    e_{XY} : \CH{X, Y} \otimes \bc_*(X) \to \bc_*(Y) ,
\]
where $C_*(\Homeo(X, Y))$ is the singular chains on the space
of homeomorphisms from $X$ to $Y$.
(If $X$ and $Y$ have non-empty boundary, these families of homeomorphisms
are required to restrict to a fixed homeomorphism on the boundaries.)
These actions (for various $X$ and $Y$) are compatible with gluing.
See \S \ref{ss:emap-def} for a more precise statement.

The most convenient way to prove that maps $e_{XY}$ with the desired properties exist is to 
introduce a homotopy equivalent alternate version of the blob complex, $\btc_*(X)$,
which is more amenable to this sort of action.
Recall from Remark \ref{blobsset-remark} that blob diagrams
have the structure of a cone-product set.
Blob diagrams can also be equipped with a natural topology, which converts this
cone-product set into a cone-product space.
Taking singular chains of this space we get $\btc_*(X)$.
The details are in \S \ref{ss:alt-def}.
We also prove a useful result (Lemma \ref{small-blobs-b}) which says that we can assume that
blobs are small with respect to any fixed open cover.

%

\subsection{Alternative definitions of the blob complex}
\label{ss:alt-def}

\newcommand\sbc{\bc^{\cU}}

In this subsection we define a subcomplex (small blobs) and supercomplex (families of blobs)
of the blob complex, and show that they are both homotopy equivalent to $\bc_*(X)$.

\medskip

If $b$ is a blob diagram in $\bc_*(X)$, recall from \S \ref{sec:basic-properties} that the {\it support} of $b$, denoted
$\supp(b)$ or $|b|$, is the union of the blobs of $b$.
More generally, we say that a chain $a\in \bc_k(X)$ is supported on $S$ if
$a = a'\bullet r$, where $a'\in \bc_k(S)$ and $r\in \bc_0(X\setmin S)$.

Similarly, if $f: P\times X\to X$ is a family of homeomorphisms and $Y\sub X$, we say that $f$ is 
{\it supported on $Y$} if $f(p, x) = f(p', x)$ for all $x\in X\setmin Y$ and all $p,p'\in P$.
We will sometimes abuse language and talk about ``the" support of $f$,
again denoted $\supp(f)$ or $|f|$, to mean some particular choice of $Y$ such that
$f$ is supported on $Y$.

If $f: M \cup (Y\times I) \to M$ is a collaring homeomorphism
(cf.\ the end of \S \ref{ss:syst-o-fields}),
we say that $f$ is supported on $S\sub M$ if $f(x) = x$ for all $x\in M\setmin S$.

\medskip

Fix $\cU$, an open cover of $X$.
Define the ``small blob complex" $\bc^{\cU}_*(X)$ to be the subcomplex of $\bc_*(X)$ 
generated by blob diagrams such that every blob is contained in some open set of $\cU$, 
and moreover each field labeling a region cut out by the blobs is splittable 
into fields on smaller regions, each of which is contained in some open set of $\cU$.

\begin{lemma}[Small blobs] \label{small-blobs-b}  \label{thm:small-blobs}
The inclusion $i: \bc^{\cU}_*(X) \into \bc_*(X)$ is a homotopy equivalence.
\end{lemma}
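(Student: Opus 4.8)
The plan is to build an explicit deformation retraction of $\bc_*(X)$ onto $\bc^{\cU}_*(X)$ by iterated application of a single ``shrink one blob configuration'' homotopy, controlled by a barycentric-subdivision-style argument on the Lebesgue number of $\cU$. First I would observe that it suffices to treat $X$ compact (which we have assumed throughout), so $\cU$ has a positive Lebesgue number $\lambda$; any blob of diameter $<\lambda$ lies in some open set of $\cU$, and similarly any sufficiently fine decomposition of $X$ has all pieces subordinate to $\cU$. The map $i$ is the inclusion of a subcomplex, so to show it is a homotopy equivalence it suffices (these complexes being free) to produce a chain map $r: \bc_*(X) \to \bc^{\cU}_*(X)$ with $r\circ i = \id$ together with a chain homotopy $\id_{\bc_*(X)} \simeq i \circ r$ that restricts to the identity homotopy on the subcomplex.

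The key construction is a ``subdivide and absorb'' operation. Given a blob diagram $b$ with configuration $\{B_1,\dots,B_k\}$ and field $r$, one picks a fine ball decomposition $M_0 \to \cdots \to M_m = X$, compatible with the configuration, all of whose balls are $\cU$-small; one may also need to refine $r$ so it is splittable along this decomposition. The homotopy then pushes $b$ into $\bc^{\cU}_*(X)$ by ``growing'' each blob's complementary field to be splittable along the fine decomposition, at the cost of introducing a correction term that records a one-parameter family interpolating between the coarse and fine pictures. Concretely, I would realize this using the cone-product structure from Remark \ref{blobsset-remark}: adding an outermost blob equal to a $\cU$-small ball that contains the support (as in the contractibility argument of Proposition \ref{bcontract}), but now iterated over the pieces of the fine decomposition and assembled via the acyclic models / partition-of-unity bookkeeping so that everything is natural and the signs work out. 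Functoriality (Property \ref{property:functoriality}) and the gluing map (Property \ref{property:gluing-map}) are used to move fields between the coarse and fine decompositions; the disjoint union property (Property \ref{property:disjoint-union}) together with Corollary \ref{disj-union-contract} provides the acyclicity needed to run acyclic models on each ball of the fine decomposition.

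The main obstacle, and where I would spend the most care, is the fact flagged in Example \ref{sin1x-example}: the complement of a blob configuration need not be a manifold, so ``make the field on the complement splittable along a fine $\cU$-subordinate decomposition'' is not automatic and has to be phrased in terms of gluing decompositions compatible with the configuration, as in Definition \ref{defn:configuration}. I would handle this by choosing, for each blob diagram, a compatible gluing decomposition first and only then refining it to a $\cU$-small one, checking that refinement of gluing decompositions is always possible (subdivide each $M_l$ keeping the distinguished ball components intact) and that the splittability of $r$ can be arranged after a homotopy in the $0$-blob direction. A secondary technical point is naturality and compatibility with the differential — ensuring the homotopy commutes with forgetting blobs — which I expect to follow formally from setting the whole construction up via acyclic models (the method sketched in the commented-out Lemma \ref{support-shrink}), assigning to each blob diagram $b$ the acyclic subcomplex generated by blob diagrams supported on a fixed $\cU$-small neighborhood of $\supp(b)$, refined as needed. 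Finally, $r\circ i = \id$ is immediate since the construction does nothing to diagrams already in $\bc^{\cU}_*(X)$.
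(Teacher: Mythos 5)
There is a genuine gap: your construction never explains how to deal with a diagram whose \emph{blobs} are large. Membership in $\bc^{\cU}_*(X)$ requires every blob to lie inside some open set of $\cU$; the splittability of the fields on the complementary regions is only the secondary condition. Your ``subdivide and absorb'' step refines the decomposition of $X$ and makes the field $r$ splittable along a $\cU$-fine decomposition, but none of that changes the blobs themselves, and a blob is a single ball carrying a local relation label $u\in U(B;c)$ --- it cannot be ``subdivided'' into small blobs, since there is no reason $u$ decomposes as a sum of local relations supported on small balls. Your fallback, adding an outermost $\cU$-small blob containing the support as in Proposition \ref{bcontract}, fails exactly in the problematic case: if the support is not $\cU$-small, no such ball exists. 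So the proposal as written produces no mechanism for replacing a large-blob diagram by a homologous chain in $\bc^{\cU}_*(X)$, which is the entire content of the lemma once one notes (via the splittings property of fields) that $\bc^{\cU}_0(X)=\bc_0(X)$.

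The missing idea in the paper's argument is to \emph{shrink} each large blob by a sequence of small collar maps (in the sense of the end of \S\ref{ss:syst-o-fields}). If $g$ denotes the composite shrinking map, extended isotopy invariance lets one realize each incremental difference $g_{j-1}(a_i)-g_j(a_i)$ as the boundary of a $1$-blob diagram $c_{ij}$ whose support $g_{j-1}(|f_j|)$ is small with respect to an auxiliary cover, so that $s(b)=\sum_{i,j}c_{ij}+g(b)$ lies in $\bc^{\cU}_*(X)$, has the same boundary as $b$, and has support in the original blob; Corollary \ref{disj-union-contract} then supplies $h_1(b)$ with $\bd h_1(b)=s(b)-b$, and the higher $h_l$ are built the same way using a nested sequence of auxiliary covers $\cV_1,\cV_2,\ldots$ chosen fine enough that the relevant supports sit inside disjoint unions of balls subordinate to the next cover. (The paper also argues on finitely generated pairs of subcomplexes rather than constructing a global retraction $r$ with $r\circ i=\id$; your stronger claim would need additional justification, but that is secondary.) Your outline does correctly anticipate the role of Corollary \ref{disj-union-contract} and the need to work with gluing decompositions compatible with the configuration as in Definition \ref{defn:configuration}, but without the collar-map shrinking (or some substitute argument that a local relation on a large ball is homologous, with controlled support, to one on a $\cU$-small ball) the proof does not go through.
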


\begin{proof}
Since both complexes are free, it suffices to show that the inclusion induces
an isomorphism of homotopy groups.
To show this it in turn suffices to show that for any finitely generated 
pair $(C_*, D_*)$, with $D_*$ a subcomplex of $C_*$ such that 
\[
	(C_*, D_*) \sub (\bc_*(X), \sbc_*(X))
\]
we can find a homotopy $h:C_*\to \bc_*(X)$ such that $h(D_*) \sub \sbc_*(X)$
and
\[
	h\bd(x) + \bd h(x) + x \in \sbc_*(X)
\]
for all $x\in C_*$.

By the splittings axiom for fields, any field is splittable into small pieces.
It follows that $\sbc_0(X) = \bc_0(X)$.
Accordingly, we define $h_0 = 0$.

Next we define $h_1$.
Let $b\in C_1$ be a 1-blob diagram.
Let $B$ be the blob of $b$.
We will construct a 1-chain $s(b)\in \sbc_1(X)$ such that $\bd(s(b)) = \bd b$
and the support of $s(b)$ is contained in $B$.
(If $B$ is not embedded in $X$, then we implicitly work in some stage of a decomposition
of $X$ where $B$ is embedded.
See Definition \ref{defn:configuration} and preceding discussion.)
It then follows from Corollary \ref{disj-union-contract} that we can choose
$h_1(b) \in \bc_2(X)$ such that $\bd(h_1(b)) = s(b) - b$.

Roughly speaking, $s(b)$ consists of a series of 1-blob diagrams implementing a series
of small collar maps, plus a shrunken version of $b$.
The composition of all the collar maps shrinks $B$ to a ball which is small with respect to $\cU$.

Let $\cV_1$ be an auxiliary open cover of $X$, subordinate to $\cU$ and 
fine enough that a condition stated later in this proof is satisfied.
Let $b = (B, u, r)$, with $u = \sum a_i$ the label of $B$, and $a_i\in \bc_0(B)$.
Choose a sequence of collar maps $\bar{f}_j:B\cup\text{collar}\to B$ satisfying conditions 
specified at the end of this paragraph.
Let $f_j:B\to B$ be the restriction of $\bar{f}_j$ to $B$; $f_j$ maps $B$ homeomorphically to 
a slightly smaller submanifold of $B$.
Let $g_j = f_1\circ f_2\circ\cdots\circ f_j$.
Let $g$ be the last of the $g_j$'s.
Choose the sequence $\bar{f}_j$ so that 
$g(B)$ is contained in an open set of $\cV_1$ and
$g_{j-1}(|f_j|)$ is also contained in an open set of $\cV_1$.

There are 1-blob diagrams $c_{ij} \in \bc_1(B)$ such that $c_{ij}$ is compatible with $\cV_1$
(more specifically, $|c_{ij}| = g_{j-1}(|f_j|)$)
and $\bd c_{ij} = g_{j-1}(a_i) - g_{j}(a_i)$.
Define
\[
	s(b) = \sum_{i,j} c_{ij} + g(b)
\]
and choose $h_1(b) \in \bc_2(X)$ such that 
\[
	\bd(h_1(b)) = s(b) - b .
\]

Next we define $h_2$.
Let $b\in C_2$ be a 2-blob diagram.
Let $B = |b|$, either a ball or a union of two balls.
By possibly working in a decomposition of $X$, we may assume that the ball(s)
of $B$ are disjointly embedded.
We will construct a 2-chain $s(b)\in \sbc_2(X)$ such that
\[
	\bd(s(b)) = \bd(h_1(\bd b) + b) = s(\bd b)
\]
and the support of $s(b)$ is contained in $B$.
It then follows from Corollary \ref{disj-union-contract} that we can choose
$h_2(b) \in \bc_2(X)$ such that $\bd(h_2(b)) = s(b) - b - h_1(\bd b)$.

Similarly to the construction of $h_1$ above, 
$s(b)$ consists of a series of 2-blob diagrams implementing a series
of small collar maps, plus a shrunken version of $b$.
The composition of all the collar maps shrinks $B$ to a sufficiently small 
disjoint union of balls.

Let $\cV_2$ be an auxiliary open cover of $X$, subordinate to $\cU$ and 
fine enough that a condition stated later in the proof is satisfied.
As before, choose a sequence of collar maps $f_j$ 
such that each has support
contained in an open set of $\cV_1$ and the composition of the corresponding collar homeomorphisms
yields an embedding $g:B\to B$ such that $g(B)$ is contained in an open set of $\cV_1$.
Let $g_j:B\to B$ be the embedding at the $j$-th stage.

Fix $j$.
We will construct a 2-chain $d_j$ such that $\bd d_j = g_{j-1}(s(\bd b)) - g_{j}(s(\bd b))$.
Let $s(\bd b) = \sum e_k$, and let $\{p_m\}$ be the 0-blob diagrams
appearing in the boundaries of the $e_k$.
As in the construction of $h_1$, we can choose 1-blob diagrams $q_m$ such that
$\bd q_m = g_{j-1}(p_m) - g_j(p_m)$ and $|q_m|$ is contained in an open set of $\cV_1$.
If $x$ is a sum of $p_m$'s, we denote the corresponding sum of $q_m$'s by $q(x)$.

Now consider, for each $k$, $g_{j-1}(e_k) - q(\bd e_k)$.
This is a 1-chain whose boundary is $g_j(\bd e_k)$.
The support of $e_k$ is $g_{j-1}(V)$ for some $V\in \cV_1$, and
the support of $q(\bd e_k)$ is contained in a union $V'$ of finitely many open sets
of $\cV_1$, all of which contain the support of $f_j$.
We now reveal the mysterious condition (mentioned above) which $\cV_1$ satisfies:
the union of $g_{j-1}(V)$ and $V'$, for all of the finitely many instances
arising in the construction of $h_2$, lies inside a disjoint union of balls $U$
such that each individual ball lies in an open set of $\cV_2$.
(In this case there are either one or two balls in the disjoint union.)
For any fixed open cover $\cV_2$ this condition can be satisfied by choosing $\cV_1$ 
to be a sufficiently fine cover.
It follows from Corollary \ref{disj-union-contract} that we can choose 
$x_k \in \bc_2(X)$ with $\bd x_k = g_{j-1}(e_k) - g_j(e_k) - q(\bd e_k)$
and with $\supp(x_k) = U$.
We can now take $d_j \deq \sum x_k$.
It is clear that $\bd d_j = \sum (g_{j-1}(e_k) - g_j(e_k)) = g_{j-1}(s(\bd b)) - g_{j}(s(\bd b))$, as desired.

We now define 
\[
	s(b) = \sum d_j + g(b),
\]
where $g$ is the composition of all the $f_j$'s.
It is easy to verify that $s(b) \in \sbc_2$, $\supp(s(b)) = \supp(b)$, and 
$\bd(s(b)) = s(\bd b)$.
If follows that we can choose $h_2(b)\in \bc_2(X)$ such that $\bd(h_2(b)) = s(b) - b - h_1(\bd b)$.
This completes the definition of $h_2$.

The general case $h_l$ is similar.
When constructing the analogue of $x_k$ above, we will need to find a disjoint union of balls $U$
which contains finitely many open sets from $\cV_{l-1}$
such that each ball is contained in some open set of $\cV_l$.
For sufficiently fine $\cV_{l-1}$ this will be possible.
Since $C_*$ is finite, the process terminates after finitely many, say $r$, steps.
We take $\cV_r = \cU$.
\end{proof}

\medskip

Next we define the cone-product space version of the blob complex, $\btc_*(X)$.
First we must specify a topology on the set of $k$-blob diagrams, $\BD_k$.
We give $\BD_k$ the finest topology such that
\begin{itemize}
\item For any $b\in \BD_k$ the action map $\Homeo(X) \to \BD_k$, $f \mapsto f(b)$ is continuous.
\item The gluing maps $\BD_k(M)\to \BD_k(M\sgl)$ are continuous.
\item For balls $B$, the map $U(B) \to \BD_1(B)$, $u\mapsto (B, u, \emptyset)$, is continuous,
where $U(B) \sub \bc_0(B)$ inherits its topology from $\bc_0(B)$ and the topology on
$\bc_0(B)$ comes from the generating set $\BD_0(B)$.
\end{itemize}

We can summarize the above by saying that in the typical continuous family
$P\to \BD_k(X)$, $p\mapsto \left(B_i(p), u_i(p), r(p)\right)$, $B_i(p)$ and $r(p)$ are induced by a map
$P\to \Homeo(X)$, with the twig blob labels $u_i(p)$ varying independently. 
(``Varying independently'' means that after pulling back via the family of homeomorphisms to the original twig blob, 
one sees a continuous family of labels.)
We note that while we've decided not to allow the blobs $B_i(p)$ to vary independently of the field $r(p)$,
if we did allow this it would not affect the truth of the claims we make below.
In particular, such a definition of $\btc_*(X)$ would result in a homotopy equivalent complex.

Next we define $\btc_*(X)$ to be the total complex of the double complex (denoted $\btc_{**}$) 
whose $(i,j)$ entry is $C_j(\BD_i)$, the singular $j$-chains on the space of $i$-blob diagrams.
The vertical boundary of the double complex,
denoted $\bd_t$, is the singular boundary, and the horizontal boundary, denoted $\bd_b$, is
the blob boundary. Following the usual sign convention, we have $\bd = \bd_b + (-1)^i \bd_t$.

We will regard $\bc_*(X)$ as the subcomplex $\btc_{*0}(X) \sub \btc_{**}(X)$.
The main result of this subsection is

\begin{lemma} \label{lem:bc-btc}
The inclusion $\bc_*(X) \sub \btc_*(X)$ is a homotopy equivalence
\end{lemma}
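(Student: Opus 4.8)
The aim is to construct a chain map $r\colon\btc_*(X)\to\bc_*(X)$ which restricts to the identity on the bottom row $\bc_*(X)=\btc_{*0}(X)$, together with a chain homotopy from $i\circ r$ to $\id_{\btc_*(X)}$; since both complexes are free, this shows that $i$ is a homotopy equivalence with homotopy inverse $r$. I would build $r$ and the homotopy by the method of acyclic models (see \S\ref{sec:moam}), viewing $\btc_*(X)$ as free on the singular simplices of blob diagrams and using suitable small subcomplexes of $\bc_*(X)$, respectively $\btc_*(X)$, as the acyclic models. Note that the evaluation maps $e_X$ of Theorem \ref{thm:CH} are not available here --- they are in fact constructed later \emph{using} this lemma --- so the argument must stay within Properties \ref{property:functoriality}--\ref{property:contractibility} and Lemma \ref{small-blobs-b}.

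\textbf{Step 1: reduction to small supports.} Fix an auxiliary open cover $\cU$ of $X$. I would first replace $\btc_*(X)$ by the subcomplex generated by those $j$-simplices $\sigma\colon\Delta^j\to\BD_i(X)$ whose \emph{total support} --- the union of all blobs appearing over $\Delta^j$ together with the support of the underlying family of homeomorphisms --- lies in a disjoint union of balls, each contained in an open set of $\cU$. That this inclusion is a homotopy equivalence should be proved exactly as Lemma \ref{small-blobs-b}, now carrying out the collar-shrinking construction in the homeomorphism-family directions as well; equivalently, one runs the argument of Lemma \ref{small-blobs-b} for $\btc_*$ and combines it with the standard fragmentation of a family of homeomorphisms into families supported in small balls (compare the analysis of $\CH{M}$ in the appendices). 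After this reduction each generator splits as $\sigma=\sigma'\bullet\rho_\sigma$, where $S_\sigma\sub X$ is a chosen disjoint union of $\cU$-small balls containing the support of $\sigma$, $\sigma'\in\btc_*(S_\sigma)$, and $\rho_\sigma\in\bc_0(X\setmin S_\sigma)$; moreover $S_{\sigma'}=S_\sigma$ and $\rho_{\sigma'}=\rho_\sigma$ may be taken for every $\sigma'$ occurring in $\bd\sigma$, since passing to a face of $\Delta^j$ or forgetting a blob only shrinks the support.

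\textbf{Step 2: contractibility on balls and acyclic models.} For $S$ a disjoint union of balls the inclusion $\bc_*(S)\sub\btc_*(S)$ is a homotopy equivalence: $\btc_*(S)$ is contractible by the argument of Proposition \ref{bcontract}, since adding each component of $S$ as an outermost blob is a continuous operation and hence induces maps $C_j(\BD_i(S))\to C_j(\BD_{i+1}(S))$ which, together with a degree-$0$ section, contract the cone-product complex; and $\btc_*(S)$ has the same $0$-th homology as $\bc_*(S)$, because a $1$-simplex in $\BD_0(S)=\cF(S)$ is a path of fields induced by an isotopy, so its $\bd_t$-boundary lies in $U(S)$ by extended isotopy invariance, whence $H_0(\btc_*(S))\iso A(S)\iso H_0(\bc_*(S))$. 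Taking the acyclic model of a generator $\sigma$ to be $\bc_*(S_\sigma)\bullet\rho_\sigma\sub\bc_*(X)$ --- acyclic in positive degrees by Corollary \ref{disj-union-contract}, and nested compatibly with $\bd$ by Step 1 --- I would set $r=\id$ on the bottom row and extend to higher simplices by the usual acyclic-models induction (lifting the cycle $r(\bd\sigma)$ through the model), the base cases being precisely the degree-$0$ identification just made. Then $i\circ r$ and $\id_{\btc_*(X)}$ are endomorphisms of $\btc_*(X)$ agreeing in degree $0$ and both compatible with the larger models $\btc_*(S_\sigma)\bullet\rho_\sigma$, which are acyclic in positive degrees by the contractibility of $\btc_*$ on disjoint unions of balls; acyclic models then yields the homotopy $i\circ r\htpy\id$.

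The step I expect to be genuinely hard is Step 1: arranging a simultaneous fragmentation of blobs \emph{and} of homeomorphism families into small balls that genuinely defines a subcomplex and is linked to all of $\btc_*(X)$ by an explicit homotopy, done coherently over each simplex and all of its faces. Once that ``small supports'' reduction is in place, the remainder is the contractibility-plus-acyclic-models bookkeeping already carried out in Proposition \ref{bcontract}, Corollary \ref{disj-union-contract}, and Lemma \ref{small-blobs-b}.
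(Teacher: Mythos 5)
Your plan follows essentially the same route as the paper: the paper first proves the small-families reduction (its Lemma \ref{small-top-blobs}, via Lemma \ref{extension_lemma_c} together with the technique of Lemma \ref{small-blobs-b} and the lifting of families to $\Homeo(X)$, exactly your Step 1), then uses contractibility of $\btc_*$ on balls obtained by continuously adding an outermost blob (Lemma \ref{bt-contract}) together with the disjoint-union statement (Lemma \ref{btc-prod}) and Corollary \ref{disj-union-contract} to build the homotopy inductively, which is your Step 2. The only difference is packaging --- the paper constructs the homotopy degree by degree on finitely generated pairs $(C_*,D_*)$ rather than assembling a global retraction by acyclic models --- and this is an inessential variation.
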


Before giving the proof we need a few preliminary results.

\begin{lemma} \label{bt-contract}
$\btc_*(B^n)$ is contractible (acyclic in positive degrees).
\end{lemma}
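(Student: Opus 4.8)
The plan is to adapt the proof of Proposition \ref{bcontract} to the cone-product setting: run the same contracting homotopy --- ``adjoin a new outermost blob equal to all of $B^n$'' --- but now one singular simplex at a time, so that it also respects the singular chains direction of $\btc_{**}(B^n)$. Recall $\btc_*(B^n)$ is the total complex of the double complex with $(i,j)$-entry $C_j(\BD_i(B^n))$, horizontal differential $\bd_b$ (blob boundary), vertical differential $\bd_t$ (singular boundary), and total differential $\bd = \bd_b + (-1)^i\bd_t$. I would construct a degree $+1$ map $H$ on $\btc_*(B^n)$ with $\bd H + H\bd = \id - \Pi$, where $\Pi$ is a chain map supported in total degree $0$ whose image is acyclic in positive degrees; acyclicity of $\btc_*(B^n)$ in positive degrees follows.

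For $i\ge 1$, adjoining $B^n$ as a new \emph{first} blob sends an $i$-blob diagram $(\{B_1,\dots,B_i\}, r)$ to the $(i{+}1)$-blob diagram $(\{B^n, B_1,\dots,B_i\}, r)$: this is legitimate because $r$ and the twig blobs and their local-relation labels are all unchanged and $B^n$ (containing everything) is not a twig blob. The operation is equivariant for the $\Homeo(B^n)$-action and is the identity on twig labels, so --- checking against the description of the topology on $\BD_{i+1}(B^n)$ in \S\ref{ss:alt-def} --- it is a continuous map $\alpha_i\colon\BD_i(B^n)\to\BD_{i+1}(B^n)$, and postcomposition gives $h\colon C_j(\BD_i(B^n))\to C_j(\BD_{i+1}(B^n))$ for all $j$, $i\ge1$. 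For the bottom row I would copy $h_0$ from \ref{bcontract}: fix a linear section $s$ of $\cF(B^n)\onto A_\cF(B^n)$ (available over a field, as there) and send a singular simplex $\sigma\colon\Delta^j\to\BD_0(B^n)$ to the family $t\mapsto (B^n,\,\sigma(t)-s(p(\sigma(t))),\,\emptyset)$, whose twig label lies in $U(B^n)=\ker p$ by construction and which agrees with $\alpha_0$ on fields already in $U(B^n)$ --- this compatibility is what makes the homotopy identities at the $i=0/i=1$ interface close up. (This uses continuity of $s$, automatic when $A_\cF(B^n)$ is finite dimensional; otherwise take $h_0=0$ and get contractibility only modulo the two-step complex $U(B^n)\to\cF(B^n)$, as in the remark after \ref{bcontract}, which still suffices for positive-degree acyclicity.)

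Two facts then drive the identity. First, $\bd_t h = h\bd_t$, since $\alpha_i$ is a map of spaces and singular chains are functorial. Second, for $i\ge1$, $\bd_b h = \id - h\bd_b$ up to signs: forgetting the new first blob $B^n$ returns the original diagram, forgetting any old blob $B_l$ commutes with $\alpha$, and --- the key point for $i=1$ --- forgetting $B_1$ from $(\{B^n,B_1\}, r)$ yields the $1$-blob diagram $(\{B^n\}, r)$, whose label $r$ does lie in $U(B^n)$ because $r$ restricts to a local relation on the twig $B_1$ and local relations form an ideal under gluing. Combining these with the $(-1)^i$ signs exactly as in \ref{bcontract} (adjoining $B^n$ as the \emph{first} blob is what makes the two copies of ``$h\bd_b$'' cancel rather than add) yields $\bd H + H\bd = \id - \Pi$; since the ``$\sigma - s(p(\sigma))$'' corrections land in $U(B^n)$ one checks $\Pi$ is a chain map supported in total degree $0$ with image acyclic in positive degrees, which completes the proof (and incidentally identifies $H_0(\btc_*(B^n))$ with $A_\cF(B^n)$).

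The main obstacle is not the homotopy, which is structurally identical to Proposition \ref{bcontract}, but its interaction with the cone-product structure: one must verify that ``adjoin $B^n$ as outermost blob'' and the forgetting maps used in analysing $\bd_b h$ are genuinely continuous for the (finest) topology on the $\BD_i(B^n)$ introduced in \S\ref{ss:alt-def}, that the signs in $\bd_b + (-1)^i\bd_t$ make $H$ an honest chain homotopy, and that the $i=0$ row (where the section $s$, and hence a mild point-set hypothesis on the coefficients, enters) behaves correctly. None of these is deep, but each needs care.
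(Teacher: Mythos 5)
Your strategy is essentially the paper's own --- add an outermost blob equal to $B^n$, correcting labels by $s\circ p$ on the bottom row --- but there is a genuine gap at exactly the point where the two directions of the double complex interact. Compute what your $H$ does on the bottom row in positive vertical degree. For $y\in\btc_{0j}$ with $j\ge 1$, write $r(y)$ for the family $t\mapsto s(p(y(t)))$, so your $h_0(y)$ is $e(y-r(y))$ in the paper's notation. Then $\bd H(y) = \bd_b e(y-r(y)) - \bd_t e(y-r(y)) = (y - r(y)) - e(\bd_t y - r(\bd_t y))$, while $H(\bd y) = H(\bd_t y) = e(\bd_t y - r(\bd_t y))$, so $\bd H(y) + H(\bd y) = y - r(y)$. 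Now $p\circ y$ is locally constant (continuous families in $\BD_0$ come from families of homeomorphisms, which act trivially on $H_0$), so $r(y)$ is a locally constant $j$-parameter family at points of $s(H_0)$ --- a nonzero chain of $\btc_{0j}$ in general, since the paper uses ordinary (unnormalized) singular chains on cone-product polyhedra. Hence your error term $\Pi$ is \emph{not} supported in total degree $0$: it equals $r$ on the whole bottom row, and the identity on which your argument rests fails in every positive degree contributed by $\btc_{0*}$.

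This defect is precisely what the paper's second term handles: there the bottom-row homotopy is $h(y) = e(y - r(y)) - c(r(y))$, where $c(r(y))\in\btc_{0,j+1}$ is the cone, in the parameter direction, on the constant family $r(y)$ (using that cone-product polyhedra are closed under cones), and the cone identity $\bd_t(c(r(y))) - c(r(\bd_t y)) = r(y)$ cancels the leftover $r(y)$, giving $\bd h + h\bd = \id$ in all positive degrees. You could try to salvage your version by noting that your $\Pi$ is a chain map homotopic to the identity whose image lies in the subcomplex of locally constant families valued in $s(H_0)$, and then proving that subcomplex is acyclic in positive degrees --- but that acyclicity is itself proved by a cone operator, so the cone term is not optional bookkeeping; it is the missing ingredient. (Your treatment of the rows $i\ge 1$, including the observation that forgetting the inner blob yields a label lying in $U(B^n)$ by the ideal property and that the correction vanishes on $\bd_b$ of $1$-blob families, agrees with the paper's computation.)
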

\begin{proof}
We will construct a contracting homotopy $h: \btc_*(B^n)\to \btc_{*+1}(B^n)$.

We will assume a splitting $s:H_0(\btc_*(B^n))\to \btc_0(B^n)$
of the quotient map $q:\btc_0(B^n)\to H_0(\btc_*(B^n))$.
Let $\rho = s\circ q$.

For $x\in \btc_{ij}$ with $i\ge 1$ define
\[
	h(x) = e(x) ,
\]
where
\[
	e: \btc_{ij}\to\btc_{i+1,j}
\]
adds an outermost blob, equal to all of $B^n$, to the $j$-parameter family of blob diagrams.
Note that for fixed $i$, $e$ is a chain map, i.e. $\bd_t e = e \bd_t$.

A generator $y\in \btc_{0j}$ is a map $y:P\to \BD_0$, where $P$ is some $j$-dimensional polyhedron.
We define $r(y)\in \btc_{0j}$ to be the constant function $\rho\circ y : P\to \BD_0$. 
Let $c(r(y))\in \btc_{0,j+1}$ be the constant map from the cone of $P$ to $\BD_0$ taking
the same value (namely $r(y(p))$, for any $p\in P$).
Let $e(y - r(y)) \in \btc_{1j}$ denote the $j$-parameter family of 1-blob diagrams
whose value at $p\in P$ is the blob $B^n$ with label $y(p) - r(y(p))$.
Now define, for $y\in \btc_{0j}$,
\[
	h(y) = e(y - r(y)) - c(r(y)) .
\]

We must now verify that $h$ does the job it was intended to do.
For $x\in \btc_{ij}$ with $i\ge 2$ we have
\begin{align*}
	\bd h(x) + h(\bd x) &= \bd(e(x)) + e(\bd x) && \\
			&= \bd_b(e(x)) + (-1)^{i+1} \bd_t(e(x)) + e(\bd_b x) + (-1)^i e(\bd_t x) && \\
			&= \bd_b(e(x)) + e(\bd_b x) && \text{(since $\bd_t(e(x)) = e(\bd_t x)$)} \\
		 	&= x . &&
\end{align*}
For $x\in \btc_{1j}$ we have
\begin{align*}
	\bd h(x) + h(\bd x) &= \bd_b(e(x)) + \bd_t(e(x)) + e(\bd_b x - r(\bd_b x)) - c(r(\bd_b x)) - e(\bd_t x) && \\
			&= \bd_b(e(x)) + e(\bd_b x) && \text{(since $r(\bd_b x) = 0$)} \\
			&= x . &&
\end{align*}
For $x\in \btc_{0j}$ with $j\ge 1$ we have
\begin{align*}
	\bd h(x) + h(\bd x) &= \bd_b(e(x - r(x))) - \bd_t(e(x - r(x))) + \bd_t(c(r(x))) + 
											e(\bd_t x - r(\bd_t x)) - c(r(\bd_t x)) \\
			&= x - r(x) + \bd_t(c(r(x))) - c(r(\bd_t x)) \\
			&= x - r(x) + r(x) \\
			&= x.
\end{align*}
Here we have used the fact that $\bd_b(c(r(x))) = 0$ since $c(r(x))$ is a $0$-blob diagram, 
as well as that $\bd_t(e(r(x))) = e(r(\bd_t x))$
and $\bd_t(c(r(x))) - c(r(\bd_t x))  = r(x)$.

For $x\in \btc_{00}$ we have
\begin{align*}
	\bd h(x) + h(\bd x) &= \bd_b(e(x - r(x))) + \bd_t(c(r(x))) \\
			&= x - r(x) + r(x) - r(x)\\
			&= x - r(x). \qedhere
\end{align*}
\end{proof}

\begin{lemma} \label{btc-prod}
For manifolds $X$ and $Y$, we have $\btc_*(X\du Y) \simeq \btc_*(X)\otimes\btc_*(Y)$.
\end{lemma}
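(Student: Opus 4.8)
The plan is to combine the disjoint-union decomposition of blob diagrams used in the proof of Property \ref{property:disjoint-union} with the Eilenberg--Zilber theorem, which is needed to pass between singular chains on a product and the tensor product of singular chains.

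First I would analyze the space $\BD_i(X\du Y)$ of $i$-blob diagrams. Since $X$ and $Y$ are disjoint, each blob of a diagram lies entirely in $X$ or entirely in $Y$, and in a continuous family of blob diagrams (whose blobs are carried along by a continuous family of homeomorphisms of $X\du Y$, which necessarily preserves the splitting $X\du Y$) the set of blob-indices lying in $X$ is locally constant; moreover the disjoint-union axiom for fields splits the field $r$ on $X\du Y$ uniquely into a field on $X$ and a field on $Y$. Hence there is a natural homeomorphism
\[
	\BD_i(X\du Y) \cong \coprod_{S\subseteq\{1,\dots,i\}} \BD_S(X)\times\BD_{S^c}(Y) ,
\]
where $\BD_S(X)$ denotes $|S|$-blob diagrams on $X$ with blobs indexed (in order) by $S$. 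This homeomorphism is compatible with the action of homeomorphisms, with the gluing maps, and with the blob boundary maps, which act by forgetting a blob from $X$ or from $Y$ and therefore respect the decomposition.

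Next, taking singular chains and using that $C_*$ sends disjoint unions to direct sums, $\btc_{ij}(X\du Y)=\bigoplus_S C_j\big(\BD_S(X)\times\BD_{S^c}(Y)\big)$. I would then invoke the Eilenberg--Zilber theorem: the shuffle map and the Alexander--Whitney map give mutually chain-homotopy-inverse natural maps between $C_*(\BD_S(X)\times\BD_{S^c}(Y))$ and $C_*(\BD_S(X))\otimes C_*(\BD_{S^c}(Y))$. Naturality guarantees compatibility with all the structure maps above, in particular with the horizontal differential $\bd_b$, while the shuffle map is on the nose a chain map for the vertical differential $\bd_t$. Assembling over all $i,j,S$ and passing to total complexes exhibits $\btc_*(X\du Y)$ as chain homotopy equivalent to the total complex of $\bigoplus_S C_*(\BD_S(X))\otimes C_*(\BD_{S^c}(Y))$. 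Finally, the blob-reordering relations (imposed exactly as in Definition \ref{defn:blobs}) identify, with the sign of the permutation, the summands indexed by subsets $S$ of a fixed cardinality, collapsing $\bigoplus_{|S|=i_1} C_*(\BD_S(X))\otimes C_*(\BD_{i-i_1}(Y))$ to a single copy of $C_*(\BD_{i_1}(X))\otimes C_*(\BD_{i_2}(Y))$; matching these up across all bidegrees gives precisely $\btc_*(X)\otimes\btc_*(Y)$.

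The main obstacle is the sign bookkeeping: one must check that the reordering signs, the shuffle signs from the Eilenberg--Zilber map, and the total-complex sign in $\bd=\bd_b+(-1)^i\bd_t$ conspire to reproduce the standard differential $d(a\otimes b)=da\otimes b+(-1)^{\deg a}a\otimes db$ on $\btc_*(X)\otimes\btc_*(Y)$ (with the conventions of Property \ref{property:disjoint-union}). Once the signs are pinned down, the remainder is a routine diagram chase relying only on naturality of the shuffle and Alexander--Whitney maps.
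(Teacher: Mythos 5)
Your proposal is correct and takes essentially the same route as the paper, whose proof simply cites the decomposition $\BD_k(X\du Y)\cong\coprod_{i+j=k}\BD_i(X)\times\BD_j(Y)$ together with the Eilenberg--Zilber theorem. Your version merely spells out the ordering/sign bookkeeping that the paper leaves implicit.
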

\begin{proof}
This follows from the Eilenberg-Zilber theorem and the fact that
\begin{align*}
	\BD_k(X\du Y) & \cong \coprod_{i+j=k} \BD_i(X)\times\BD_j(Y) . \qedhere
\end{align*}
\end{proof}

For $S\sub X$, we say that $a\in \btc_k(X)$ is {\it supported on $S$}
if there exist $a'\in \btc_k(S)$
and $r\in \btc_0(X\setmin S)$ such that $a = a'\bullet r$.

\newcommand\sbtc{\btc^{\cU}}
Let $\cU$ be an open cover of $X$.
Let $\sbtc_*(X)\sub\btc_*(X)$ be the subcomplex generated by
$a\in \btc_*(X)$ such that there is a decomposition $X = \cup_i D_i$
such that each $D_i$ is a ball contained in some open set of $\cU$ and
$a$ is splittable along this decomposition.
In other words, $a$ can be obtained by gluing together pieces, each of which
is small with respect to $\cU$.

\begin{lemma} \label{small-top-blobs}
For any open cover $\cU$ of $X$, the inclusion $\sbtc_*(X)\sub\btc_*(X)$
is a homotopy equivalence.
\end{lemma}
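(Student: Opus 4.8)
The plan is to imitate the proof of the small-blobs lemma (Lemma \ref{thm:small-blobs}) for $\bc_*(X)$, but now carried out in the cone-product space setting of $\btc_*(X)$. Since both $\sbtc_*(X)$ and $\btc_*(X)$ are complexes of abelian groups generated by (families of) blob diagrams, it suffices to show that the inclusion induces an isomorphism on homology, and for that it is enough to show that for every finitely generated pair $(C_*, D_*)$ with $D_* \subset C_*$ fitting inside $(\btc_*(X), \sbtc_*(X))$ we can construct a homotopy $h \colon C_* \to \btc_*(X)$ with $h(D_*) \subset \sbtc_*(X)$ and $h\bd x + \bd h x + x \in \sbtc_*(X)$ for all $x \in C_*$. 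The construction of $h$ proceeds by induction on the homological degree $l$, exactly paralleling the $h_0, h_1, h_2, \ldots$ of Lemma \ref{thm:small-blobs}.

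The key point is that the shrinking argument used there goes through almost verbatim. Given a generator $x$ of $C_l$ — a $j$-parameter family of blob diagrams, i.e.\ a map $P \to \BD_i$ with $i + j = l$ — we choose a sequence of collar maps $\bar f_1, \ldots$ whose composition $g$ shrinks the supports of the whole family (which, since $P$ is compact, lie in a fixed compact subset of $X$) into a disjoint union of balls, each contained in some open set of $\cU$; then $g(x) \in \sbtc_*(X)$. The collar maps can be applied uniformly to the family because the action of $\Homeo(X)$ on $\BD_i$ is continuous and the support of a compact family is compact; this is where one uses the topology on $\BD_i$ rather than just the set. The intermediate $1$-blob-like correction terms $c_{ij}$ of Lemma \ref{thm:small-blobs} now become $j$-parameter families of such correction diagrams, obtained by the same explicit formula, and the auxiliary open covers $\cV_1 \supset \cV_2 \supset \cdots$ are chosen by the same nesting-of-balls condition. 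At each stage one invokes the acyclicity of $\btc_*$ on disjoint unions of balls — that is, Lemmas \ref{bt-contract} and \ref{btc-prod} in place of Proposition \ref{bcontract} and Corollary \ref{disj-union-contract} — to produce the chain $x_k$ with the prescribed boundary and support, and assembles $h_l(x)$ from these. Since $C_*$ is finitely generated the process terminates after finitely many steps, and we set $\cV_r = \cU$ at the last stage.

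The main obstacle is bookkeeping rather than a genuinely new idea: one must check that the collar maps and the correction families are continuous in the $P$-parameter (so that they define actual singular chains in $\btc_*$, not just families of diagrams), and that the various support and splittability conditions are preserved under the vertical (singular) boundary $\bd_t$ as well as the horizontal (blob) boundary $\bd_b$, so that the identity $\bd h + h\bd + \id \in \sbtc_*$ holds with the correct signs coming from $\bd = \bd_b + (-1)^i \bd_t$. One slightly delicate technical point is that the supports of the blobs $B_i(p)$ in a family are allowed to move with $p$ (they are induced by a family of homeomorphisms), so the "fixed ball $B$" of the $\bc_*$ argument must be replaced by a compact region containing $\bigcup_p \supp(x(p))$; since $P$ is a compact polyhedron this causes no real difficulty, but it is the place where one must be careful. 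Once these points are handled, the argument is a routine, if lengthy, transcription of the proof of Lemma \ref{thm:small-blobs}.
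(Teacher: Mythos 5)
There is a genuine gap, and it sits exactly at the point you dismiss as causing ``no real difficulty.'' Membership in $\sbtc_*(X)$ is not the condition that every diagram $x(p)$ in the family has $\cU$-small blobs; it is the condition that the chain as a whole is splittable along a single decomposition $X = \cup_i D_i$ into balls each contained in an open set of $\cU$, i.e.\ that the \emph{family} is obtained by gluing together families living on the small pieces. Your collar-shrinking argument (the families version of Lemma \ref{small-blobs-b}) only achieves the first, pointwise, kind of smallness and does nothing about the parameter direction. A typical generator of $\btc_{ij}$ has the form $p \mapsto g(p)(b)$ for a family of homeomorphisms $g: P\to \Homeo(X)$ (with twig labels varying independently), and if $g$ has large support --- say a $1$-parameter isotopy dragging a tiny blob, or merely deforming the field, all the way across $X$ --- then $\bigcup_p \supp(x(p))$ is not contained in any disjoint union of $\cU$-small balls, and applying collar maps uniformly in $p$ cannot change this: the resulting chain is still not splittable along any $\cU$-small decomposition. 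Compactness of $P$ is irrelevant here, since $X$ itself is compact.

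The missing ingredient is the second half of the paper's proof: after making the diagrams small in families, one homotopes each family to a sum of families lifted to $\Homeo(X)$ and then invokes Lemma \ref{extension_lemma_c} (which rests on the adaptation Lemma \ref{basic_adaptation_lemma} of Appendix \ref{sec:localising}) to deform the family of homeomorphisms, after subdividing the parameter polyhedron, so that on each piece its support lies in a bounded number of sets of $\cU$, hence in a disjoint union of balls each inside an open set of $\cU$. Only after this step do the generators actually land in $\sbtc_*(X)$, and only then can the acyclicity statements (Lemmas \ref{bt-contract} and \ref{btc-prod}) be used in the way you intend. Without an argument of this kind, your homotopy $h$ cannot be constructed even in low degrees, e.g.\ for a degree-one singular chain in $\btc_{0,1}$ coming from an isotopy of $X$ with large support.
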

\begin{proof}
This follows from a combination of Lemma \ref{extension_lemma_c} and the techniques of
the proof of Lemma \ref{small-blobs-b}.

It suffices to show that we can deform a finite subcomplex $C_*$ of $\btc_*(X)$ into $\sbtc_*(X)$
(relative to any designated subcomplex of $C_*$ already in $\sbtc_*(X)$).
The first step is to replace families of general blob diagrams with families 
of blob diagrams that are small with respect to $\cU$.
(If $f:P \to \BD_k$ is the family then for all $p\in P$ we have that $f(p)$ is a diagram in which the blobs are small.)
This is done as in the proof of Lemma \ref{small-blobs-b}; the technique of the proof works in families.
Each such family is homotopic to a sum of families which can be a ``lifted" to $\Homeo(X)$.
That is, $f:P \to \BD_k$ has the form $f(p) = g(p)(b)$ for some $g:P\to \Homeo(X)$ and $b\in \BD_k$.
(We are ignoring a complication related to twig blob labels, which might vary
independently of $g$, but this complication does not affect the conclusion we draw here.)
We now apply Lemma \ref{extension_lemma_c} to get families which are supported 
on balls $D_i$ contained in open sets of $\cU$.
\end{proof}

\begin{proof}[Proof of Lemma \ref{lem:bc-btc}]
Armed with the above lemmas, we can now proceed similarly to the proof of Lemma \ref{small-blobs-b}.

It suffices to show that for any finitely generated pair of subcomplexes 
$(C_*, D_*) \sub (\btc_*(X), \bc_*(X))$
we can find a homotopy $h:C_*\to \btc_{*+1}(X)$ such that $h(D_*) \sub \bc_{*+1}(X)$
and $x + h\bd(x) + \bd h(x) \in \bc_*(X)$ for all $x\in C_*$.

By Lemma \ref{small-top-blobs}, we may assume that $C_* \sub \btc_*^\cU(X)$ for some
cover $\cU$ of our choosing.
We choose $\cU$ fine enough so that each generator of $C_*$ is supported on a disjoint union of balls.
(This is possible since the original $C_*$ was finite and therefore had bounded dimension.)

Since $\bc_0(X) = \btc_0(X)$, we can take $h_0 = 0$.

Let $b \in C_1$ be a generator.
Since $b$ is supported in a disjoint union of balls,
we can find $s(b)\in \bc_1$ with $\bd (s(b)) = \bd b$
(by Corollary \ref{disj-union-contract}), and also $h_1(b) \in \btc_2(X)$
such that $\bd (h_1(b)) = s(b) - b$
(by Lemmas \ref{bt-contract} and \ref{btc-prod}).

Now let $b$ be a generator of $C_2$.
If $\cU$ is fine enough, there is a disjoint union of balls $V$
on which $b + h_1(\bd b)$ is supported.
Since $\bd(b + h_1(\bd b)) = s(\bd b) \in \bc_1(X)$, we can find
$s(b)\in \bc_2(X)$ with $\bd(s(b)) = \bd(b + h_1(\bd b))$ (by Corollary \ref{disj-union-contract}).
By Lemmas \ref{bt-contract} and \ref{btc-prod}, we can now find
$h_2(b) \in \btc_3(X)$, also supported on $V$, such that $\bd(h_2(b)) = s(b) - b - h_1(\bd b)$

The general case, $h_k$, is similar.

Note that it is possible to make the various choices above so that the homotopies we construct
are fixed on $\bc_* \sub \btc_*$.
It follows that we may assume that
the homotopy inverse to the inclusion constructed above is the identity on $\bc_*$.
Note that the complex of all homotopy inverses with this property is contractible, 
so the homotopy inverse is well-defined up to a contractible set of choices.
\end{proof}


\subsection{Action of \texorpdfstring{$\CH{X}$}{C*(Homeo(M))}}
\label{ss:emap-def}

Let  $C_*(\Homeo(X \to Y))$ denote the singular chain complex of
the space of homeomorphisms
between the $n$-manifolds $X$ and $Y$ 
(any given singular chain extends a fixed homeomorphism $\bd X \to \bd Y$).
We also will use the abbreviated notation $\CH{X} \deq \CH{X \to X}$.
(For convenience, we will permit the singular cells generating $\CH{X \to Y}$ to be more general
than simplices --- they can be based on any cone-product polyhedron (see Remark \ref{blobsset-remark}).)

\begin{thm}  \label{thm:CH} \label{thm:evaluation}%
For $n$-manifolds $X$ and $Y$ there is a chain map
\eq{
    e_{XY} : \CH{X \to Y} \otimes \bc_*(X) \to \bc_*(Y) ,
}
well-defined up to coherent homotopy,
such that
\begin{enumerate}
\item on $C_0(\Homeo(X \to Y)) \otimes \bc_*(X)$ it agrees with the obvious action of 
$\Homeo(X, Y)$ on $\bc_*(X)$  described in Property \ref{property:functoriality}, and
\item for any compatible splittings $X\to X\sgl$ and $Y\to Y\sgl$, 
the following diagram commutes up to homotopy
\begin{equation*}
\xymatrix@C+2cm{
      \CH{X \to Y} \otimes \bc_*(X)
        \ar[r]_(.6){e_{XY}}  \ar[d]^{\gl \otimes \gl}   &
            \bc_*(Y)\ar[d]^{\gl} \\
     \CH{X\sgl, Y\sgl} \otimes \bc_*(X\sgl) \ar[r]_(.6){e_{X\sgl Y\sgl}}   & 	\bc_*(Y\sgl)  
}
\end{equation*}
\end{enumerate}
\end{thm}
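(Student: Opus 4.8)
The plan is to avoid defining $e_{XY}$ directly on $\bc_*$ and instead to work with the cone-product model $\btc_*$ of \S\ref{ss:alt-def}, on which families of homeomorphisms act strictly, and then to transport the action back across the homotopy equivalence $\bc_*\htpy\btc_*$ of Lemma \ref{lem:bc-btc}.

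First I would build a strict action $\tilde e_{XY}\colon \CH{X\to Y}\otimes\btc_*(X)\to\btc_*(Y)$. For each $k$ the evaluation map $\Homeo(X\to Y)\times \BD_k(X)\to\BD_k(Y)$, $(f,b)\mapsto f(b)$, is continuous: since $\BD_k(X)$ carries the finest topology making the structure maps continuous, continuity out of it reduces to checking the composites with the $\Homeo$-action maps, the gluing maps $\BD_k(M)\to\BD_k(M\sgl)$, and the maps $U(B)\to\BD_1(B)$, and in each case the composite is manifestly continuous because the $\Homeo$-action is compatible with composition, with gluing, and with the field structure. Applying singular chains together with the Eilenberg--Zilber map produces, for each $k$, a map $C_*(\Homeo(X\to Y))\otimes C_j(\BD_k(X))\to C_j(\BD_k(Y))$; since pushing a blob diagram forward commutes with the blob boundary $\bd_b$ and Eilenberg--Zilber commutes with the singular boundary $\bd_t$, passing to total complexes (with the standard Koszul signs dictated by $\bd=\bd_b+(-1)^i\bd_t$) gives a chain map $\tilde e_{XY}$. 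Being assembled from an honest continuous map of spaces, $\tilde e$ is strictly associative (which simultaneously establishes Theorem \ref{thm:CH-associativity}), and, since the gluing maps on $\BD_*$ are continuous and commute on the nose with the $\Homeo$-action, the square of part (2) commutes strictly at the level of $\btc_*$. One can moreover arrange that $\tilde e_{XY}(f\otimes b)$ is supported on $|f|\cup|b|$, since $f(b)$ is.

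Next I would set $e_{XY}$ equal to the composite $\bc_*(X)\xrightarrow{\iota}\btc_*(X)\xrightarrow{\tilde e_{XY}}\btc_*(Y)\xrightarrow{\,p\,}\bc_*(Y)$, where $\iota$ is the inclusion and $p$ is a homotopy inverse restricting to the identity on $\bc_*$. By the closing remark in the proof of Lemma \ref{lem:bc-btc}, the space of such $p$, with all its higher coherences, is contractible, so $e_{XY}$ is well-defined up to coherent homotopy, and the same contractibility packages the homotopies witnessing (1) and (2) into a coherent system. Part (1) is then immediate: on $C_0(\Homeo(X\to Y))\otimes\bc_*(X)$ the map $\tilde e$ is literally $(f,b)\mapsto f(b)$, and since $\iota$ and $p$ are the identity on $\bc_*$ this recovers the functoriality action of Property \ref{property:functoriality}. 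Part (2) follows by chasing the prism built from the strictly commuting $\btc_*$-square of the previous paragraph together with the (up to homotopy) commuting squares expressing that $\iota$ and $p$ intertwine the gluing maps on $\bc_*$ with those on $\btc_*$ — this last compatibility being supplied by Lemma \ref{lem:bc-btc} together with the gluing maps on $\btc_*$ induced by those on $\BD_*$.

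The hard part is bookkeeping rather than conceptual: verifying that the topology on $\BD_k$ really makes the evaluation map continuous (the delicate point being the independently-varying twig-blob labels and their interaction with gluing), tracking signs in the double complex $\btc_{**}$ carefully enough that $\tilde e$ is a chain map and that its associativity and gluing identities are genuinely strict, and — most subtly — upgrading the collection of homotopies produced by the transfer to a coherent system, which is exactly what underwrites the phrase ``well-defined up to coherent homotopy''. Boundary conditions cause no difficulty, since all homeomorphisms and isotopies in play fix $\bd X$ and $\bd Y$ throughout. If the local (small-support) refinement of $e_{XY}$ is wanted for later sections, one additionally invokes the extension lemma \ref{extension_lemma_c} and the small-blobs lemma \ref{small-blobs-b}, exactly as in the proof of Lemma \ref{small-top-blobs}.
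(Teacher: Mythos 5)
Your proposal is correct and follows essentially the same route as the paper: the paper likewise reduces to the cone-product model via Lemma \ref{lem:bc-btc} and defines a strict action on $\btc_*$ by $(p,q)\mapsto f(p)(a(q))$ on the product parameter polyhedron (its convention of allowing cone-product polyhedra as parameter spaces absorbs your Eilenberg--Zilber step), with part (1) and the gluing square holding on the nose there and the ``up to coherent homotopy'' qualifiers arising exactly from the transfer back to $\bc_*$.
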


\begin{proof}
In light of Lemma \ref{lem:bc-btc}, it suffices to prove the theorem with 
$\bc_*$ replaced by $\btc_*$.
In fact, for $\btc_*$ we get a sharper result: we can omit
the ``up to homotopy" qualifiers.

Let $f\in C_k(\Homeo(X \to Y))$, $f:P^k\to \Homeo(X \to Y)$ and $a\in \btc_{ij}(X)$, 
$a:Q^j \to \BD_i(X)$.
Define $e_{XY}(f\ot a)\in \btc_{i,j+k}(Y)$ by
\begin{align*}
	e_{XY}(f\ot a) : P\times Q &\to \BD_i(Y) \\
	(p,q) &\mapsto f(p)(a(q))  .
\end{align*}
It is clear that this agrees with the previously defined $C_0(\Homeo(X \to Y))$ action on $\btc_*$,
and it is also easy to see that the diagram in item 2 of the statement of the theorem
commutes on the nose.
\end{proof}

\begin{thm}
\label{thm:CH-associativity}
The $\CH{X \to Y}$ actions defined above are associative.
That is, the following diagram commutes up to coherent homotopy:
\[ \xymatrix@C=5pt{
& \CH{Y\to Z} \ot \bc_*(Y) \ar[drr]^{e_{YZ}} & &\\
\CH{X \to Y} \ot \CH{Y \to Z} \ot \bc_*(X) \ar[ur]^{e_{XY}\ot\id} \ar[dr]_{\mu\ot\id} & & & \bc_*(Z) \\
& \CH{X \to Z} \ot \bc_*(X) \ar[urr]_{e_{XZ}} & &
} \]
Here $\mu:\CH{X\to Y} \ot \CH{Y \to Z}\to \CH{X \to Z}$ is the map induced by composition
of homeomorphisms.
\end{thm}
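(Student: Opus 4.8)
The plan is to imitate the proof of Theorem~\ref{thm:CH}: in light of Lemma~\ref{lem:bc-btc} it suffices to establish the associativity diagram with $\bc_*$ replaced by $\btc_*$, and for $\btc_*$ we can again drop the ``up to homotopy'' qualifier --- indeed the diagram will commute strictly. Transporting the statement back across the homotopy equivalence $\bc_*\simeq\btc_*$ of Lemma~\ref{lem:bc-btc} is exactly what produces the ``up to coherent homotopy'' in the statement for $\bc_*$.

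First I would recall the on-the-nose formula for the $\btc_*$ action from the proof of Theorem~\ref{thm:CH}: for $f\colon P\to\Homeo(X\to Y)$ and $a\colon Q\to\BD_i(X)$ one has $e_{XY}(f\ot a)\colon P\times Q\to\BD_i(Y)$, $(p,q)\mapsto f(p)(a(q))$. Now take generators $f\colon P\to\Homeo(X\to Y)$, $g\colon P'\to\Homeo(Y\to Z)$ of the two homeomorphism-chain factors and $a\colon Q\to\BD_i(X)$. Going around the top of the triangle, $(e_{XY}\ot\id)(f\ot g\ot a)=g\ot e_{XY}(f\ot a)$, and then $e_{YZ}\big(g\ot e_{XY}(f\ot a)\big)\colon P'\times P\times Q\to\BD_i(Z)$ is $(p',p,q)\mapsto g(p')\big(f(p)(a(q))\big)$. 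Going around the bottom, $\mu(f\ot g)\colon P\times P'\to\Homeo(X\to Z)$ is $(p,p')\mapsto g(p')\circ f(p)$, so $e_{XZ}\big(\mu(f\ot g)\ot a\big)\colon P\times P'\times Q\to\BD_i(Z)$ is $(p,p',q)\mapsto\big(g(p')\circ f(p)\big)(a(q))=g(p')\big(f(p)(a(q))\big)$. These two families of blob diagrams agree after the evident permutation of the product factors $P$ and $P'$, which is nothing more than the standard shuffle/Eilenberg--Zilber bookkeeping implicit in forming tensor products of singular chains on products (and it is compatible with our sign conventions, since $\mu$ is itself built from the Eilenberg--Zilber map). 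Hence the diagram commutes on the nose for $\btc_*$.

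To deduce the statement for $\bc_*$, I would invoke the final paragraph of the proof of Lemma~\ref{lem:bc-btc}: the inclusion $\bc_*(X)\hookrightarrow\btc_*(X)$ admits a homotopy inverse which is the identity on $\bc_*$, and the complex of all such homotopy inverses, together with the homotopies exhibiting the equivalence, is contractible. Conjugating the strictly associative $\btc_*$-action along these homotopy equivalences yields the $\bc_*$-action $e_{XY}$ of Theorem~\ref{thm:CH} together with a coherent system of higher homotopies filling the associativity triangle; the coherence of this system (the compatibility of the higher homotopies under the appropriate face and degeneracy operations) is forced by the contractibility of the space of choices, exactly as in the argument that $e_{XY}$ itself is well-defined up to coherent homotopy.

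The real work here is organizational rather than geometric: one must pin down precisely what ``commutes up to coherent homotopy'' means (a tower of higher homotopies between the two composites, coherent under the relevant simplicial-type structure maps) and verify that pulling a strictly commuting diagram back along a homotopy equivalence that is canonical up to a contractible space of choices produces such a tower. No new input about manifolds, fields, or blob diagrams beyond the strict $\btc_*$ model and Lemma~\ref{lem:bc-btc} is needed.
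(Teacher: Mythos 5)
Your proposal is correct and follows essentially the same route as the paper: the paper's proof consists precisely of the observation that the corresponding diagram for $\btc_*$ commutes on the nose (using the explicit formula $e_{XY}(f\ot a)(p,q)=f(p)(a(q))$ from the proof of Theorem \ref{thm:CH}), with the passage back to $\bc_*$ via Lemma \ref{lem:bc-btc} accounting for the ``up to coherent homotopy.'' Your write-up simply spells out the pointwise computation and the transport argument that the paper leaves implicit.
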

\begin{proof}
The corresponding diagram for $\btc_*$ commutes on the nose.
\end{proof}

\begin{remark} \label{collar-map-action-remark} \rm
Like $\Homeo(X)$, collar maps also have a natural topology (see discussion following Axiom \ref{axiom:families}),
and by adjusting the topology on blob diagrams we can arrange that families of collar maps
act naturally on $\btc_*(X)$.
\end{remark}

\noop{

\subsection{[older version still hanging around]}
\label{ss:old-evmap-remnants}

\nn{should comment at the start about any assumptions about smooth, PL etc.}

\nn{should maybe mention alternate def of blob complex (cone-product space instead of
cone-product set) where this action would be easy}

Let $CH_*(X, Y)$ denote $C_*(\Homeo(X \to Y))$, the singular chain complex of
the space of homeomorphisms
between the $n$-manifolds $X$ and $Y$ (any given singular chain extends a fixed homeomorphism $\bd X \to \bd Y$).
We also will use the abbreviated notation $CH_*(X) \deq CH_*(X, X)$.
(For convenience, we will permit the singular cells generating $CH_*(X, Y)$ to be more general
than simplices --- they can be based on any linear polyhedron.
\nn{be more restrictive here?  does more need to be said?})

\begin{thm}  \label{thm:CH}
For $n$-manifolds $X$ and $Y$ there is a chain map
\eq{
    e_{XY} : CH_*(X, Y) \otimes \bc_*(X) \to \bc_*(Y)
}
such that
\begin{enumerate}
\item on $CH_0(X, Y) \otimes \bc_*(X)$ it agrees with the obvious action of 
$\Homeo(X, Y)$ on $\bc_*(X)$  described in Property (\ref{property:functoriality}), and
\item for any compatible splittings $X\to X\sgl$ and $Y\to Y\sgl$, 
the following diagram commutes up to homotopy
\begin{equation*}
\xymatrix@C+2cm{
      CH_*(X, Y) \otimes \bc_*(X)
        \ar[r]_(.6){e_{XY}}  \ar[d]^{\gl \otimes \gl}   &
            \bc_*(Y)\ar[d]^{\gl} \\
     CH_*(X\sgl, Y\sgl) \otimes \bc_*(X\sgl) \ar[r]_(.6){e_{X\sgl Y\sgl}}   & 	\bc_*(Y\sgl)  
}
\end{equation*}
\end{enumerate}
Moreover, for any $m \geq 0$, we can find a family of chain maps $\{e_{XY}\}$ 
satisfying the above two conditions which is $m$-connected. In particular, 
this means that the choice of chain map above is unique up to homotopy.
\end{thm}
\begin{rem}
Note that the statement doesn't quite give uniqueness up to iterated homotopy. 
We fully expect that this should actually be the case, but haven't been able to prove this.
\end{rem}

Before giving the proof, we state the essential technical tool of Lemma \ref{extension_lemma}, 
and then give an outline of the method of proof.

Without loss of generality, we will assume $X = Y$.

\medskip

Let $f: P \times X \to X$ be a family of homeomorphisms (e.g. a generator of $CH_*(X)$)
and let $S \sub X$.
We say that {\it $f$ is supported on $S$} if $f(p, x) = f(q, x)$ for all
$x \notin S$ and $p, q \in P$. Equivalently, $f$ is supported on $S$ if 
there is a family of homeomorphisms $f' : P \times S \to S$ and a ``background"
homeomorphism $f_0 : X \to X$ so that
\begin{align*}
	f(p,s) & = f_0(f'(p,s)) \;\;\;\; \mbox{for}\; (p, s) \in P\times S \\
\intertext{and}
	f(p,x) & = f_0(x) \;\;\;\; \mbox{for}\; (p, x) \in {P \times (X \setmin S)}.
\end{align*}
Note that if $f$ is supported on $S$ then it is also supported on any $R \sup S$.
(So when we talk about ``the" support of a family, there is some ambiguity,
but this ambiguity will not matter to us.)

Let $\cU = \{U_\alpha\}$ be an open cover of $X$.
A $k$-parameter family of homeomorphisms $f: P \times X \to X$ is
{\it adapted to $\cU$} 
if the support of $f$ is contained in the union
of at most $k$ of the $U_\alpha$'s.

\begin{lemma}  \label{extension_lemma}
Let $x \in CH_k(X)$ be a singular chain such that $\bd x$ is adapted to $\cU$.
Then $x$ is homotopic (rel boundary) to some $x' \in CH_k(X)$ which is adapted to $\cU$.
Furthermore, one can choose the homotopy so that its support is equal to the support of $x$.
\end{lemma}

The proof will be given in \S\ref{sec:localising}.

\medskip

Before diving into the details, we outline our strategy for the proof of Theorem \ref{thm:CH}.
Let $p$ be a singular cell in $CH_k(X)$ and $b$ be a blob diagram in $\bc_*(X)$.
We say that $p\ot b$ is {\it localizable} if there exists $V \sub X$ such that
\begin{itemize}
\item $V$ is homeomorphic to a disjoint union of balls, and
\item $\supp(p) \cup \supp(b) \sub V$.
\end{itemize}
(Recall that $\supp(b)$ is defined to be the union of the blobs of the diagram $b$.)

Assuming that $p\ot b$ is localizable as above, 
let $W = X \setmin V$, $W' = p(W)$ and $V' = X\setmin W'$.
We then have a factorization 
\[
	p = \gl(q, r),
\]
where $q \in CH_k(V, V')$ and $r \in CH_0(W, W')$.
We can also factorize $b = \gl(b_V, b_W)$, where $b_V\in \bc_*(V)$ and $b_W\in\bc_0(W)$.
According to the commutative diagram of the proposition, we must have
\[
	e_X(p\otimes b) = e_X(\gl(q\otimes b_V, r\otimes b_W)) = 
				gl(e_{VV'}(q\otimes b_V), e_{WW'}(r\otimes b_W)) .
\]
Since $r$ is a  0-parameter family of homeomorphisms, we must have
\[
	e_{WW'}(r\otimes b_W) = r(b_W),
\]
where $r(b_W)$ denotes the obvious action of homeomorphisms on blob diagrams (in
this case a 0-blob diagram).
Since $V'$ is a disjoint union of balls, $\bc_*(V')$ is acyclic in degrees $>0$ 
(by Properties \ref{property:disjoint-union} and \ref{property:contractibility}).
Assuming inductively that we have already defined $e_{VV'}(\bd(q\otimes b_V))$,
there is, up to (iterated) homotopy, a unique choice for $e_{VV'}(q\otimes b_V)$
such that 
\[
	\bd(e_{VV'}(q\otimes b_V)) = e_{VV'}(\bd(q\otimes b_V)) .
\]

Thus the conditions of the proposition determine (up to homotopy) the evaluation
map for localizable generators $p\otimes b$.
On the other hand, Lemma \ref{extension_lemma} allows us to homotope 
arbitrary generators to sums of localizable generators.
This (roughly) establishes the uniqueness part of the proposition.
To show existence, we must show that the various choices involved in constructing
evaluation maps in this way affect the final answer only by a homotopy.

Now for a little more detail.
(But we're still just motivating the full, gory details, which will follow.)
Choose a metric on $X$, and let $\cU_\gamma$ be the open cover of $X$ by balls of radius $\gamma$.
By Lemma \ref{extension_lemma} we can restrict our attention to $k$-parameter families 
$p$ of homeomorphisms such that $\supp(p)$ is contained in the union of $k$ $\gamma$-balls.
For fixed blob diagram $b$ and fixed $k$, it's not hard to show that for $\gamma$ small enough
$p\ot b$ must be localizable.
On the other hand, for fixed $k$ and $\gamma$ there exist $p$ and $b$ such that $p\ot b$ is not localizable,
and for fixed $\gamma$ and $b$ there exist non-localizable $p\ot b$ for sufficiently large $k$.
Thus we will need to take an appropriate limit as $\gamma$ approaches zero.

The construction of $e_X$, as outlined above, depends on various choices, one of which 
is the choice, for each localizable generator $p\ot b$, 
of disjoint balls $V$ containing $\supp(p)\cup\supp(b)$.
Let $V'$ be another disjoint union of balls containing $\supp(p)\cup\supp(b)$,
and assume that there exists yet another disjoint union of balls $W$ containing 
$V\cup V'$.
Then we can use $W$ to construct a homotopy between the two versions of $e_X$ 
associated to $V$ and $V'$.
If we impose no constraints on $V$ and $V'$ then such a $W$ need not exist.
Thus we will insist below that $V$ (and $V'$) be contained in small metric neighborhoods
of $\supp(p)\cup\supp(b)$.
Because we want not mere homotopy uniqueness but iterated homotopy uniqueness,
we will similarly require that $W$ be contained in a slightly larger metric neighborhood of 
$\supp(p)\cup\supp(b)$, and so on.

\begin{proof}[Proof of Theorem \ref{thm:CH}.]
We'll use the notation $|b| = \supp(b)$ and $|p| = \supp(p)$.

Choose a metric on $X$.
Choose a monotone decreasing sequence of positive real numbers $\ep_i$ converging to zero
(e.g.\ $\ep_i = 2^{-i}$).
Choose another sequence of positive real numbers $\delta_i$ such that $\delta_i/\ep_i$
converges monotonically to zero (e.g.\ $\delta_i = \ep_i^2$).
Let $\phi_l$ be an increasing sequence of positive numbers
satisfying the inequalities of Lemma \ref{xx2phi} below.
Given a generator $p\otimes b$ of $CH_*(X)\otimes \bc_*(X)$ and non-negative integers $i$ and $l$
define
\[
	N_{i,l}(p\ot b) \deq \Nbd_{l\ep_i}(|b|) \cup \Nbd_{\phi_l\delta_i}(|p|).
\]
In other words, for each $i$
we use the metric to choose nested neighborhoods of $|b|\cup |p|$ (parameterized
by $l$), with $\ep_i$ controlling the size of the buffers around $|b|$ and $\delta_i$ controlling
the size of the buffers around $|p|$.

Next we define subcomplexes $G_*^{i,m} \sub CH_*(X)\otimes \bc_*(X)$.
Let $p\ot b$ be a generator of $CH_*(X)\otimes \bc_*(X)$ and let $k = \deg(p\ot b)
= \deg(p) + \deg(b)$.
We say $p\ot b$ is in $G_*^{i,m}$ exactly when either (a) $\deg(p) = 0$ or (b)
there exist codimension-zero submanifolds $V_0,\ldots,V_m \sub X$ such that each $V_j$
is homeomorphic to a disjoint union of balls and
\[
	N_{i,k}(p\ot b) \subeq V_0 \subeq N_{i,k+1}(p\ot b)
			\subeq V_1 \subeq \cdots \subeq V_m \subeq N_{i,k+m+1}(p\ot b) ,
\]
and further $\bd(p\ot b) \in G_*^{i,m}$.
We also require that $b$ is splitable (transverse) along the boundary of each $V_l$.

Note that $G_*^{i,m+1} \subeq G_*^{i,m}$.

As sketched above and explained in detail below, 
$G_*^{i,m}$ is a subcomplex where it is easy to define
the evaluation map.
The parameter $m$ controls the number of iterated homotopies we are able to construct
(see Lemma \ref{m_order_hty}).
The larger $i$ is (i.e.\ the smaller $\ep_i$ is), the better $G_*^{i,m}$ approximates all of
$CH_*(X)\ot \bc_*(X)$ (see Lemma \ref{Gim_approx}).

Next we define a chain map (dependent on some choices) $e_{i,m}: G_*^{i,m} \to \bc_*(X)$.
(When the domain is clear from context we will drop the subscripts and write
simply  $e: G_*^{i,m} \to \bc_*(X)$).
Let $p\ot b \in G_*^{i,m}$.
If $\deg(p) = 0$, define
\[
	e(p\ot b) = p(b) ,
\]
where $p(b)$ denotes the obvious action of the homeomorphism(s) $p$ on the blob diagram $b$.
For general $p\ot b$ ($\deg(p) \ge 1$) assume inductively that we have already defined
$e(p'\ot b')$ when $\deg(p') + \deg(b') < k = \deg(p) + \deg(b)$.
Choose $V = V_0$ as above so that 
\[
	N_{i,k}(p\ot b) \subeq V \subeq N_{i,k+1}(p\ot b) .
\]
Let $\bd(p\ot b) = \sum_j p_j\ot b_j$, and let $V^j$ be the choice of neighborhood
of $|p_j|\cup |b_j|$ made at the preceding stage of the induction.
For all $j$, 
\[
	V^j \subeq N_{i,k}(p_j\ot b_j) \subeq N_{i,k}(p\ot b) \subeq V .
\]
(The second inclusion uses the facts that $|p_j| \subeq |p|$ and $|b_j| \subeq |b|$.)
We therefore have splittings
\[
	p = p'\bullet p'' , \;\; b = b'\bullet b'' , \;\; e(\bd(p\ot b)) = f'\bullet f'' ,
\]
where $p' \in CH_*(V)$, $p'' \in CH_*(X\setmin V)$, 
$b' \in \bc_*(V)$, $b'' \in \bc_*(X\setmin V)$, 
$f' \in \bc_*(p(V))$, and $f'' \in \bc_*(p(X\setmin V))$.
(Note that since the family of homeomorphisms $p$ is constant (independent of parameters)
near $\bd V$, the expressions $p(V) \sub X$ and $p(X\setmin V) \sub X$ are
unambiguous.)
We have $\deg(p'') = 0$ and, inductively, $f'' = p''(b'')$.
Choose $x' \in \bc_*(p(V))$ such that $\bd x' = f'$.
This is possible by Properties \ref{property:disjoint-union} and \ref{property:contractibility} 
and the fact that isotopic fields differ by a local relation.
Finally, define
\[
	e(p\ot b) \deq x' \bullet p''(b'') .
\]

Note that above we are essentially using the method of acyclic models \nn{\S \ref{sec:moam}}.
For each generator $p\ot b$ we specify the acyclic (in positive degrees) 
target complex $\bc_*(p(V)) \bullet p''(b'')$.

The definition of $e: G_*^{i,m} \to \bc_*(X)$ depends on two sets of choices:
The choice of neighborhoods $V$ and the choice of inverse boundaries $x'$.
The next lemma shows that up to (iterated) homotopy $e$ is independent
of these choices.
(Note that independence of choices of $x'$ (for fixed choices of $V$)
is a standard result in the method of acyclic models.)



\begin{lemma} \label{m_order_hty}
Let $\tilde{e} :  G_*^{i,m} \to \bc_*(X)$ be a chain map constructed like $e$ above, but with
different choices of $V$ (and hence also different choices of $x'$) at each step.
If $m \ge 1$ then $e$ and $\tilde{e}$ are homotopic.
If $m \ge 2$ then any two choices of this first-order homotopy are second-order homotopic.
Continuing, $e :  G_*^{i,m} \to \bc_*(X)$ is well-defined up to $m$-th order homotopy.
\end{lemma}

\begin{proof}
We construct $h: G_*^{i,m} \to \bc_*(X)$ such that $\bd h + h\bd = e - \tilde{e}$.
The chain maps $e$ and $\tilde{e}$ coincide on bidegrees $(0, j)$, so define $h$
to be zero there.
Assume inductively that $h$ has been defined for degrees less than $k$.
Let $p\ot b$ be a generator of degree $k$.
Choose $V_1$ as in the definition of $G_*^{i,m}$ so that
\[
	N_{i,k+1}(p\ot b) \subeq V_1 \subeq N_{i,k+2}(p\ot b) .
\]
There are splittings
\[
	p = p'_1\bullet p''_1 , \;\; b = b'_1\bullet b''_1 , 
			\;\; e(p\ot b) - \tilde{e}(p\ot b) - h(\bd(p\ot b)) = f'_1\bullet f''_1 ,
\]
where $p'_1 \in CH_*(V_1)$, $p''_1 \in CH_*(X\setmin V_1)$, 
$b'_1 \in \bc_*(V_1)$, $b''_1 \in \bc_*(X\setmin V_1)$, 
$f'_1 \in \bc_*(p(V_1))$, and $f''_1 \in \bc_*(p(X\setmin V_1))$.
Inductively, $\bd f'_1 = 0$ and $f_1'' = p_1''(b_1'')$.
Choose $x'_1 \in \bc_*(p(V_1))$ so that $\bd x'_1 = f'_1$.
Define 
\[
	h(p\ot b) \deq x'_1 \bullet p''_1(b''_1) .
\]
This completes the construction of the first-order homotopy when $m \ge 1$.

The $j$-th order homotopy is constructed similarly, with $V_j$ replacing $V_1$ above.
\end{proof}

Note that on $G_*^{i,m+1} \subeq G_*^{i,m}$, we have defined two maps,
$e_{i,m}$ and $e_{i,m+1}$.
An easy variation on the above lemma shows that 
the restrictions of $e_{i,m}$ and $e_{i,m+1}$ to $G_*^{i,m+1}$ are $m$-th 
order homotopic.

Next we show how to homotope chains in $CH_*(X)\ot \bc_*(X)$ to one of the 
$G_*^{i,m}$.
Choose a monotone decreasing sequence of real numbers $\gamma_j$ converging to zero.
Let $\cU_j$ denote the open cover of $X$ by balls of radius $\gamma_j$.
Let $h_j: CH_*(X)\to CH_*(X)$ be a chain map homotopic to the identity whose image is 
spanned by families of homeomorphisms with support compatible with $\cU_j$, 
as described in Lemma \ref{extension_lemma}.
Recall that $h_j$ and also the homotopy connecting it to the identity do not increase
supports.
Define
\[
	g_j \deq h_j\circ h_{j-1} \circ \cdots \circ h_1 .
\]
The next lemma says that for all generators $p\ot b$ we can choose $j$ large enough so that
$g_j(p)\ot b$ lies in $G_*^{i,m}$, for arbitrary $m$ and sufficiently large $i$ 
(depending on $b$, $\deg(p)$ and $m$).

\begin{lemma} \label{Gim_approx}
Fix a blob diagram $b$, a homotopy order $m$ and a degree $n$ for $CH_*(X)$.
Then there exists a constant $k_{bmn}$ such that for all $i \ge k_{bmn}$
there exists another constant $j_{ibmn}$ such that for all $j \ge j_{ibmn}$ and all $p\in CH_n(X)$ 
we have $g_j(p)\ot b \in G_*^{i,m}$.
\end{lemma}

For convenience we also define $k_{bmp} = k_{bmn}$
and $j_{ibmp} = j_{ibmn}$ where $n=\deg(p)$.
Note that we may assume that
\[
	k_{bmp} \ge k_{alq}
\]
for all $l\ge m$ and all $q\ot a$ which appear in the boundary of $p\ot b$.
Additionally, we may assume that
\[
	j_{ibmp} \ge j_{ialq}
\]
for all $l\ge m$ and all $q\ot a$ which appear in the boundary of $p\ot b$.

\begin{proof}

There exists $\lambda > 0$ such that for every  subset $c$ of the blobs of $b$ the set 
$\Nbd_u(c)$ is homeomorphic to $|c|$ for all $u < \lambda$ .
(Here we are using the fact that the blobs are 
piecewise smooth or piecewise-linear and that $\bd c$ is collared.)
We need to consider all such $c$ because all generators appearing in
iterated boundaries of $p\ot b$ must be in $G_*^{i,m}$.)

Let $r = \deg(b)$ and 
\[
	t = r+n+m+1 = \deg(p\ot b) + m + 1.
\]

Choose $k = k_{bmn}$ such that
\[
	t\ep_k < \lambda
\]
and
\[
	n\cdot (2 (\phi_t + 1) \delta_k) < \ep_k .
\]
Let $i \ge k_{bmn}$.
Choose $j = j_i$ so that
\[
	\gamma_j < \delta_i
\]
and also so that $\phi_t \gamma_j$ is less than the constant $\rho(M)$ of Lemma \ref{xxzz11}.

Let $j \ge j_i$ and $p\in CH_n(X)$.
Let $q$ be a generator appearing in $g_j(p)$.
Note that $|q|$ is contained in a union of $n$ elements of the cover $\cU_j$,
which implies that $|q|$ is contained in a union of $n$ metric balls of radius $\delta_i$.
We must show that $q\ot b \in G_*^{i,m}$, which means finding neighborhoods
$V_0,\ldots,V_m \sub X$ of $|q|\cup |b|$ such that each $V_j$
is homeomorphic to a disjoint union of balls and
\[
	N_{i,n}(q\ot b) \subeq V_0 \subeq N_{i,n+1}(q\ot b)
			\subeq V_1 \subeq \cdots \subeq V_m \subeq N_{i,t}(q\ot b) .
\]
Recall that
\[
	N_{i,a}(q\ot b) \deq \Nbd_{a\ep_i}(|b|) \cup \Nbd_{\phi_a\delta_i}(|q|).
\]
By repeated applications of Lemma \ref{xx2phi} we can find neighborhoods $U_0,\ldots,U_m$
of $|q|$, each homeomorphic to a disjoint union of balls, with
\[
	\Nbd_{\phi_{n+l} \delta_i}(|q|) \subeq U_l \subeq \Nbd_{\phi_{n+l+1} \delta_i}(|q|) .
\]
The inequalities above guarantee that 
for each $0\le l\le m$ we can find $u_l$ with 
\[
	(n+l)\ep_i \le u_l \le (n+l+1)\ep_i
\]
such that each component of $U_l$ is either disjoint from $\Nbd_{u_l}(|b|)$ or contained in 
$\Nbd_{u_l}(|b|)$.
This is because there are at most $n$ components of $U_l$, and each component
has radius $\le (\phi_t + 1) \delta_i$.
It follows that
\[
	V_l \deq \Nbd_{u_l}(|b|) \cup U_l
\]
is homeomorphic to a disjoint union of balls and satisfies
\[
	N_{i,n+l}(q\ot b) \subeq V_l \subeq N_{i,n+l+1}(q\ot b) .
\]

The same argument shows that each generator involved in iterated boundaries of $q\ot b$
is in $G_*^{i,m}$.
\end{proof}

In the next three lemmas, which provide the estimates needed above, we have made no effort to optimize the various bounds.
(The bounds are, however, optimal in the sense of minimizing the amount of work
we do.  Equivalently, they are the first bounds we thought of.)

We say that a subset $S$ of a metric space has radius $\le r$ if $S$ is contained in
some metric ball of radius $r$.

\begin{lemma}
Let $S \sub \ebb^n$ (Euclidean $n$-space) have radius $\le r$.  
Then $\Nbd_a(S)$ is homeomorphic to a ball for $a \ge 2r$.
\end{lemma}

\begin{proof} \label{xxyy2}
Let $S$ be contained in $B_r(y)$, $y \in \ebb^n$.
Note that if $a \ge 2r$ then $\Nbd_a(S) \sup B_r(y)$.
Let $z\in \Nbd_a(S) \setmin B_r(y)$.
Consider the triangle
with vertices $z$, $y$ and $s$ with $s\in S$ such that $z \in B_a(s)$.
The length of the edge $yz$ is greater than $r$ which is greater
than the length of the edge $ys$.
It follows that the angle at $z$ is less than $\pi/2$ (less than $\pi/3$, in fact),
which means that points on the edge $yz$ near $z$ are closer to $s$ than $z$ is,
which implies that these points are also in $\Nbd_a(S)$.
Hence $\Nbd_a(S)$ is star-shaped with respect to $y$.
\end{proof}

If we replace $\ebb^n$ above with an arbitrary compact Riemannian manifold $M$,
the same result holds, so long as $a$ is not too large:
\nn{replace this with a PL version}

\begin{lemma} \label{xxzz11}
Let $M$ be a compact Riemannian manifold.
Then there is a constant $\rho(M)$ such that for all
subsets $S\sub M$ of radius $\le r$ and all $a$ such that $2r \le a \le \rho(M)$,
$\Nbd_a(S)$ is homeomorphic to a ball.
\end{lemma}

\begin{proof}
Choose $\rho = \rho(M)$ such that $3\rho/2$ is less than the radius of injectivity of $M$,
and also so that for any point $y\in M$ the geodesic coordinates of radius $3\rho/2$ around
$y$ distort angles by only a small amount.
Now the argument of the previous lemma works.
\end{proof}

\begin{lemma} \label{xx2phi}
Let $S \sub M$ be contained in a union (not necessarily disjoint)
of $k$ metric balls of radius $r$.
Let $\phi_1, \phi_2, \ldots$ be an increasing sequence of real numbers satisfying
$\phi_1 \ge 2$ and $\phi_{i+1} \ge \phi_i(2\phi_i + 2) + \phi_i$.
For convenience, let $\phi_0 = 0$.
Assume also that $\phi_k r \le \rho(M)$,
where $\rho(M)$ is as in Lemma \ref{xxzz11}.
Then there exists a neighborhood $U$ of $S$,
homeomorphic to a disjoint union of balls, such that
\[
	\Nbd_{\phi_{k-1} r}(S) \subeq U \subeq \Nbd_{\phi_k r}(S) .
\]
\end{lemma}

\begin{proof}
For $k=1$ this follows from Lemma \ref{xxzz11}.
Assume inductively that it holds for $k-1$.
Partition $S$ into $k$ disjoint subsets $S_1,\ldots,S_k$, each of radius $\le r$.
By Lemma \ref{xxzz11}, each $\Nbd_{\phi_{k-1} r}(S_i)$ is homeomorphic to a ball.
If these balls are disjoint, let $U$ be their union.
Otherwise, assume WLOG that $S_{k-1}$ and $S_k$ are distance less than $2\phi_{k-1}r$ apart.
Let $R_i = \Nbd_{\phi_{k-1} r}(S_i)$ for $i = 1,\ldots,k-2$ 
and $R_{k-1} = \Nbd_{\phi_{k-1} r}(S_{k-1})\cup \Nbd_{\phi_{k-1} r}(S_k)$.
Each $R_i$ is contained in a metric ball of radius $r' \deq (2\phi_{k-1}+2)r$.
Note that the defining inequality of the $\phi_i$ guarantees that
\[
	\phi_{k-1}r' = \phi_{k-1}(2\phi_{k-1}+2)r \le \phi_k r \le \rho(M) .
\]
By induction, there is a neighborhood $U$ of $R \deq \bigcup_i R_i$, 
homeomorphic to a disjoint union
of balls, and such that
\[
	U \subeq \Nbd_{\phi_{k-1}r'}(R) = \Nbd_{t}(S) \subeq \Nbd_{\phi_k r}(S) ,
\]
where $t = \phi_{k-1}(2\phi_{k-1}+2)r + \phi_{k-1} r$.
\end{proof}

We now return to defining the chain maps $e_X$.

Let $R_*$ be the chain complex with a generating 0-chain for each non-negative
integer and a generating 1-chain connecting each adjacent pair $(j, j+1)$.
(So $R_*$ is a simplicial version of the non-negative reals.)
Denote the 0-chains by $j$ (for $j$ a non-negative integer) and the 1-chain connecting $j$ and $j+1$
by $\iota_j$.
Define a map (homotopy equivalence)
\[
	\sigma: R_*\ot CH_*(X, X) \otimes \bc_*(X) \to CH_*(X, X)\ot \bc_*(X)
\]
as follows.
On $R_0\ot CH_*(X, X) \otimes \bc_*(X)$ we define
\[
	\sigma(j\ot p\ot b) = g_j(p)\ot b .
\]
On $R_1\ot CH_*(X, X) \otimes \bc_*(X)$ we define
\[
	\sigma(\iota_j\ot p\ot b) = f_j(p)\ot b ,
\]
where $f_j$ is the homotopy from $g_j$ to $g_{j+1}$.

Next we specify subcomplexes $G^m_* \sub R_*\ot CH_*(X, X) \otimes \bc_*(X)$ on which we will eventually
define a version of the action map $e_X$.
A generator $j\ot p\ot b$ is defined to be in $G^m_*$ if $j\ge j_{kbmp}$, where
$k = k_{bmp}$ is the constant from Lemma \ref{Gim_approx}.
Similarly $\iota_j\ot p\ot b$ is in $G^m_*$ if $j\ge j_{kbmp}$.
The inequality following Lemma \ref{Gim_approx} guarantees that $G^m_*$ is indeed a subcomplex
and that $G^m_* \sup G^{m+1}_*$.

It is easy to see that each $G^m_*$ is homotopy equivalent (via the inclusion map) 
to $R_*\ot CH_*(X, X) \otimes \bc_*(X)$
and hence to $CH_*(X, X) \otimes \bc_*(X)$, and furthermore that the homotopies are well-defined
up to a contractible set of choices.

Next we define a map
\[
	e_m : G^m_* \to \bc_*(X) .
\]
Let $p\ot b$ be a generator of $G^m_*$.
Each $g_j(p)\ot b$ or $f_j(p)\ot b$ is a linear combination of generators $q\ot c$,
where $\supp(q)\cup\supp(c)$ is contained in a disjoint union of balls satisfying 
various conditions specified above.
As in the construction of the maps $e_{i,m}$ above,
it suffices to specify for each such $q\ot c$ a disjoint union of balls
$V_{qc} \sup \supp(q)\cup\supp(c)$, such that $V_{qc} \sup V_{q'c'}$
whenever $q'\ot c'$ appears in the boundary of $q\ot c$.

Let $q\ot c$ be a summand of $g_j(p)\ot b$, as above.
Let $i$ be maximal such that $j\ge j_{ibmp}$
(notation as in Lemma \ref{Gim_approx}).
Then $q\ot c \in G^{i,m}_*$ and we choose $V_{qc} \sup \supp(q)\cup\supp(c)$
such that 
\[
	N_{i,d}(q\ot c) \subeq V_{qc} \subeq N_{i,d+1}(q\ot c) ,
\]
where $d = \deg(q\ot c)$.
Let $\tilde q = f_j(q)$.
The summands of $f_j(p)\ot b$ have the form $\tilde q \ot c$, 
where $q\ot c$ is a summand of $g_j(p)\ot b$.
Since the homotopy $f_j$ does not increase supports, we also have that
\[
	V_{qc} \sup \supp(\tilde q) \cup \supp(c) .
\]
So we define $V_{\tilde qc} = V_{qc}$.

It is now easy to check that we have $V_{qc} \sup V_{q'c'}$
whenever $q'\ot c'$ appears in the boundary of $q\ot c$.
As in the construction of the maps $e_{i,m}$ above,
this allows us to construct a map
\[
	e_m : G^m_* \to \bc_*(X) 
\]
which is well-defined up to homotopy.
As in the proof of Lemma \ref{m_order_hty}, we can show that the map is well-defined up
to $m$-th order homotopy.
Put another way, we have specified an $m$-connected subcomplex of the complex of
all maps $G^m_* \to \bc_*(X)$.
On $G^{m+1}_* \sub G^m_*$ we have defined two maps, $e_m$ and $e_{m+1}$.
One can similarly (to the proof of Lemma \ref{m_order_hty}) show that 
these two maps agree up to $m$-th order homotopy.
More precisely, one can show that the subcomplex of maps containing the various
$e_{m+1}$ candidates is contained in the corresponding subcomplex for $e_m$.

\medskip

Next we show that the action maps are compatible with gluing.
Let $G^m_*$ and $\ol{G}^m_*$ be the complexes, as above, used for defining
the action maps $e_{X\sgl}$ and $e_X$.
The gluing map $X\sgl\to X$ induces a map
\[
	\gl:  R_*\ot CH_*(X, X) \otimes \bc_*(X)  \to R_*\ot CH_*(X\sgl, X \sgl) \otimes \bc_*(X \sgl) ,
\]
and it is easy to see that $\gl(G^m_*)\sub \ol{G}^m_*$.
From this it follows that the diagram in the statement of Theorem \ref{thm:CH} commutes.

\todo{this paragraph isn't very convincing, or at least I don't see what's going on}
Finally we show that the action maps defined above are independent of
the choice of metric (up to iterated homotopy).
The arguments are very similar to ones given above, so we only sketch them.
Let $g$ and $g'$ be two metrics on $X$, and let $e$ and $e'$ be the corresponding
actions $CH_*(X, X) \ot \bc_*(X)\to\bc_*(X)$.
We must show that $e$ and $e'$ are homotopic.
As outlined in the discussion preceding this proof,
this follows from the facts that both $e$ and $e'$ are compatible
with gluing and that $\bc_*(B^n)$ is contractible.
As above, we define a subcomplex $F_*\sub  CH_*(X, X) \ot \bc_*(X)$ generated
by $p\ot b$ such that $|p|\cup|b|$ is contained in a disjoint union of balls.
Using acyclic models, we can construct a homotopy from $e$ to $e'$ on $F_*$.
We now observe that $CH_*(X, X) \ot \bc_*(X)$ retracts to $F_*$.
Similar arguments show that this homotopy from $e$ to $e'$ is well-defined
up to second order homotopy, and so on.

This completes the proof of Theorem \ref{thm:CH}.
\end{proof}

\begin{rem*}
\label{rem:for-small-blobs}
For the proof of Lemma \ref{lem:CH-small-blobs} below we will need the following observation on the action constructed above.
Let $b$ be a blob diagram and $p:P\times X\to X$ be a family of homeomorphisms.
Then we may choose $e$ such that $e(p\ot b)$ is a sum of generators, each
of which has support close to $p(t,|b|)$ for some $t\in P$.
More precisely, the support of the generators is contained in the union of a small neighborhood
of $p(t,|b|)$ with some small balls.
(Here ``small" is in terms of the metric on $X$ that we chose to construct $e$.)
\end{rem*}

\begin{thm}
\label{thm:CH-associativity}
The $CH_*(X, Y)$ actions defined above are associative.
That is, the following diagram commutes up to homotopy:
\[ \xymatrix{
& CH_*(Y, Z) \ot \bc_*(Y) \ar[dr]^{e_{YZ}} & \\
CH_*(X, Y) \ot CH_*(Y, Z) \ot \bc_*(X) \ar[ur]^{e_{XY}\ot\id} \ar[dr]_{\mu\ot\id} & & \bc_*(Z) \\
& CH_*(X, Z) \ot \bc_*(X) \ar[ur]_{e_{XZ}} &
} \]
Here $\mu:CH_*(X, Y) \ot CH_*(Y, Z)\to CH_*(X, Z)$ is the map induced by composition
of homeomorphisms.
\end{thm}

\begin{proof}
The strategy of the proof is similar to that of Theorem \ref{thm:CH}.
We will identify a subcomplex 
\[
	G_* \sub CH_*(X, Y) \ot CH_*(Y, Z) \ot \bc_*(X)
\]
where it is easy to see that the two sides of the diagram are homotopic, then 
show that there is a deformation retraction of $CH_*(X, Y) \ot CH_*(Y, Z) \ot \bc_*(X)$ into $G_*$.

Let $p\ot q\ot b$ be a generator of $CH_*(X, Y) \ot CH_*(Y, Z) \ot \bc_*(X)$.
By definition, $p\ot q\ot b\in G_*$ if there is a disjoint union of balls in $X$ which
contains $|p| \cup p\inv(|q|) \cup |b|$.
(If $p:P\times X\to Y$, then $p\inv(|q|)$ means the union over all $x\in P$ of 
$p(x, \cdot)\inv(|q|)$.)

As in the proof of Theorem \ref{thm:CH}, we can construct a homotopy 
between the upper and lower maps restricted to $G_*$.
This uses the facts that the maps agree on $CH_0(X, Y) \ot CH_0(Y, Z) \ot \bc_*(X)$,
that they are compatible with gluing, and the contractibility of $\bc_*(X)$.

We can now apply Lemma \ref{extension_lemma_c}, using a series of increasingly fine covers, 
to construct a deformation retraction of $CH_*(X, Y) \ot CH_*(Y, Z) \ot \bc_*(X)$ into $G_*$.
\end{proof}

} 


\def\xxpar#1#2{\smallskip\noindent{\bf #1} {\it #2} \smallskip}
\def\mmpar#1#2#3{\smallskip\noindent{\bf #1} (#2). {\it #3} \smallskip}

\section{\texorpdfstring{$n$}{n}-categories and their modules}
\label{sec:ncats}

\subsection{Definition of \texorpdfstring{$n$}{n}-categories}
\label{ss:n-cat-def}

Before proceeding, we need more appropriate definitions of $n$-categories, 
$A_\infty$ $n$-categories, as well as modules for these, and tensor products of these modules.
(As is the case throughout this paper, by ``$n$-category" we mean some notion of
a ``weak" $n$-category with ``strong duality".)

Compared to other definitions in the literature,
the definitions presented below tie the categories more closely to the topology
and avoid combinatorial questions about, for example, finding a minimal sufficient
collection of generalized associativity axioms; we prefer maximal sets of axioms to minimal sets.
It is easy to show that examples of topological origin
(e.g.\ categories whose morphisms are maps into spaces or decorated balls, or bordism categories)
satisfy our axioms.
To show that examples of a more purely algebraic origin satisfy our axioms, 
one would typically need the combinatorial
results that we have avoided here.

See \S\ref{n-cat-names} for a discussion of $n$-category terminology.


\medskip

The axioms for an $n$-category are spread throughout this section.
Collecting these together, an $n$-category is a gadget satisfying Axioms \ref{axiom:morphisms}, 
\ref{nca-boundary}, \ref{axiom:composition},  \ref{nca-assoc}, \ref{axiom:product}, \ref{axiom:extended-isotopies} and  \ref{axiom:splittings}.
For an enriched $n$-category we add Axiom \ref{axiom:enriched}.
For an $A_\infty$ $n$-category, we replace 
Axiom \ref{axiom:extended-isotopies} with Axiom \ref{axiom:families}.

Strictly speaking, before we can state the axioms for $k$-morphisms we need all the axioms 
for $k{-}1$-morphisms.
Readers who prefer things to be presented in a strictly logical order should read this 
subsection $n{+}1$ times, first setting $k=0$, then $k=1$, and so on until they reach $k=n$.

\medskip

There are many existing definitions of $n$-categories, with various intended uses.
In any such definition, there are sets of $k$-morphisms for each $0 \leq k \leq n$.
Generally, these sets are indexed by instances of a certain typical shape. 
Some $n$-category definitions model $k$-morphisms on the standard bihedron (interval, bigon, and so on).
Other definitions have a separate set of 1-morphisms for each interval $[0,l] \sub \r$, 
a separate set of 2-morphisms for each rectangle $[0,l_1]\times [0,l_2] \sub \r^2$,
and so on.
(This allows for strict associativity; see \cite{ulrike-tillmann-2008,0909.2212}.)
Still other definitions (see, for example, \cite{MR2094071})
model the $k$-morphisms on more complicated combinatorial polyhedra.

For our definition, we will allow our $k$-morphisms to have {\it any} shape, so long as it is 
homeomorphic to the standard $k$-ball.
Thus we associate a set of $k$-morphisms $\cC_k(X)$ to any $k$-manifold $X$ homeomorphic 
to the standard $k$-ball.

Below, we will use ``a $k$-ball" to mean any $k$-manifold which is homeomorphic to the 
standard $k$-ball.
We {\it do not} assume that such $k$-balls are equipped with a 
preferred homeomorphism to the standard $k$-ball.
The same applies to ``a $k$-sphere" below.

Given a homeomorphism $f:X\to Y$ between $k$-balls (not necessarily fixed on 
the boundary), we want a corresponding
bijection of sets $f:\cC_k(X)\to \cC_k(Y)$.
(This will imply ``strong duality", among other things.) Putting these together, we have

\begin{axiom}[Morphisms]
\label{axiom:morphisms}
For each $0 \le k \le n$, we have a functor $\cC_k$ from 
the category of $k$-balls and 
homeomorphisms to the category of sets and bijections.
\end{axiom}

(Note: We often omit the subscript $k$.)

We are being deliberately vague about what flavor of $k$-balls
we are considering.
They could be unoriented or oriented or Spin or $\mbox{Pin}_\pm$.
They could be PL or smooth.
(If smooth, ``homeomorphism" should be read ``diffeomorphism", and we would need
to be fussier about corners and boundaries.)
For each flavor of manifold there is a corresponding flavor of $n$-category.
For simplicity, we will concentrate on the case of PL unoriented manifolds.

(An interesting open question is whether the techniques of this paper can be adapted to topological
manifolds and plain, merely continuous homeomorphisms.
The main obstacles are proving a version of Lemma \ref{basic_adaptation_lemma} and adapting the
transversality arguments used in Lemma \ref{lem:colim-injective}.)

An ambitious reader may want to keep in mind two other classes of balls.
The first is balls equipped with a map to some other space $Y$ (c.f. \cite{MR2079378}). 
This will be used below (see the end of \S \ref{ss:product-formula}) to describe the blob complex of a fiber bundle with
base space $Y$.
The second is balls equipped with sections of the tangent bundle, or the frame
bundle (i.e.\ framed balls), or more generally some partial flag bundle associated to the tangent bundle.
These can be used to define categories with less than the ``strong" duality we assume here,
though we will not develop that idea in this paper.

Next we consider domains and ranges of morphisms (or, as we prefer to say, boundaries
of morphisms).
The 0-sphere is unusual among spheres in that it is disconnected.
Correspondingly, for 1-morphisms it makes sense to distinguish between domain and range.
(Actually, this is only true in the oriented case, with 1-morphisms parameterized
by {\it oriented} 1-balls.)
For $k>1$ and in the presence of strong duality the division into domain and range makes less sense.
For example, in a pivotal tensor category, there are natural isomorphisms $\Hom{}{A}{B \tensor C} \isoto \Hom{}{B^* \tensor A}{C}$, etc. 
(sometimes called ``Frobenius reciprocity''), which canonically identify all the morphism spaces which have the same boundary.
We prefer not to make the distinction in the first place.

Instead, we will combine the domain and range into a single entity which we call the 
boundary of a morphism.
Morphisms are modeled on balls, so their boundaries are modeled on spheres.
In other words, we need to extend the functors $\cC_{k-1}$ from balls to spheres, for 
$1\le k \le n$.
At first it might seem that we need another axiom 
(more specifically, additional data) for this, but in fact once we have
all the axioms in this subsection for $0$ through $k-1$ we can use a colimit
construction, as described in \S\ref{ss:ncat-coend} below, to extend $\cC_{k-1}$
to spheres (and any other manifolds):

\begin{lem}
\label{lem:spheres}
For each $1 \le k \le n$, we have a functor $\cl{\cC}_{k-1}$ from 
the category of $k{-}1$-spheres and 
homeomorphisms to the category of sets and bijections.
\end{lem}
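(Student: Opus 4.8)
The plan is to construct $\cl{\cC}_{k-1}$ on a $(k{-}1)$-sphere $S$ by taking a colimit of the functor $\cC_{k-1}$ over the poset of decompositions of $S$ into $(k{-}1)$-balls glued along their boundaries. Concretely, for a fixed $S$ I would consider the category $\cell(S)$ whose objects are ways of writing $S$ as a union of $(k{-}1)$-balls meeting along $(k{-}2)$-dimensional faces (``ball decompositions''), with morphisms given by refinement, and define $\cl{\cC}_{k-1}(S) = \colim_{\cell(S)} \psi$, where $\psi$ assigns to a decomposition the subset of $\prod_i \cC_{k-1}(B_i)$ consisting of tuples that agree on the shared boundary $(k{-}2)$-spheres. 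Here the boundary-restriction maps needed to state the agreement condition come from Axiom \ref{nca-boundary} for $(k{-}1)$-morphisms together with the inductively-already-constructed $\cl{\cC}_{k-2}$ on $(k{-}2)$-spheres (this is why the subsection must be read $n{+}1$ times, as the excerpt warns). The gluing and associativity axioms (Axioms \ref{axiom:composition}, \ref{nca-assoc}) guarantee the colimit is well-behaved --- in fact for $\cl{\cC}_{k-1}$ on spheres it will just be a limit over the shared boundaries, so one could equally describe it as an equalizer/fibered-product construction. The details of this colimit are exactly what \S\ref{ss:ncat-coend} sets up, so I would simply invoke that construction once it is available.

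Next I would check functoriality. Given a homeomorphism $f : S \to S'$ of $(k{-}1)$-spheres, $f$ carries any ball decomposition of $S$ to one of $S'$, compatibly with refinement, hence induces a functor $f_* : \cell(S) \to \cell(S')$; naturality of the bijections $\cC_{k-1}(B_i) \to \cC_{k-1}(f(B_i))$ from Axiom \ref{axiom:morphisms} then gives a natural transformation of the diagrams, and passing to colimits yields a bijection $\cl{\cC}_{k-1}(S) \to \cl{\cC}_{k-1}(S')$. Compatibility with composition of homeomorphisms and with identities is immediate from the corresponding properties at the level of $(k{-}1)$-balls, so $\cl{\cC}_{k-1}$ is a functor from $(k{-}1)$-spheres and homeomorphisms to sets and bijections. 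One should also remark that $\cl{\cC}_{k-1}$ restricted to balls agrees with $\cC_{k-1}$ (a ball decomposition of a ball has a terminal object, namely the trivial one-piece decomposition, so the colimit collapses), which is what makes the notation consistent.

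I anticipate the main obstacle is not really a deep one but a bookkeeping one: making precise the category $\cell(S)$ of decompositions (what counts as a decomposition, what the refinement morphisms are, and why the resulting diagram category is such that the colimit exists and is computed as described), and verifying that the agreement-on-boundary condition is itself well-defined, which requires the inductive hypothesis that $\cl{\cC}_{k-2}$ is already a functor on $(k{-}2)$-spheres with the expected gluing behavior. Since all of this machinery is developed in \S\ref{ss:ncat-coend}, the honest proof of Lemma \ref{lem:spheres} is short: define $\cl{\cC}_{k-1}(S)$ by the colimit construction of that subsection applied to the sphere $S$, and observe that functoriality in $S$ follows formally as above. I would therefore defer the substance to \S\ref{ss:ncat-coend} and present Lemma \ref{lem:spheres} essentially as a corollary, flagging only the inductive dependence on the lower-dimensional axioms.
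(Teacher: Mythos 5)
Your proposal matches the paper's own treatment: Lemma \ref{lem:spheres} is proved by deferring to the colimit-over-decompositions construction of \S\ref{ss:ncat-coend} (the functor $\psi_{\cC;W}$ on the poset $\cell(W)$ of permissible ball decompositions, with compatibility along shared boundaries via the inductively defined lower-dimensional data), after which functoriality under homeomorphisms is immediate and the lemma follows by restricting to $k{-}1$-spheres. So your approach is essentially the same as the paper's.
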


We postpone the proof of this result until after we've actually given all the axioms.
Note that defining this functor for fixed $k$ only requires the data described in Axiom \ref{axiom:morphisms} at level $k$, 
along with the data described in the other axioms for smaller values of $k$. 

Of course, Lemma \ref{lem:spheres}, as stated, is satisfied by the trivial functor.
What we really mean is that there exists a functor which interacts with the other data of $\cC$ as specified 
in the axioms below.

\begin{axiom}[Boundaries]\label{nca-boundary}
For each $k$-ball $X$, we have a map of sets $\bd: \cC_k(X)\to \cl{\cC}_{k-1}(\bd X)$.
These maps, for various $X$, comprise a natural transformation of functors.
\end{axiom}

Note that the first ``$\bd$" above is part of the data for the category, 
while the second is the ordinary boundary of manifolds.
Given $c\in\cl{\cC}(\bd(X))$, we will write $\cC(X; c)$ for $\bd^{-1}(c)$, those morphisms with specified boundary $c$.

\medskip

In order to simplify the exposition we have concentrated on the case of 
unoriented PL manifolds and avoided the question of what exactly we mean by 
the boundary of a manifold with extra structure, such as an oriented manifold.
In general, all manifolds of dimension less than $n$ should be equipped with the germ
of a thickening to dimension $n$, and this germ should carry whatever structure we have 
on $n$-manifolds.
In addition, lower dimensional manifolds should be equipped with a framing
of their normal bundle in the thickening; the framing keeps track of which
side (iterated) bounded manifolds lie on.
For example, the boundary of an oriented $n$-ball
should be an $n{-}1$-sphere equipped with an orientation of its once stabilized tangent
bundle and a choice of direction in this bundle indicating
which side the $n$-ball lies on.

\medskip

We have just argued that the boundary of a morphism has no preferred splitting into
domain and range, but the converse meets with our approval.
That is, given compatible domain and range, we should be able to combine them into
the full boundary of a morphism.
The following lemma will follow from the colimit construction used to define $\cl{\cC}_{k-1}$
on spheres.

\begin{lem}[Boundary from domain and range]
\label{lem:domain-and-range}
Let $S = B_1 \cup_E B_2$, where $S$ is a $k{-}1$-sphere $(1\le k\le n)$,
$B_i$ is a $k{-}1$-ball, and $E = B_1\cap B_2$ is a $k{-}2$-sphere (Figure \ref{blah3}).
Let $\cC(B_1) \times_{\cl{\cC}(E)} \cC(B_2)$ denote the fibered product of the 
two maps $\bd: \cC(B_i)\to \cl{\cC}(E)$.
Then we have an injective map
\[
	\gl_E : \cC(B_1) \times_{\cl{\cC}(E)} \cC(B_2) \into \cl{\cC}(S)
\]
which is natural with respect to the actions of homeomorphisms.
(When $k=1$ we stipulate that $\cl{\cC}(E)$ is a point, so that the above fibered product
becomes a normal product.)
\end{lem}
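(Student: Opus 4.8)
\textbf{Proof proposal for Lemma \ref{lem:domain-and-range}.}

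The plan is to deduce this from the colimit construction of $\cl{\cC}$ on spheres (the one used to prove Lemma \ref{lem:spheres}), by observing that the two-ball decomposition $S = B_1 \cup_E B_2$ is itself one of the decompositions of $S$ that appear in the colimit. First I would recall that $\cl{\cC}(S)$ is defined as a colimit over the poset of decompositions of $S$ into $k{-}1$-balls glued along $k{-}2$-spheres, where to each decomposition $S = \bigcup X_i$ one assigns the fibered product of the sets $\cC(X_i)$ over the boundary-matching conditions, and the colimit maps are induced by gluing/refinement. The decomposition $S = B_1 \cup_E B_2$ is a vertex of this poset, and its associated set is precisely $\cC(B_1) \times_{\cl{\cC}(E)} \cC(B_2)$ — so there is a canonical structure map from this fibered product into the colimit $\cl{\cC}(S)$. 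Define $\gl_E$ to be this structure map. Naturality with respect to homeomorphisms is then automatic, since homeomorphisms act on the whole diagram of decompositions compatibly and the colimit is functorial; the $k=1$ convention (that $\cl{\cC}(E)$ is a point, so the fibered product is an ordinary product) matches the fact that the empty $(k{-}2)$-sphere has a one-point colimit.

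The substantive part is injectivity of $\gl_E$. Here I would argue that the two-ball decomposition is \emph{cofinal} (or at least ``dominant enough'') among decompositions of $S$: any finer decomposition refines $B_1 \cup_E B_2$, and the gluing/composition axioms (Axiom \ref{axiom:composition} and the associativity Axiom \ref{nca-assoc}) guarantee that passing to a finer decomposition and then regluing recovers the original element. Concretely, suppose $x, y \in \cC(B_1) \times_{\cl{\cC}(E)} \cC(B_2)$ become equal in $\cl{\cC}(S)$. Unwinding the colimit, this means there is some common refinement of $B_1 \cup_E B_2$ on which the images of $x$ and $y$ agree after the refinement maps. But the refinement map from the two-ball decomposition is built from boundary restriction maps, and one can reconstruct $x$ (resp.\ $y$) from its restriction to any refinement by composing back up — this is exactly what the composition and associativity axioms provide (gluing is associative and the gluing maps are injective at each stage, by the injectivity clause in Axiom \ref{axiom:composition}). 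Hence the restrictions agreeing forces $x = y$.

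The main obstacle, and the step I expect to require the most care, is making precise the claim that elements of $\cl{\cC}(S)$ are determined by their restrictions to a cofinal family of decompositions, i.e.\ that the relevant part of the colimit diagram is filtered enough that the structure map out of the coarsest (two-ball) decomposition is injective. This is really a statement about the interaction of Axioms \ref{axiom:composition} and \ref{nca-assoc} with the colimit, and it is the same mechanism that makes $\gl_E$ well-behaved in Lemma \ref{lem:ncat-from-fields} and in the gluing axioms generally; so I would either cite the analysis of the colimit construction in \S\ref{ss:ncat-coend} directly, or isolate a small lemma there stating ``restriction to a ball decomposition is injective on $\cl{\cC}$'' and invoke it here. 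Everything else — existence of $\gl_E$, naturality, the $k=1$ degenerate case — is formal once the colimit construction is in hand.
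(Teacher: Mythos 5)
Your high-level route is exactly the paper's: $\gl_E$ is the structure map into the colimit from the value of $\psi_{\cC;S}$ at the two-ball decomposition $S = B_1\cup_E B_2$ (which is indeed the fibered product), naturality and the $k=1$ convention are formal, and the whole content is the statement that $\psi_{\cC;S}(x)\to\cl{\cC}(S)$ is injective for a decomposition $x$. The paper disposes of the lemma in one sentence by citing Definition \ref{def:colim-fields} and Lemma \ref{lem:colim-injective}, which is precisely the ``small lemma'' you propose to isolate. So if you simply invoke Lemma \ref{lem:colim-injective} you are done, and done the same way the authors are.

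However, the argument you sketch in place of that lemma has a genuine gap. In the colimit defining $\cl{\cC}(S)$ the diagram maps run from finer decompositions to coarser ones --- they are gluing maps --- so there are no ``refinement/restriction'' maps out of the two-ball decomposition, and equality of two elements in the colimit is \emph{not} witnessed by a common refinement on which their restrictions agree. It is witnessed by a zig-zag of gluing identifications through the poset of decompositions, and that poset is not filtered: two decompositions of $S$ need not have any common refinement (this is exactly the $e^{-1/x^2}\sin(1/x)$ phenomenon of Example \ref{sin1x-example}, acknowledged again in the proof of Lemma \ref{lem:d-a-acyclic}). Moreover, even when a refinement is specified, an element of $\cC(B_i)$ need not split along it; producing decompositions along which everything in the zig-zag splits requires the splittings axiom (Axiom \ref{axiom:splittings}) and a general-position perturbation, not just the composition axioms. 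The paper's proof of Lemma \ref{lem:colim-injective} repairs exactly these points: given $a\neq\hat a$ in $\psi(x)$ with the same image, take the zig-zag witnessing equality, choose one decomposition $z$ in general position with respect to every decomposition occurring in it, use Axiom \ref{axiom:splittings} to obtain common refinements along which all the elements split, and then use strict associativity (Axiom \ref{nca-assoc}) together with the injectivity clause of Axiom \ref{axiom:composition} to push the whole zig-zag down into $\psi(z)$ and conclude $a=\hat a$. Your ``cofinality'' heuristic also fails as stated: a general decomposition of $S$ need not refine $B_1\cup_E B_2$, and in a non-filtered poset the existence of such a coarse vertex would not by itself yield injectivity. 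So: cite or reprove Lemma \ref{lem:colim-injective} along the lines above; the shortcut in your second paragraph does not go through.
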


\begin{figure}[t] \centering
\begin{tikzpicture}[
]
\node[fill=black, circle, label=below:$E$, inner sep=1.5pt](S) at (0,0) {};
\node[fill=black, circle, label=above:$E$, inner sep=1.5pt](N) at (0,2) {};
\draw (S) arc  (-90:90:1);
\draw (N) arc  (90:270:1);
\node[left] at (-1,1) {$B_1$};
\node[right] at (1,1) {$B_2$};
\end{tikzpicture}
\caption{Combining two balls to get a full boundary.}\label{blah3}\end{figure}
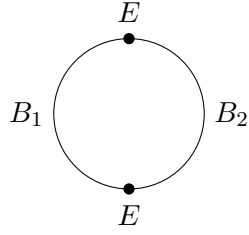

Note that we insist on injectivity above. 
The lemma follows from Definition \ref{def:colim-fields} and Lemma \ref{lem:colim-injective}.

We do not insist on surjectivity of the gluing map, since this is not satisfied by all of the examples
we are trying to axiomatize.
If our $k$-morphisms $\cC(X)$ are labeled cell complexes embedded in $X$ (c.f. Example \ref{ex:traditional-n-categories} below), then a $k$-morphism is
in the image of the gluing map precisely when the cell complex is in general position
with respect to $E$. On the other hand, in categories based on maps to a target space (c.f. Example \ref{ex:maps-to-a-space} below) the gluing map is always surjective.

If $S$ is a 0-sphere (the case $k=1$ above), then $S$ can be identified with the {\it disjoint} union
of two 0-balls $B_1$ and $B_2$ and the colimit construction $\cl{\cC}(S)$ can be identified
with the (ordinary, not fibered) product $\cC(B_1) \times \cC(B_2)$.

Let $\cl{\cC}(S)\trans E$ denote the image of $\gl_E$.
We will refer to elements of $\cl{\cC}(S)\trans E$ as ``splittable along $E$" or ``transverse to $E$". 
When the gluing map is surjective every such element is splittable.

If $X$ is a $k$-ball and $E \sub \bd X$ splits $\bd X$ into two $k{-}1$-balls $B_1$ and $B_2$
as above, then we define $\cC(X)\trans E = \bd^{-1}(\cl{\cC}(\bd X)\trans E)$.

We will call the projection $\cl{\cC}(S)\trans E \to \cC(B_i)$ given by the composition
$$\cl{\cC}(S)\trans E \xrightarrow{\gl^{-1}} \cC(B_1) \times \cC(B_2) \xrightarrow{\pr_i} \cC(B_i)$$
a {\it restriction} map and write $\res_{B_i}(a)$
(or simply $\res(a)$ when there is no ambiguity), for $a\in \cl{\cC}(S)\trans E$.
More generally, we also include under the rubric ``restriction map"
the boundary maps of Axiom \ref{nca-boundary} above,
another class of maps introduced after Axiom \ref{nca-assoc} below, as well as any composition
of restriction maps.
In particular, we have restriction maps $\cC(X)\trans E \to \cC(B_i)$
defined as the composition of the boundary with the first restriction map described above:
$$
\cC(X) \trans E \xrightarrow{\bdy} \cl{\cC}(\bdy X)\trans E \xrightarrow{\res} \cC(B_i)
.$$
These restriction maps can be thought of as 
domain and range maps, relative to the choice of splitting $\bd X = B_1 \cup_E B_2$.
\noop{These restriction maps in fact have their image in the subset $\cC(B_i)\trans E$,
and so to emphasize this we will sometimes write the restriction map as $\cC(X)\trans E \to \cC(B_i)\trans E$.}

Next we consider composition of morphisms.
For $n$-categories which lack strong duality, one usually considers
$k$ different types of composition of $k$-morphisms, each associated to a different ``direction".
(For example, vertical and horizontal composition of 2-morphisms.)
In the presence of strong duality, these $k$ distinct compositions are subsumed into 
one general type of composition which can be in any direction.

\begin{axiom}[Composition]
\label{axiom:composition}
Let $B = B_1 \cup_Y B_2$, where $B$, $B_1$ and $B_2$ are $k$-balls ($1\le k\le n$)
and $Y = B_1\cap B_2$ is a $k{-}1$-ball (Figure \ref{blah5}).
Let $E = \bd Y$, which is a $k{-}2$-sphere.
Note that each of $B$, $B_1$ and $B_2$ has its boundary split into two $k{-}1$-balls by $E$.
We have restriction (domain or range) maps $\cC(B_i)\trans E \to \cC(Y)$.
Let $\cC(B_1)\trans E \times_{\cC(Y)} \cC(B_2)\trans E$ denote the fibered product of these two maps. 
We have a map
\[
	\gl_Y : \cC(B_1)\trans E \times_{\cC(Y)} \cC(B_2)\trans E \to \cC(B)\trans E
\]
which is natural with respect to the actions of homeomorphisms, and also compatible with restrictions
to the intersection of the boundaries of $B$ and $B_i$.
If $k < n$
we require that $\gl_Y$ is injective.
\end{axiom}

\begin{figure}[t] \centering
\begin{tikzpicture}[
				x=1.5cm,y=1.5cm]
\node[fill=black, circle, label=below:$E$, inner sep=2pt](S) at (0,0) {};
\node[fill=black, circle, label=above:$E$, inner sep=2pt](N) at (0,2) {};
\draw (S) arc  (-90:90:1);
\draw (N) arc  (90:270:1);
\draw (N) -- (S);
\node[left] at (-1/4,1) {$B_1$};
\node[right] at (1/4,1) {$B_2$};
\node at (1/6,3/2)  {$Y$};
\end{tikzpicture}
\caption{From two balls to one ball.}\label{blah5}\end{figure}
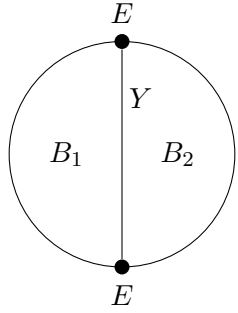

\begin{axiom}[Strict associativity] \label{nca-assoc}
The composition (gluing) maps above are strictly associative.
Given any splitting of a ball $B$ into smaller balls
$$\bigsqcup B_i \to B,$$ 
any sequence of gluings (in the sense of Definition \ref{defn:gluing-decomposition}, where all the intermediate steps are also disjoint unions of balls) yields the same result.
\end{axiom}

\begin{figure}[t]
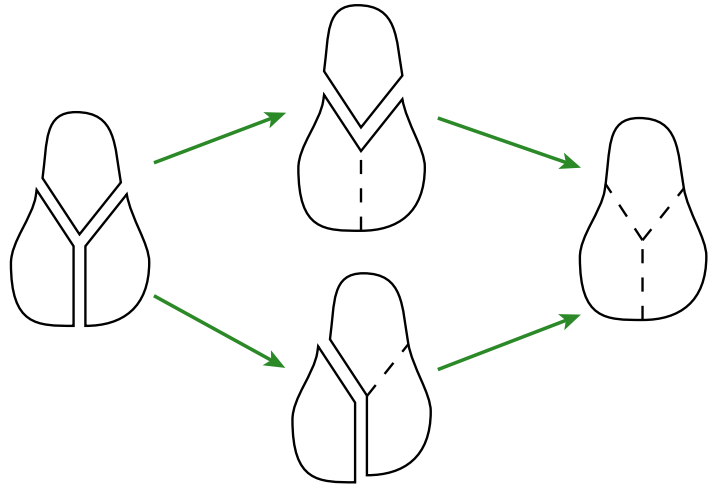

$$\mathfig{.65}{ncat/strict-associativity}$$
\caption{An example of strict associativity.}\label{blah6}\end{figure}

We'll use the notation  $a\bullet b$ for the glued together field $\gl_Y(a, b)$.
In the other direction, we will call the projection from $\cC(B)\trans E$ to $\cC(B_i)\trans E$ 
a restriction map (one of many types of map so called) and write $\res_{B_i}(a)$ for $a\in \cC(B)\trans E$.

We will write $\cC(B)\trans Y$ for the image of $\gl_Y$ in $\cC(B)$.
We will call elements of $\cC(B)\trans Y$ morphisms which are 
``splittable along $Y$'' or ``transverse to $Y$''.
We have $\cC(B)\trans Y \sub \cC(B)\trans E \sub \cC(B)$.

More generally, let $\alpha$ be a splitting of $X$ into smaller balls.
Let $\cC(X)_\alpha \sub \cC(X)$ denote the image of the iterated gluing maps from 
the smaller balls to $X$.
We  say that elements of $\cC(X)_\alpha$ are morphisms which are ``splittable along $\alpha$".
In situations where the splitting is notationally anonymous, we will write
$\cC(X)\spl$ for the morphisms which are splittable along (a.k.a.\ transverse to)
the unnamed splitting.
If $\beta$ is a ball decomposition of $\bd X$, we define $\cC(X)_\beta \deq \bd\inv(\cl{\cC}(\bd X)_\beta)$;
this can also be denoted $\cC(X)\spl$ if the context contains an anonymous
decomposition of $\bd X$ and no competing splitting of $X$.

The above two composition axioms are equivalent to the following one,
which we state in slightly vague form.

\xxpar{Multi-composition:}
{Given any splitting $B_1 \sqcup \cdots \sqcup B_m \to B$ of a $k$-ball
into small $k$-balls, there is a 
map from an appropriate subset (like a fibered product) 
of $\cC(B_1)\spl\times\cdots\times\cC(B_m)\spl$ to $\cC(B)\spl$,
and these various $m$-fold composition maps satisfy an
operad-type strict associativity condition (Figure \ref{fig:operad-composition}).}

\begin{figure}[t]
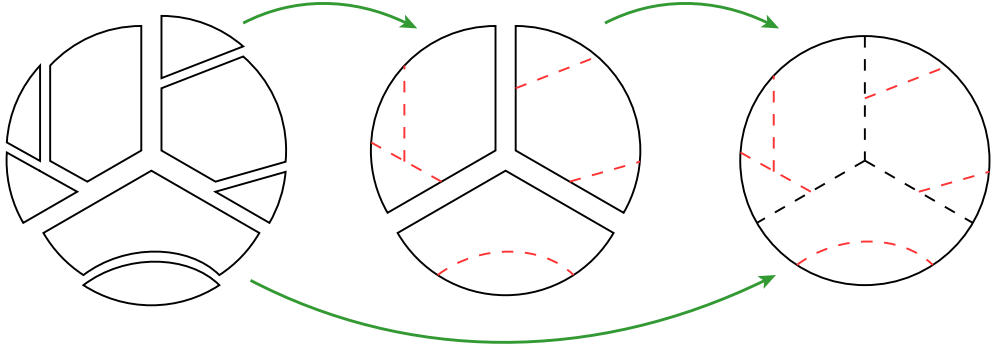

$$\mathfig{.8}{ncat/operad-composition}$$
\caption{Operad composition and associativity}\label{fig:operad-composition}\end{figure}

The next axiom is related to identity morphisms, though that might not be immediately obvious.

\begin{axiom}[Product (identity) morphisms, preliminary version]
For each $k$-ball $X$ and $m$-ball $D$, with $k+m \le n$, there is a map $\cC(X)\to \cC(X\times D)$, 
usually denoted $a\mapsto a\times D$ for $a\in \cC(X)$.
These maps must satisfy the following conditions.
\begin{enumerate}
\item
If $f:X\to X'$ and $\tilde{f}:X\times D \to X'\times D'$ are homeomorphisms such that the diagram
\[ \xymatrix{
	X\times D \ar[r]^{\tilde{f}} \ar[d]_{\pi} & X'\times D' \ar[d]^{\pi} \\
	X \ar[r]^{f} & X'
} \]
commutes, then we have 
\[
	\tilde{f}(a\times D) = f(a)\times D' .
\]
\item
Product morphisms are compatible with gluing (composition) in both factors:
\[
	(a'\times D)\bullet(a''\times D) = (a'\bullet a'')\times D
\]
and
\[
	(a\times D')\bullet(a\times D'') = a\times (D'\bullet D'') .
\]
\item
Product morphisms are associative:
\[
	(a\times D)\times D' = a\times (D\times D') .
\]
(Here we are implicitly using functoriality and the obvious homeomorphism
$(X\times D)\times D' \to X\times(D\times D')$.)
\item
Product morphisms are compatible with restriction:
\[
	\res_{X\times E}(a\times D) = a\times E
\]
for $E\sub \bd D$ and $a\in \cC(X)$.
\end{enumerate}
\end{axiom}

We will need to strengthen the above preliminary version of the axiom to allow
for products which are ``pinched" in various ways along their boundary.
(See Figure \ref{pinched_prods}.)
\begin{figure}[t]
$$
\begin{tikzpicture}[baseline=0]
\begin{scope}
\path[clip] (0,0) arc (135:45:4) arc (-45:-135:4);
\draw[kw-blue-a,line width=2pt] (0,0) arc (135:45:4) arc (-45:-135:4);
\foreach \x in {0, 0.5, ..., 6} {
	\draw[green!50!brown] (\x,-2) -- (\x,2);
}
\end{scope}
\draw[kw-blue-a,line width=1.5pt] (0,-3) -- (5.66,-3);
\draw[->,red,line width=2pt] (2.83,-1.5) -- (2.83,-2.5);
\end{tikzpicture}
\qquad \qquad
\begin{tikzpicture}[baseline=-0.15cm]
\begin{scope}
\path[clip] (0,1) arc (90:135:8 and 4)  arc (-135:-90:8 and 4) -- cycle;
\draw[kw-blue-a,line width=2pt] (0,1) arc (90:135:8 and 4)  arc (-135:-90:8 and 4) -- cycle;
\foreach \x in {-6, -5.5, ..., 0} {
	\draw[green!50!brown] (\x,-2) -- (\x,2);
}
\end{scope}
\draw[kw-blue-a,line width=1.5pt] (-5.66,-3.15) -- (0,-3.15);
\draw[->,red,line width=2pt] (-2.83,-1.5) -- (-2.83,-2.5);
\end{tikzpicture}
$$
\caption{Examples of pinched products}\label{pinched_prods}
\end{figure}
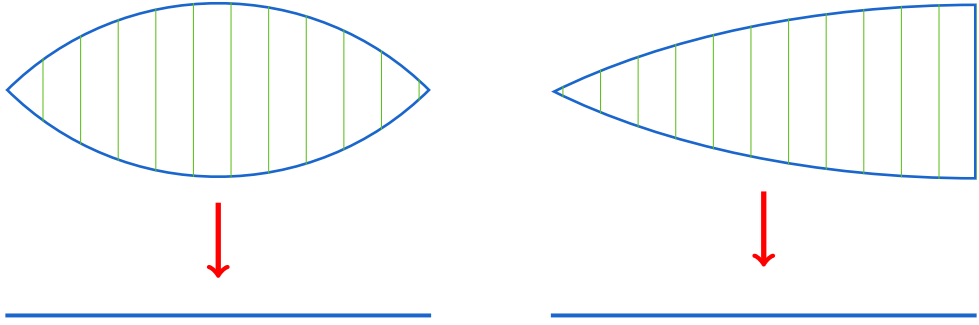
The need for a strengthened version will become apparent in Appendix \ref{sec:comparing-defs}
where we construct a traditional 2-category from a disk-like 2-category.
For example, ``half-pinched" products of 1-balls are used to construct weak identities for 1-morphisms
in 2-categories (see \S\ref{ssec:2-cats}).
We also need fully-pinched products to define collar maps below (see Figure \ref{glue-collar}).

Define a {\it pinched product} to be a map
\[
	\pi: E\to X
\]
such that $E$ is a $k{+}m$-ball, $X$ is a $k$-ball ($m\ge 1$), and $\pi$ is locally modeled
on a standard iterated degeneracy map
\[
	d: \Delta^{k+m}\to\Delta^k .
\]
(We thank Kevin Costello for suggesting this approach.)

Note that for each interior point $x\in X$, $\pi\inv(x)$ is an $m$-ball,
and for each boundary point $x\in\bd X$, $\pi\inv(x)$ is a ball of dimension
$l \le m$, with $l$ depending on $x$.
It is easy to see that a composition of pinched products is again a pinched product.
A {\it sub pinched product} is a sub-$m$-ball $E'\sub E$ such that the restriction
$\pi:E'\to \pi(E')$ is again a pinched product.
A {union} of pinched products is a decomposition $E = \cup_i E_i$
such that each $E_i\sub E$ is a sub pinched product.
(See Figure \ref{pinched_prod_unions}.)
\begin{figure}[t]
$$
\begin{tikzpicture}[baseline=0]
\begin{scope}
\path[clip] (0,0) arc (135:45:4) arc (-45:-135:4);
\draw[kw-blue-a,line width=2pt] (0,0) arc (135:45:4) arc (-45:-135:4);
\draw[kw-blue-a] (0,0) -- (5.66,0);
\foreach \x in {0, 0.5, ..., 6} {
	\draw[green!50!brown] (\x,-2) -- (\x,2);
}
\end{scope}
\end{tikzpicture}
\qquad
\begin{tikzpicture}[baseline=0]
\begin{scope}
\path[clip] (0,-1) rectangle (4,1);
\draw[kw-blue-a,line width=2pt] (0,-1) rectangle (4,1);
\draw[kw-blue-a] (0,0) -- (5,0);
\foreach \x in {0, 0.5, ..., 6} {
	\draw[green!50!brown] (\x,-2) -- (\x,2);
}
\end{scope}
\end{tikzpicture}
\qquad
\begin{tikzpicture}[baseline=0]
\begin{scope}
\path[clip] (0,0) arc (135:45:4) arc (-45:-135:4);
\draw[kw-blue-a,line width=2pt] (0,0) arc (135:45:4) arc (-45:-135:4);
\draw[kw-blue-a] (2.83,3) circle (3);
\foreach \x in {0, 0.5, ..., 6} {
	\draw[green!50!brown] (\x,-2) -- (\x,2);
}
\end{scope}
\end{tikzpicture}
$$
$$
\begin{tikzpicture}[baseline=0]
\begin{scope}
\path[clip] (0,-1) rectangle (4,1);
\draw[kw-blue-a,line width=2pt] (0,-1) rectangle (4,1);
\draw[kw-blue-a] (0,-1) -- (4,1);
\foreach \x in {0, 0.5, ..., 6} {
	\draw[green!50!brown] (\x,-2) -- (\x,2);
}
\end{scope}
\end{tikzpicture}
\qquad
\begin{tikzpicture}[baseline=0]
\begin{scope}
\path[clip] (0,-1) rectangle (5,1);
\draw[kw-blue-a,line width=2pt] (0,-1) rectangle (5,1);
\draw[kw-blue-a] (1,-1) .. controls  (2,-1) and (3,1) .. (4,1);
\foreach \x in {0, 0.5, ..., 6} {
	\draw[green!50!brown] (\x,-2) -- (\x,2);
}
\end{scope}
\end{tikzpicture}
\qquad
\begin{tikzpicture}[baseline=0]
\begin{scope}
\path[clip] (0,0) arc (135:45:4) arc (-45:-135:4);
\draw[kw-blue-a,line width=2pt] (0,0) arc (135:45:4) arc (-45:-135:4);
\draw[kw-blue-a] (2.82,-5) -- (2.83,5);
\foreach \x in {0, 0.5, ..., 6} {
	\draw[green!50!brown] (\x,-2) -- (\x,2);
}
\end{scope}
\end{tikzpicture}
$$
\caption{Six examples of unions of pinched products}\label{pinched_prod_unions}
\end{figure}
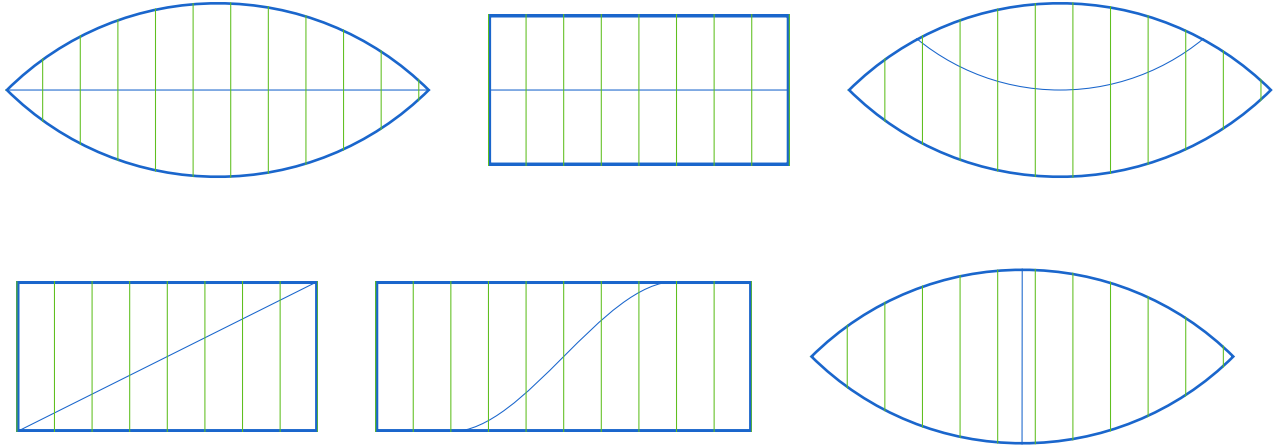

Note that $\bd X$ has a (possibly trivial) subdivision according to 
the dimension of $\pi\inv(x)$, $x\in \bd X$.
Let $\cC(X)\trans{}$ denote the morphisms which are splittable along this subdivision.

The product axiom will give a map $\pi^*:\cC(X)\trans{}\to \cC(E)$ for each pinched product
$\pi:E\to X$.
Morphisms in the image of $\pi^*$ will be called product morphisms.
Before stating the axiom, we illustrate it in our two motivating examples of $n$-categories.
In the case where $\cC(X) = \{f: X\to T\}$, we define $\pi^*(f) = f\circ\pi$.
In the case where $\cC(X)$ is the set of all labeled embedded cell complexes $K$ in $X$, 
define $\pi^*(K) = \pi\inv(K)$, with each codimension $i$ cell $\pi\inv(c)$ labeled by the
same (traditional) $i$-morphism as the corresponding codimension $i$ cell $c$.

\begin{axiom}[Product (identity) morphisms]
\label{axiom:product}
For each pinched product $\pi:E\to X$, with $X$ a $k$-ball and $E$ a $k{+}m$-ball ($m\ge 1$),
there is a map $\pi^*:\cC(X)\trans{}\to \cC(E)$.
These maps must satisfy the following conditions.
\begin{enumerate}
\item
If $\pi:E\to X$ and $\pi':E'\to X'$ are pinched products, and
if $f:X\to X'$ and $\tilde{f}:E \to E'$ are maps such that the diagram
\[ \xymatrix{
	E \ar[r]^{\tilde{f}} \ar[d]_{\pi} & E' \ar[d]^{\pi'} \\
	X \ar[r]^{f} & X'
} \]
commutes, then we have 
\[
	\pi'^*\circ f = \tilde{f}\circ \pi^*.
\]
\item
Product morphisms are compatible with gluing (composition).
Let $\pi:E\to X$, $\pi_1:E_1\to X_1$, and $\pi_2:E_2\to X_2$ 
be pinched products with $E = E_1\cup E_2$.
(See Figure \ref{pinched_prod_unions}.)  
Note that $X_1$ and $X_2$ can be identified with subsets of $X$, 
but $X_1 \cap X_2$ might not be codimension 1, and indeed we might have $X_1 = X_2 = X$.
We assume that there is a decomposition of $X$ into balls which is compatible with
$X_1$ and $X_2$.
Let $a\in \cC(X)\trans{}$, and let $a_i$ denote the restriction of $a$ to $X_i\sub X$.
(We assume that $a$ is splittable with respect to the above decomposition of $X$ into balls.)
Then 
\[
	\pi^*(a) = \pi_1^*(a_1)\bullet \pi_2^*(a_2) .
\]
\item
Product morphisms are associative.
If $\pi:E\to X$ and $\rho:D\to E$ are pinched products then
\[
	\rho^*\circ\pi^* = (\pi\circ\rho)^* .
\]
\item
Product morphisms are compatible with restriction.
If we have a commutative diagram
\[ \xymatrix{
	D \ar@{^(->}[r] \ar[d]_{\rho} & E \ar[d]^{\pi} \\
	Y \ar@{^(->}[r] & X
} \]
such that $\rho$ and $\pi$ are pinched products, then
\[
	\res_D\circ\pi^* = \rho^*\circ\res_Y .
\]
\end{enumerate}
\end{axiom}

\medskip



The next axiom says, roughly, that we have strict associativity in dimension $n$, 
even when we reparametrize our $n$-balls.

\begin{axiom}[\textup{\textbf{[preliminary]}} Isotopy invariance in dimension $n$]
\label{axiom:isotopy-preliminary}
Let $X$ be an $n$-ball, $b \in \cC(X)$, and $f: X\to X$ be a homeomorphism which 
acts trivially on the restriction $\bd b$ of $b$ to $\bd X$.
(Keep in mind the important special case where $f$ restricted to $\bd X$ is the identity.)
Suppose furthermore that $f$ is isotopic to the identity through homeomorphisms which act
trivially on $\bd b$.
Then $f(b) = b$.
In particular, homeomorphisms which are isotopic to the identity rel boundary act trivially on 
all of $\cC(X)$.
\end{axiom}

This axiom needs to be strengthened to force product morphisms to act as the identity.
Let $X$ be an $n$-ball and $Y\sub\bd X$ be an $n{-}1$-ball.
Let $J$ be a 1-ball (interval).
Let $s_{Y,J}: X\cup_Y (Y\times J) \to X$ be a collaring homeomorphism
(see the end of \S\ref{ss:syst-o-fields}).
Here we use $Y\times J$ with boundary entirely pinched.
We define a map
\begin{eqnarray*}
	\psi_{Y,J}: \cC(X) &\to& \cC(X) \\
	a & \mapsto & s_{Y,J}(a \bullet ((a|_Y)\times J)) .
\end{eqnarray*}
(See Figure \ref{glue-collar}.)
\begin{figure}[t]
\begin{equation*}
\begin{tikzpicture}
\def\rad{1}
\def\srad{0.75}
\def\gap{4.5}
\foreach \i in {0, 1, 2} {
	\node(\i) at ($\i*(\gap,0)$) [draw, circle through = {($\i*(\gap,0)+(\rad,0)$)}] {};
	\node(\i-small) at (\i.east) [circle through={($(\i.east)+(\srad,0)$)}] {};
	\foreach \n in {1,2} {
		\fill (intersection \n of \i-small and \i) node(\i-intersection-\n) {} circle (2pt);
	}
}

\begin{scope}[decoration={brace,amplitude=10,aspect=0.5}]
	\draw[decorate] (0-intersection-1.east) -- (0-intersection-2.east);
\end{scope}
\node[right=1mm] at (0.east) {$a$};
\draw[->] ($(0.east)+(0.75,0)$) -- ($(1.west)+(-0.2,0)$);

\draw (1-small)  circle (\srad);
\foreach \theta in {90, 72, ..., -90} {
	\draw[kw-blue-a] (1) -- ($(1)+(\rad,0)+(\theta:\srad)$);
}
\filldraw[fill=white] (1) circle (\rad);
\foreach \n in {1,2} {
	\fill (intersection \n of 1-small and 1) circle (2pt);
}
\node[below] at (1-small.south) {$a \times J$};
\draw[->] ($(1.east)+(1,0)$) -- ($(2.west)+(-0.2,0)$);

\begin{scope}
\path[clip] (2) circle (\rad);
\draw[clip] (2.east) circle (\srad);
\foreach \y in {1, 0.86, ..., -1} {
	\draw[kw-blue-a] ($(2)+(-1,\y) $)-- ($(2)+(1,\y)$);
}
\end{scope}
\end{tikzpicture}
\end{equation*}
\begin{equation*}
\xymatrix@C+2cm{\cC(X) \ar[r]^(0.45){\text{glue}} & \cC(X \cup \text{collar}) \ar[r]^(0.55){\text{homeo}} & \cC(X)}
\end{equation*}

\caption{Extended homeomorphism.}\label{glue-collar}\end{figure}
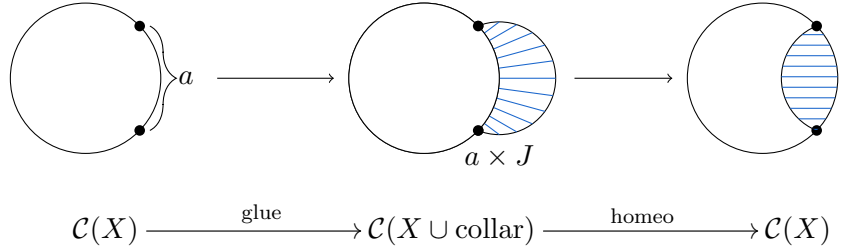
We call a map of this form a {\it collar map}.
It can be thought of as the action of the inverse of
a map which projects a collar neighborhood of $Y$ onto $Y$,
or as the limit of homeomorphisms $X\to X$ which expand a very thin collar of $Y$
to a larger collar.
We call the equivalence relation generated by collar maps and homeomorphisms
isotopic (rel boundary) to the identity {\it extended isotopy}.

The revised axiom is

\begin{axiom}[Extended isotopy invariance in dimension $n$]
\label{axiom:extended-isotopies}
Let $X$ be an $n$-ball, $b \in \cC(X)$, and $f: X\to X$ be a homeomorphism which 
acts trivially on the restriction $\bd b$ of $b$ to $\bd X$.
Suppose furthermore that $f$ is isotopic to the identity through homeomorphisms which
act trivially on $\bd b$.
Then $f(b) = b$.
In addition, collar maps act trivially on $\cC(X)$.
\end{axiom}

\medskip

We need one additional axiom.
It says, roughly, that given a $k$-ball $X$, $k<n$, and $c\in \cC(X)$, there exist sufficiently many splittings of $c$.
We use this axiom in the proofs of \ref{lem:d-a-acyclic} and \ref{lem:colim-injective}.
The analogous axiom for systems of fields is used in the proof of \ref{small-blobs-b}.
All of the examples of (disk-like) $n$-categories we consider in this paper satisfy the axiom, but
nevertheless we feel that it is too strong.
In the future we would like to see this provisional version of the axiom replaced by something less restrictive.

We give two alternate versions of the axiom, one better suited for smooth examples, and one better suited to PL examples.

\begin{axiom}[Splittings]
\label{axiom:splittings}
Let $c\in \cC_k(X)$, with $0\le k < n$.
Let $s = \{X_i\}$ be a splitting of X (so $X = \cup_i X_i$).
Let $\Homeo_\bd(X)$ denote homeomorphisms of $X$ which restrict to the identity on $\bd X$.
\begin{itemize}
\item (Alternative 1) Consider the set of homeomorphisms $g:X\to X$ such that $c$ splits along $g(s)$.
Then this subset of $\Homeo(X)$ is open and dense.
Furthermore, if $s$ restricts to a splitting $\bd s$ of $\bd X$, and if $\bd c$ splits along $\bd s$, then the
intersection of the set of such homeomorphisms $g$ with $\Homeo_\bd(X)$ is open and dense in $\Homeo_\bd(X)$.
\item (Alternative 2) Then there exists an embedded cell complex $S_c \sub X$, called the string locus of $c$,
such that if the splitting $s$ is transverse to $S_c$ then $c$ splits along $s$.
\end{itemize}
\end{axiom}

We note some consequences of Axiom \ref{axiom:splittings}.

First, some preliminary definitions.
If $P$ is a poset let $P\times I$ denote the product poset, where $I = \{0, 1\}$ with ordering $0\le 1$.
Let $\Cone(P)$ denote $P$ adjoined an additional object $v$ (the vertex of the cone) with $p\le v$ for all objects $p$ of $P$.
Finally, let $\vcone(P)$ denote $P\times I \cup \Cone(P)$, where we identify $P\times \{0\}$ with the base of the cone.
We call $P\times \{1\}$ the base of $\vcone(P)$.
(See Figure \ref{vcone-fig}.)
\begin{figure}[t]
\centering
\begin{tikzpicture}
	[kw node/.style={circle,fill=orange!70},
	kw arrow/.style={-latex, very thick, blue!70, shorten >=.06cm, shorten <=.06cm},
	kw label/.style={cca},
	]

	\definecolor{cca}{rgb}{.1,.4,.3};

	\node at (0,0) {
		\begin{tikzpicture}	
			\draw 
				(0,0) node[kw node](p1){}
				(1,.5) node[kw node](p2){}
				(2,0) node[kw node](p3){};
			
			\draw[kw arrow] (p1) -- (p3);
			\draw[kw arrow] (p2) -- (p3);
			\draw[kw arrow] (p1) -- (p2);
			
			\draw[kw label] (1,-.6) node{(a)};
		\end{tikzpicture}
	};
	
	\node at (7,0) {
		\begin{tikzpicture}	
			\draw 
				(0,0) node[kw node](p1){}
				++(0,2.5) node[kw node](q1){}
				(1,.5) node[kw node](p2){}
				++(0,2.5) node[kw node](q2){}
				(2,0)  node[kw node](p3){}
				++(0,2.5) node[kw node](q3){}
				;
			
			\draw[kw arrow] (p1) -- (p3);
			\draw[kw arrow] (p2) -- (p3);
			\draw[kw arrow] (p1) -- (p2);
			\draw[kw arrow] (q1) -- (q3);
			\draw[kw arrow] (q2) -- (q3);
			\draw[kw arrow] (q1) -- (q2);
			\draw[kw arrow] (p1) -- (q1);
			\draw[kw arrow] (p2) -- (q2);
			\draw[kw arrow] (p3) -- (q3);

			\draw[kw label] (1,-.6) node{(b)};
		\end{tikzpicture}
	};
	
	\node at (0,-5) {
		\begin{tikzpicture}	
			\draw 
				(0,0) node[kw node](p1){}
				(1,.5) node[kw node](p2){}
				++(0,2.5) node[kw node](v){}
				(2,0)  node[kw node](p3){}
				;
			
			\draw[kw arrow] (p1) -- (p3);
			\draw[kw arrow] (p2) -- (p3);
			\draw[kw arrow] (p1) -- (p2);
			\draw[kw arrow] (p1) -- (v);
			\draw[kw arrow] (p2) -- (v);
			\draw[kw arrow] (p3) -- (v);

			\draw[kw label] (1,-.6) node{(c)};
		\end{tikzpicture}
	};
	
	\node at (7,-5) {
		\begin{tikzpicture}	
			\draw 
				(0,0) node[kw node](p1){}
				++(-2,2.5) node[kw node](q1){}
				(1,.5) node[kw node](p2){}
				++(-2,2.5) node[kw node](q2){}
				++(4,0) node[kw node](v){}
				(2,0)  node[kw node](p3){}
				++(-2,2.5) node[kw node](q3){}
				;
			
			\draw[kw arrow] (p1) -- (p3);
			\draw[kw arrow] (p2) -- (p3);
			\draw[kw arrow] (p1) -- (p2);
			\draw[kw arrow] (p1) -- (v);
			\draw[kw arrow] (p2) -- (v);
			\draw[kw arrow] (p3) -- (v);
			\draw[kw arrow] (q1) -- (q3);
			\draw[kw arrow] (q2) -- (q3);
			\draw[kw arrow] (q1) -- (q2);
			\draw[kw arrow] (p1) -- (q1);
			\draw[kw arrow] (p2) -- (q2);
			\draw[kw arrow] (p3) -- (q3);

			\draw[kw label] (1,-.6) node{(d)};
		\end{tikzpicture}
	};
	
\end{tikzpicture}
\caption{(a) $P$, (b) $P\times I$, (c) $\Cone(P)$, (d) $\vcone(P)$}
\label{vcone-fig}
\end{figure}

\begin{lem}
\label{lemma:vcones}
Let $c\in \cC_k(X)$, with $0\le k < n$, and
let $P$ be a finite poset of splittings of $c$.
Then we can embed $\vcone(P)$ into the splittings of $c$, with $P$ corresponding to the base of $\vcone(P)$.
Furthermore, if $q$ is any decomposition of $X$, then we can take the vertex of $\vcone(P)$ to be $q$ up to a small perturbation.
\end{lem}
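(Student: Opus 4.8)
I would construct the required order-preserving injection $f \colon \vcone(P) \to \{\text{splittings of }c\}$ by hand: on the base $P\times\{1\}$ it is the given inclusion of $P$; the vertex $v$ is sent to an arbitrarily small generic perturbation $\hat q$ of $q$; and each $(p,0)$ in the lower copy $P\times\{0\}$ is sent to a common refinement of $p$ and $\hat q$. Recall that in the poset of splittings a refinement of a splitting $s$ lies below $s$, so $(p,0)$, which lies below both $(p,1)=p$ and $v$ in $\vcone(P)$, must indeed be sent to something refining both. The only non-formal ingredient is Axiom \ref{axiom:splittings}, used in the following form: if $c$ splits along a splitting $s$, and each ball of $s$ is further subdivided by a sufficiently generic decomposition (rel the boundary of that ball), then $c$ still splits along the resulting refinement. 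This follows by applying Axiom \ref{axiom:splittings} to the restriction of $c$ to each piece of $s$ --- a ball of dimension $k<n$ --- together with an induction on $k$ that controls the behaviour on boundaries, feeding the lower-dimensional boundary data ($\bd c\in\cl{\cC}_{k-1}(\bd X)$, living on a sphere, treated via the colimit description of $\cl{\cC}$) through the same argument; the $\Homeo_\bd(X)$ clause of Axiom \ref{axiom:splittings}, or, in the string-locus formulation, transversality of the subdivision to $S_c$, is exactly what makes this work rel boundary.

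\textbf{The vertex.} First I would fix an auxiliary ball decomposition $\rho$ of $X$, chosen sufficiently fine and generically (transverse to $S_c$, to $\hat q$ to be chosen, and to every $p\in P$). Then, using the form of Axiom \ref{axiom:splittings} above (and first perturbing $\bd q$ inside $\bd X$, by the same argument one dimension down, so that $\bd c$ splits along the restricted decomposition, then extending), I would perturb $q$ to $\hat q$ by an arbitrarily small isotopy so that: $c$ splits along $\hat q$; $\hat q$ is transverse to $\rho$ and to each of the finitely many $p\in P$, so that the common refinements below are honest ball decompositions in the sense of Definition \ref{defn:gluing-decomposition}; and $\hat q$ neither refines nor is refined by any $p\in P$. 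Each of these is an open dense (generic) condition and there are only finitely many, so they can be imposed simultaneously. Set $f(v)=\hat q$; this is ``$q$ up to a small perturbation''.

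\textbf{The lower copy of $P$ and conclusion.} For $p\in P$ let $f(p,0)$ be the splitting obtained by cutting $X$ along the union of the cut loci of $p$, $\hat q$ and $\rho$. By construction $f(p,0)$ refines $p$ and refines $\hat q$, and if $p\le p'$ in $P$ then $f(p,0)$ refines $f(p',0)$ and refines $p'$; hence $f$ is order preserving on all of $\vcone(P)$. Within each piece of $p$, the extra cuts coming from $\hat q$ and $\rho$ form a generic decomposition of that (dimension $k<n$) ball, so by the form of Axiom \ref{axiom:splittings} from the first paragraph $c$ splits along $f(p,0)$; thus $f$ lands in the splittings of $c$. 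Genericity of $\hat q$ and $\rho$ also guarantees that each $f(p,0)$ is strictly finer than $p$ and than $\hat q$, that the $f(p,0)$ are pairwise distinct, and that $f(p,0)$ refines $p'$ (respectively $f(p',0)$) only when $p\le p'$; combined with the incomparability of $\hat q$ with the $p$'s arranged above, this makes $f$ injective and order-reflecting. Therefore $f$ is the desired embedding of $\vcone(P)$ into the splittings of $c$, restricting to the given $P$ on the base and with vertex $q$ up to small perturbation.

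\textbf{Main obstacle.} The real content is entirely in the first paragraph: showing that ``$c$ splits along $s$'' is stable under generic refinement, rel boundary, uniformly enough that the finitely many genericity demands of the second and third paragraphs --- including that all the common refinements be legitimate ball decompositions --- can be met at once. This is precisely where the full strength of Axiom \ref{axiom:splittings}, in either its open-dense or its string-locus form, together with the induction on dimension (routing the boundary case through the analogous statement for spheres), is needed. Once that is in place, checking that the explicitly defined $f$ respects the order relations of $\vcone(P)$ and is injective is routine bookkeeping.
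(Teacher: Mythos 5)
Your construction is essentially the paper's own proof: perturb $q$ slightly so that it is transverse to all splittings in $P$ and, by Axiom \ref{axiom:splittings}, $c$ splits along it, then take common refinements of each $p\in P$ with the perturbed $q$ to serve as the middle level $P\times\{0\}$ of $\vcone(P)$. The paper states this in three sentences and leaves the splitting-along-refinements and injectivity bookkeeping implicit, so your extra auxiliary decomposition and genericity conditions are elaboration rather than a different argument.
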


\begin{proof}
After a small perturbation, we may assume that $q$ is simultaneously transverse to all the splittings in $P$, and
(by Axiom \ref{axiom:splittings}) that $c$ splits along $q$.
We can now choose, for each splitting $p$ in $P$, a common refinement $p'$ of $p$ and $q$.
This constitutes the middle part ($P\times \{0\}$ above) of $\vcone(P)$.
\end{proof}

\begin{cor}
For any $c\in \cC_k(X)$, the geometric realization of the poset of splittings of $c$ is contractible.
\end{cor}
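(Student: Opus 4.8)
\emph{Proposal.} The plan is to deduce this from Lemma~\ref{lemma:vcones} by a routine compactness argument. Write $\cP$ for the poset of all splittings of $c$, ordered by refinement; the goal is to show $|\cP|$ is contractible. Since $|\cP|$ is a CW complex, by Whitehead's theorem it suffices to show $|\cP|$ is nonempty, path-connected, and has $\pi_m(|\cP|) = 0$ for all $m \ge 1$.

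The first ingredient is the elementary observation that for any finite poset $P$ the realization $|\vcone(P)|$ is contractible. This is pure poset combinatorics: there is an order-preserving retraction $r\colon \vcone(P) \to \Cone(P)$ which collapses the $I$-factor (sending both $(x,0)$ and $(x,1)$ to the image of $x$ in $\Cone(P)$, and fixing the cone vertex), and the inclusion $\iota\colon \Cone(P) \to \vcone(P)$ (sending $x \mapsto (x,0)$ and fixing the vertex). One checks $r\circ\iota = \id$ and $\iota\circ r \le \id$ as poset maps, so on realizations $|\iota|$ and $|r|$ are mutually inverse homotopy equivalences (comparable poset maps induce homotopic maps). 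Since $\Cone(P)$ has a maximum, $|\Cone(P)|$ is contractible, hence so is $|\vcone(P)|$.

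Now $|\cP|$ is the filtered union of the subcomplexes $|P|$ over finite subposets $P \subseteq \cP$, so $\pi_m(|\cP|) = \colim_P \pi_m(|P|)$ for all $m \ge 0$; concretely, any compact subset of $|\cP|$ meets only finitely many cells, and the vertices of the simplices it touches generate a finite subposet $P$ with the subset contained in $|P|$. Hence every class in $\pi_m(|\cP|)$ is the image of a class coming from $\pi_m(|P|)$ for some finite $P$. Applying Lemma~\ref{lemma:vcones} to this $P$ produces an embedding $\vcone(P) \hookrightarrow \cP$ restricting to the given inclusion of $P$ as the base; therefore the map $|P| \to |\cP|$ factors through the contractible space $|\vcone(P)|$, so it is nullhomotopic. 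Consequently the given class in $\pi_m(|\cP|)$ vanishes. Taking $m=0$ shows (using that the trivial splitting $\{X\}$ lies in $\cP$, so $\cP \ne \emptyset$) that $|\cP|$ is path-connected, and taking $m \ge 1$ gives $\pi_m(|\cP|) = 0$; Whitehead's theorem then finishes the proof.

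The argument is essentially formal given Lemma~\ref{lemma:vcones}; the only points needing care are the contractibility of $|\vcone(P)|$ (handled above by an explicit poset homotopy) and the bookkeeping that a compact subset of $|\cP|$ lies in $|P|$ for some finite subposet $P$. I do not anticipate any substantive obstacle beyond this.
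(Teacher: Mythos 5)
Your proposal is correct and follows essentially the same route as the paper: the paper's proof is the one-line observation that, via Lemma \ref{lemma:vcones}, V-cones become (contractible) cones in the realization, so any cycle can be contracted. You have simply spelled out the routine details — the poset-level contraction of $\vcone(P)$, the compactness reduction to finite subposets, and the Whitehead step — which the paper leaves implicit.
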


\begin{proof}
In the geometric realization, V-Cones become actual cones, allowing us to contract any cycle.
\end{proof}

\noop{ 

We need one additional axiom, in order to constrain the poset of decompositions of a given morphism.
We will soon want to take colimits (and homotopy colimits) indexed by such posets, and we want to require
that these colimits are in some sense locally acyclic.
Before stating the axiom we need a few preliminary definitions.
If $P$ is a poset let $P\times I$ denote the product poset, where $I = \{0, 1\}$ with ordering $0\le 1$.
Let $\Cone(P)$ denote $P$ adjoined an additional object $v$ (the vertex of the cone) with $p\le v$ for all objects $p$ of $P$.
Finally, let $\vcone(P)$ denote $P\times I \cup \Cone(P)$, where we identify $P\times \{0\}$ with the base of the cone.
We call $P\times \{1\}$ the base of $\vcone(P)$.
(See Figure \ref{vcone-fig}.)
\begin{figure}[t]
\centering
\begin{tikzpicture}
	[kw node/.style={circle,fill=orange!70},
	kw arrow/.style={-latex, very thick, blue!70, shorten >=.06cm, shorten <=.06cm},
	kw label/.style={cca},
	]

	\definecolor{cca}{rgb}{.1,.4,.3};

	\node at (0,0) {
		\begin{tikzpicture}	
			\draw 
				(0,0) node[kw node](p1){}
				(1,.5) node[kw node](p2){}
				(2,0) node[kw node](p3){};
			
			\draw[kw arrow] (p1) -- (p3);
			\draw[kw arrow] (p2) -- (p3);
			\draw[kw arrow] (p1) -- (p2);
			
			\draw[kw label] (1,-.6) node{(a)};
		\end{tikzpicture}
	};
	
	\node at (7,0) {
		\begin{tikzpicture}	
			\draw 
				(0,0) node[kw node](p1){}
				++(0,2.5) node[kw node](q1){}
				(1,.5) node[kw node](p2){}
				++(0,2.5) node[kw node](q2){}
				(2,0)  node[kw node](p3){}
				++(0,2.5) node[kw node](q3){}
				;
			
			\draw[kw arrow] (p1) -- (p3);
			\draw[kw arrow] (p2) -- (p3);
			\draw[kw arrow] (p1) -- (p2);
			\draw[kw arrow] (q1) -- (q3);
			\draw[kw arrow] (q2) -- (q3);
			\draw[kw arrow] (q1) -- (q2);
			\draw[kw arrow] (p1) -- (q1);
			\draw[kw arrow] (p2) -- (q2);
			\draw[kw arrow] (p3) -- (q3);

			\draw[kw label] (1,-.6) node{(b)};
		\end{tikzpicture}
	};
	
	\node at (0,-5) {
		\begin{tikzpicture}	
			\draw 
				(0,0) node[kw node](p1){}
				(1,.5) node[kw node](p2){}
				++(0,2.5) node[kw node](v){}
				(2,0)  node[kw node](p3){}
				;
			
			\draw[kw arrow] (p1) -- (p3);
			\draw[kw arrow] (p2) -- (p3);
			\draw[kw arrow] (p1) -- (p2);
			\draw[kw arrow] (p1) -- (v);
			\draw[kw arrow] (p2) -- (v);
			\draw[kw arrow] (p3) -- (v);

			\draw[kw label] (1,-.6) node{(c)};
		\end{tikzpicture}
	};
	
	\node at (7,-5) {
		\begin{tikzpicture}	
			\draw 
				(0,0) node[kw node](p1){}
				++(-2,2.5) node[kw node](q1){}
				(1,.5) node[kw node](p2){}
				++(-2,2.5) node[kw node](q2){}
				++(4,0) node[kw node](v){}
				(2,0)  node[kw node](p3){}
				++(-2,2.5) node[kw node](q3){}
				;
			
			\draw[kw arrow] (p1) -- (p3);
			\draw[kw arrow] (p2) -- (p3);
			\draw[kw arrow] (p1) -- (p2);
			\draw[kw arrow] (p1) -- (v);
			\draw[kw arrow] (p2) -- (v);
			\draw[kw arrow] (p3) -- (v);
			\draw[kw arrow] (q1) -- (q3);
			\draw[kw arrow] (q2) -- (q3);
			\draw[kw arrow] (q1) -- (q2);
			\draw[kw arrow] (p1) -- (q1);
			\draw[kw arrow] (p2) -- (q2);
			\draw[kw arrow] (p3) -- (q3);

			\draw[kw label] (1,-.6) node{(d)};
		\end{tikzpicture}
	};
	
\end{tikzpicture}
\caption{(a) $P$, (b) $P\times I$, (c) $\Cone(P)$, (d) $\vcone(P)$}
\label{vcone-fig}
\end{figure}

\begin{axiom}[Splittings]
\label{axiom:vcones}
Let $c\in \cC_k(X)$, with $0\le k < n$, and
let $P$ be a finite poset of splittings of $c$.
Then we can embed $\vcone(P)$ into the splittings of $c$, with $P$ corresponding to the base of $\vcone(P)$.
Furthermore, if $q$ is any decomposition of $X$, then we can take the vertex of $\vcone(P)$ to be $q$ up to a small perturbation.
Also, any splitting of $\bd c$ can be extended to a splitting of $c$.
\end{axiom}

It is easy to see that this axiom holds in our two motivating examples, 
using standard facts about transversality and general position.
One starts with $q$, perturbs it so that it is in general position with respect to $c$ (in the case of string diagrams)
and also with respect to each of the decompositions of $P$, then chooses common refinements of each decomposition of $P$
and the perturbed $q$.
These common refinements form the middle ($P\times \{0\}$ above) part of $\vcone(P)$.

We note two simple special cases of Axiom \ref{axiom:vcones}.
If $P$ is the empty poset, then $\vcone(P)$ consists of only the vertex, and the axiom says that any morphism $c$
can be split along any decomposition of $X$, after a small perturbation.
If $P$ is the disjoint union of two points, then $\vcone(P)$ looks like a letter W, and the axiom implies that the
poset of splittings of $c$ is connected.
Note that we do not require that any two splittings of $c$ have a common refinement (i.e.\ replace the letter W with the letter V).
Two decompositions of $X$ might intersect in a very messy way, but one can always find a third
decomposition which has common refinements with each of the original two decompositions.

} 

\medskip

This completes the definition of an $n$-category.
Next we define enriched $n$-categories.

\medskip

Most of the examples of $n$-categories we are interested in are enriched in the following sense.
The various sets of $n$-morphisms $\cC(X; c)$, for all $n$-balls $X$ and
all $c\in \cl{\cC}(\bd X)$, have the structure of an object in some appropriate auxiliary category
(e.g.\ vector spaces, or modules over some ring, or chain complexes),
and all the structure maps of the $n$-category are compatible with the auxiliary
category structure.
Note that this auxiliary structure is only in dimension $n$; if $\dim(Y) < n$ then 
$\cC(Y; c)$ is just a plain set.

First we must specify requirements for the auxiliary category.
It should have a {\it distributive monoidal structure} in the sense of 
\cite{1010.4527}.
This means that there is a monoidal structure $\otimes$ and also coproduct $\oplus$,
and these two structures interact in the appropriate way.
Examples include 
\begin{itemize}
\item vector spaces (or $R$-modules or chain complexes) with tensor product and direct sum; and
\item topological spaces with product and disjoint union.
\end{itemize}
For convenience, we will also assume that the objects of our auxiliary category are sets with extra structure.
(Otherwise, stating the axioms for identity morphisms becomes more cumbersome.)

Before stating the revised axioms for an $n$-category enriched in a distributive monoidal category,
we need a preliminary definition.
Once we have the above $n$-category axioms for $n{-}1$-morphisms, we can define the 
category $\bbc$ of {\it $n$-balls with boundary conditions}.
Its objects are pairs $(X, c)$, where $X$ is an $n$-ball and $c \in \cl\cC(\bd X)$ is the ``boundary condition".
The morphisms from $(X, c)$ to $(X', c')$, denoted $\Homeo(X; c \to X'; c')$, are
homeomorphisms $f:X\to X'$ such that $f|_{\bd X}(c) = c'$.
 
\begin{axiom}[Enriched $n$-categories]
\label{axiom:enriched}
Let $\cS$ be a distributive symmetric monoidal category.
An $n$-category enriched in $\cS$ satisfies the above $n$-category axioms for $k=0,\ldots,n-1$,
and modifies the axioms for $k=n$ as follows:
\begin{itemize}
\item Morphisms. We have a functor $\cC_n$ from $\bbc$ ($n$-balls with boundary conditions) to $\cS$.
\item Composition. Let $B = B_1\cup_Y B_2$ as in Axiom \ref{axiom:composition}.
Let $Y_i = \bd B_i \setmin Y$.  
Note that $\bd B = Y_1\cup Y_2$.
Let $c_i \in \cC(Y_i)$ with $\bd c_1 = \bd c_2 = d \in \cl\cC(E)$.
Then we have a map
\[
	\gl_Y : \bigoplus_c \cC(B_1; c_1 \bullet c) \otimes \cC(B_2; c_2\bullet c) \to \cC(B; c_1\bullet c_2),
\]
where the sum is over $c\in\cC(Y)$ such that $\bd c = d$.
This map is natural with respect to the action of homeomorphisms and with respect to restrictions.
\end{itemize}
\end{axiom}

\medskip

When the enriching category $\cS$ is chain complexes or topological spaces,
or more generally an appropriate sort of $\infty$-category,
we can modify the extended isotopy axiom \ref{axiom:extended-isotopies}
to require that families of homeomorphisms act
and obtain what we shall call an $A_\infty$ $n$-category.

\noop{
We believe that abstract definitions should be guided by diverse collections
of concrete examples, and a lack of diversity in our present collection of examples of $A_\infty$ $n$-categories
makes us reluctant to commit to an all-encompassing general definition.
Instead, we will give a relatively narrow definition which covers the examples we consider in this paper.
After stating it, we will briefly discuss ways in which it can be made more general.
}

Recall the category $\bbc$ of balls with boundary conditions.
Note that the set of morphisms $\Homeo(X;c \to X'; c')$ from $(X, c)$ to $(X', c')$ is a topological space.
Let $\cS$ be an appropriate $\infty$-category (e.g.\ chain complexes)
and let $\cJ$ be an $\infty$-functor from topological spaces to $\cS$
(e.g.\ the singular chain functor $C_*$).

\begin{axiom}[\textup{\textbf{[$A_\infty$ replacement for Axiom \ref{axiom:extended-isotopies}]}} Families of homeomorphisms act in dimension $n$.]
\label{axiom:families}
For each pair of $n$-balls $X$ and $X'$ and each pair $c\in \cl{\cC}(\bd X)$ and $c'\in \cl{\cC}(\bd X')$ we have an $\cS$-morphism
\[
	\cJ(\Homeo(X;c \to X'; c')) \ot \cC(X; c) \to \cC(X'; c') .
\]
Similarly, we have an $\cS$-morphism
\[
	\cJ(\Coll(X,c)) \ot \cC(X; c) \to \cC(X; c),
\]
where $\Coll(X,c)$ denotes the space of collar maps.
(See below for further discussion.)
These action maps are required to be associative up to coherent homotopy,
and also compatible with composition (gluing) in the sense that
a diagram like the one in Theorem \ref{thm:CH} commutes.
\end{axiom}

We now describe the topology on $\Coll(X; c)$.
We retain notation from the above definition of collar map (after Axiom \ref{axiom:isotopy-preliminary}).
Each collaring homeomorphism $X \cup (Y\times J) \to X$ determines a map from points $p$ of $\bd X$ to
(possibly length zero) embedded intervals in $X$ terminating at $p$.
If $p \in Y$ this interval is the image of $\{p\}\times J$.
If $p \notin Y$ then $p$ is assigned the length zero interval $\{p\}$.
Such collections of intervals have a natural topology, and $\Coll(X; c)$ inherits its topology from this.
Note in particular that parts of the collar are allowed to shrink continuously to zero length.
(This is the real content; if nothing shrinks to zero length then the action of families of collar
maps follows from the action of families of homeomorphisms and compatibility with gluing.)

The $k=n$ case of Axiom \ref{axiom:morphisms} posits a {\it strictly} associative action of {\it sets}
$\Homeo(X;c\to X'; c') \times \cC(X; c) \to \cC(X'; c')$, and at first it might seem that this would force the above
action of $\cJ(\Homeo(X;c\to X'; c'))$ to be strictly associative as well (assuming the two actions are compatible).
In fact, compatibility implies less than this.
For simplicity, assume that $\cJ$ is $C_*$, the singular chains functor.
(This is the example most relevant to this paper.)
Then compatibility implies that the action of $C_*(\Homeo(X;c\to X'; c'))$ agrees with the action
of $C_0(\Homeo(X;c\to X'; c'))$ coming from Axiom \ref{axiom:morphisms}, so we only require associativity in degree zero.
And indeed, this is true for our main example of an $A_\infty$ $n$-category based on the blob construction (see Example \ref{ex:blob-complexes-of-balls} below).
Stating this sort of compatibility for general $\cS$ and $\cJ$ requires further assumptions, 
such as the forgetful functor from $\cS$ to sets having a left adjoint, and $\cS$ having an internal Hom.

An alternative (due to Peter Teichner) is to say that Axiom \ref{axiom:families} 
supersedes the $k=n$ case of Axiom \ref{axiom:morphisms}; in dimension $n$ we just have a
functor $\bbc \to \cS$ of $A_\infty$ 1-categories.
(This assumes some prior notion of $A_\infty$ 1-category.)
We are not currently aware of any examples which require this sort of greater generality, so we think it best
to refrain from settling on a preferred version of the axiom until
we have a greater variety of examples to guide the choice.

Note that if we think of an ordinary 1-category as an $A_\infty$ 1-category where $k$-morphisms are identities for $k>1$,
then Axiom \ref{axiom:families} implies Axiom \ref{axiom:extended-isotopies}.

Another variant of the above axiom would be to drop the ``up to homotopy" and require a strictly associative action. 
In fact, the alternative construction $\btc_*(X)$ of the blob complex described in \S \ref{ss:alt-def} 
gives $n$-categories as in Example \ref{ex:blob-complexes-of-balls} which satisfy this stronger axiom. 
For future reference we make the following definition.

\begin{defn}
A {\em strict $A_\infty$ $n$-category} is one in which the actions of Axiom \ref{axiom:families} are strictly associative.
\end{defn}

\noop{
Note that if we take homology of chain complexes, we turn an $A_\infty$ $n$-category
into a ordinary $n$-category (enriched over graded groups).
In a different direction, if we enrich over topological spaces instead of chain complexes,
we get a space version of an $A_\infty$ $n$-category, with $\Homeo_\bd(X)$ acting 
instead of  $C_*(\Homeo_\bd(X))$.
Taking singular chains converts such a space type $A_\infty$ $n$-category into a chain complex
type $A_\infty$ $n$-category.
}

\medskip

We define a $j$ times monoidal $n$-category to be an $(n{+}j)$-category $\cC$ where
$\cC(X)$ is a trivial 1-element set if $X$ is a $k$-ball with $k<j$.
See Example \ref{ex:bord-cat}.

\medskip

The alert reader will have already noticed that our definition of an (ordinary) $n$-category
is extremely similar to our definition of a system of fields.
There are two differences.
First, for the $n$-category definition we restrict our attention to balls
(and their boundaries), while for fields we consider all manifolds.
Second,  in the category definition we directly impose isotopy
invariance in dimension $n$, while in the fields definition we 
instead remember a subspace of local relations which contain differences of isotopic fields. 
(Recall that the compensation for this complication is that we can demand that the gluing map for fields is injective.)
Thus
\begin{lem}
\label{lem:ncat-from-fields}
A system of fields and local relations $(\cF,U)$ determines an $n$-category $\cC_ {\cF,U}$ simply by restricting our attention to
balls and, at level $n$, quotienting out by the local relations:
\begin{align*}
\cC_{\cF,U}(B^k) & = \begin{cases}\cF(B) & \text{when $k<n$,} \\ \cF(B) / U(B) & \text{when $k=n$.}\end{cases}
\end{align*}
\end{lem}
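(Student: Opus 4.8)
The claim is that restricting a system of fields $(\cF, U)$ to balls of dimension $0$ through $n$, and quotienting by local relations in top dimension, satisfies all of the $n$-category axioms (Axioms \ref{axiom:morphisms}, \ref{nca-boundary}, \ref{axiom:composition}, \ref{nca-assoc}, \ref{axiom:product}, \ref{axiom:extended-isotopies}, \ref{axiom:splittings}). The strategy is simply to go through the axioms one at a time and read off each one from the corresponding clause in the definition of a system of fields in \S\ref{ss:syst-o-fields}, being careful about the one place where a genuine argument is needed, namely the top-dimensional quotient. First I would set notation: write $\cC = \cC_{\cF,U}$, so $\cC(B^k) = \cF(B^k)$ for $k<n$ and $\cC(B^n; c) = \cF(B^n; c)/U(B^n; c)$ for $n$-balls. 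Since the system of fields is assumed enriched over $\Vect$ (or can be made so), $\cC(B^n; c)$ is a vector space, which handles the enriched version once we also check compatibility; but for the plain statement we just treat it as a set.

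\textbf{Low dimensions ($k<n$).} Here $\cC_k = \cF_k|_{k\text{-balls}}$ literally, so Axiom \ref{axiom:morphisms} (functoriality on balls and homeomorphisms) is immediate; Axiom \ref{nca-boundary} is the boundary restriction map of clause (1) of the fields definition, which we need to see lands in $\cl\cC_{k-1}(\bd X)$ — this is exactly the content of Lemma \ref{lem:spheres}, whose colimit construction agrees with the way fields extend from balls to spheres. Axiom \ref{axiom:composition} and the multi-composition reformulation come from the gluing-with-corners clause (5): gluing a $k$-ball $B_1$ to a $k$-ball $B_2$ along a $k{-}1$-ball $Y$ is a special case of gluing with corners, and the fields axiom gives an injective gluing map on the relevant equalizer/fibered product, which is the required $\gl_Y$ (injectivity for $k<n$ is part of the fields axiom). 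Strict associativity (Axiom \ref{nca-assoc}) follows because gluing of fields is defined so that iterated gluings along a ball decomposition (Definition \ref{defn:gluing-decomposition}) are independent of order — this is built into the ``compatible with all of the above structure'' clauses. The product axiom (Axiom \ref{axiom:product}) comes from the product-fields clause (7), extended to pinched products; one checks that $\pi^*(x) = $ pullback of $x$ along $\pi$ satisfies the four listed conditions, each of which is a restatement of naturality and compatibility of product fields with gluing and restriction. Axiom \ref{axiom:splittings} is the splittings clause (6) of the fields definition, essentially verbatim.

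\textbf{The top dimension and the main obstacle.} The one place requiring real care is dimension $n$, where $\cC(B^n;c) = \cF(B^n;c)/U(B^n;c)$. Functoriality (Axiom \ref{axiom:morphisms} at level $n$) needs that homeomorphisms preserve $U$: this is the functoriality clause (1) in the definition of local relation. The composition map $\gl_Y$ must descend to the quotient: given representatives $x_1, x_2$ of classes in $\cC(B_1;-)$, $\cC(B_2;-)$, we need that changing $x_i$ by an element of $U(B_i)$ changes $x_1\bullet x_2$ by an element of $U(B)$ — this is exactly clause (3) of the local relation definition, that $U$ is an ideal under gluing. The genuinely substantive axiom is the extended isotopy invariance, Axiom \ref{axiom:extended-isotopies}: if $f:B^n\to B^n$ is isotopic to the identity rel the boundary data $\bd b$, then $f(b)=b$ in $\cC(B^n)$, and collar maps act trivially. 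This is precisely clause (2) of the local relation definition ``local relations imply extended isotopy invariance'': if $x$ is extended isotopic to $y$ then $x - y\in U(B^n)$, so $[x]=[y]$ in the quotient. Since ``extended isotopy'' for fields (generated by collar maps and isotopies-to-the-identity, end of \S\ref{ss:syst-o-fields}) is exactly the equivalence relation appearing in Axiom \ref{axiom:extended-isotopies}, this matches on the nose. So the main obstacle is not a hard argument but rather the bookkeeping of confirming that the ``extended isotopy'' relation used in the two definitions genuinely coincides, and that the gluing maps of the fields definition (with corners, with the equalizer/fibered-product bookkeeping) restrict correctly to the ball-only setting to give the $\gl_Y$ of the category axioms; I would state this correspondence carefully and then the verification of each axiom is a one- or two-line citation of the matching clause.
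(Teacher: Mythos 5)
Your proof is correct and follows the same route the paper takes: the paper offers no separate argument for this lemma, presenting it as immediate from the observation that the definition of a system of fields restricted to balls nearly coincides with the disk-like $n$-category axioms, with the top-dimensional quotient handled exactly by the three clauses in the definition of a local relation (functoriality, extended isotopy invariance, and the ideal-under-gluing property). Your axiom-by-axiom matching is simply that observation written out in full.
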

This $n$-category can be thought of as the local part of the fields.
Conversely, given a disk-like $n$-category we can construct a system of fields via 
a colimit construction; see \S \ref{ss:ncat_fields} below.

\medskip

In the $n$-category axioms above we have intermingled data and properties for expository reasons.
Here's a summary of the definition which segregates the data from the properties.
We also remind the reader of the inductive nature of the definition: All the data for $k{-}1$-morphisms must be in place
before we can describe the data for $k$-morphisms.

An $n$-category consists of the following data:
\begin{itemize}
\item functors $\cC_k$ from $k$-balls to sets, $0\le k\le n$ (Axiom \ref{axiom:morphisms});
\item boundary natural transformations $\cC_k \to \cl{\cC}_{k-1} \circ \bd$ (Axiom \ref{nca-boundary});
\item ``composition'' or ``gluing'' maps $\gl_Y : \cC(B_1)\trans E \times_{\cC(Y)} \cC(B_2)\trans E \to \cC(B_1\cup_Y B_2)\trans E$ (Axiom \ref{axiom:composition});
\item ``product'' or ``identity'' maps $\pi^*:\cC(X)\to \cC(E)$ for each pinched product $\pi:E\to X$ (Axiom \ref{axiom:product});
\item if enriching in an auxiliary category, additional structure on $\cC_n(X; c)$ (Axiom \ref{axiom:enriched});
\item in the $A_\infty$ case, actions of the topological spaces of homeomorphisms preserving boundary conditions
and collar maps (Axiom \ref{axiom:families}).
\end{itemize}
The above data must satisfy the following conditions.
\begin{itemize}
\item The gluing maps are compatible with actions of homeomorphisms and boundary 
restrictions (Axiom \ref{axiom:composition}).
\item For $k<n$ the gluing maps are injective (Axiom \ref{axiom:composition}).
\item The gluing maps are strictly associative (Axiom \ref{nca-assoc}).
\item The product maps are associative and also compatible with homeomorphism actions, gluing and restriction (Axiom \ref{axiom:product}).
\item If enriching in an auxiliary category, all of the data should be compatible 
with the auxiliary category structure on $\cC_n(X; c)$ (Axiom \ref{axiom:enriched}).
\item The possible splittings of a morphism satisfy various conditions (Axiom \ref{axiom:splittings}).
\item For ordinary categories, invariance of $n$-morphisms under extended isotopies 
and collar maps (Axiom \ref{axiom:extended-isotopies}).
\end{itemize}

\subsection{Examples of \texorpdfstring{$n$}{n}-categories}
\label{ss:ncat-examples}

We now describe several classes of examples of $n$-categories satisfying our axioms.
We typically specify only the morphisms; the rest of the data for the category
(restriction maps, gluing, product morphisms, action of homeomorphisms) is usually obvious.

\begin{example}[Maps to a space]
\rm
\label{ex:maps-to-a-space}%
Let $T$ be a topological space.
We define $\pi_{\leq n}(T)$, the fundamental $n$-category of $T$, as follows.
For $X$ a $k$-ball with $k < n$, define $\pi_{\leq n}(T)(X)$ to be the set of 
all continuous maps from $X$ to $T$.
For $X$ an $n$-ball define $\pi_{\leq n}(T)(X)$ to be continuous maps from $X$ to $T$ modulo
homotopies fixed on $\bd X$.
(Note that homotopy invariance implies isotopy invariance.)
For $a\in \cC(X)$ define the product morphism $a\times D \in \cC(X\times D)$ to
be $a\circ\pi_X$, where $\pi_X : X\times D \to X$ is the projection.
\end{example}

\noop{
Recall we described a system of fields and local relations based on maps to $T$ in Example \ref{ex:maps-to-a-space(fields)} above.
Constructing a system of fields from $\pi_{\leq n}(T)$ recovers that example.
\nn{shouldn't this go elsewhere?  we haven't yet discussed constructing a system of fields from
an n-cat}
}

\begin{example}[Maps to a space, with a fiber] \label{ex:maps-with-fiber}
\rm
\label{ex:maps-to-a-space-with-a-fiber}%
We can modify the example above, by fixing a
closed $m$-manifold $F$, and defining $\pi^{\times F}_{\leq n}(T)(X) = \Maps(X \times F \to T)$, 
otherwise leaving the definition in Example \ref{ex:maps-to-a-space} unchanged.
Taking $F$ to be a point recovers the previous case.
\end{example}

\begin{example}[Linearized, twisted, maps to a space]
\rm
\label{ex:linearized-maps-to-a-space}%
We can linearize Examples \ref{ex:maps-to-a-space} and \ref{ex:maps-to-a-space-with-a-fiber} as follows.
Let $\alpha$ be an $(n{+}m{+}1)$-cocycle on $T$ with values in a ring $R$
(have in mind the trivial cocycle).
For $X$ of dimension less than $n$ define $\pi^{\alpha, \times F}_{\leq n}(T)(X)$ as before, ignoring $\alpha$.
For $X$ an $n$-ball and $c\in \Maps(\bdy X \times F \to T)$ define $\pi^{\alpha, \times F}_{\leq n}(T)(X; c)$ to be
the $R$-module of finite linear combinations of continuous maps from $X\times F$ to $T$,
modulo the relation that if $a$ is homotopic to $b$ (rel boundary) via a homotopy
$h: X\times F\times I \to T$, then $a = \alpha(h)b$.
(In order for this to be well-defined we must choose $\alpha$ to be zero on degenerate simplices.
Alternatively, we could equip the balls with fundamental classes.)
\end{example}

\begin{example}[$n$-categories from TQFTs]
\rm
\label{ex:ncats-from-tqfts}%
Let $\cF$ be a TQFT in the sense of \S\ref{sec:fields}: an $n$-dimensional 
system of fields (also denoted $\cF$) and local relations.
Let $W$ be an $n{-}j$-manifold.
Define the $j$-category $\cF(W)$ as follows.
If $X$ is a $k$-ball with $k<j$, let $\cF(W)(X) \deq \cF(W\times X)$.
If $X$ is a $j$-ball and $c\in \cl{\cF(W)}(\bd X)$,
let $\cF(W)(X; c) \deq A_\cF(W\times X; c)$.
\end{example}

This last example generalizes Lemma \ref{lem:ncat-from-fields} above which produced an $n$-category from an $n$-dimensional system of fields and local relations. Taking $W$ to be the point recovers that statement.

The next example is only intended to be illustrative, as we don't specify 
which definition of a ``traditional $n$-category with strong duality" we intend.
\begin{example}[Traditional $n$-categories]
\rm
\label{ex:traditional-n-categories}
Given a ``traditional $n$-category with strong duality" $C$
define $\cC(X)$, for $X$ a $k$-ball with $k < n$,
to be the set of all $C$-labeled embedded cell complexes of $X$ (c.f. \S \ref{sec:fields}).
For $X$ an $n$-ball and $c\in \cl{\cC}(\bd X)$, define $\cC(X; c)$ to be finite linear
combinations of $C$-labeled embedded cell complexes of $X$
modulo the kernel of the evaluation map.
Define a product morphism $a\times D$, for $D$ an $m$-ball, to be the product of the cell complex of $a$ with $D$,
with each cell labelled according to the corresponding cell for $a$.
(These two cells have the same codimension.)
More generally, start with an $n{+}m$-category $C$ and a closed $m$-manifold $F$.
Define $\cC(X)$, for $\dim(X) < n$,
to be the set of all $C$-labeled embedded cell complexes of $X\times F$.
Define $\cC(X; c)$, for $X$ an $n$-ball,
to be the dual Hilbert space $A(X\times F; c)$.
(See \S\ref{sec:constructing-a-tqft}.)
\end{example}

\begin{example}[The bordism $n$-category of $d$-manifolds, ordinary version]
\label{ex:bord-cat}
\rm
\label{ex:bordism-category}
For a $k$-ball $X$, $k<n$, define $\Bord^{n,d}(X)$ to be the set of all $(d{-}n{+}k)$-dimensional PL
submanifolds $W$ of $X\times \Real^\infty$ such that $\bd W = W \cap \bd X \times \Real^\infty$.
For an $n$-ball $X$ define $\Bord^{n,d}(X)$ to be homeomorphism classes (rel boundary) of such $d$-dimensional submanifolds;
we identify $W$ and $W'$ if $\bd W = \bd W'$ and there is a homeomorphism
$W \to W'$ which restricts to the identity on the boundary.
For $n=1$ we have the familiar bordism 1-category of $d$-manifolds.
The case $n=d$ captures the $n$-categorical nature of bordisms.
The case $n > 2d$ captures the full symmetric monoidal $n$-category structure.
\end{example}
\begin{rem}
Working with the smooth bordism category would require careful attention to either collars, corners or halos.
\end{rem}




\begin{example}[Chains (or space) of maps to a space]
\rm
\label{ex:chains-of-maps-to-a-space}
We can modify Example \ref{ex:maps-to-a-space} above to define the fundamental $A_\infty$ $n$-category $\pi^\infty_{\le n}(T)$ of a topological space $T$.
For a $k$-ball $X$, with $k < n$, the set $\pi^\infty_{\leq n}(T)(X)$ is just $\Maps(X \to T)$.
Define $\pi^\infty_{\leq n}(T)(X; c)$ for an $n$-ball $X$ and $c \in \pi^\infty_{\leq n}(T)(\bdy X)$ to be the chain complex
\[
	C_*(\Maps_c(X \to T)),
\]
where $\Maps_c$ denotes continuous maps restricting to $c$ on the boundary,
and $C_*$ denotes singular chains.
Alternatively, if we take the $n$-morphisms to be simply $\Maps_c(X \to T)$, 
we get an $A_\infty$ $n$-category enriched over spaces.
\end{example}

See also Theorem \ref{thm:map-recon} below, recovering $C_*(\Maps(M \to T))$ up to 
homotopy as the blob complex of $M$ with coefficients in $\pi^\infty_{\le n}(T)$.

Instead of using the TQFT invariant $\cA$ as in Example \ref{ex:ncats-from-tqfts} above, we can turn an $n$-dimensional system of fields and local relations into an $A_\infty$ $n$-category using the blob complex. With a codimension $k$ fiber, we obtain an $A_\infty$ $k$-category:

\begin{example}[Blob complexes of balls (with a fiber)]
\rm
\label{ex:blob-complexes-of-balls}
Fix an $n{-}k$-dimensional manifold $F$ and an $n$-dimensional system of fields $\cE$.
We will define an $A_\infty$ $k$-category $\cC$.
When $X$ is an $m$-ball, with $m<k$, define $\cC(X) = \cE(X\times F)$.
When $X$ is a $k$-ball,
define $\cC(X; c) = \bc^\cE_*(X\times F; c)$
where $\bc^\cE_*$ denotes the blob complex based on $\cE$.
\end{example}

This example will be used in Theorem \ref{thm:product} below, which allows us to compute the blob complex of a product.
Notice that with $F$ a point, the above example is a construction turning an ordinary 
$n$-category $\cC$ into an $A_\infty$ $n$-category.
We think of this as providing a ``free resolution" 
of the ordinary $n$-category. 
In fact, there is also a trivial, but mostly uninteresting, way to do this: 
we can think of each vector space associated to an $n$-ball as a chain complex concentrated in degree $0$, 
and let $\CH{B}$ act trivially. 

Beware that the ``free resolution" of the ordinary $n$-category $\pi_{\leq n}(T)$ 
is not the $A_\infty$ $n$-category $\pi^\infty_{\leq n}(T)$.
It's easy to see that with $n=0$, the corresponding system of fields is just 
linear combinations of connected components of $T$, and the local relations are trivial.
There's no way for the blob complex to magically recover all the data of $\pi^\infty_{\leq 0}(T) \iso C_* T$.

\begin{example}[The bordism $n$-category of $d$-manifolds, $A_\infty$ version]
\rm
\label{ex:bordism-category-ainf}
As in Example \ref{ex:bord-cat}, for $X$ a $k$-ball, $k<n$, we define $\Bord^{n,d}_\infty(X)$
to be the set of all $(d{-}n{+}k)$-dimensional
submanifolds $W$ of $X\times \Real^\infty$ such that $\bd W = W \cap \bd X \times \Real^\infty$.
For an $n$-ball $X$ with boundary condition $c$ 
define $\Bord^{n,d}_\infty(X; c)$ to be the space of all $d$-dimensional
submanifolds $W$ of $X\times \Real^\infty$ such that 
$W$ coincides with $c$ at $\bd X \times \Real^\infty$.
(The topology on this space is induced by ambient isotopy rel boundary.
This is homotopy equivalent to a disjoint union of copies $\mathrm{B}\!\Homeo(W')$, where
$W'$ runs though representatives of homeomorphism types of such manifolds.)
\end{example}

Let $\cE\cB_n$ be the operad of smooth embeddings of $k$ (little)
copies of the standard $n$-ball $B^n$ into another (big) copy of $B^n$.
(We require that the interiors of the little balls be disjoint, but their 
boundaries are allowed to meet.
Note in particular that the space for $k=1$ contains a copy of $\Diff(B^n)$, namely
the embeddings of a ``little" ball with image all of the big ball $B^n$.
(But note also that this inclusion is not
necessarily a homotopy equivalence.))
The operad $\cE\cB_n$ is homotopy equivalent to the standard framed little $n$-ball operad:
by shrinking the little balls (precomposing them with dilations), 
we see that both operads are homotopic to the space of $k$ framed points
in $B^n$.
It is easy to see that $n$-fold loop spaces $\Omega^n(T)$  have
an action of $\cE\cB_n$.

\begin{example}[$E_n$ algebras]
\rm
\label{ex:e-n-alg}
Let $A$ be an $\cE\cB_n$-algebra.
Note that this implies a $\Diff(B^n)$ action on $A$, 
since $\cE\cB_n$ contains a copy of $\Diff(B^n)$.
We will define a strict $A_\infty$ $n$-category $\cC^A$.
(We enrich in topological spaces, though this could easily be adapted to, say, chain complexes.)
If $X$ is a ball of dimension $k<n$, define $\cC^A(X)$ to be a point.
In other words, the $k$-morphisms are trivial for $k<n$.
If $X$ is an $n$-ball, we define $\cC^A(X)$ via a colimit construction.
(Plain colimit, not homotopy colimit.)
Let $J$ be the category whose objects are embeddings of a disjoint union of copies of 
the standard ball $B^n$ into $X$, and whose morphisms are given by engulfing some of the 
embedded balls into a single larger embedded ball.
To each object of $J$ we associate $A^{\times m}$ (where $m$ is the number of balls), and
to each morphism of $J$ we associate a morphism coming from the $\cE\cB_n$ action on $A$.
Alternatively and more simply, we could define $\cC^A(X)$ to be 
$\Diff(B^n\to X)\times A$ modulo the diagonal action of $\Diff(B^n)$.
The remaining data for the $A_\infty$ $n$-category 
--- composition and $\Diff(X\to X')$ action ---
also comes from the $\cE\cB_n$ action on $A$.

Conversely, one can show that a disk-like strict $A_\infty$ $n$-category $\cC$, where the $k$-morphisms
$\cC(X)$ are trivial (single point) for $k<n$, gives rise to 
an $\cE\cB_n$-algebra.
Let $A = \cC(B^n)$, where $B^n$ is the standard $n$-ball.
We must define maps
\[
	\cE\cB_n^k \times A \times \cdots \times A \to A ,
\]
where $\cE\cB_n^k$ is the $k$-th space of the $\cE\cB_n$ operad.
Let $(b, a_1,\ldots,a_k)$ be a point of $\cE\cB_n^k \times A \times \cdots \times A \to A$.
The $i$-th embedding of $b$ together with $a_i$ determine an element of $\cC(B_i)$, 
where $B_i$ denotes the $i$-th little ball.
Using composition of $n$-morphsims in $\cC$, and padding the spaces between the little balls with the 
(essentially unique) identity $n$-morphism of $\cC$, we can construct a well-defined element
of $\cC(B^n) = A$.

If we apply the homotopy colimit construction of the next subsection to this example, 
we get an instance of Lurie's topological chiral homology construction.
\end{example}

\subsection{From balls to manifolds}
\label{ss:ncat_fields} \label{ss:ncat-coend}
In this section we show how to extend an $n$-category $\cC$ as described above 
(of either the ordinary or $A_\infty$ variety) to an invariant of manifolds, which we denote by $\cl{\cC}$.
This extension is a certain colimit, and the arrow in the notation is intended as a reminder of this.

In the case of ordinary $n$-categories, this construction factors into a construction of a 
system of fields and local relations, followed by the usual TQFT definition of a 
vector space invariant of manifolds given as Definition \ref{defn:TQFT-invariant}.
For an $A_\infty$ $n$-category, $\cl{\cC}$ is defined using a homotopy colimit instead.
Recall that we can take an ordinary $n$-category $\cC$ and pass to the ``free resolution", 
an $A_\infty$ $n$-category $\bc_*(\cC)$, by computing the blob complex of balls 
(recall Example \ref{ex:blob-complexes-of-balls} above).
We will show in Corollary \ref{cor:new-old} below that the homotopy colimit invariant 
for a manifold $M$ associated to this $A_\infty$ $n$-category is actually the 
same as the original blob complex for $M$ with coefficients in $\cC$.

Recall that we've already anticipated this construction Subsection \ref{ss:n-cat-def}, 
inductively defining $\cl{\cC}$ on $k$-spheres in terms of $\cC$ on $k$-balls, 
so that we can state the boundary axiom for $\cC$ on $k+1$-balls.

\medskip

We will first define the {\it decomposition poset} $\cell(W)$ for any $k$-manifold $W$, for $1 \leq k \leq n$. 
An $n$-category $\cC$ provides a functor from this poset to the category of sets, 
and we  will define $\cl{\cC}(W)$ as a suitable colimit 
(or homotopy colimit in the $A_\infty$ case) of this functor. 
We'll later give a more explicit description of this colimit.
In the case that the $n$-category $\cC$ is enriched (e.g. associates vector spaces or chain 
complexes to $n$-balls with boundary data), 
then the resulting colimit is also enriched, that is, the set associated to $W$ splits into 
subsets according to boundary data, and each of these subsets has the appropriate structure 
(e.g. a vector space or chain complex).

Recall (Definition \ref{defn:gluing-decomposition}) that a {\it ball decomposition} of $W$ is a 
sequence of gluings $M_0\to M_1\to\cdots\to M_m = W$ such that $M_0$ is a disjoint union of balls
$\du_a X_a$.
Abusing notation, we let $X_a$ denote both the ball (component of $M_0$) and
its image in $W$ (which is not necessarily a ball --- parts of $\bd X_a$ may have been glued together).
Define a {\it permissible decomposition} of $W$ to be a map
\[
	\coprod_a X_a \to W,
\]
which can be completed to a ball decomposition $\du_a X_a = M_0\to\cdots\to M_m = W$.
We further require that $\du_a (X_a \cap \bd W) \to \bd W$ 
can be completed to a (not necessarily ball) decomposition of $\bd W$.
(So, for example, in Example \ref{sin1x-example} if we take $W = B\cup C\cup D$ then $B\du C\du D \to W$
is not allowed since $D\cap \bd W$ is not a submanifold.)
Roughly, a permissible decomposition is like a ball decomposition where we don't care in which order the balls
are glued up to yield $W$, so long as there is some (non-pathological) way to glue them.

(Every smooth or PL manifold has a ball decomposition, but certain topological manifolds (e.g.\ non-smoothable
topological 4-manifolds) do not have ball decompositions.
For such manifolds we have only the empty colimit.)

We want the category (poset) of decompositions of $W$ to be small, so when we say decomposition we really
mean isomorphism class of decomposition.
Isomorphisms are defined in the obvious way: a collection of homeomorphisms $M_i\to M_i'$ which commute
with the gluing maps $M_i\to M_{i+1}$ and $M'_i\to M'_{i+1}$.

Given permissible decompositions $x = \{X_a\}$ and $y = \{Y_b\}$ of $W$, we say that $x$ is a refinement
of $y$, or write $x \le y$, if there is a ball decomposition $\du_a X_a = M_0\to\cdots\to M_m = W$
with $\du_b Y_b = M_i$ for some $i$,
and with $M_0, M_1, \ldots, M_i$ each being a disjoint union of balls.

\begin{defn}
The poset $\cell(W)$ has objects the permissible decompositions of $W$, 
and a unique morphism from $x$ to $y$ if and only if $x$ is a refinement of $y$.
See Figure \ref{partofJfig}.
\end{defn}

\begin{figure}[t]
\begin{equation*}
\mathfig{.63}{ncat/zz2}
\end{equation*}
\caption{A small part of $\cell(W)$}
\label{partofJfig}
\end{figure}

An $n$-category $\cC$ determines 
a functor $\psi_{\cC;W}$ from $\cell(W)$ to the category of sets 
(possibly with additional structure if $k=n$).
Let $x = \{X_a\}$ be a permissible decomposition of $W$ (i.e.\ object of $\cD(W)$).
We will define $\psi_{\cC;W}(x)$ to be a certain subset of $\prod_a \cC(X_a)$.
Roughly speaking, $\psi_{\cC;W}(x)$ is the subset where the restriction maps from
$\cC(X_a)$ and $\cC(X_b)$ agree whenever some part of $\bd X_a$ is glued to some part of $\bd X_b$.
(Keep in mind that perhaps $a=b$.)
Since we allow decompositions in which the intersection of $X_a$ and $X_b$ might be messy 
(see Example \ref{sin1x-example}), we must define $\psi_{\cC;W}(x)$ in a more roundabout way.

Inductively, we may assume that we have already defined the colimit $\cl\cC(M)$ for $k{-}1$-manifolds $M$.
(To start the induction, we define $\cl\cC(M)$, where $M = \du_a P_a$ is a 0-manifold and each $P_a$ is
a 0-ball, to be $\prod_a \cC(P_a)$.)
We also assume, inductively, that we have gluing and restriction maps for colimits of $k{-}1$-manifolds.
Gluing and restriction maps for colimits of $k$-manifolds will be defined later in this subsection.

Let $\du_a X_a = M_0\to\cdots\to M_m = W$ be a ball decomposition compatible with $x$.
Let $\bd M_i = N_i \cup Y_i \cup Y'_i$, where $Y_i$ and $Y'_i$ are glued together to produce $M_{i+1}$.
We will define $\psi_{\cC;W}(x)$ to be the subset of $\prod_a \cC(X_a)$ which satisfies a series of conditions
related to the gluings $M_{i-1} \to M_i$, $1\le i \le m$.
By Axiom \ref{nca-boundary}, we have a map
\[
	\prod_a \cC(X_a) \to \cl\cC(\bd M_0) .
\]
The first condition is that the image of $\psi_{\cC;W}(x)$ in $\cl\cC(\bd M_0)$ is splittable
along $\bd Y_0$ and $\bd Y'_0$, and that the restrictions to $\cl\cC(Y_0)$ and $\cl\cC(Y'_0)$ agree
(with respect to the identification of $Y_0$ and $Y'_0$ provided by the gluing map). 

On the subset of $\prod_a \cC(X_a)$ which satisfies the first condition above, we have a restriction
map to $\cl\cC(N_0)$ which we can compose with the gluing map 
$\cl\cC(N_0) \to \cl\cC(\bd M_1)$.
The second condition is that the image of $\psi_{\cC;W}(x)$ in $\cl\cC(\bd M_1)$ is splittable
along $\bd Y_1$ and $\bd Y'_1$, and that the restrictions to $\cl\cC(Y_1)$ and $\cl\cC(Y'_1)$ agree
(with respect to the identification of $Y_1$ and $Y'_1$ provided by the gluing map). 
The $i$-th condition is defined similarly.
Note that these conditions depend only on the boundaries of elements of $\prod_a \cC(X_a)$.

We define $\psi_{\cC;W}(x)$ to be the subset of $\prod_a \cC(X_a)$ which satisfies the 
above conditions for all $i$ and also all 
ball decompositions compatible with $x$.
(If $x$ is a nice, non-pathological cell decomposition, then it is easy to see that gluing
compatibility for one ball decomposition implies gluing compatibility for all other ball decompositions.
Rather than try to prove a similar result for arbitrary
permissible decompositions, we instead require compatibility with all ways of gluing up the decomposition.)

If $x$ is a refinement of $y$, the map $\psi_{\cC;W}(x) \to \psi_{\cC;W}(y)$ 
is given by the composition maps of $\cC$.
This completes the definition of the functor $\psi_{\cC;W}$.

If $k=n$ in the above definition and we are enriching in some auxiliary category, 
we need to say a bit more.
We can rewrite the colimit as
\[  
	\psi_{\cC;W}(x) \deq \coprod_\beta \prod_a \cC(X_a; \beta) ,
\]  
where $\beta$ runs through 
boundary conditions on $\du_a X_a$ which are compatible with gluing as specified above
and $\cC(X_a; \beta)$
means the subset of $\cC(X_a)$ whose restriction to $\bd X_a$ agrees with $\beta$.
If we are enriching over $\cS$ and $k=n$, then $\cC(X_a; \beta)$ is an object in 
$\cS$ and the coproduct and product in the above expression should be replaced by the appropriate
operations in $\cS$ (e.g. direct sum and tensor product if $\cS$ is Vect).

Finally, we construct $\cl{\cC}(W)$ as the appropriate colimit of $\psi_{\cC;W}$:

\begin{defn}[System of fields functor]
\label{def:colim-fields}
If $\cC$ is an $n$-category enriched in sets or vector spaces, $\cl{\cC}(W)$ is the usual colimit of the functor $\psi_{\cC;W}$.
That is, for each decomposition $x$ there is a map
$\psi_{\cC;W}(x)\to \cl{\cC}(W)$, these maps are compatible with the refinement maps
above, and $\cl{\cC}(W)$ is universal with respect to these properties.
\end{defn}

\begin{defn}[System of fields functor, $A_\infty$ case]
When $\cC$ is an $A_\infty$ $n$-category, $\cl{\cC}(W)$ for $W$ a $k$-manifold with $k < n$ 
is defined as above, as the colimit of $\psi_{\cC;W}$.
When $W$ is an $n$-manifold, the chain complex $\cl{\cC}(W)$ is the homotopy colimit of the functor $\psi_{\cC;W}$.
\end{defn}


\medskip

We must now define restriction maps $\bd : \cl{\cC}(W) \to \cl{\cC}(\bd W)$ and gluing maps.

Let $y\in \cl{\cC}(W)$.
Choose a representative of $y$ in the colimit: a permissible decomposition $\du_a X_a \to W$ and elements
$y_a \in \cC(X_a)$.
By assumption, $\du_a (X_a \cap \bd W) \to \bd W$ can be completed to a decomposition of $\bd W$.
Let $r(y_a) \in \cl\cC(X_a \cap \bd W)$ be the restriction.
Choose a representative of $r(y_a)$ in the colimit $\cl\cC(X_a \cap \bd W)$: a permissible decomposition
$\du_b Q_{ab} \to X_a \cap \bd W$ and elements $z_{ab} \in \cC(Q_{ab})$.
Then $\du_{ab} Q_{ab} \to \bd W$ is a permissible decomposition of $\bd W$ and $\{z_{ab}\}$ represents
an element of $\cl{\cC}(\bd W)$.  Define $\bd y$ to be this element.
It is not hard to see that it is independent of the various choices involved.

Note that since we have already (inductively) defined gluing maps for colimits of $k{-}1$-manifolds,
we can also define restriction maps from $\cl{\cC}(W)\trans{}$ to $\cl{\cC}(Y)$ where $Y$ is a codimension 0 
submanifold of $\bd W$.

Next we define gluing maps for colimits of $k$-manifolds.
Let $W = W_1 \cup_Y W_2$.
Let $y_i \in \cl\cC(W_i)$ and assume that the restrictions of $y_1$ and $y_2$ to $\cl\cC(Y)$ agree.
We want to define $y_1\bullet y_2 \in \cl\cC(W)$.
Choose a permissible decomposition $\du_a X_{ia} \to W_i$ and elements 
$y_{ia} \in \cC(X_{ia})$ representing $y_i$.
It might not be the case that $\du_{ia} X_{ia} \to W$ is a permissible decomposition of $W$,
since intersections of the pieces with $\bd W$ might not be well-behaved.
However, using the fact that $\bd y_i$ splits along $\bd Y$ and applying Axiom \ref{axiom:splittings},
we can choose the decomposition $\du_{a} X_{ia}$ so that its restriction to $\bd W_i$ is a refinement
of the splitting along $\bd Y$, and this implies that the combined decomposition $\du_{ia} X_{ia}$
is permissible.
We can now define the gluing $y_1\bullet y_2$ in the obvious way, and a further application of Axiom \ref{axiom:splittings}
shows that this is independent of the choices of representatives of $y_i$.

\medskip

We now give more concrete descriptions of the above colimits.

In the non-enriched case (e.g.\ $k<n$), where each $\cC(X_a; \beta)$ is just a set,
the colimit is
\[
	\cl{\cC}(W,c) = \left( \coprod_x \coprod_\beta \prod_a \cC(X_a; \beta) \right) \Bigg/ \sim ,
\]
where $x$ runs through decompositions of $W$, and $\sim$ is the obvious equivalence relation 
induced by refinement and gluing.
If $\cC$ is enriched over, for example, vector spaces and $W$ is an $n$-manifold, 
we can take
\begin{equation*}
	\cl{\cC}(W,c) = \left( \bigoplus_x \bigoplus_\beta \bigotimes_a \cC(X_a; \beta) \right) \Bigg/ K,
\end{equation*}
where $K$ is the vector space spanned by elements $a - g(a)$, with
$a\in \psi_{\cC;W,c}(x)$ for some decomposition $x$, and $g: \psi_{\cC;W,c}(x)
\to \psi_{\cC;W,c}(y)$ is the value of $\psi_{\cC;W,c}$ on some antirefinement $x \leq y$.

In the $A_\infty$ case, enriched over chain complexes, the concrete description of the homotopy colimit
is more involved.
We will describe two different (but homotopy equivalent) versions of the homotopy colimit of $\psi_{\cC;W}$.
The first is the usual one, which works for any indexing category.
The second construction, which we call the {\it local} homotopy colimit,
is more closely related to the blob complex
construction of \S \ref{sec:blob-definition} and takes advantage of local (gluing) properties
of the indexing category $\cell(W)$.

Define an $m$-sequence in $W$ to be a sequence $x_0 \le x_1 \le \dots \le x_m$ of permissible decompositions of $W$.
Such sequences (for all $m$) form a simplicial set in $\cell(W)$.
Define $\cl{\cC}(W)$ as a vector space via
\[
	\cl{\cC}(W) = \bigoplus_{(x_i)} \psi_{\cC;W}(x_0)[m] ,
\]
where the sum is over all $m$ and all $m$-sequences $(x_i)$, and each summand is degree shifted by $m$. 
Elements of a summand indexed by an $m$-sequence will be call $m$-simplices.
We endow $\cl{\cC}(W)$ with a differential which is the sum of the differential of the $\psi_{\cC;W}(x_0)$
summands plus another term using the differential of the simplicial set of $m$-sequences.
More specifically, if $(a, \bar{x})$ denotes an element in the $\bar{x}$
summand of $\cl{\cC}(W)$ (with $\bar{x} = (x_0,\dots,x_k)$), define
\[
	\bd (a, \bar{x}) = (\bd a, \bar{x}) + (-1)^{\deg{a}} (g(a), d_0(\bar{x})) + (-1)^{\deg{a}} \sum_{j=1}^k (-1)^{j} (a, d_j(\bar{x})) ,
\]
where $d_j(\bar{x}) = (x_0,\dots,x_{j-1},x_{j+1},\dots,x_k)$ and $g: \psi_\cC(x_0)\to \psi_\cC(x_1)$
is the usual gluing map coming from the antirefinement $x_0 \le x_1$.

We can think of this construction as starting with a disjoint copy of a complex for each
permissible decomposition (the 0-simplices).
Then we glue these together with mapping cylinders coming from gluing maps
(the 1-simplices).
Then we kill the extra homology we just introduced with mapping 
cylinders between the mapping cylinders (the 2-simplices), and so on.

Next we describe the local homotopy colimit.
This is similar to the usual homotopy colimit, but using
a cone-product set (Remark \ref{blobsset-remark}) in place of a simplicial set.
The cone-product $m$-polyhedra for the set are pairs $(x, E)$, where $x$ is a decomposition of $W$
and $E$ is an $m$-blob diagram such that each blob is a union of balls of $x$.
(Recall that this means that the interiors of
each pair of blobs (i.e.\ balls) of $E$ are either disjoint or nested.)
To each $(x, E)$ we associate the chain complex $\psi_{\cC;W}(x)$, shifted in degree by $m$.
The boundary has a term for omitting each blob of $E$.
If we omit an innermost blob then we replace $x$ by the formal difference $x - \gl(x)$, where
$\gl(x)$ is obtained from $x$ by gluing together the balls of $x$ contained in the blob we are omitting.
The gluing maps of $\cC$ give us a maps from $\psi_{\cC;W}(x)$ to $\psi_{\cC;W}(\gl(x))$.

One can show that the usual hocolimit and the local hocolimit are homotopy equivalent using an 
Eilenberg-Zilber type subdivision argument.

\medskip

$\cl{\cC}(W)$ is functorial with respect to homeomorphisms of $k$-manifolds. 
Restricting to $k$-spheres, we have now proved Lemma \ref{lem:spheres}.

\begin{lem}
\label{lem:colim-injective}
Let $W$ be a manifold of dimension $j<n$.  Then for each
decomposition $x$ of $W$ the natural map $\psi_{\cC;W}(x)\to \cl{\cC}(W)$ is injective.
\end{lem}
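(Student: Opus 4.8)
The plan is to induct on $j = \dim W$, the case $j = 0$ being trivial since a $0$-manifold admits an essentially unique decomposition (there are no gluings possible). So assume the result for all manifolds of dimension less than $j$. Via Lemma \ref{lem:domain-and-range} and the colimit description of Definition \ref{def:colim-fields}, the inductive hypothesis provides injectivity of the gluing maps for $\cl{\cC}$ of $(j{-}1)$-manifolds; this is exactly what is needed to make sense of the boundary-matching conditions defining $\psi_{\cC;W}(x)$.

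The first step is a sublemma: for any refinement $\hat{x}\le z$ of decompositions of $W$ the ``gluing up'' map $g_{\hat{x}z}\colon \psi_{\cC;W}(\hat{x})\to \psi_{\cC;W}(z)$ is injective. Such a $g_{\hat{x}z}$ is an iterated composition of gluing maps of $\cC$ applied inside each piece of $z$, together with gluing maps for $\cl{\cC}$ of lower-dimensional manifolds recording how boundary data are identified. Since $j<n$, Axiom \ref{axiom:composition} makes each composition map of $\cC$ injective; combining this with the inductive hypothesis and strict associativity (Axiom \ref{nca-assoc}) yields injectivity of $g_{\hat{x}z}$. The one real subtlety is that a ball decomposition (Definition \ref{defn:gluing-decomposition}) realizing $\hat{x}\le z$ may pass through intermediate stages that are not manifolds, as in Example \ref{sin1x-example}; here one uses Axiom \ref{axiom:splittings} to refine $\hat{x}$ further so that it admits a ball decomposition to $z$ with well-behaved intermediate stages, reducing to the case already treated.

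Now take $a, a'\in \psi_{\cC;W}(x)$ with the same image in $\cl{\cC}(W)$. Unwinding the colimit (a quotient of $\coprod_x \psi_{\cC;W}(x)$ by the equivalence relation generated by the refinement maps) produces a finite zigzag of refinements $x = z_0, z_1, \ldots, z_m = x$ and elements $a_i \in \psi_{\cC;W}(z_i)$ with $a_0 = a$, $a_m = a'$, consecutive $a_i, a_{i+1}$ being related by a refinement map in one direction or the other. Using PL transversality together with Axiom \ref{axiom:splittings} (the transversality arguments underlying Lemma \ref{lemma:vcones}), choose a single decomposition $\hat{x}$ of $W$ that refines all of $z_0,\ldots,z_m$ and is chosen so that each of the finitely many $a_i$ splits along $\hat{x}$ restricted to the pieces of $z_i$. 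Then each $a_i$ has a preimage $\hat{a}_i\in \psi_{\cC;W}(\hat{x})$ under $g_{\hat{x}z_i}$, unique by the sublemma. For consecutive indices, functoriality of $\psi_{\cC;W}$ (unique morphisms in a poset compose) shows $\hat{a}_i$ and $\hat{a}_{i+1}$ have equal image in $\psi_{\cC;W}(z_{i+1})$ (or $\psi_{\cC;W}(z_i)$), so injectivity of the gluing-up map forces $\hat{a}_i = \hat{a}_{i+1}$. Hence all $\hat{a}_i$ agree, and applying $g_{\hat{x}x}$ to this common element recovers both $a$ and $a'$; therefore $a = a'$.

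The main obstacle is the sublemma on injectivity of the gluing-up maps, and inside it the treatment of decompositions whose intermediate gluing stages fail to be manifolds: the bookkeeping — tracking which portions of boundary get identified and invoking the inductive hypothesis and Lemma \ref{lem:domain-and-range} at the right moments — is the technical heart of the argument. The transversality/common-refinement step is the other essential ingredient, but it is a routine application of general position once Axiom \ref{axiom:splittings} is granted.
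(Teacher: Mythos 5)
The proposal founders at the step where you ``choose a single decomposition $\hat{x}$ of $W$ that refines all of $z_0,\ldots,z_m$'' and along which every $a_i$ splits. Such a common refinement need not exist: the decompositions appearing in the zigzag are handed to you by the colimit relation, together with the elements attached to them, so you have no freedom to perturb them into general position with one another, and two fixed decompositions can intersect so badly that no decomposition refines both (this is exactly the point of Example \ref{sin1x-example} and of the remark in the proof of Lemma \ref{lem:d-a-acyclic} about the $x$-axis versus the graph of $e^{-1/x^2}\sin(1/x)$). Axiom \ref{axiom:splittings} and Lemma \ref{lemma:vcones} are deliberately weaker than what you invoke: they produce a \emph{V-cone}, i.e.\ a freshly chosen generic decomposition $q$ together with, for each given decomposition $p$, a common refinement of $p$ with $q$ --- not a common refinement of all the given decompositions simultaneously. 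So the lifts $\hat{a}_i$ you want may simply not exist, and the rest of the argument (which is otherwise sound: uniqueness of lifts and agreement of consecutive lifts do follow from injectivity of the gluing-up maps, which in turn comes from the injectivity clause of Axiom \ref{axiom:composition} since $j<n$) has nothing to stand on.

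The paper's proof is organized precisely to avoid this trap. It chooses one auxiliary decomposition $z$ in general position with respect to all the $x_i$'s and $v_i$'s of the zigzag, then produces \emph{varying} refinements $x'_i$ (a common refinement of $x_i$ and $z$) and $v'_i$ (refining $v_i$, $x'_i$, $x'_{i-1}$) carrying lifts $a'_i$, $b'_i$; there is never a single decomposition under the whole zigzag. Strict associativity (Axiom \ref{nca-assoc}) then shows all the $a'_i$ have the same image $c$ in $\psi_{\cC;W}(z)$, and since one may take $x'_k = x'_0$, the injectivity of the iterated gluing map $\psi_{\cC;W}(x'_0)\to\psi_{\cC;W}(z)$ forces $a'_0 = a'_k$ and hence $a = \hat{a}$. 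Two smaller remarks: your induction on $\dim W$ is unnecessary (the only injectivity input is the composition axiom in dimensions $<n$), and the worry about non-manifold intermediate stages in a refinement is already handled by the definition of refinement, which requires the stages up to the coarser decomposition to be disjoint unions of balls.
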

\begin{proof}
$\cl{\cC}(W)$ is a colimit of a diagram of sets, and each of the arrows in the diagram is
injective.
Concretely, the colimit is the disjoint union of the sets (one for each decomposition of $W$),
modulo the relation which identifies the domain of each of the injective maps
with its image.

To save ink and electrons we will simplify notation and write $\psi(x)$ for $\psi_{\cC;W}(x)$.

Suppose $a, \hat{a}\in \psi(x)$ have the same image in $\cl{\cC}(W)$ but $a\ne \hat{a}$.
Then there exist
\begin{itemize}
\item decompositions $x = x_0, x_1, \ldots , x_{k-1}, x_k = x$ and $v_1,\ldots, v_k$ of $W$;
\item anti-refinements $v_i\to x_i$ and $v_i\to x_{i-1}$; and
\item elements $a_i\in \psi(x_i)$ and $b_i\in \psi(v_i)$, with $a_0 = a$ and $a_k = \hat{a}$, 
such that $b_i$ and $b_{i+1}$ both map to (glue up to) $a_i$.
\end{itemize}
In other words, we have a zig-zag of equivalences starting at $a$ and ending at $\hat{a}$.
The idea of the proof is to produce a similar zig-zag where everything antirefines to the same
disjoint union of balls, and then invoke Axiom \ref{nca-assoc} which ensures associativity.

Let $z$ be a decomposition of $W$ which is in general position with respect to all of the 
$x_i$'s and $v_i$'s.
There exist decompositions $x'_i$ and $v'_i$ (for all $i$) such that
\begin{itemize}
\item $x'_i$ antirefines to $x_i$ and $z$;
\item $v'_i$ antirefines to $x'_i$, $x'_{i-1}$ and $v_i$;
\item $b_i$ is the image of some $b'_i\in \psi(v'_i)$; and
\item $a_i$ is the image of some $a'_i\in \psi(x'_i)$, which in turn is the image
of $b'_i$ and $b'_{i+1}$.
\end{itemize}
(This is possible by Axiom \ref{axiom:splittings}.)
Now consider the diagrams
\[ \xymatrix{
	& \psi(x'_{i-1}) \ar[rd] & \\
	\psi(v'_i) \ar[ru] \ar[rd] & & \psi(z) \\
	& \psi(x'_i) \ar[ru] &
} \]
The associativity axiom applied to this diagram implies that $a'_{i-1}$ and $a'_i$
map to the same element $c\in \psi(z)$.
Therefore $a'_0$ and $a'_k$ both map to $c$.
But $a'_0$ and $a'_k$ are both elements of $\psi(x'_0)$ (because $x'_k = x'_0$).
So by the injectivity clause of the composition axiom, we must have that $a'_0 = a'_k$.
But this implies that $a = a_0 = a_k = \hat{a}$, contrary to our assumption that $a\ne \hat{a}$.
\end{proof}


\subsection{Modules}
\label{sec:modules}

\tikzset{marked/.style={line width=3pt,red}}

Next we define ordinary and $A_\infty$ $n$-category modules.
The definition will be very similar to that of $n$-categories,
but with $k$-balls replaced by {\it marked $k$-balls,} defined below.

Our motivating example comes from an $(m{-}n{+}1)$-dimensional manifold $W$ with boundary
in the context of an $m{+}1$-dimensional TQFT.
Such a $W$ gives rise to a module for the $n$-category associated to $\bd W$ (see Example \ref{ex:ncats-from-tqfts}).
This will be explained in more detail as we present the axioms.

Throughout, we fix an $n$-category $\cC$.
For all but one axiom, it doesn't matter whether $\cC$ is an ordinary $n$-category or an $A_\infty$ $n$-category.
We state the final axiom, regarding actions of homeomorphisms, differently in the two cases.

Define a {\it marked $k$-ball} to be a pair $(B, N)$ homeomorphic to the pair
$$(\text{standard $k$-ball}, \text{northern hemisphere in boundary of standard $k$-ball}).$$
We call $B$ the ball and $N$ the marking.
A homeomorphism between marked $k$-balls is a homeomorphism of balls which
restricts to a homeomorphism of markings.

\begin{module-axiom}[Module morphisms] \label{module-axiom-funct}
{For each $1 \le k \le n$, we have a functor $\cM_k$ from 
the category of marked $k$-balls and 
homeomorphisms to the category of sets and bijections.}
\end{module-axiom}

(As with $n$-categories, we will usually omit the subscript $k$.)

For example, let $\cD$ be the TQFT which assigns to a $k$-manifold $N$ the set 
of maps from $N$ to $T$ (for $k\le m$), modulo homotopy (and possibly linearized) if $k=m$
(see Example \ref{ex:maps-with-fiber}).
Let $W$ be an $(m{-}n{+}1)$-dimensional manifold with boundary.
Let $\cC$ be the $n$-category with $\cC(X) \deq \cD(X\times \bd W)$.
Let $\cM(B, N) \deq \cD((B\times \bd W)\cup (N\times W))$.
(The union is along $N\times \bd W$.)
See Figure \ref{blah15}.

\begin{figure}[t]
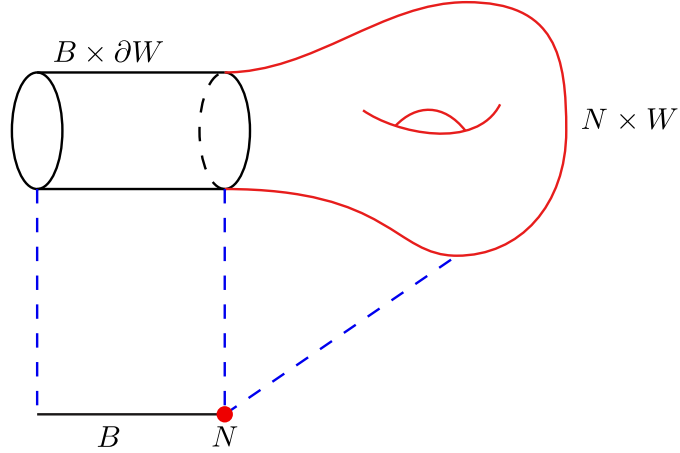

$$\mathfig{.55}{ncat/boundary-collar}$$
\caption{From manifold with boundary collar to marked ball}\label{blah15}\end{figure}

Define the boundary of a marked $k$-ball $(B, N)$ to be the pair $(\bd B \setmin N, \bd N)$.
Call such a thing a {\it marked $k{-}1$-hemisphere}.
(A marked $k{-}1$-hemisphere is, of course, just a $k{-}1$-ball with its entire boundary marked.
We call it a hemisphere instead of a ball because it plays a role analogous
to the $k{-}1$-spheres in the $n$-category definition.)

\begin{lem}
\label{lem:hemispheres}
{For each $1 \le k \le n$, we have a functor $\cl\cM_{k-1}$ from 
the category of marked $k$-hemispheres and 
homeomorphisms to the category of sets and bijections.}
\end{lem}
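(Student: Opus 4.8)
The plan is to construct $\cl\cM_{k-1}$ by the same colimit recipe used in \S\ref{ss:ncat-coend} to extend $\cC$ from balls to spheres (and thereby to prove Lemma \ref{lem:spheres}), with marked balls playing the role of balls. As there, the construction of $\cl\cM_{k-1}$ on a marked $(k{-}1)$-hemisphere uses only the module data $\cM_j$ and the $n$-category data $\cC_j$ for $j\le k-1$, together with the extension $\cl{\cC}_j$ to lower-dimensional spheres already provided by Lemma \ref{lem:spheres}; so the whole package is built up inductively in $k$, just as with the $n$-category axioms themselves.

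First I would define, for a marked $(k{-}1)$-hemisphere $H = (B, \bd B)$, a decomposition poset $\cell(H)$ in complete analogy with $\cell(W)$ of Definition \ref{defn:gluing-decomposition} ff.: its objects are permissible decompositions of $H$ into ordinary $(k{-}1)$-balls $X_a$ mapping into the interior of $H$ and marked $(k{-}1)$-balls $(Y_b, N_b)$ whose markings $N_b$ map into $\bd B$, where the map can be completed to a sequence of gluings (a ball decomposition in the sense of Definition \ref{defn:gluing-decomposition}, adapted to allow marked pieces) whose restriction to $\bd B$ is also completable; morphisms are refinements. An $n$-category-with-module then gives a functor $\psi_H$ from $\cell(H)$ to sets, sending a decomposition $x$ to the subset of $\prod_a \cC(X_a) \times \prod_b \cM(Y_b, N_b)$ cut out by requiring that, for every ball decomposition realizing $x$ and every gluing step in it, the two restrictions to the glued-along region agree --- using the already-constructed $\cl{\cC}$ on the unmarked gluing regions and the inductively-constructed $\cl\cM$ on the marked ones (these latter being marked hemispheres of one lower dimension). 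Refinement maps come from the composition (gluing) maps of $\cC$ and $\cM$. Finally set $\cl\cM_{k-1}(H) \deq \colim_{\cell(H)} \psi_H$, a plain colimit of sets; in the $A_\infty$ case nothing changes at this level, since $k-1 < n$ forces ordinary sets, exactly as in Definition \ref{def:colim-fields} and its $A_\infty$ counterpart.

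Functoriality is then formal: a homeomorphism $f: H \to H'$ of marked hemispheres carries permissible decompositions to permissible decompositions and, via the functoriality of $\cC$ and $\cM$ (Axiom \ref{axiom:morphisms}, Module Axiom \ref{module-axiom-funct}), induces an isomorphism of diagrams $\psi_H \Rightarrow \psi_{H'}$ over the induced poset isomorphism $\cell(H) \iso \cell(H')$, hence a bijection $\cl\cM_{k-1}(H) \to \cl\cM_{k-1}(H')$; identities and composites are respected because the underlying functors respect them. As with Lemma \ref{lem:spheres}, the statement as literally phrased is also satisfied by the trivial functor, so the real content is that this particular $\cl\cM_{k-1}$ is compatible with the remaining module axioms (boundaries, gluing, products, and the homeomorphism/collar actions), which is verified as those axioms are stated, paralleling the $n$-category case.

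The main obstacle is the same bookkeeping issue that makes \S\ref{ss:ncat-coend} delicate: one must check that the gluing-compatibility conditions defining $\psi_H(x)$ are independent of the ball decomposition chosen to realize the permissible decomposition $x$, and that the colimit behaves well under refinement and gluing even when the intersection patterns of the pieces (marked or unmarked) are pathological in the sense of Example \ref{sin1x-example}. This is handled exactly as for $\cC$: impose compatibility with \emph{all} realizing ball decompositions, and invoke strict associativity (Axiom \ref{nca-assoc}) and its module analogue, together with the splittings axiom (Axiom \ref{axiom:splittings}), to see the conditions are consistent and that the resulting functor is well-behaved. The injectivity-type subtlety that requires the argument of Lemma \ref{lem:colim-injective} in the $n$-category setting is not needed here, since Lemma \ref{lem:hemispheres} asserts only the existence of the functor.
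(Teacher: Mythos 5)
Your proposal is correct and matches the paper's approach: the paper simply declares the proof "exactly analogous to that of Lemma \ref{lem:spheres}," i.e.\ the colimit-over-decompositions construction of \S\ref{ss:ncat-coend} (and its marked-ball variant as in \S\ref{moddecss}), which is precisely what you carry out. Your write-up just makes explicit the details the paper omits.
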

The proof is exactly analogous to that of Lemma \ref{lem:spheres}, and we omit the details.
We use the same type of colimit construction.

In our example, $\cl\cM(H) = \cD(H\times\bd W \cup \bd H\times W)$.

\begin{module-axiom}[Module boundaries]
{For each marked $k$-ball $M$ we have a map of sets $\bd: \cM(M)\to \cl\cM(\bd M)$.
These maps, for various $M$, comprise a natural transformation of functors.}
\end{module-axiom}

Given $c\in\cl\cM(\bd M)$, let $\cM(M; c) \deq \bd^{-1}(c)$.

If the $n$-category $\cC$ is enriched over some other category (e.g.\ vector spaces),
then for each marked $n$-ball $M=(B,N)$ and $c\in \cC(\bd B \setminus N)$, the set $\cM(M; c)$ should be an object in that category.

\begin{lem}[Boundary from domain and range]
\label{lem:module-boundary}
{Let $H = M_1 \cup_E M_2$, where $H$ is a marked $k{-}1$-hemisphere ($1\le k\le n$),
$M_i$ is a marked $k{-}1$-ball, and $E = M_1\cap M_2$ is a marked $k{-}2$-hemisphere.
Let $\cM(M_1) \times_{\cM(E)} \cM(M_2)$ denote the fibered product of the 
two maps $\bd: \cM(M_i)\to \cl\cM(E)$.
Then we have an injective map
\[
	\gl_E : \cM(M_1) \times_{\cl\cM(E)} \cM(M_2) \hookrightarrow \cl\cM(H)
\]
which is natural with respect to the actions of homeomorphisms.}
\end{lem}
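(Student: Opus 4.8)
The plan is to mimic exactly the proof of Lemma \ref{lem:domain-and-range} (``Boundary from domain and range'') for $n$-categories, transporting it to the module setting. The key observation is that a marked $k{-}1$-hemisphere $H$ is, topologically, just a $k{-}1$-ball with its entire boundary marked, and the colimit functor $\cl\cM_{k-1}$ was constructed (Lemma \ref{lem:hemispheres}) by precisely the same colimit recipe as $\cl\cC_{k-1}$ on spheres. So the combinatorics of decompositions of $H = M_1 \cup_E M_2$ and the associativity/injectivity statements we need are formally identical to what appears for $\cl{\cC}$. First I would fix the relevant notation: $H$ is a marked $k{-}1$-hemisphere, $E = M_1\cap M_2$ a marked $k{-}2$-hemisphere, and we have the two maps $\bd: \cM(M_i)\to \cl\cM(E)$ whose fibered product $\cM(M_1)\times_{\cl\cM(E)}\cM(M_2)$ we wish to map into $\cl\cM(H)$.

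The map $\gl_E$ itself is essentially tautological: the two-piece decomposition $\{M_1, M_2\}$ is a permissible decomposition of $H$ (in the module sense — it completes to a ball decomposition of $H$ whose restriction to $\bd H$ is well-behaved, since $\bd H$ is entirely marked and $E\cap \bd H = \bd E$ is handled by the inductive structure), and so there is a canonical map $\psi_{\cM;H}(\{M_1,M_2\}) \to \cl\cM(H)$ from the colimit construction. The fibered product $\cM(M_1)\times_{\cl\cM(E)}\cM(M_2)$ is exactly the subset of $\cM(M_1)\times\cM(M_2)$ on which the boundary restrictions to $\cl\cM(E)$ agree, which is by definition a subset of $\psi_{\cM;H}(\{M_1,M_2\})$; composing gives $\gl_E$. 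Naturality with respect to homeomorphisms is immediate since all the structure maps in the colimit are natural.

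The real content, as in the $n$-category case, is \emph{injectivity} of $\gl_E$, and this is where I expect the main obstacle to lie — though it is an obstacle already overcome in the body of the paper. The argument is the module analogue of Lemma \ref{lem:colim-injective}: one takes two elements $a, \hat a$ of the fibered product with the same image in $\cl\cM(H)$, unwinds the colimit as a disjoint union of sets modulo the equivalence relation generated by the (injective) antirefinement maps, and obtains a zig-zag of equivalences connecting $a$ to $\hat a$ through various decompositions of $H$. One then chooses a common decomposition $z$ of $H$ in general position with respect to all the decompositions appearing in the zig-zag — invoking the module version of the splittings axiom (Axiom \ref{axiom:splittings}, whose module counterpart will appear later in \S\ref{sec:modules}) — refines everything down to $z$, and applies strict associativity (the module analogue of Axiom \ref{nca-assoc}) to conclude that all the intermediate elements map to a single element of $\psi_{\cM;H}(z)$; the injectivity clause of module composition then forces $a = \hat a$. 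Since $H$ has dimension $k{-}1 < k \le n$, we are in the range where composition is genuinely injective, so this goes through. In practice the cleanest way to present this is simply to cite Definition \ref{def:colim-fields} and Lemma \ref{lem:colim-injective} (and their evident module restatements), exactly as the $n$-category version cites them, rather than repeating the zig-zag argument verbatim; the proof reduces to the remark that the colimit defining $\cl\cM$ has injective structure maps and satisfies associativity, hence the two-piece decomposition injects.
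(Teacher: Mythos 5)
Your proposal is correct and follows essentially the same route as the paper: the paper's own treatment simply declares the lemma to be ``in exact analogy with Lemma \ref{lem:domain-and-range}'', which in turn is justified by citing Definition \ref{def:colim-fields} and Lemma \ref{lem:colim-injective}, and your construction of $\gl_E$ from the two-piece decomposition plus the zig-zag/splittings/associativity injectivity argument is precisely the content of those citations transported to the module setting.
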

This is in exact analogy with Lemma \ref{lem:domain-and-range}, and illustrated in Figure \ref{fig:module-boundary}.
\begin{figure}[t]
\begin{equation*}
\begin{tikzpicture}[baseline=0]
\coordinate (a) at (0,1);
\coordinate (b) at (4,1);
\draw[marked] (a) arc (180:0:2);
\draw (b) -- (a);
\node at (2,2) {$M_1$};

\draw (0,0) node[fill, circle, red] {} -- (4,0) node[fill,circle,red] {};
\node at (-0.6,0) {$E$};

\draw[marked] (0,-1) arc(-180:0:2);
\draw (4,-1) -- (0,-1);
\node at (2,-2) {$M_2$};
\end{tikzpicture}
\qquad \qquad \qquad
\begin{tikzpicture}[baseline=0]
\draw[marked] (0,0) node[black] {$H$} circle (2);
\end{tikzpicture}
\end{equation*}\caption{The marked hemispheres and marked balls from Lemma \ref{lem:module-boundary}.}
\label{fig:module-boundary}
\end{figure}
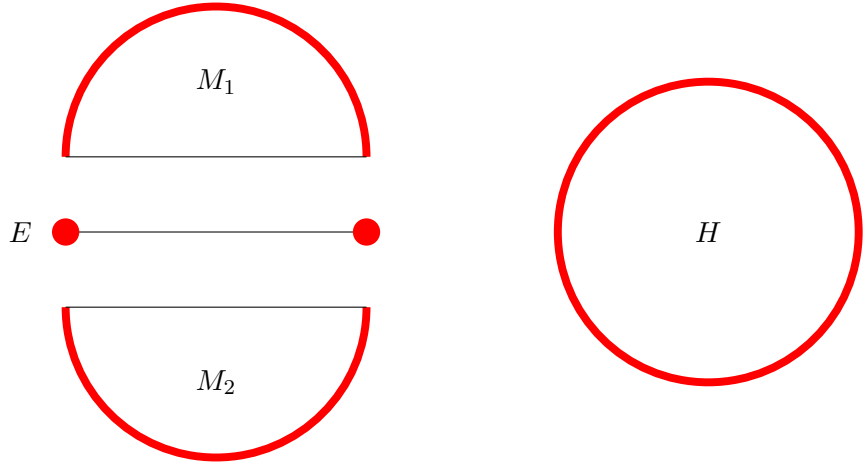

Let $\cl\cM(H)\trans E$ denote the image of $\gl_E$.
We will refer to elements of $\cl\cM(H)\trans E$ as ``splittable along $E$" or ``transverse to $E$". 

\noop{ 
\begin{lem}[Module to category restrictions]
{For each marked $k$-hemisphere $H$ there is a restriction map
$\cl\cM(H)\to \cC(H)$.
($\cC(H)$ means apply $\cC$ to the underlying $k$-ball of $H$.)
These maps comprise a natural transformation of functors.}
\end{lem}
}	

It follows from the definition of the colimit $\cl\cM(H)$ that
given any (unmarked) $k{-}1$-ball $Y$ in the interior of $H$ there is a restriction map
from a subset $\cl\cM(H)_{\trans{\bdy Y}}$ of $\cl\cM(H)$ to $\cC(Y)$.
Combining this with the boundary map $\cM(B,N) \to \cl\cM(\bd(B,N))$, we also have a restriction
map from a subset $\cM(B,N)_{\trans{\bdy Y}}$ of $\cM(B,N)$ to $\cC(Y)$ whenever $Y$ is in the interior of $\bd B \setmin N$.
This fact will be used below.

\noop{ 
Note that combining the various boundary and restriction maps above
(for both modules and $n$-categories)
we have for each marked $k$-ball $(B, N)$ and each $k{-}1$-ball $Y\sub \bd B \setmin N$
a natural map from a subset of $\cM(B, N)$ to $\cC(Y)$.
This subset $\cM(B,N)\trans{\bdy Y}$ is the subset of morphisms which are appropriately splittable (transverse to the
cutting submanifolds).
This fact will be used below.
} 

In our example, the various restriction and gluing maps above come from
restricting and gluing maps into $T$.

We require two sorts of composition (gluing) for modules, corresponding to two ways
of splitting a marked $k$-ball into two (marked or plain) $k$-balls.
(See Figure \ref{zzz3}.)

\begin{figure}[t]
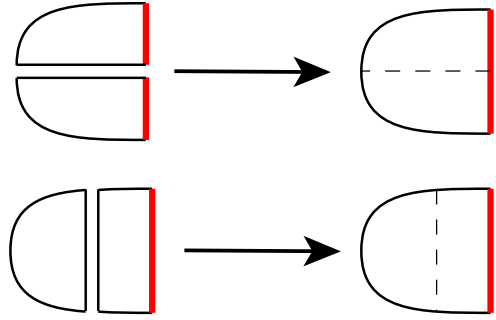

\begin{equation*}
\mathfig{.4}{ncat/zz3}
\end{equation*}
\caption{Module composition (top); $n$-category action (bottom).}
\label{zzz3}
\end{figure}

First, we can compose two module morphisms to get another module morphism.

\begin{module-axiom}[Module composition]
{Let $M = M_1 \cup_Y M_2$, where $M$, $M_1$ and $M_2$ are marked $k$-balls (with $2\le k\le n$)
and $Y = M_1\cap M_2$ is a marked $k{-}1$-ball.
Let $E = \bd Y$, which is a marked $k{-}2$-hemisphere.
Note that each of $M$, $M_1$ and $M_2$ has its boundary split into two marked $k{-}1$-balls by $E$.
We have restriction (domain or range) maps $\cM(M_i)\trans E \to \cM(Y)$.
Let $\cM(M_1) \trans E \times_{\cM(Y)} \cM(M_2) \trans E$ denote the fibered product of these two maps. 
Then (axiom) we have a map
\[
	\gl_Y : \cM(M_1) \trans E \times_{\cM(Y)} \cM(M_2) \trans E \to \cM(M) \trans E
\]
which is natural with respect to the actions of homeomorphisms, and also compatible with restrictions
to the intersection of the boundaries of $M$ and $M_i$.
If $k < n$ we require that $\gl_Y$ is injective.}
\end{module-axiom}

Second, we can compose an $n$-category morphism with a module morphism to get another
module morphism.
We'll call this the action map to distinguish it from the other kind of composition.

\begin{module-axiom}[$n$-category action]
{Let $M = X \cup_Y M'$, where $M$ and $M'$ are marked $k$-balls ($1\le k\le n$),
$X$ is a plain $k$-ball,
and $Y = X\cap M'$ is a $k{-}1$-ball.
Let $E = \bd Y$, which is a $k{-}2$-sphere.
We have restriction maps $\cM(M') \trans E \to \cC(Y)$ and $\cC(X) \trans E\to \cC(Y)$.
Let $\cC(X)\trans E \times_{\cC(Y)} \cM(M') \trans E$ denote the fibered product of these two maps. 
Then (axiom) we have a map
\[
	\gl_Y :\cC(X)\trans E \times_{\cC(Y)} \cM(M')\trans E \to \cM(M) \trans E
\]
which is natural with respect to the actions of homeomorphisms, and also compatible with restrictions
to the intersection of the boundaries of $X$ and $M'$.
If $k < n$ we require that $\gl_Y$ is injective.}
\end{module-axiom}

\begin{module-axiom}[Strict associativity]
The composition and action maps above are strictly associative.
Given any decomposition of a large marked ball into smaller marked and unmarked balls
any sequence of pairwise gluings yields (via composition and action maps) the same result.
\end{module-axiom}

Note that the above associativity axiom applies to mixtures of module composition,
action maps and $n$-category composition.
See Figure \ref{zzz1b}.

\begin{figure}[t]
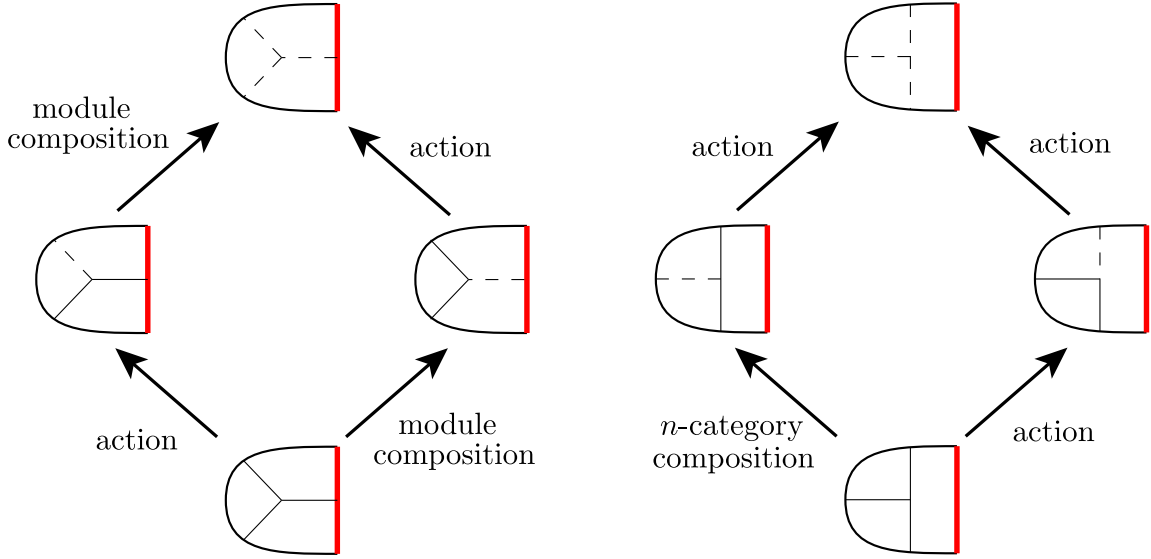

\begin{equation*}
\mathfig{0.49}{ncat/zz0} \mathfig{0.49}{ncat/zz1}
\end{equation*}
\caption{Two examples of mixed associativity}
\label{zzz1b}
\end{figure}

The above three axioms are equivalent to the following axiom,
which we state in slightly vague form.

\xxpar{Module multi-composition:}
{Given any splitting 
\[
	X_1 \sqcup\cdots\sqcup X_p \sqcup M_1\sqcup\cdots\sqcup M_q \to M
\]
of a marked $k$-ball $M$
into small (marked and plain) $k$-balls $M_i$ and $X_j$, there is a 
map from an appropriate subset (like a fibered product) 
of 
\[
	\cC(X_1)\times\cdots\times\cC(X_p) \times \cM(M_1)\times\cdots\times\cM(M_q) 
\]
to $\cM(M)$,
and these various multifold composition maps satisfy an
operad-type strict associativity condition.}

The above operad-like structure is analogous to the swiss cheese operad
\cite{MR1718089}.

\medskip

We can define marked pinched products $\pi:E\to M$ of marked balls similarly to the 
plain ball case. A marked pinched product $\pi: E \to M$ is a pinched product (that is, locally modeled on degeneracy maps) which restricts to a map between the markings which is also a pinched product, and in a neighborhood of the markings is the product of the map between the markings with an interval.  (See Figure \ref{fig:marked-pinched-products}.)

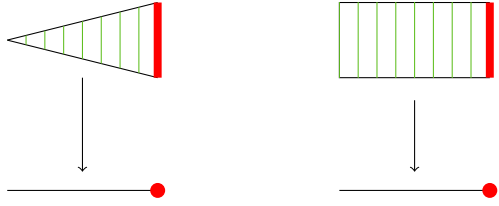
\begin{figure}[ht]
\begin{equation*}
\begin{tikzpicture}
\draw (0,2) -- (2,2.5);
\draw (0,2) -- (2,1.5);
\draw[marked] (2,1.5) -- (2,2.5);
\draw (0,0) -- (2,0) node[red,circle,fill,inner sep=2pt] {};
\draw[->] (1,1.5) -- (1,0.25);
\path[clip] (0,2) -- (2,2.5) -- (2,1.5) -- cycle;
\foreach \x in {0, 0.25, ..., 1.75} {
	\draw[green!50!brown] (\x,1) -- (\x,3);
}
\end{tikzpicture}
\qquad \qquad \qquad
\begin{tikzpicture}
\draw (2,2.5) -- (0,2.5) -- (0,1.5) -- (2,1.5);
\draw[marked] (2,1.5) -- (2,2.5);
\draw (0,0) -- (2,0) node[red,circle,fill,inner sep=2pt] {};
\draw[->] (1,1.2) -- (1,0.25);
\path[clip] (2,2.5) -- (0,2.5) -- (0,1.5) -- (2,1.5);
\foreach \x in {0, 0.25, ..., 1.75} {
	\draw[green!50!brown] (\x,1) -- (\x,3);
}
\end{tikzpicture}
\end{equation*}
\caption{Two examples of marked pinched products.}
\label{fig:marked-pinched-products}
\end{figure}

Note that a marked pinched product can be decomposed into either
two marked pinched products or a plain pinched product and a marked pinched product.
 (See Figure \ref{fig:decomposing-marked-pinched-products}.)
\begin{figure}[ht]
\begin{equation*}
\begin{tikzpicture}
\draw (0,2) -- (2,2.5);
\draw (0,2) -- (2,1.5);
\draw[marked] (2,1.5) -- (2,2.5);
\draw (0,0) -- (2,0) node[red,circle,fill,inner sep=2pt] {};
\draw[->] (1,1.5) -- (1,0.25);
\path[clip] (0,2) -- (2,2.5) -- (2,1.5) -- cycle;
\draw[dashed] (1.4,2.5) -- (1.4,1.5);
\foreach \x in {0, 0.25, ..., 1.75} {
	\draw[green!50!brown] (\x,1) -- (\x,3);
}
\end{tikzpicture}
\qquad \qquad \qquad
\begin{tikzpicture}
\draw (0,2) -- (2,2.5);
\draw (0,2) -- (2,1.5);
\draw[dashed] (0.666,2.166) -- (2,1.833);
\draw[marked] (2,1.5) -- (2,2.5);
\draw (0,0) -- (2,0) node[red,circle,fill,inner sep=2pt] {};
\draw[->] (1,1.5) -- (1,0.25);
\path[clip] (0,2) -- (2,2.5) -- (2,1.5) -- cycle;
\foreach \x in {0, 0.25, ..., 1.75} {
	\draw[green!50!brown] (\x,1) -- (\x,3);
}
\end{tikzpicture}
\end{equation*}
\caption{Two examples of decompositions of marked pinched products.}
\label{fig:decomposing-marked-pinched-products}
\end{figure}
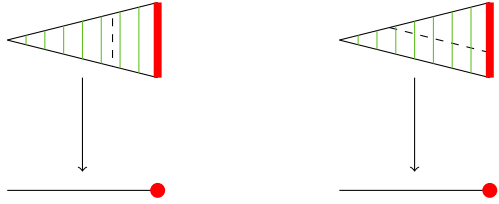

\begin{module-axiom}[Product (identity) morphisms]
For each pinched product $\pi:E\to M$, with $M$ a marked $k$-ball and $E$ a marked
$k{+}m$-ball ($m\ge 1$),
there is a map $\pi^*:\cM(M)\to \cM(E)$.
These maps must satisfy the following conditions.
\begin{enumerate}
\item
If $\pi:E\to M$ and $\pi':E'\to M'$ are marked pinched products, and
if $f:M\to M'$ and $\tilde{f}:E \to E'$ are maps such that the diagram
\[ \xymatrix{
	E \ar[r]^{\tilde{f}} \ar[d]_{\pi} & E' \ar[d]^{\pi'} \\
	M \ar[r]^{f} & M'
} \]
commutes, then we have 
\[
	\pi'^*\circ f = \tilde{f}\circ \pi^*.
\]
\item
Product morphisms are compatible with module composition and module action.
Let $\pi:E\to M$, $\pi_1:E_1\to M_1$, and $\pi_2:E_2\to M_2$ 
be pinched products with $E = E_1\cup E_2$.
Let $a\in \cM(M)$, and let $a_i$ denote the restriction of $a$ to $M_i\sub M$.
Then 
\[
	\pi^*(a) = \pi_1^*(a_1)\bullet \pi_2^*(a_2) .
\]
Similarly, if $\rho:D\to X$ is a pinched product of plain balls and
$E = D\cup E_1$, then
\[
	\pi^*(a) = \rho^*(a')\bullet \pi_1^*(a_1),
\]
where $a'$ is the restriction of $a$ to $D$.
\item
Product morphisms are associative.
If $\pi:E\to M$ and $\rho:D\to E$ are marked pinched products then
\[
	\rho^*\circ\pi^* = (\pi\circ\rho)^* .
\]
\item
Product morphisms are compatible with restriction.
If we have a commutative diagram
\[ \xymatrix{
	D \ar@{^(->}[r] \ar[d]_{\rho} & E \ar[d]^{\pi} \\
	Y \ar@{^(->}[r] & M
} \]
such that $\rho$ and $\pi$ are pinched products, then
\[
	\res_D\circ\pi^* = \rho^*\circ\res_Y .
\]
($Y$ could be either a marked or plain ball.)
\end{enumerate}
\end{module-axiom}

As in the $n$-category definition, once we have product morphisms we can define
collar maps $\cM(M)\to \cM(M)$.
Note that there are two cases:
the collar could intersect the marking of the marked ball $M$, in which case
we use a product on a morphism of $\cM$; or the collar could be disjoint from the marking,
in which case we use a product on a morphism of $\cC$.

In our example, elements $a$ of $\cM(M)$ are maps to $T$, and $\pi^*(a)$ is the pullback of
$a$ along the map associated to $\pi$.

\medskip


The remaining module axioms are very similar to their counterparts in \S\ref{ss:n-cat-def}.

\begin{module-axiom}[Extended isotopy invariance in dimension $n$]
\label{ei-module-axiom}
Let $M$ be a marked $n$-ball, $b \in \cM(M)$, and $f: M\to M$ be a homeomorphism which 
acts trivially on the restriction $\bd b$ of $b$ to $\bd M$.
Suppose furthermore that $f$ is isotopic to the identity through homeomorphisms which
act trivially on $\bd b$.
Then $f(b) = b$.
In addition, collar maps act trivially on $\cM(M)$.
\end{module-axiom}

We emphasize that the $\bd M$ above (and below) means boundary in the marked $k$-ball sense.
In other words, if $M = (B, N)$ then we require only that isotopies are fixed 
on $\bd B \setmin N$.

\begin{module-axiom}[Splittings]
Let $c\in \cM_k(M)$, with $1\le k < n$.
Let $s = \{X_i\}$ be a splitting of M (so $M = \cup_i X_i$, and each $X_i$ is either a marked ball or a plain ball).
Let $\Homeo_\bd(M)$ denote homeomorphisms of $M$ which restrict to the identity on $\bd M$.
\begin{itemize}
\item (Alternative 1) Consider the set of homeomorphisms $g:M\to M$ such that $c$ splits along $g(s)$.
Then this subset of $\Homeo(M)$ is open and dense.
Furthermore, if $s$ restricts to a splitting $\bd s$ of $\bd M$, and if $\bd c$ splits along $\bd s$, then the
intersection of the set of such homeomorphisms $g$ with $\Homeo_\bd(M)$ is open and dense in $\Homeo_\bd(M)$.
\item (Alternative 2) Then there exists an embedded cell complex $S_c \sub M$, called the string locus of $c$,
such that if the splitting $s$ is transverse to $S_c$ then $c$ splits along $s$.
\end{itemize}
\end{module-axiom}

We define the 
category $\mbc$ of {\it marked $n$-balls with boundary conditions} as follows.
Its objects are pairs $(M, c)$, where $M$ is a marked $n$-ball and $c \in \cl\cM(\bd M)$ is the ``boundary condition".
The morphisms from $(M, c)$ to $(M', c')$, denoted $\Homeo(M; c \to M'; c')$, are
homeomorphisms $f:M\to M'$ such that $f|_{\bd M}(c) = c'$.

Let $\cS$ be a distributive symmetric monoidal category, and assume that $\cC$ is enriched in $\cS$.
A $\cC$-module enriched in $\cS$ is defined analogously to \ref{axiom:enriched}.
The top-dimensional part of the module $\cM_n$ is required to be a functor from $\mbc$ to $\cS$.
The top-dimensional gluing maps (module composition and $n$-category action) are $\cS$-maps whose
domain is a direct sub of tensor products, as in \ref{axiom:enriched}.

If $\cC$ is an $A_\infty$ $n$-category (see \ref{axiom:families}), we replace module axiom \ref{ei-module-axiom}
with the following axiom.
Retain notation from \ref{axiom:families}.

\begin{module-axiom}[Families of homeomorphisms act in dimension $n$.]
For each pair of marked $n$-balls $M$ and $M'$ and each pair $c\in \cl{\cM}(\bd M)$ and $c'\in \cl{\cM}(\bd M')$ 
we have an $\cS$-morphism
\[
	\cJ(\Homeo(M;c \to M'; c')) \ot \cM(M; c) \to \cM(M'; c') .
\]
Similarly, we have an $\cS$-morphism
\[
	\cJ(\Coll(M,c)) \ot \cM(M; c) \to \cM(M; c),
\]
where $\Coll(M,c)$ denotes the space of collar maps.
These action maps are required to be associative up to coherent homotopy,
and also compatible with composition (gluing) in the sense that
a diagram like the one in Theorem \ref{thm:CH} commutes.
\end{module-axiom}

\medskip

Note that the above axioms imply that an $n$-category module has the structure
of an $n{-}1$-category.
More specifically, let $J$ be a marked 1-ball, and define $\cE(X)\deq \cM(X\times J)$,
where $X$ is a $k$-ball and in the product $X\times J$ we pinch 
above the non-marked boundary component of $J$.
(More specifically, we collapse $X\times P$ to a single point, where
$P$ is the non-marked boundary component of $J$.)
Then $\cE$ has the structure of an $n{-}1$-category.

All marked $k$-balls are homeomorphic, unless $k = 1$ and our manifolds
are oriented or Spin (but not unoriented or $\text{Pin}_\pm$).
In this case ($k=1$ and oriented or Spin), there are two types
of marked 1-balls, call them left-marked and right-marked,
and hence there are two types of modules, call them right modules and left modules.
In all other cases ($k>1$ or unoriented or $\text{Pin}_\pm$),
there is no left/right module distinction.

\medskip

We now give some examples of modules over ordinary and $A_\infty$ $n$-categories.

\begin{example}[Examples from TQFTs]
\rm
Continuing Example \ref{ex:ncats-from-tqfts}, with $\cF$ a TQFT, $W$ an $n{-}j$-manifold,
and $\cF(W)$ the $j$-category associated to $W$.
Let $Y$ be an $(n{-}j{+}1)$-manifold with $\bd Y = W$.
Define a $\cF(W)$ module $\cF(Y)$ as follows.
If $M = (B, N)$ is a marked $k$-ball with $k<j$ let 
$\cF(Y)(M)\deq \cF((B\times W) \cup (N\times Y))$.
If $M = (B, N)$ is a marked $j$-ball and $c\in \cl{\cF(Y)}(\bd M)$ let
$\cF(Y)(M)\deq A_\cF((B\times W) \cup (N\times Y); c)$.
\end{example}

\begin{example}[Examples from the blob complex] \label{bc-module-example}
\rm
In the previous example, we can instead define
$\cF(Y)(M)\deq \bc_*((B\times W) \cup (N\times Y), c; \cF)$ (when $\dim(M) = n$)
and get a module for the $A_\infty$ $n$-category associated to $\cF$ as in 
Example \ref{ex:blob-complexes-of-balls}.
\end{example}

\begin{example}
\rm
Suppose $S$ is a topological space, with a subspace $T$.
We can define a module $\pi_{\leq n}(S,T)$ so that on each marked $k$-ball $(B,N)$ 
for $k<n$ the set $\pi_{\leq n}(S,T)(B,N)$ consists of all continuous maps of pairs 
$(B,N) \to (S,T)$ and on each marked $n$-ball $(B,N)$ it consists of all 
such maps modulo homotopies fixed on $\bdy B \setminus N$.
This is a module over the fundamental $n$-category $\pi_{\leq n}(S)$ of $S$, from Example \ref{ex:maps-to-a-space}.
\end{example}
Modifications corresponding to Examples \ref{ex:maps-to-a-space-with-a-fiber} and 
\ref{ex:linearized-maps-to-a-space} are also possible, and there is an $A_\infty$ version analogous to 
Example \ref{ex:chains-of-maps-to-a-space} given by taking singular chains.

\subsection{Modules as boundary labels (colimits for decorated manifolds)}
\label{moddecss}

Fix an ordinary $n$-category or $A_\infty$ $n$-category  $\cC$.
Let $W$ be a $k$-manifold ($k\le n$),
let $\{Y_i\}$ be a collection of disjoint codimension 0 submanifolds of $\bd W$,
and let $\cN = (\cN_i)$ be an assignment of a $\cC$ module $\cN_i$ to each $Y_i$.

We will define a set $\cC(W, \cN)$ using a colimit construction very similar to 
the one appearing in \S \ref{ss:ncat_fields} above.
(If $k = n$ and our $n$-categories are enriched, then
$\cC(W, \cN)$ will have additional structure; see below.)

Define a permissible decomposition of $W$ to be a map
\[
	\left(\bigsqcup_a X_a\right) \sqcup \left(\bigsqcup_{i,b} M_{ib}\right)  \to W,
\]
where each $X_a$ is a plain $k$-ball disjoint, in $W$, from $\cup Y_i$, and
each $M_{ib}$ is a marked $k$-ball intersecting $Y_i$  (once mapped into $W$),
with $M_{ib}\cap Y_i$ being the marking, which extends to a ball decomposition in the sense of Definition \ref{defn:gluing-decomposition}.
(See Figure \ref{mblabel}.)
\begin{figure}[t]
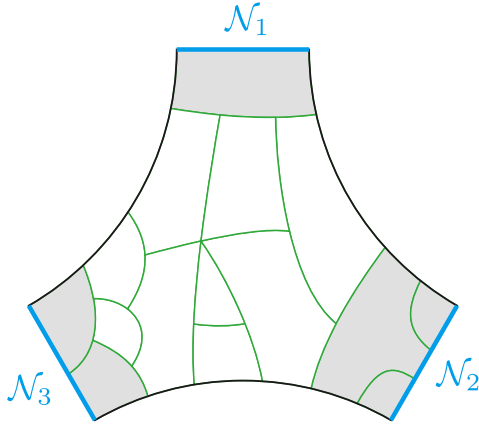

\begin{equation*}
\mathfig{.4}{ncat/mblabel}
\end{equation*}
\caption{A permissible decomposition of a manifold
whose boundary components are labeled by $\cC$ modules $\{\cN_i\}$.
Marked balls are shown shaded, plain balls are unshaded.}\label{mblabel}
\end{figure}
Given permissible decompositions $x$ and $y$, we say that $x$ is a refinement
of $y$, or write $x \le y$, if each ball of $y$ is a union of balls of $x$.
This defines a partial ordering $\cell(W)$, which we will think of as a category.
(The objects of $\cell(D)$ are permissible decompositions of $W$, and there is a unique
morphism from $x$ to $y$ if and only if $x$ is a refinement of $y$.)

The collection of modules $\cN$ determines 
a functor $\psi_\cN$ from $\cell(W)$ to the category of sets 
(possibly with additional structure if $k=n$).
For a decomposition $x = (X_a, M_{ib})$ in $\cell(W)$, define $\psi_\cN(x)$ to be the subset
\[
	\psi_\cN(x) \sub \left(\prod_a \cC(X_a)\right) \times \left(\prod_{ib} \cN_i(M_{ib})\right)
\]
such that the restrictions to the various pieces of shared boundaries amongst the
$X_a$ and $M_{ib}$ all agree.
If $x$ is a refinement of $y$, define a map $\psi_\cN(x)\to\psi_\cN(y)$
via the gluing (composition or action) maps from $\cC$ and the $\cN_i$.

We now define the set $\cC(W, \cN)$ to be the colimit of the functor $\psi_\cN$.
(As in \S\ref{ss:ncat-coend}, if $k=n$ we take a colimit in whatever
category we are enriching over, and if additionally we are in the $A_\infty$ case, 
then we use a homotopy colimit.)

\medskip

If $D$ is an $m$-ball, $0\le m \le n-k$, then we can similarly define
$\cC(D\times W, \cN)$, where in this case $\cN_i$ labels the submanifold 
$D\times Y_i \sub \bd(D\times W)$.
It is not hard to see that the assignment $D \mapsto \cC(D\times W, \cN)$
has the structure of an $n{-}k$-category.

\medskip

We will use a simple special case of the above 
construction to define tensor products 
of modules.
Let $\cM_1$ and $\cM_2$ be modules for an $n$-category $\cC$.
(If $k=1$ and our manifolds are oriented, then one should be 
a left module and the other a right module.)
Choose a 1-ball $J$, and label the two boundary points of $J$ by $\cM_1$ and $\cM_2$.
Define the tensor product $\cM_1 \tensor \cM_2$ to be the 
$n{-}1$-category associated as above to $J$ with its boundary labeled by $\cM_1$ and $\cM_2$.
This of course depends (functorially)
on the choice of 1-ball $J$.

We will define a more general self tensor product (categorified coend) below.

\subsection{Morphisms of modules}
\label{ss:module-morphisms}

Modules are collections of functors together with some additional data, so we define morphisms
of modules to be collections of natural transformations which are compatible with this
additional data.

More specifically, let $\cX$ and $\cY$ be $\cC$ modules, i.e.\ collections of functors
$\{\cX_k\}$ and $\{\cY_k\}$, for $1\le k\le n$, from marked $k$-balls to sets 
as in Module Axiom \ref{module-axiom-funct}.
A morphism $g:\cX\to\cY$ is a collection of natural transformations $g_k:\cX_k\to\cY_k$
satisfying:
\begin{itemize}
\item Each $g_k$ commutes with $\bd$.
\item Each $g_k$ commutes with gluing (module composition and $\cC$ action).
\item Each $g_k$ commutes with taking products.
\item In the top dimension $k=n$, $g_n$ preserves whatever additional structure we are enriching over (e.g.\ vector
spaces).
In the $A_\infty$ case (e.g.\ enriching over chain complexes) $g_n$ should live in 
an appropriate derived hom space, as described below.
\end{itemize}

We will be mainly interested in the case $n=1$ and enriched over chain complexes,
since this is the case that's relevant to the generalized Deligne conjecture of \S\ref{sec:deligne}.
So we treat this case in more detail.

First we explain the remark about derived hom above.
Let $L$ be a marked 1-ball and let $\cl{\cX}(L)$ denote the local homotopy colimit construction
associated to $L$ by $\cX$ and $\cC$.
(See \S \ref{ss:ncat_fields} and \S \ref{moddecss}.)
Define $\cl{\cY}(L)$ similarly.
For $K$ an unmarked 1-ball let $\cl{\cC}(K)$ denote the local homotopy colimit
construction associated to $K$ by $\cC$.
Then we have an injective gluing map
\[
	\gl: \cl{\cX}(L) \ot \cl{\cC}(K) \to \cl{\cX}(L\cup K) 
\]
which is also a chain map.
(For simplicity we are suppressing mention of boundary conditions on the unmarked 
boundary components of the 1-balls.)
We define $\hom_\cC(\cX \to \cY)$ to be a collection of (graded linear) natural transformations
$g: \cl{\cX}(L)\to \cl{\cY}(L)$ such that the following diagram commutes for all $L$ and $K$:
\[ \xymatrix{
	\cl{\cX}(L) \ot \cl{\cC}(K) \ar[r]^{\gl} \ar[d]_{g\ot \id} & \cl{\cX}(L\cup K) \ar[d]^{g}\\
	\cl{\cY}(L) \ot \cl{\cC}(K) \ar[r]^{\gl} & \cl{\cY}(L\cup K)
} \]

The usual differential on graded linear maps between chain complexes induces a differential
on $\hom_\cC(\cX \to \cY)$, giving it the structure of a chain complex.

Let $\cZ$ be another $\cC$ module.
We define a chain map
\[
	a: \hom_\cC(\cX \to \cY) \ot (\cX \ot_\cC \cZ) \to \cY \ot_\cC \cZ
\]
as follows.
Recall that the tensor product $\cX \ot_\cC \cZ$  depends on a choice of interval $J$, labeled
by $\cX$ on one boundary component and $\cZ$ on the other.
Because we are using the {\it local} homotopy colimit, any generator
$D\ot x\ot \bar{c}\ot z$ of $\cX \ot_\cC \cZ$ can be written (perhaps non-uniquely) as a gluing
$(D'\ot x \ot \bar{c}') \bullet (D''\ot \bar{c}''\ot z)$, for some decomposition $J = L'\cup L''$
and with $D'\ot x \ot \bar{c}'$ a generator of $\cl{\cX}(L')$ and 
$D''\ot \bar{c}''\ot z$ a generator of $\cl{\cZ}(L'')$.
(Such a splitting exists because the blob diagram $D$ can be split into left and right halves, 
since no blob can include both the leftmost and rightmost intervals in the underlying decomposition.
This step would fail if we were using the usual hocolimit instead of the local hocolimit.)
We now define
\[
	a: g\ot (D\ot x\ot \bar{c}\ot z) \mapsto g(D'\ot x \ot \bar{c}')\bullet (D''\ot \bar{c}''\ot z) .
\]
This does not depend on the choice of splitting $D = D'\bullet D''$ because $g$ commutes with gluing.

\subsection{The \texorpdfstring{$n{+}1$}{n+1}-category of sphere modules}
\label{ssec:spherecat}

In this subsection we define $n{+}1$-categories $\cS$ of ``sphere modules".
The objects are $n$-categories, the $k$-morphisms are $k{-}1$-sphere modules for $1\le k \le n$,
and the $n{+}1$-morphisms are intertwiners.
With future applications in mind, we treat simultaneously the big $n{+}1$-category
of all $n$-categories and all sphere modules and also subcategories thereof.
When $n=1$ this is closely related to the familiar $2$-category consisting of 
algebras, bimodules and intertwiners, or a subcategory of that.
(More generally, we can replace algebras with linear 1-categories.)
The ``bi" in ``bimodule" corresponds to the fact that a 0-sphere consists of two points.
The sphere module $n{+}1$-category is a natural generalization of the 
algebra-bimodule-intertwiner 2-category to higher dimensions.

Another possible name for this $n{+}1$-category is the $n{+}1$-category of defects.
The $n$-categories are thought of as representing field theories, and the 
$0$-sphere modules are codimension 1 defects between adjacent theories.
In general, $m$-sphere modules are codimension $m{+}1$ defects;
the link of such a defect is an $m$-sphere decorated with defects of smaller codimension.

\medskip


For simplicity, we will assume that $n$-categories are enriched over $\c$-vector spaces.

The $1$- through $n$-dimensional parts of $\cS$ are various sorts of modules, and we describe
these first.
The $n{+}1$-dimensional part of $\cS$ consists of intertwiners
of  $1$-category modules associated to decorated $n$-balls.
We will see below that in order for these $n{+}1$-morphisms to satisfy all of
the axioms of an $n{+}1$-category (in particular, duality requirements), we will have to assume
that our $n$-categories and modules have non-degenerate inner products.
(In other words, we need to assume some extra duality on the $n$-categories and modules.)

\medskip

Our first task is to define an $n$-category $m$-sphere module, for $0\le m \le n-1$.
These will be defined in terms of certain classes of marked balls, very similarly
to the definition of $n$-category modules above.
(This, in turn, is very similar to our definition of $n$-category.)
Because of this similarity, we only sketch the definitions below.

We start with $0$-sphere modules, which also could reasonably be called (categorified) bimodules.
(For $n=1$ they are precisely bimodules in the usual, uncategorified sense.)
We prefer the more awkward term ``0-sphere module" to emphasize the analogy
with the higher sphere modules defined below.

Define a $0$-marked $k$-ball, $1\le k \le n$, to be a pair  $(X, M)$ homeomorphic to the standard
$(B^k, B^{k-1})$.
See Figure \ref{feb21a}.
Another way to say this is that $(X, M)$ is homeomorphic to $B^{k-1}\times([-1,1], \{0\})$.

\begin{figure}[t]
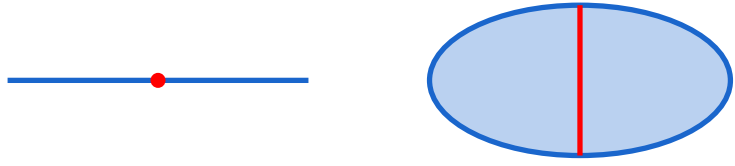

$$\tikz[baseline,line width=2pt]{\draw[kw-blue-a] (-2,0)--(2,0); \fill[red] (0,0) circle (0.1);} \qquad \qquad \tikz[baseline,line width=2pt]{\draw[kw-blue-a][fill=kw-blue-a!30!white] (0,0) circle (2 and 1); \draw[red] (0,1)--(0,-1);}$$
\caption{0-marked 1-ball and 0-marked 2-ball}
\label{feb21a}
\end{figure}

The $0$-marked balls can be cut into smaller balls in various ways.
We only consider those decompositions in which the smaller balls are either
$0$-marked (i.e. intersect the $0$-marking of the large ball in a disc) 
or plain (don't intersect the $0$-marking of the large ball).
We can also take the boundary of a $0$-marked ball, which is a $0$-marked sphere.

Fix $n$-categories $\cA$ and $\cB$.
These will label the two halves of a $0$-marked $k$-ball.

An $n$-category $0$-sphere module $\cM$ over the $n$-categories $\cA$ and $\cB$ is 
a collection of functors $\cM_k$ from the category
of $0$-marked $k$-balls, $1\le k \le n$,
(with the two halves labeled by $\cA$ and $\cB$) to the category of sets.
If $k=n$ these sets should be enriched to the extent $\cA$ and $\cB$ are.
Given a decomposition of a $0$-marked $k$-ball $X$ into smaller balls $X_i$, we have
morphism sets $\cA_k(X_i)$ (if $X_i$ lies on the $\cA$-labeled side)
or $\cB_k(X_i)$ (if $X_i$ lies on the $\cB$-labeled side)
or $\cM_k(X_i)$ (if $X_i$ intersects the marking and is therefore a smaller 0-marked ball).
Corresponding to this decomposition we have a composition (or ``gluing") map
from the product (fibered over the boundary data) of these various sets into $\cM_k(X)$.

\medskip

Part of the structure of an $n$-category 0-sphere module $\cM$  is captured by saying it is
a collection $\cD^{ab}$ of $n{-}1$-categories, indexed by pairs $(a, b)$ of objects (0-morphisms)
of $\cA$ and $\cB$.
Let $J$ be some standard 0-marked 1-ball (i.e.\ an interval with a marked point in its interior).
Given a $j$-ball $X$, $0\le j\le n-1$, we define
\[
	\cD(X) \deq \cM(X\times J) .
\]
The product is pinched over the boundary of $J$.
The set $\cD$ breaks into ``blocks" according to the restrictions to the pinched points of $X\times J$
(see Figure \ref{feb21b}).
These restrictions are 0-morphisms $(a, b)$ of $\cA$ and $\cB$.

\begin{figure}[t] \centering
\begin{tikzpicture}[kw-blue-a,line width=2pt]
\draw (0,1) -- (0,-1) node[below] {$X$};

\draw (2,0) -- (4,0) node[below] {$J$};
\fill[red] (3,0) circle (0.1);

\draw[fill=kw-blue-a!30!white] (6,0) node(a) {} arc (135:90:4) node(top) {} arc (90:45:4) node(b) {} arc (-45:-90:4) node(bottom) {} arc(-90:-135:4);
\draw[red] (top.center) -- (bottom.center);
\fill (a) circle (0.1) node[left] {\color{green!50!brown} $a$};
\fill (b) circle (0.1) node[right] {\color{green!50!brown} $b$};

\path (bottom) node[below]{$X \times J$};

\end{tikzpicture}
\caption{The pinched product $X\times J$}
\label{feb21b}
\end{figure}
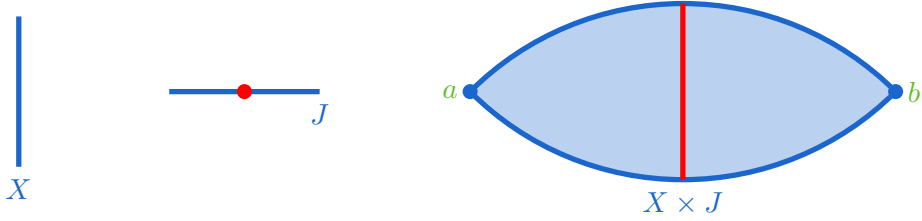

More generally, consider an interval with interior marked points, and with the complements
of these points labeled by $n$-categories $\cA_i$ ($0\le i\le l$) and the marked points labeled
by $\cA_i$-$\cA_{i+1}$ 0-sphere modules $\cM_i$.
(See Figure \ref{feb21c}.)
To this data we can apply the coend construction as in \S\ref{moddecss} above
to obtain an $\cA_0$-$\cA_l$ $0$-sphere module and, forgetfully, an $n{-}1$-category.
This amounts to a definition of taking tensor products of $0$-sphere modules over $n$-categories.

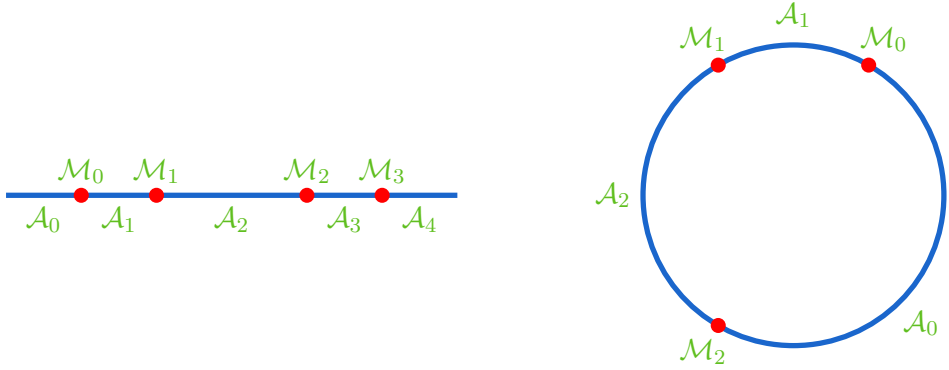
\begin{figure}[t] \centering
\begin{tikzpicture}[baseline,line width = 2pt]
\draw[kw-blue-a] (0,0) -- (6,0);
\foreach \x/\n in {0.5/0,1.5/1,3/2,4.5/3,5.5/4} {
	\path (\x,0)  node[below] {\color{green!50!brown}$\cA_{\n}$};
}
\foreach \x/\n in {1/0,2/1,4/2,5/3} {
	\fill[red] (\x,0) circle (0.1) node[above] {\color{green!50!brown}$\cM_{\n}$};
}
\end{tikzpicture}
\qquad
\qquad
\begin{tikzpicture}[baseline,line width = 2pt]
\draw[kw-blue-a] (0,0) circle (2);
\foreach \q/\n in {-45/0,90/1,180/2} {
	\path (\q:2.4)  node {\color{green!50!brown}$\cA_{\n}$};
}
\foreach \q/\n in {60/0,120/1,-120/2} {
	\fill[red] (\q:2) circle (0.1);
	\path (\q:2.4) node {\color{green!50!brown}$\cM_{\n}$};
}
\end{tikzpicture}
\caption{Marked and labeled 1-manifolds}
\label{feb21c}
\end{figure}

We could also similarly mark and label a circle, obtaining an $n{-}1$-category
associated to the marked and labeled circle.
(See Figure \ref{feb21c}.)
If the circle is divided into two intervals, we can think of this $n{-}1$-category
as the 2-sided tensor product of the two 0-sphere modules associated to the two intervals.

\medskip

Next we define $n$-category 1-sphere modules.
These are just representations of (modules for) $n{-}1$-categories associated to marked and labeled 
circles (1-spheres) which we just introduced.

Equivalently, we can define 1-sphere modules in terms of 1-marked $k$-balls, $2\le k\le n$.
Fix a marked (and labeled) circle $S$.
Let $C(S)$ denote the cone of $S$, a marked 2-ball (Figure \ref{feb21d}).
A 1-marked $k$-ball is anything homeomorphic to $B^j \times C(S)$, $0\le j\le n-2$, 
where $B^j$ is the standard $j$-ball.
A 1-marked $k$-ball can be decomposed in various ways into smaller balls, which are either 
(a) smaller 1-marked $k$-balls, (b) 0-marked $k$-balls, or (c) plain $k$-balls.
(See Figure \ref{subdividing1marked}.)
We now proceed as in the above module definitions.

\begin{figure}[t] \centering
\begin{tikzpicture}[baseline,line width = 2pt]
\draw[kw-blue-a][fill=kw-blue-a!15!white] (0,0) circle (2);
\fill[red] (0,0) circle (0.1);
\foreach \qm/\qa/\n in {70/-30/0, 120/95/1, -120/180/2} {
	\draw[red] (0,0) -- (\qm:2);
	\path (\qa:1) node {\color{green!50!brown} $\cA_\n$};
	\path (\qm+20:2.5) node(M\n) {\color{green!50!brown} $\cM_\n$};
	\draw[line width=1pt, green!50!brown, ->] (M\n.\qm+135) to[out=\qm+135,in=\qm+90] (\qm+5:1.3);
}
\end{tikzpicture}
\caption{Cone on a marked circle, the prototypical 1-marked ball}
\label{feb21d}
\end{figure}
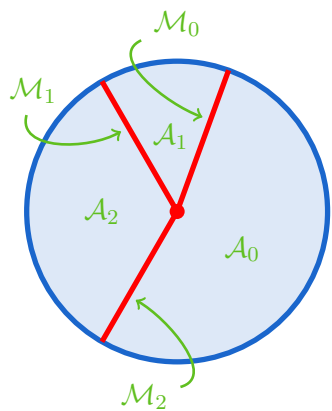

\begin{figure}[t] \centering
\begin{tikzpicture}[baseline,line width = 2pt]
\draw[kw-blue-a][fill=kw-blue-a!15!white] (0,0) circle (2);
\fill[red] (0,0) circle (0.1);
\foreach \qm/\qa/\n in {70/-30/0, 120/95/1, -120/180/2} {
	\draw[red] (0,0) -- (\qm:2);
}

\begin{scope}[black, thin]
\clip (0,0) circle (2);
\draw (0:1) -- (90:1) -- (180:1) -- (270:1) -- cycle;
\draw (90:1) -- (90:2.1);
\draw (180:1) -- (180:2.1);
\draw (270:1) -- (270:2.1);
\draw (0:1) -- (15:2.1);
\draw (0:1) -- (315:1.5) -- (270:1);
\draw (315:1.5) -- (315:2.1);
\end{scope}

\node(0marked) at (2.5,2.25) {$0$-marked ball};
\node(1marked) at (3.5,1) {$1$-marked ball};
\node(plain) at (3,-1) {plain ball};
\draw[line width=1pt, green!50!brown, ->] (0marked.270) to[out=270,in=45] (50:1.1);
\draw[line width=1pt, green!50!brown, ->] (1marked.225) to[out=270,in=45] (0.4,0.1);
\draw[line width=1pt, green!50!brown, ->] (plain.90) to[out=135,in=45] (-45:1);

\end{tikzpicture}
\caption{Subdividing a $1$-marked ball into plain, $0$-marked and $1$-marked balls.}
\label{subdividing1marked}
\end{figure}
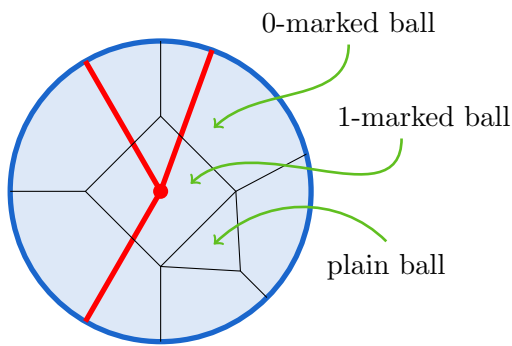

A $n$-category 1-sphere module is, among other things, an $n{-}2$-category $\cD$ with
\[
	\cD(X) \deq \cM(X\times C(S)) .
\]
The product is pinched over the boundary of $C(S)$.
$\cD$ breaks into ``blocks" according to the restriction to the 
image of $\bd C(S) = S$ in $X\times C(S)$.

More generally, consider a 2-manifold $Y$ 
(e.g.\ 2-ball or 2-sphere) marked by an embedded 1-complex $K$.
The components of $Y\setminus K$ are labeled by $n$-categories, 
the edges of $K$ are labeled by 0-sphere modules, 
and the 0-cells of $K$ are labeled by 1-sphere modules.
We can now apply the coend construction and obtain an $n{-}2$-category.
If $Y$ has boundary then this $n{-}2$-category is a module for the $n{-}1$-category
associated to the (marked, labeled) boundary of $Y$.
In particular, if $\bd Y$ is a 1-sphere then we get a 1-sphere module as defined above.

\medskip

It should now be clear how to define $n$-category $m$-sphere modules for $0\le m \le n-1$.
For example, there is an $n{-}2$-category associated to a marked, labeled 2-sphere,
and a 2-sphere module is a representation of such an $n{-}2$-category.

\medskip

We can now define the $n$-or-less-dimensional part of our $n{+}1$-category $\cS$.
Choose some collection of $n$-categories, then choose some collections of 0-sphere modules between
these $n$-categories, then choose some collection of 1-sphere modules for the various
possible marked 1-spheres labeled by the $n$-categories and 0-sphere modules, and so on.
Let $L_i$ denote the collection of $i{-}1$-sphere modules we have chosen.
(For convenience, we declare a $(-1)$-sphere module to be an $n$-category.)
There is a wide range of possibilities.
The set $L_0$ could contain infinitely many $n$-categories or just one.
For each pair of $n$-categories in $L_0$, $L_1$ could contain no 0-sphere modules at all or 
it could contain several.
The only requirement is that each $k$-sphere module be a module for a $k$-sphere $n{-}k$-category
constructed out of labels taken from $L_j$ for $j<k$.

We remind the reader again that $\cS$ depends on 
the choice of $L_i$ above as well as the choice of 
families of inner products described below.

We now define $\cS(X)$, for $X$ a ball of dimension at most $n$, to be the set of all 
cell-complexes $K$ embedded in $X$, with the codimension-$j$ parts of $(X, K)$ labeled
by elements of $L_j$.
As described above, we can think of each decorated $k$-ball as defining a $k{-}1$-sphere module
for the $n{-}k{+}1$-category associated to its decorated boundary.
Thus the $k$-morphisms of $\cS$ (for $k\le n$) can be thought 
of as $n$-category $k{-}1$-sphere modules 
(generalizations of bimodules).
On the other hand, we can equally well think of the $k$-morphisms as decorations on $k$-balls, 
and from this point of view it is clear that they satisfy all of the axioms of an
$n{+}1$-category.
(All of the axioms for the less-than-$n{+}1$-dimensional part of an $n{+}1$-category, that is.)

\medskip

Next we define the $n{+}1$-morphisms of $\cS$.
The construction of the 0- through $n$-morphisms was easy and tautological, but the 
$n{+}1$-morphisms will require some effort and combinatorial topology, as well as additional
duality assumptions on the lower morphisms. 
These are required because we define the spaces of $n{+}1$-morphisms by 
making arbitrary choices of incoming and outgoing boundaries for each $n{+}1$-ball. 
The additional duality assumptions are needed to prove independence of our definition from these choices.

Let $X$ be an $n{+}1$-ball, and let $c$ be a decoration of its boundary
by a cell complex labeled by 0- through $n$-morphisms, as above.
Choose an $n{-}1$-sphere $E\sub \bd X$, transverse to $c$, which divides
$\bd X$ into ``incoming" and ``outgoing" boundary $\bd_-X$ and $\bd_+X$.
Let $E_c$ denote $E$ decorated by the restriction of $c$ to $E$.
Recall from above the associated 1-category $\cS(E_c)$.
We can also have $\cS(E_c)$ modules $\cS(\bd_-X_c)$ and $\cS(\bd_+X_c)$.
Define
\[
	\cS(X; c; E) \deq \hom_{\cS(E_c)}(\cS(\bd_-X_c), \cS(\bd_+X_c)) .
\]

We will show that if the sphere modules are equipped with a ``compatible family of 
non-degenerate inner products", then there is a coherent family of isomorphisms
$\cS(X; c; E) \cong \cS(X; c; E')$ for all pairs of choices $E$ and $E'$.
This will allow us to define $\cS(X; c)$ independently of the choice of $E$.

First we must define ``inner product", ``non-degenerate" and ``compatible".
Let $Y$ be a decorated $n$-ball, and $\ol{Y}$ its mirror image.
(We assume we are working in the unoriented category.)
Let $Y\cup\ol{Y}$ denote the decorated $n$-sphere obtained by gluing $Y$ and $\ol{Y}$
along their common boundary.
An {\it inner product} on $\cS(Y)$ is a dual vector
\[
	z_Y : \cS(Y\cup\ol{Y}) \to \c.
\]
We will also use the notation
\[
	\langle a, b\rangle \deq z_Y(a\bullet b) \in \c .
\]
An inner product induces a linear map
\begin{eqnarray*}
	\varphi: \cS(Y) &\to& \cS(Y)^* \\
	a &\mapsto& \langle a, \cdot \rangle
\end{eqnarray*}
which satisfies, for all morphisms $e$ of $\cS(\bd Y)$,
\[
	\varphi(ae)(b) = \langle ae, b \rangle = z_Y(a\bullet e\bullet b) = 
			\langle a, eb \rangle = \varphi(a)(eb) .
\]
In other words, $\varphi$ is a map of $\cS(\bd Y)$ modules.
An inner product is {\it non-degenerate} if $\varphi$ is an isomorphism.
This implies that $\cS(Y; c)$ is finite dimensional for all boundary conditions $c$.
(One can think of these inner products as giving some duality in dimension $n{+}1$;
heretofore we have only assumed duality in dimensions 0 through $n$.)

Next we define compatibility.
Let $Y = Y_1\cup Y_2$ with $D = Y_1\cap Y_2$.
Let $X_1$ and $X_2$ be the two components of $Y\times I$ cut along
$D\times I$, in both cases using the pinched product.
(Here we are overloading notation and letting $D$ denote both a decorated and an undecorated
manifold.)
We have $\bd X_i = Y_i \cup \ol{Y}_i \cup (D\times I)$
(see Figure \ref{jun23a}).
\begin{figure}[t]
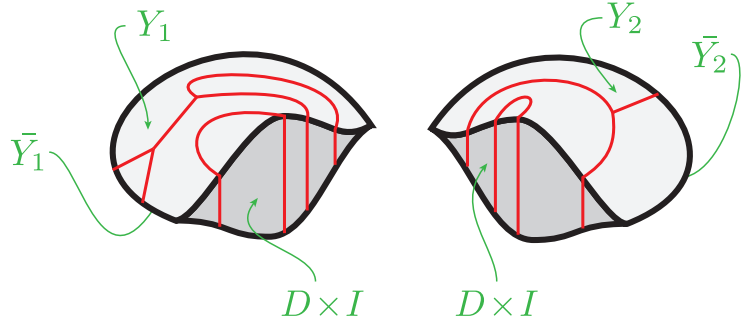

\begin{equation*}
\mathfig{.6}{ncat/YxI-sliced}
\end{equation*}
\caption{$Y\times I$ sliced open}
\label{jun23a}
\end{figure}
Given $a_i\in \cS(Y_i)$, $b_i\in \cS(\ol{Y}_i)$ and $v\in\cS(D\times I)$
which agree on their boundaries, we can evaluate
\[
	z_{Y_i}(a_i\bullet b_i\bullet v) \in \c .
\]
(This requires a choice of homeomorphism $Y_i \cup \ol{Y}_i \cup (D\times I) \cong
Y_i \cup \ol{Y}_i$, but the value of $z_{Y_i}$ is independent of this choice.)
We can think of $z_{Y_i}$ as giving a function
\[
	\psi_i : \cS(Y_i) \ot \cS(\ol{Y}_i) \to \cS(D\times I)^* 
					\stackrel{\varphi\inv}{\longrightarrow} \cS(D\times I) .
\]
We can now finally define a family of inner products to be {\it compatible} if
for all decompositions $Y = Y_1\cup Y_2$ as above and all $a_i\in \cS(Y_i)$, $b_i\in \cS(\ol{Y}_i)$
we have
\[
	z_Y(a_1\bullet a_2\bullet b_1\bullet b_2) = 
				z_{D\times I}(\psi_1(a_1\ot b_1)\bullet \psi_2(a_2\ot b_2)) .
\]
In other words, the inner product on $Y$ is determined by the inner products on
$Y_1$, $Y_2$ and $D\times I$.

Now we show how to unambiguously identify $\cS(X; c; E)$ and $\cS(X; c; E')$ for any
two choices of $E$ and $E'$.
Consider first the case where $\bd X$ is decomposed as three $n$-balls $A$, $B$ and $C$,
with $E = \bd(A\cup B)$ and $E' = \bd A$.
We must provide an isomorphism between $\cS(X; c; E) = \hom(\cS(C), \cS(A\cup B))$
and $\cS(X; c; E') = \hom(\cS(C\cup \ol{B}), \cS(A))$.
Let $D = B\cap A$.
Then as above we can construct a map
\[
	\psi: \cS(B)\ot\cS(\ol{B}) \to \cS(D\times I) .
\]
Given $f\in \hom(\cS(C), \cS(A\cup B))$ we define $f'\in \hom(\cS(C\cup \ol{B}), \cS(A))$
to be the composition
\[
	\cS(C\cup \ol{B}) \stackrel{f\ot\id}{\longrightarrow}
		\cS(A\cup B\cup \ol{B})  \stackrel{\id\ot\psi}{\longrightarrow}
			\cS(A\cup(D\times I)) \stackrel{\cong}{\longrightarrow} \cS(A) .
\]
(See Figure \ref{jun23b}.)
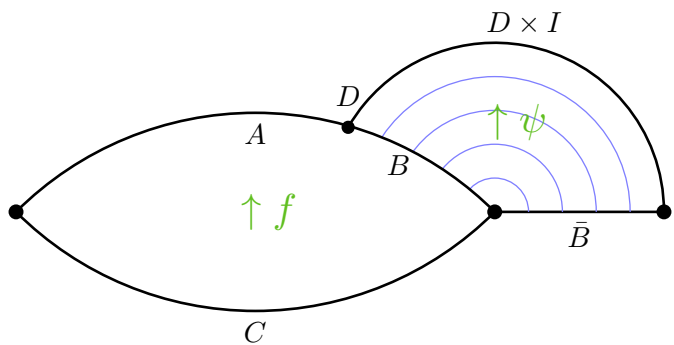
\begin{figure}[t]
$$
\begin{tikzpicture}[baseline,line width = 1pt,x=1.5cm,y=1.5cm]
\draw (0,0) node(R) {}
	-- (0.75,0) node[below] {$\bar{B}$}
	--(1.5,0)  node[circle,fill=black,inner sep=2pt] {}
	arc (0:80:1.5) node[above] {$D \times I$}
	arc (80:180:1.5);
\foreach \r in {0.3, 0.6, 0.9, 1.2} {
	\draw[blue!50, line width = 0.5pt] (\r,0) arc (0:180:\r);
}
\draw[fill=white]
	(R) node[circle,fill=black,inner sep=2pt] {}
	arc (45:65:3) node[below] {$B$}
	arc (65:90:3) node[below] {$A$}
	arc (90:135:3) node[circle,fill=black,inner sep=2pt] {}
	arc (-135:-90:3) node[below] {$C$}
	arc (-90:-45:3);
\draw[fill]  (150:1.5) circle (2pt) node[above=4pt] {$D$};
\node[green!50!brown] at (-2,0) {\scalebox{1.4}{$\uparrow f$}};
\node[green!50!brown] at (0.2,0.8) {\scalebox{1.4}{$\uparrow \psi$}};
\end{tikzpicture}
$$
\caption{Moving $B$ from top to bottom}
\label{jun23b}
\end{figure}
Let $D' = B\cap C$.
Using the inner products there is an adjoint map
\[
	\psi^\dagger: \cS(D'\times I) \to \cS(\ol{B})\ot\cS(B) .
\]
Given $f'\in \hom(\cS(C\cup \ol{B}), \cS(A))$ we define $f\in \hom(\cS(C), \cS(A\cup B))$
to be the composition
\[
	\cS(C) \stackrel{\cong}{\longrightarrow}
		\cS(C\cup(D'\times I)) \stackrel{\id\ot\psi^\dagger}{\longrightarrow}
			\cS(C\cup \ol{B}\cup B)   \stackrel{f'\ot\id}{\longrightarrow}
				\cS(A\cup B) .
\]
(See Figure \ref{jun23c}.)
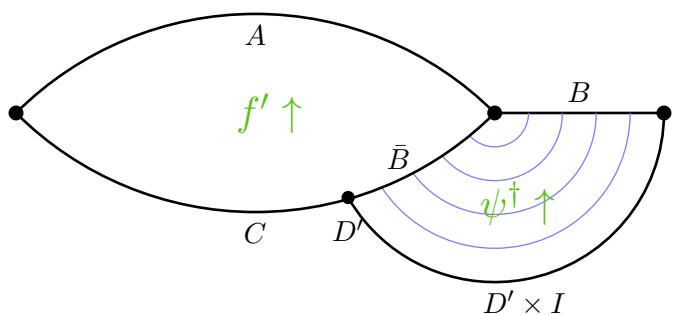
\begin{figure}[t]
\begin{equation*}
\begin{tikzpicture}[baseline,line width = 1pt,x=1.5cm,y=-1.5cm]
\draw (0,0) node(R) {}
	-- (0.75,0) node[above] {$B$}
	--(1.5,0)  node[circle,fill=black,inner sep=2pt] {}
	arc (0:80:1.5) node[below] {$D' \times I$}
	arc (80:180:1.5);
\foreach \r in {0.3, 0.6, 0.9, 1.2} {
	\draw[blue!50, line width = 0.5pt] (\r,0) arc (0:180:\r);
}
\draw[fill=white]
	(R) node[circle,fill=black,inner sep=2pt] {}
	arc (45:65:3) node[above] {$\bar{B}$}
	arc (65:90:3) node[below] {$C$}
	arc (90:135:3) node[circle,fill=black,inner sep=2pt] {}
	arc (-135:-90:3) node[below] {$A$}
	arc (-90:-45:3);
\draw[fill]  (150:1.5) circle (2pt) node[below=4pt] {$D'$};
\node[green!50!brown] at (-2,0) {\scalebox{1.4}{$f'\uparrow $}};
\node[green!50!brown] at (0.2,0.8) {\scalebox{1.4}{$\psi^\dagger \uparrow $}};
\end{tikzpicture}
\end{equation*}
\caption{Moving $B$ from bottom to top}
\label{jun23c}
\end{figure}
It is not hard too show that the above two maps are mutually inverse.

\begin{lem} \label{equator-lemma}
Any two choices of $E$ and $E'$ are related by a series of modifications as above.
\end{lem}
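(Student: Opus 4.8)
\textbf{Proof proposal for Lemma \ref{equator-lemma}.}

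The plan is to show that any two equators $E, E' \subset \bd X$ (transverse to the decoration $c$) can be connected by a finite sequence of ``elementary moves'', each of which pushes a single $n$-ball of a suitable decomposition of $\bd X$ from the incoming side to the outgoing side (or vice versa) --- precisely the situation handled by the $A$-$B$-$C$ picture of Figures \ref{jun23b} and \ref{jun23c} just above. Combined with the already-established fact that each such move induces a canonical isomorphism $\cS(X;c;E) \cong \cS(X;c;E')$, and (stated below) that these isomorphisms are coherent, this gives a well-defined $\cS(X;c)$.

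First I would set up the combinatorial framework. Fix a cell decomposition $\tau$ of $\bd X \cong S^n$ refining (the cell structure underlying) $c$ and simultaneously transverse to both $E$ and $E'$; such a $\tau$ exists by general position / the splittings axiom (Axiom \ref{axiom:splittings}), applied to the sphere $\bd X$. Now $E$ separates the top-dimensional cells of $\tau$ into two subsets $\bd_- X, \bd_+ X$, and likewise $E'$ gives a partition $\bd'_- X, \bd'_+ X$. Each elementary move consists of choosing one top-dimensional cell (or, more generally, a collared $n$-ball $B$ which is a union of cells of $\tau$) lying in $\bd_+X$ but whose closure meets $E$ in an $(n{-}1)$-ball $D$, and transferring $B$ to $\bd_- X$; this replaces $E$ by a new equator $E''$ which agrees with $E$ outside a neighborhood of $D$. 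I would check that $E''$ is again transverse to $c$ (using that $B$, $D$ are unions of cells transverse to $c$) and that the isomorphism $\cS(X;c;E)\cong\cS(X;c;E'')$ it induces is the one built from the map $\psi$ associated to $B$ exactly as in the paragraph preceding the lemma.

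Second, the connectivity statement itself: given $E$ and $E'$, both adapted to $\tau$, I would argue that a finite sequence of such single-cell transfers carries $\bd_+X$ to $\bd'_+X$. This is a statement about two ways of bipartitioning the dual graph of $\tau$ (vertices $=$ top cells, edges $=$ shared $(n{-}1)$-faces) by a separating cycle; since $S^n$ is connected and simply connected for $n\ge 1$ (and for $n=0$ the statement is vacuous or trivial), any two such bipartitions differ by flipping one vertex at a time, with each intermediate bipartition still realized by an embedded $(n{-}1)$-sphere transverse to $c$. The one subtlety is ensuring that each intermediate equator can be taken transverse to $c$ and that the ball being transferred is genuinely collared so that the pinched-product constructions apply; this is arranged by choosing $\tau$ fine enough and invoking Axiom \ref{axiom:splittings} (or standard PL transversality) at each stage. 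I expect this bookkeeping --- keeping every intermediate equator simultaneously an honest $(n{-}1)$-sphere, transverse to $c$, and bounding collared incoming/outgoing balls --- to be the main obstacle; the algebra of the isomorphisms is already done in the displayed construction above. Finally I would remark (as the paper does right after) that coherence of these isomorphisms, i.e.\ independence of the composite isomorphism $\cS(X;c;E)\cong\cS(X;c;E')$ from the chosen sequence of moves, follows from the compatibility of the family of inner products together with the strict associativity axioms, so that $\cS(X;c) \deq \cS(X;c;E)$ is well-defined.
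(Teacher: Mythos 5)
There is a genuine gap at the heart of your argument: the combinatorial connectivity claim. You assert that because $S^n$ is connected and simply connected, any two sphere-bipartitions of the top cells of your fixed decomposition $\tau$ differ by transferring one cell at a time, ``with each intermediate bipartition still realized by an embedded $(n{-}1)$-sphere transverse to $c$.'' Connectivity or simple connectivity of $S^n$ does not give this. Flipping a single top cell across the interface can destroy the sphere property (the new interface can be non-manifold or disconnected), and a cell adjacent to the current equator may meet it in two or more faces, in which case transferring it is not an elementary move of the required shape (the paper's move needs the transferred ball $B$ to meet the equator in a single $(n{-}1)$-ball $D$, and similarly on the other side, so that the map $\psi$ and the $A$-$B$-$C$ identification apply). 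Whether a valid ordering of single-cell transfers exists for a fixed $\tau$ is exactly the hard content — essentially a ``PL isotopy is generated by cellular moves'' statement — and you explicitly defer it (``I expect this bookkeeping \dots to be the main obstacle''). There is also a tension in your setup: if $\tau$ refines the cell structure underlying $c$, then the $(n{-}1)$-skeleton of $\tau$ contains the codimension-1 strata of $c$, so an intermediate interface built from cells of $\tau$ need not be transverse to $c$ at all; you would want $\tau$ in general position with respect to $c$, not refining it, and then the interfaces are no longer subcomplexes in the naive sense.

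For comparison, the paper's (admittedly brief) proof goes the other way around: $E$ and $E'$ are isotopic as $(n{-}1)$-spheres in $\bd X$, and any such isotopy can be homotoped to a composition of small isotopies each of which either leaves the current equator untouched (supported away from it) or pushes it across a small ball --- i.e., is precisely one of the modifications described before the lemma. That general-position decomposition of an isotopy into ball-pushes is the standard route to the kind of ``move-by-move'' statement you want; if you wish to keep your combinatorial formulation, you would need to prove the cellular-moves-generate-isotopy statement (with the transversality to $c$ maintained throughout), rather than derive it from $\pi_0$ or $\pi_1$ of $S^n$.
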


\begin{proof}
(Sketch)
$E$ and $E'$ are isotopic, and any isotopy is 
homotopic to a composition of small isotopies which are either
(a) supported away from $E$, or (b) modify $E$ in the simple manner described above.
\end{proof}

It follows from the lemma that we can construct an isomorphism
between $\cS(X; c; E)$ and $\cS(X; c; E')$ for any pair $E$, $E'$.
This construction involves a choice of simple ``moves" (as above) to transform
$E$ to $E'$.
We must now show that the isomorphism does not depend on this choice.
We will show below that it suffices to check two ``movie moves".

The first movie move is to push $E$ across an $n$-ball $B$ as above, then push it back.
The result is equivalent to doing nothing.
As we remarked above, the isomorphisms corresponding to these two pushes are mutually
inverse, so we have invariance under this movie move.

The second movie move replaces two successive pushes in the same direction,
across $B_1$ and $B_2$, say, with a single push across $B_1\cup B_2$.
(See Figure \ref{jun23d}.)
\begin{figure}[t]
\begin{tikzpicture}
\node(L) {
\scalebox{0.5}{
\begin{tikzpicture}[baseline,line width = 1pt,x=1.5cm,y=1.5cm]
\draw[red] (0.75,0) -- +(2,0);
\draw[red] (0,0) node(R) {}
	-- (0.75,0) node[below] {}
	--(1.5,0)  node[circle,fill=black,inner sep=2pt] {};
\draw[fill]  (150:1.5) circle (2pt) node[above=4pt] {};
\draw (1.5,0) arc (0:149:1.5);
\draw[red]
	(R) node[circle,fill=black,inner sep=2pt] {}
	arc (-45:-135:3) node[circle,fill=black,inner sep=2pt] {};
\draw[red] (-5.5,0) -- (-4.2,0);
\draw (R) arc (45:75:3);
\draw (150:1.5) arc (74:135:3);
\node at (-2,0) {\scalebox{2.0}{$B_1$}};
\node at (0.2,0.8) {\scalebox{2.0}{$B_2$}};
\node at (-4,1.2) {\scalebox{2.0}{$A$}};
\node at (-4,-1.2) {\scalebox{2.0}{$C$}};
\node[red] at (2.53,0.35) {\scalebox{2.0}{$E$}};
\end{tikzpicture}
}
};
\node(M) at (5,4) {
\scalebox{0.5}{
\begin{tikzpicture}[baseline,line width = 1pt,x=1.5cm,y=1.5cm]
\draw[red] (0.75,0) -- +(2,0);
\draw[red] (0,0) node(R) {}
	-- (0.75,0) node[below] {}
	--(1.5,0)  node[circle,fill=black,inner sep=2pt] {};
\draw[fill]  (150:1.5) circle (2pt) node[above=4pt] {};
\draw(1.5,0) arc (0:149:1.5);
\draw
	(R) node[circle,fill=black,inner sep=2pt] {}
	arc (-45:-135:3) node[circle,fill=black,inner sep=2pt] {};
\draw[red] (-5.5,0) -- (-4.2,0);
\draw[red] (R) arc (45:75:3);
\draw[red] (150:1.5) arc (74:135:3);
\node at (-2,0) {\scalebox{2.0}{$B_1$}};
\node at (0.2,0.8) {\scalebox{2.0}{$B_2$}};
\node at (-4,1.2) {\scalebox{2.0}{$A$}};
\node at (-4,-1.2) {\scalebox{2.0}{$C$}};
\node[red] at (2.53,0.35) {\scalebox{2.0}{$E$}};
\end{tikzpicture}
}
};
\node(R) at (10,0) {
\scalebox{0.5}{
\begin{tikzpicture}[baseline,line width = 1pt,x=1.5cm,y=1.5cm]
\draw[red] (0.75,0) -- +(2,0);
\draw (0,0) node(R) {}
	-- (0.75,0) node[below] {}
	--(1.5,0)  node[circle,fill=black,inner sep=2pt] {};
\draw[fill]  (150:1.5) circle (2pt) node[above=4pt] {};
\draw[red] (1.5,0) arc (0:149:1.5);
\draw
	(R) node[circle,fill=black,inner sep=2pt] {}
	arc (-45:-135:3) node[circle,fill=black,inner sep=2pt] {};
\draw[red] (-5.5,0) -- (-4.2,0);
\draw (R) arc (45:75:3);
\draw[red] (150:1.5) arc (74:135:3);
\node at (-2,0) {\scalebox{2.0}{$B_1$}};
\node at (0.2,0.8) {\scalebox{2.0}{$B_2$}};
\node at (-4,1.2) {\scalebox{2.0}{$A$}};
\node at (-4,-1.2) {\scalebox{2.0}{$C$}};
\node[red] at (2.53,0.35) {\scalebox{2.0}{$E$}};
\end{tikzpicture}
}
};
\draw[->] (L) to[out=90,in=225] node[sloped, above] {push $B_1$} (M);
\draw[->] (M)  to[out=-45,in=90] node[sloped, above] {push $B_2$} (R);
\draw[->] (L) to[out=-35,in=-145] node[sloped, below] {push $B_1 \cup B_2$} (R);
\end{tikzpicture}
\caption{A movie move}
\label{jun23d}
\end{figure}
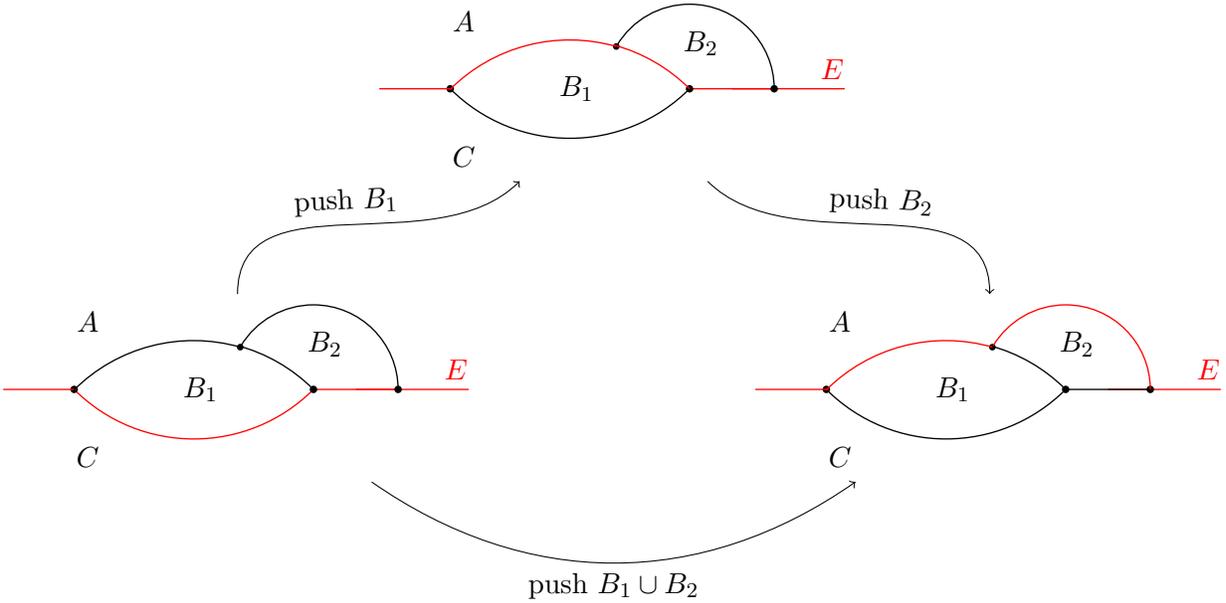
Invariance under this movie move follows from the compatibility of the inner
product for $B_1\cup B_2$ with the inner products for $B_1$ and $B_2$.


If $n\ge 2$, these two movie moves suffice:

\begin{lem}
Assume $n\ge 2$ and fix $E$ and $E'$ as above.
Then any two sequences of elementary moves connecting $E$ to $E'$
are related by a sequence of the two movie moves defined above.
\end{lem}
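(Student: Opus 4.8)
The plan is to recognize this as a ``completeness of movie moves'' statement and prove it by a general position argument in the configuration space of equators. Write $\mathcal E$ for the space of (cooriented) $(n{-}1)$-spheres $E\sub \bd X$ transverse to $c$, where the coorientation records which side is $\bd_- X$ and which is $\bd_+ X$. A sequence of elementary moves connecting $E$ to $E'$ is the same data as a generic edge-path in $\mathcal E$: between two consecutive generic equators one has an isotopy which either avoids the locus where transversality to $c$ fails (a move of type (a), acting by the functoriality isomorphisms of $\cS$) or crosses that locus transversally in a single point (a move of type (b), i.e.\ a push across an $n$-ball). Conversely, by general position any path in $\mathcal E$ is homotopic rel endpoints to such an edge-path. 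So it suffices to prove: (i) the concatenation of two such edge-paths with common endpoints bounds a $2$-parameter family in $\mathcal E$; and (ii) any such $2$-parameter family decomposes into elementary pieces, each of which is one of the two movie moves or a ``trivial'' relation of the sort described below.

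For (i) I would use that $\mathcal E$ is connected and simply connected when $n\ge 2$. Connectedness is exactly the content of the proof sketch of Lemma~\ref{equator-lemma}. Simple connectedness follows from standard facts about spaces of embedded spheres in $\bd X = S^n$ together with general position: with the coorientation data, the space of such spheres retracts onto a simply connected space of ``standard'' representatives, and transversality to the fixed complex $c$ is a condition of codimension $\ge 1$, hence does not change $\pi_0$ or $\pi_1$. This is the one place where the hypothesis $n\ge 2$ enters: for $n=1$ the corresponding space of pairs of points in a circle has free fundamental group, and correspondingly additional movie moves are needed.

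For (ii), stratify the natural completion of $\mathcal E$ (allowing singular equators) by the way $E$ degenerates against $c$ and against itself: the top stratum consists of transverse equators, the codimension-$1$ stratum records a single elementary move, and the codimension-$2$ stratum is a disjoint union of finitely many local models. Put the bounding $2$-parameter family from (i) in general position so that it meets the codimension-$2$ stratum in finitely many points and crosses the codimension-$1$ stratum transversally along arcs. Near each codimension-$2$ point the family looks like a standard model, and the possibilities are: two codimension-$1$ events at disjoint locations of $\bd X$ (far-commutativity of pushes, which holds on the nose because the relevant gluing and inner-product maps are then supported in disjoint regions, so it imposes no new relation); a codimension-$1$ arc and its reverse being born or cancelled (this is precisely the first movie move, ``push across $B$ and back equals the identity''); two pushes across adjacent balls $B_1$ and $B_2$ merging into a single push across $B_1\cup B_2$ (the second movie move); and events whose interaction is with the decoration $c$ rather than with $E$, which get absorbed into moves of type (a) and act by functoriality. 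Cutting the disk along the preimage of the codimension-$1$ stratum decomposes it into pieces each of one of these types, exhibiting the two given edge-paths as related by a sequence of movie moves.

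The main obstacle is the classification in (ii): proving that the list of codimension-$2$ local models above is complete, i.e.\ that the stratification of singular equators transverse to $c$ has exactly these four model singularities in codimension $2$. This is the standard hard point in any movie-move theorem and requires a careful transversality analysis; the rest is general position and bookkeeping. A secondary technical point is the Schoenflies-type input needed (in the PL category) to put a pair of equators into standard position when they intersect, which is why, for larger $n$, it is cleaner to phrase step (i) directly via general position for the relevant spheres rather than via a structure theorem for the whole configuration space.
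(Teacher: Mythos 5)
Your proposal is correct and follows essentially the same route as the paper's (sketched) proof: both rest on simple connectivity of the space of equators when $n\ge 2$ (which fails for $n=1$), and on putting a bounding two-parameter family in general position so that it decomposes into local pieces that are either supported away from $E$ (absorbed into functoriality) or realize one of the two movie moves. Your version is more explicit about the codimension-2 stratification and honestly flags the completeness of the local models as the hard point, but the skeleton of the argument is the same.
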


\begin{proof}
(Sketch)
Consider a two parameter family of diffeomorphisms (one parameter family of isotopies) 
of $\bd X$.
Up to homotopy,
such a family is homotopic to a family which can be decomposed 
into small families which are either
(a) supported away from $E$, 
(b) have boundaries corresponding to the two movie moves above.
Finally, observe that the space of $E$'s is simply connected.
(This fails for $n=1$.)
\end{proof}

For $n=1$ we have to check an additional ``global" relation corresponding to 
rotating the 0-sphere $E$ around the 1-sphere $\bd X$.
But if $n=1$, then we are in the case of ordinary algebroids and bimodules,
and this is just the well-known ``Frobenius reciprocity" result for bimodules \cite{MR1424954}.

\medskip

We have now defined $\cS(X; c)$ for any $n{+}1$-ball $X$ with boundary decoration $c$.
We must also define, for any homeomorphism $X\to X'$, an action $f: \cS(X; c) \to \cS(X', f(c))$.
Choosing an equator $E\sub \bd X$ we have 
\[
	\cS(X; c) \cong \cS(X; c; E) \deq \hom_{\cS(E_c)}(\cS(\bd_-X_c), \cS(\bd_+X_c)) .
\]
We define $f: \cS(X; c) \to \cS(X', f(c))$ to be the tautological map
\[
	f: \cS(X; c; E) \to \cS(X'; f(c); f(E)) .
\]
It is easy to show that this is independent of the choice of $E$.
Note also that this map depends only on the restriction of $f$ to $\bd X$.
In particular, if $F: X\to X$ is the identity on $\bd X$ then $f$ acts trivially, as required by
Axiom \ref{axiom:extended-isotopies}.

We define product $n{+}1$-morphisms to be identity maps of modules.

To define (binary) composition of $n{+}1$-morphisms, choose the obvious common equator
then compose the module maps.
The proof that this composition rule is associative is similar to the proof of Lemma \ref{equator-lemma}.

\medskip

We end this subsection with some remarks about Morita equivalence of disk-like $n$-categories.
Recall that two 1-categories $\cC$ and $\cD$ are Morita equivalent if and only if they are equivalent
objects in the 2-category of (linear) 1-categories, bimodules, and intertwiners.
Similarly, we define two disk-like $n$-categories to be Morita equivalent if they are equivalent objects in the
$n{+}1$-category of sphere modules.

Because of the strong duality enjoyed by disk-like $n$-categories, the data for such an equivalence lives only in 
dimensions 1 and $n+1$ (the middle dimensions come along for free).
The $n{+}1$-dimensional part of the data must be invertible and satisfy
identities corresponding to Morse cancellations in $n$-manifolds.
We will treat this in detail for the $n=2$ case; the case for general $n$ is very similar.

Let $\cC$ and $\cD$ be (unoriented) disk-like 2-categories.
Let $\cS$ denote the 3-category of 2-category sphere modules.
The 1-dimensional part of the data for a Morita equivalence between $\cC$ and $\cD$ is a 0-sphere module $\cM = {}_\cC\cM_\cD$ 
(categorified bimodule) connecting $\cC$ and $\cD$.
Because of the full unoriented symmetry, this can also be thought of as a 
0-sphere module ${}_\cD\cM_\cC$ connecting $\cD$ and $\cC$.

We want $\cM$ to be an equivalence, so we need 2-morphisms in $\cS$ 
between ${}_\cC\cM_\cD \otimes_\cD {}_\cD\cM_\cC$ and the identity 0-sphere module ${}_\cC\cC_\cC$, and similarly
with the roles of $\cC$ and $\cD$ reversed.
These 2-morphisms come for free, in the sense of not requiring additional data, since we can take them to be the labeled 
cell complexes (cups and caps) in $B^2$ shown in Figure \ref{morita-fig-1}.

\definecolor{C}{named}{orange}
\definecolor{D}{named}{blue}
\definecolor{M}{named}{purple}

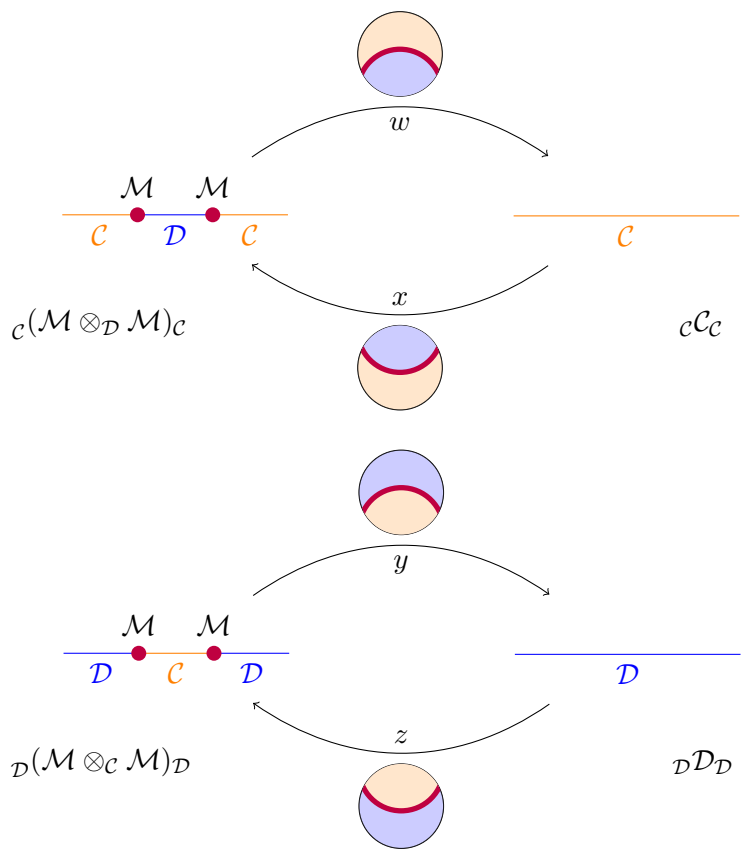
\begin{figure}[t]

$$
\begin{tikzpicture}
\node(L) at (0,0) {\tikz{
	\draw[C] (0,0) -- node[below] {$\cC$} (1,0);
	\draw[D] (1,0) -- node[below] {$\cD$} (2,0);
	\draw[C] (2,0) -- node[below] {$\cC$} (3,0);
	\node[M, fill, circle, inner sep=2pt, label=$\cM$] at (1,0) {};
	\node[M, fill, circle, inner sep=2pt, label=$\cM$] at (2,0) {};
}};

\node(R) at (6,0) {\tikz{
	\draw[C] (0,0) -- node[below] {$\cC$} (3,0);
	\node[label={\phantom{$\cM$}}] at (1.5,0) {};
}};

\node at (-1,-1.5) { $\leftidx{_\cC}{(\cM \tensor_\cD \cM)}{_\cC}$ };
\node at (7,-1.5) { $\leftidx{_\cC}{\cC}{_\cC}$ };

\draw[->] (L) to[out=35, in=145] node[below] {$w$} node[above] { \tikz{
	\draw (0,0) circle (16pt);
	\path[clip] (0,0) circle (16pt);
	\draw[fill=C!20] (0,0) circle (16pt);
	\draw[M,fill=D!20,line width=2pt] (0,-0.5) circle (16pt);
}}(R);

\draw[->] (R) to[out=-145, in=-35] node[above] {$x$} node[below] { \tikz{
	\draw (0,0) circle (16pt);
	\path[clip] (0,0) circle (16pt);
	\draw[fill=C!20] (0,0) circle (16pt);
	\draw[M,fill=D!20,line width=2pt] (0,0.5) circle (16pt);
}}(L);

\end{tikzpicture}
$$
$$
\begin{tikzpicture}
\node(L) at (0,0) {\tikz{
	\draw[D] (0,0) -- node[below] {$\cD$} (1,0);
	\draw[C] (1,0) -- node[below] {$\cC$} (2,0);
	\draw[D] (2,0) -- node[below] {$\cD$} (3,0);
	\node[M, fill, circle, inner sep=2pt, label=$\cM$] at (1,0) {};
	\node[M, fill, circle, inner sep=2pt, label=$\cM$] at (2,0) {};
}};

\node(R) at (6,0) {\tikz{
	\draw[D] (0,0) -- node[below] {$\cD$} (3,0);
	\node[label={\phantom{$\cM$}}] at (1.5,0) {};
}};

\node at (-1,-1.5) { $\leftidx{_\cD}{(\cM \tensor_\cC \cM)}{_\cD}$ };
\node at (7,-1.5) { $\leftidx{_\cD}{\cD}{_\cD}$ };

\draw[->] (L) to[out=35, in=145] node[below] {$y$} node[above] { \tikz{
	\draw (0,0) circle (16pt);
	\path[clip] (0,0) circle (16pt);
	\draw[fill=D!20] (0,0) circle (16pt);
	\draw[M,fill=C!20,line width=2pt] (0,-0.5) circle (16pt);
}}(R);

\draw[->] (R) to[out=-145, in=-35] node[above] {$z$} node[below] { \tikz{
	\draw (0,0) circle (16pt);
	\path[clip] (0,0) circle (16pt);
	\draw[fill=D!20] (0,0) circle (16pt);
	\draw[M,fill=C!20,line width=2pt] (0,0.5) circle (16pt);
}}(L);

\end{tikzpicture}
$$
\caption{Cups and caps for free}\label{morita-fig-1}
\end{figure}

We want the 2-morphisms from the previous paragraph to be equivalences, so we need 3-morphisms
between various compositions of these 2-morphisms and various identity 2-morphisms.
Recall that the 3-morphisms of $\cS$ are intertwiners between representations of 1-categories associated
to decorated circles.
Figure \ref{morita-fig-2} 
\begin{figure}[t]
$$
\begin{tikzpicture}
\node(L) at (0,0) {\tikz{
	\draw[fill=C!20] (0,0) circle (32pt);
	\draw[M,fill=D!20,line width=2pt] (0,0) circle (16pt);
}};
\node(R) at (4,0) {\tikz{
	\draw[fill=C!20] (0,0) circle (32pt);
}};
\draw[->] (L) to[out=35, in=145] node[below] {$a$} (R);
\draw[->] (R) to[out=-145, in=-35] node[above] {$b$} (L);
\node at (-2,0) {$w \atop x$};
\node at (6,0) {$1$};
\end{tikzpicture}
$$
$$
\begin{tikzpicture}
\node(L) at (0,0) {\tikz{
	\draw[fill=C!20] (0,0) circle (32pt);
	\path[clip] (0,0) circle (32pt);
	\draw[M,fill=D!20,line width=2pt] (0,1) circle (16pt);
	\draw[M,fill=D!20,line width=2pt] (0,-1) circle (16pt);
}};
\node(R) at (4,0) {\tikz{
	\draw[fill=D!20] (0,0) circle (32pt);
	\path[clip] (0,0) circle (32pt);
	\draw[M,fill=C!20,line width=2pt] (5,0) circle (130pt);
	\draw[M,fill=C!20,line width=2pt] (-5,0) circle (130pt);
}};
\draw[->] (L) to[out=35, in=145] node[below] {$c$} (R);
\draw[->] (R) to[out=-145, in=-35] node[above] {$d$} (L);
\node at (-2,0) {$x \atop w$};
\node at (6,0) {$1$};
\end{tikzpicture}
$$
$$
\begin{tikzpicture}
\node(L) at (0,0) {\tikz{
	\draw[fill=D!20] (0,0) circle (32pt);
	\draw[M,fill=C!20,line width=2pt] (0,0) circle (16pt);
}};
\node(R) at (4,0) {\tikz{
	\draw[fill=D!20] (0,0) circle (32pt);
}};
\draw[->] (L) to[out=35, in=145] node[below] {$e$} (R);
\draw[->] (R) to[out=-145, in=-35] node[above] {$f$} (L);
\node at (-2,0) {$y \atop z$};
\node at (6,0) {$1$};
\end{tikzpicture}
$$
$$
\begin{tikzpicture}
\node(L) at (0,0) {\tikz{
	\draw[fill=D!20] (0,0) circle (32pt);
	\path[clip] (0,0) circle (32pt);
	\draw[M,fill=C!20,line width=2pt] (0,1) circle (16pt);
	\draw[M,fill=C!20,line width=2pt] (0,-1) circle (16pt);
}};
\node(R) at (4,0) {\tikz{
	\draw[fill=C!20] (0,0) circle (32pt);
	\path[clip] (0,0) circle (32pt);
	\draw[M,fill=D!20,line width=2pt] (5,0) circle (130pt);
	\draw[M,fill=D!20,line width=2pt] (-5,0) circle (130pt);
}};
\draw[->] (L) to[out=35, in=145] node[below] {$g$} (R);
\draw[->] (R) to[out=-145, in=-35] node[above] {$h$} (L);
\node at (-2,0) {$z \atop y$};
\node at (6,0) {$1$};
\end{tikzpicture}
$$

\caption{Intertwiners for a Morita equivalence}\label{morita-fig-2}
\end{figure}
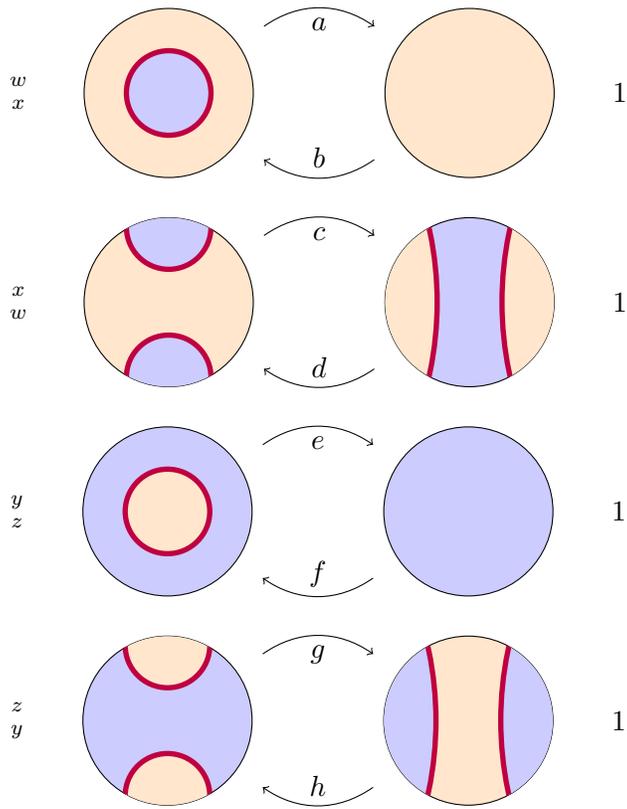
shows the intertwiners we need.
Each decorated 2-ball in that figure determines a representation of the 1-category associated to the decorated circle
on the boundary.
This is the 3-dimensional part of the data for the Morita equivalence.
(Note that, by symmetry, the $c$ and $d$ arrows of Figure \ref{morita-fig-2} 
are the same (up to rotation), as the $h$ and $g$ arrows.)

In order for these 3-morphisms to be equivalences, 
they must be invertible (i.e.\ $a=b\inv$, $c=d\inv$, $e=f\inv$) and in addition
they must satisfy identities corresponding to Morse cancellations on 2-manifolds.
These are illustrated in Figure \ref{morita-fig-3}.
\begin{figure}[t]
$$
\begin{tikzpicture}
\node(L) at (0,0) {\tikz{
\draw[fill=C!20] (0,0) circle (32pt);
\path[clip] (0,0) circle (32pt);
\draw[M,fill=D!20,line width=2pt] (-5,0) circle (130pt);
}};
\node(C) at (4,0) {\tikz{
\draw[fill=C!20] (0,0) circle (32pt);
\path[clip] (0,0) circle (32pt);
\draw[M,fill=D!20,line width=2pt] (-5,0) circle (130pt);
\draw[M,fill=D!20,line width=2pt] (0.25,0) circle (6pt);
}};
\node(R) at (8,0) {\tikz{
\draw[fill=C!20] (0,0) circle (32pt);
\path[clip] (0,0) circle (32pt);
\draw[M,line width=4pt] (-0.75,2) .. controls +(0,-2) and +(0,0.5) .. (0.2,0) .. controls +(0,-0.5) and +(0,2) .. (-0.75,-2) -- (-5,-2) -- (-5,2) -- cycle;
\path[clip] (-0.75,2) .. controls +(0,-2) and +(0,0.5) .. (0.2,0) .. controls +(0,-0.5) and +(0,2) .. (-0.75,-2) -- (-5,-2) -- (-5,2)  -- cycle;
\path[fill=D!20] (-5,-2) rectangle (5,2); 
}};
\draw[<-] (L) to[out=35, in=145] node[above] {$a$} (C);
\draw[<-] (C) to[out=35, in=145] node[above] {$d$} (R);
\draw[<-] (R) to[out=-145, in=-35] node[below] {$c$} (C);
\draw[<-] (C) to[out=-145, in=-35] node[below] {$b$} (L);
\end{tikzpicture}
$$
$$
\begin{tikzpicture}
\node(L) at (0,0) {\tikz{
\draw[fill=D!20] (0,0) circle (32pt);
\path[clip] (0,0) circle (32pt);
\draw[M,fill=C!20,line width=2pt] (-5,0) circle (130pt);
}};
\node(C) at (4,0) {\tikz{
\draw[fill=D!20] (0,0) circle (32pt);
\path[clip] (0,0) circle (32pt);
\draw[M,fill=C!20,line width=2pt] (-5,0) circle (130pt);
\draw[M,fill=C!20,line width=2pt] (0.25,0) circle (6pt);
}};
\node(R) at (8,0) {\tikz{
\draw[fill=D!20] (0,0) circle (32pt);
\path[clip] (0,0) circle (32pt);
\draw[M,line width=4pt] (-0.75,2) .. controls +(0,-2) and +(0,0.5) .. (0.2,0) .. controls +(0,-0.5) and +(0,2) .. (-0.75,-2) -- (-5,-2) -- (-5,2) -- cycle;
\path[clip] (-0.75,2) .. controls +(0,-2) and +(0,0.5) .. (0.2,0) .. controls +(0,-0.5) and +(0,2) .. (-0.75,-2) -- (-5,-2) -- (-5,2)  -- cycle;
\path[fill=C!20] (-5,-2) rectangle (5,2); 
}};
\draw[<-] (L) to[out=35, in=145] node[above] {$e$} (C);
\draw[<-] (C) to[out=35, in=145] node[above] {$c$} (R);
\draw[<-] (R) to[out=-145, in=-35] node[below] {$d$} (C);
\draw[<-] (C) to[out=-145, in=-35] node[below] {$f$} (L);
\end{tikzpicture}
$$
\caption{Identities for intertwiners}\label{morita-fig-3}
\end{figure}
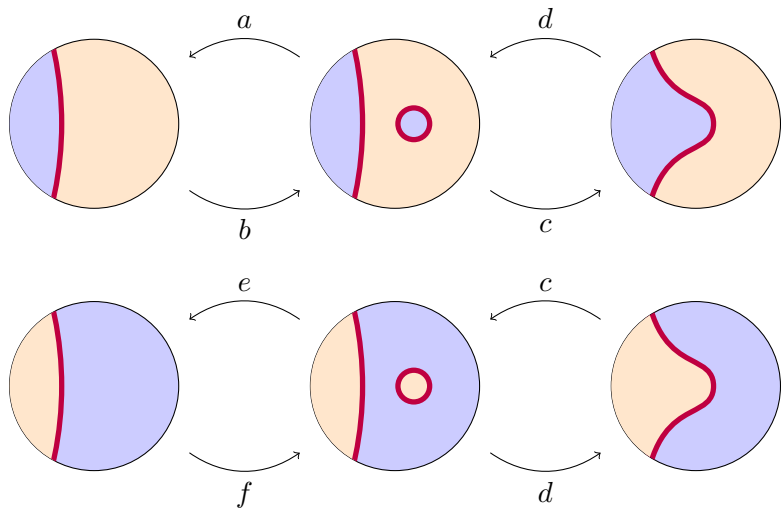
Each line shows a composition of two intertwiners which we require to be equal to the identity intertwiner.
The modules corresponding to the leftmost and rightmost disks in the figure can be identified via the obvious isotopy.

For general $n$, we start with an $n$-category 0-sphere module $\cM$ which is the data for the 1-dimensional
part of the Morita equivalence.
For $2\le k \le n$, the $k$-dimensional parts of the Morita equivalence are various decorated $k$-balls with submanifolds
labeled by $\cC$, $\cD$ and $\cM$; no additional data is needed for these parts.
The $n{+}1$-dimensional part of the equivalence is given by certain intertwiners, and these intertwiners must 
be invertible and satisfy
identities corresponding to Morse cancellations in $n$-manifolds. 

\noop{
One way of thinking of these conditions is as follows.
Given a decorated $n{+}1$-manifold, with a codimension 1 submanifold labeled by $\cM$ and 
codimension 0 submanifolds labeled by $\cC$ and $\cD$, we can make any local modification we like without 
changing
}

If $\cC$ and $\cD$ are Morita equivalent $n$-categories, then it is easy to show that for any $n-j$-manifold
$Y$ the $j$-categories $\cC(Y)$ and $\cD(Y)$ are Morita equivalent.
When $j=0$ this means that the TQFT Hilbert spaces $\cC(Y)$ and $\cD(Y)$ are isomorphic 
(if we are enriching over vector spaces).

\noop{ 
More specifically, the 1-dimensional part of the data is a 0-sphere module $\cM = {}_\cCM_\cD$ 
(categorified bimodule) connecting $\cC$ and $\cD$.
From $\cM$ we can construct various $k$-sphere modules $N^k_{j,E}$ for $0 \le k \le n$, $0\le j \le k$, and $E = \cC$ or $\cD$.
$N^k_{j,E}$ can be thought of as the graph of an index $j$ Morse function on the $k$-ball $B^k$
(so the graph lives in $B^k\times I = B^{k+1}$).
The positive side of the graph is labeled by $E$, the negative side by $E'$
(where $\cC' = \cD$ and $\cD' = \cC$), and the codimension-1 
submanifold separating the positive and negative regions is labeled by $\cM$.
We think of $N^k_{j,E}$ as a $k{+}1$-morphism connecting 
We plan on treating this in more detail in a future paper.
\nn{should add a few more details}
}


\section{The blob complex for \texorpdfstring{$A_\infty$}{A-infinity} \texorpdfstring{$n$}{n}-categories}
\label{sec:ainfblob}
Given an $A_\infty$ $n$-category $\cC$ and an $n$-manifold $M$, we make the following 
anticlimactically tautological definition of the blob
complex.
\begin{defn}
The blob complex $\bc_*(M;\cC)$ of an $n$-manifold $M$ with coefficients in 
an $A_\infty$ $n$-category $\cC$ is the homotopy colimit $\cl{\cC}(M)$ of \S\ref{ss:ncat_fields}.
\end{defn}

We will show below 
in Corollary \ref{cor:new-old}
that when $\cC$ is obtained from a system of fields $\cE$ 
as the blob complex of an $n$-ball (see Example \ref{ex:blob-complexes-of-balls}), 
$\cl{\cC}(M)$ is homotopy equivalent to
our original definition of the blob complex $\bc_*(M;\cE)$.


%

\subsection{A product formula}
\label{ss:product-formula}

Given an $n$-dimensional system of fields $\cE$ and a $n{-}k$-manifold $F$, recall from 
Example \ref{ex:blob-complexes-of-balls} that there is an  $A_\infty$ $k$-category $\cC_F$ 
defined by $\cC_F(X) = \cE(X\times F)$ if $\dim(X) < k$ and
$\cC_F(X) = \bc_*(X\times F;\cE)$ if $\dim(X) = k$.

\begin{thm} \label{thm:product}
Let $Y$ be a $k$-manifold which admits a ball decomposition
(e.g.\ any triangulable manifold).
Then there is a homotopy equivalence between ``old-fashioned" (blob diagrams) 
and ``new-fangled" (hocolimit) blob complexes
\[
	\cB_*(Y \times F) \htpy \cl{\cC_F}(Y) .
\]\end{thm}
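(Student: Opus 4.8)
The plan is to produce the equivalence from the gluing maps of Property \ref{property:gluing-map}, and to check it is an equivalence by combining the small blobs lemma (Lemma \ref{small-blobs-b}) with the contractibility of the poset of decompositions of $Y$ (the corollary to Lemma \ref{lemma:vcones}).

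First I would construct the comparison map $r\colon \cl{\cC_F}(Y) \to \bc_*(Y\times F)$. A permissible decomposition $x = \{X_a\}$ of $Y$ can be completed to a ball decomposition $\du_a X_a = M_0 \to \cdots \to M_m = Y$; since $F$ is closed, $\bd(M_i\times F) = \bd M_i\times F$, so crossing with $F$ produces a ball decomposition $\du_a (X_a\times F) = M_0\times F \to \cdots \to M_m\times F = Y\times F$ of the $n$-manifold $Y\times F$. By construction $\cC_F(X_a) = \bc_*(X_a\times F;\cE)$, so the iterated gluing map of Property \ref{property:gluing-map} sends the subcomplex $\psi_{\cC_F;Y}(x) \subset \bigotimes_a \bc_*(X_a\times F)$ into $\bc_*(Y\times F)$. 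Associativity of gluing makes these maps compatible with the refinement maps of $\cell(Y)$ and with the higher-simplex structure of the (local) homotopy colimit, so they assemble into a chain map $r$ out of $\cl{\cC_F}(Y) = \hocolim_{\cell(Y)} \psi_{\cC_F;Y}$. The case $Y = B^k$ is essentially tautological: $\cl{\cC_F}(B^k) \htpy \cC_F(B^k) = \bc_*(B^k\times F)$ since the poset of decompositions of $B^k$ is contractible and $\bc_*$ on a ball is contractible; alternatively one could induct over a ball decomposition of $Y$ using Theorem \ref{thm:gluing}, but the direct argument below avoids that.

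Next I would build a homotopy inverse. Fix a ball decomposition of $Y$, and pick an open cover $\cU$ of $Y\times F$ that is fine enough in the $Y$-directions that every open set lies in a product ball $X\times F$ with $X$ a small $k$-ball contained in a single piece of the decomposition; note that no refinement in the $F$-directions is needed, which is precisely where the fiber is absorbed into $\cC_F$. By Lemma \ref{small-blobs-b}, $\bc_*(Y\times F) \htpy \bc^\cU_*(Y\times F)$. Each generator of $\bc^\cU_*(Y\times F)$ has every blob inside such a product ball and every region-field splittable into fields on such product balls; after a small perturbation (Axiom \ref{axiom:splittings} for $\cE$) the collection of these product balls gives a decomposition $x$ of $Y$ together with an element of $\psi_{\cC_F;Y}(x)$, hence an element of $\cl{\cC_F}(Y)$. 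This defines $s\colon \bc^\cU_*(Y\times F) \to \cl{\cC_F}(Y)$, and $r\circ s$ is the identity up to homotopy. For $s\circ r \simeq \id$ I would use the method of acyclic models: to a generator of $\cl{\cC_F}(Y)$ given by a decomposition $x$ and a blob diagram on $\du_a (X_a\times F)$, assign the subcomplex of $\cl{\cC_F}(Y)$ spanned by decompositions refining $x$ together with blob diagrams supported on the corresponding product balls; this is acyclic in positive degrees because the poset of refinements of $x$ is contractible (corollary to Lemma \ref{lemma:vcones}) and $\bc_*$ of a ball is contractible (Property \ref{property:contractibility}), and it is preserved by both $s\circ r$ and the identity.

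The hard part will be the step relating small-blob chains on $Y\times F$ to product decompositions of $Y$: a priori a blob, or a field on a complementary region, need not respect the product structure of $Y\times F$. The resolution is that we never need product-shaped blobs, only blobs small enough in the $Y$-directions to sit inside product balls $X_a\times F$ and fields splittable along the product decomposition $\{X_a\times F\}$, and both can be arranged by feeding a suitably fine cover into Lemma \ref{small-blobs-b} and then applying the splittings axiom. Granting this, compatibility with the $A_\infty$ structure is formal, and the generalizations to a fibre bundle over $Y$ and to an arbitrary map to $Y$ (see \S\ref{ss:product-formula}) run through the same argument with ``product ball'' replaced by the appropriate preimage of a ball in the base.
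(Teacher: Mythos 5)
Your map out of the homotopy colimit is essentially the paper's map $\psi$ (glue up the $0$-simplices, kill the higher ones), and your overall plan — small blobs plus acyclic models — is in the right family. But there is a genuine gap in the construction of the inverse map $s$. You define $s$ on $\bc^{\cU}_*(Y\times F)$ by choosing, for each small-blob generator, \emph{one} decomposition $x$ of $Y$ along which it splits and sending the generator to the corresponding $0$-simplex of $\psi_{\cC_F;Y}(x)$. This is not a chain map: the decomposition you choose for a generator and the decompositions you choose for the generators appearing in its boundary need not agree, and in the homotopy colimit the same blob diagram viewed with respect to two different decompositions gives two \emph{different} $0$-simplices, connected only by $1$-simplices (antirefinements), not identified. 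So $s\bd\ne\bd s$ for arbitrary choices, and there is no canonical choice to fall back on — two decompositions along which a diagram splits need not even have a common refinement (this is the $\sin(1/x)$ pathology of Example \ref{sin1x-example}). The phrase ``after a small perturbation (Axiom \ref{axiom:splittings})'' also cannot be applied to the generator itself; only the decomposition can be perturbed, which again only produces more non-canonical choices.

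This coherence problem is exactly where the paper puts the work: it restricts to the subcomplex $G_*$ of diagrams splitting along some $K\times F$ (your cover argument correctly shows $\bc_*(Y\times F)$ retracts into such a subcomplex, via Lemma \ref{small-blobs-b}), and then constructs $\phi:G_*\to\cl{\cC_F}(Y)$ by the method of acyclic models, assigning to each generator $a$ the subcomplex $D(a)$ of \emph{all} pairs (iterated boundary of $a$, compatible chain of decompositions). The acyclicity of $D(a)$ (Lemma \ref{lem:d-a-acyclic}) is the nontrivial step, and its proof needs Axiom \ref{axiom:splittings}/Lemma \ref{lemma:vcones}: one connects two splitting decompositions $K,K'$ through a third decomposition $L$ admitting common refinements with each, and fills higher cycles with V-Cones. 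In your sketch, acyclic models appears only at the $s\circ r\simeq\id$ stage, and rests on the claim that ``the poset of refinements of $x$ is contractible,'' which skates over the same common-refinement issue. To repair your proof you should build the inverse itself by acyclic models on $G_*$ (at which point $\psi\circ\phi$ is the identity on the nose, and the remaining homotopy $\phi\circ\psi\simeq\id$ is a second acyclic-models argument with the same subcomplexes), i.e.\ you are led back to the paper's argument.
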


\begin{proof}
We will use the concrete description of the homotopy colimit from \S\ref{ss:ncat_fields}.

First we define a map 
\[
	\psi: \cl{\cC_F}(Y) \to \bc_*(Y\times F;\cE) .
\]
On 0-simplices of the hocolimit 
we just glue together the various blob diagrams on $X_i\times F$
(where $X_i$ is a component of a permissible decomposition of $Y$) to get a blob diagram on
$Y\times F$.
For simplices of dimension 1 and higher we define the map to be zero.
It is easy to check that this is a chain map.

In the other direction, we will define (in the next few paragraphs) 
a subcomplex $G_*\sub \bc_*(Y\times F;\cE)$ and a map
\[
	\phi: G_* \to \cl{\cC_F}(Y) .
\]

Given a decomposition $K$ of $Y$ into $k$-balls $X_i$, let $K\times F$ denote the corresponding
decomposition of $Y\times F$ into the pieces $X_i\times F$.

Let $G_*\sub \bc_*(Y\times F;\cE)$ be the subcomplex generated by blob diagrams $a$ such that there
exists a decomposition $K$ of $Y$ such that $a$ splits along $K\times F$.
It follows from Lemma \ref{thm:small-blobs} that $\bc_*(Y\times F; \cE)$ 
is homotopic to a subcomplex of $G_*$.
(If the blobs of $a$ are small with respect to a sufficiently fine cover then their
projections to $Y$ are contained in some disjoint union of balls.)
Note that the image of $\psi$ is equal to $G_*$.

We will define $\phi: G_* \to \cl{\cC_F}(Y)$ using the method of acyclic models.
Let $a$ be a generator of $G_*$.
Let $D(a)$ denote the subcomplex of $\cl{\cC_F}(Y)$ generated by all $(b, \ol{K})$
where $b$ is a generator appearing
in an iterated boundary of $a$ (this includes $a$ itself)
and $b$ splits along $K_0\times F$.
(Recall that $\ol{K} = (K_0,\ldots,K_l)$ denotes a chain of decompositions;
see \S\ref{ss:ncat_fields}.)
By $(b, \ol{K})$ we really mean $(b^\sharp, \ol{K})$, where $b^\sharp$ is 
$b$ split according to $K_0\times F$.
To simplify notation we will just write plain $b$ instead of $b^\sharp$.
Roughly speaking, $D(a)$ consists of 0-simplices which glue up to give
$a$ (or one of its iterated boundaries), 1-simplices which connect all the 0-simplices, 
2-simplices which kill the homology created by the 
1-simplices, and so on.
More formally,
 
\begin{lemma} \label{lem:d-a-acyclic}
$D(a)$ is acyclic in positive degrees.
\end{lemma}

\begin{proof}
Let $P(a)$ denote the finite cone-product polyhedron composed of $a$ and its iterated boundaries.
(See Remark \ref{blobsset-remark}.)
We can think of $D(a)$ as a cell complex equipped with an obvious
map $p: D(a) \to P(a)$ which forgets the second factor.
For each cell $b$ of $P(a)$, let $I(b) = p\inv(b)$.
It suffices to show that each $I(b)$ is acyclic and more generally that
each intersection $I(b)\cap I(b')$ is acyclic.

If $I(b)\cap I(b')$ is nonempty then then as a cell complex it is isomorphic to
$(b\cap b') \times E(b, b')$, where $E(b, b')$ consists of those simplices
$\ol{K} = (K_0,\ldots,K_l)$ such that both $b$ and $b'$ split along $K_0\times F$.
(Here we are thinking of $b$ and $b'$ as both blob diagrams and also faces of $P(a)$.)
So it suffices to show that $E(b, b')$ is acyclic.

Let $K$ and $K'$ be two decompositions of $Y$ (i.e.\ 0-simplices) in $E(b, b')$.
We want to find 1-simplices which connect $K$ and $K'$.
We might hope that $K$ and $K'$ have a common refinement, but this is not necessarily
the case.
(Consider the $x$-axis and the graph of $y = e^{-1/x^2} \sin(1/x)$ in $\r^2$.)
However, we {\it can} find another decomposition $L$ such that $L$ shares common
refinements with both $K$ and $K'$. (For instance, in the example above, $L$ can be the graph of $y=x^2-1$.)
This follows from Axiom \ref{axiom:splittings}, which in turn follows from the
splitting axiom for the system of fields $\cE$.
Let $KL$ and $K'L$ denote these two refinements.
Then 1-simplices associated to the four anti-refinements
$KL\to K$, $KL\to L$, $K'L\to L$ and $K'L\to K'$
give the desired chain connecting $(a, K)$ and $(a, K')$
(see Figure \ref{zzz4}).
(In the language of Lemma \ref{lemma:vcones}, this is $\vcone(K \du K')$.)

\begin{figure}[t] \centering
\begin{tikzpicture}
\foreach \x/\label in {-3/K, 0/L, 3/K'} {
	\node(\label) at (\x,0) {$\label$};
}
\foreach \x/\la/\lb in {-1.5/K/L, 1.5/K'/L} {
	\node(\la \lb) at (\x,-1.5) {$\la \lb$};
	\draw[->] (\la \lb) -- (\la);
	\draw[->] (\la \lb) -- (\lb); 
}
\end{tikzpicture}
\caption{Connecting $K$ and $K'$ via $L$}
\label{zzz4}
\end{figure}
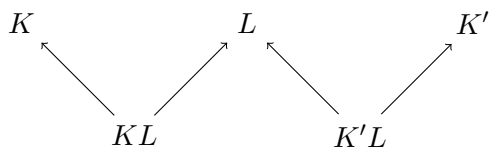

Consider next a 1-cycle in $E(b, b')$, such as one arising from
a different choice of decomposition $L'$ in place of $L$ above.
By Lemma \ref{lemma:vcones} we can fill in this 1-cycle with 2-simplices.
Choose a decomposition $M$ which has common refinements with each of 
$K$, $KL$, $L$, $K'L$, $K'$, $K'L'$, $L'$ and $KL'$.
(We also require that $KLM$ antirefines to $KM$, etc.)
Then we have 2-simplices, as shown in Figure \ref{zzz5}, which do the trick.
(Each small triangle in Figure \ref{zzz5} can be filled with a 2-simplex.)

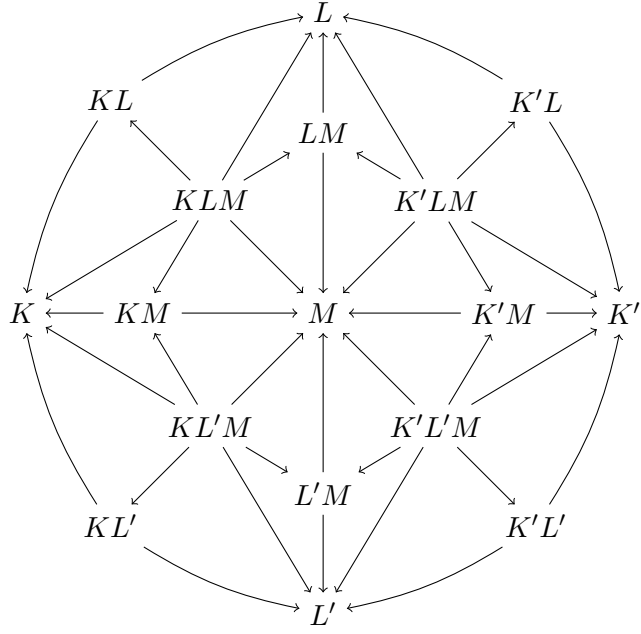
\begin{figure}[t] \centering
\begin{tikzpicture}
\node(M) at (0,0) {$M$};
\foreach \angle/\label in {0/K', 45/K'L, 90/L, 135/KL, 180/K, 225/KL', 270/L', 315/K'L'} {
	\node(\label) at (\angle:4) {$\label$};
}
\foreach \label in {K', L, K, L'} {
	\node(\label M) at ($(M)!0.6!(\label)$) {$\label M$};
	\draw[->] (\label M)--(M);
	\draw[->] (\label M)--(\label);
}
\foreach \k in {K, K'} {
	\foreach \l in {L, L'} {
		\node(\k \l M) at (intersection cs: first line={(\k M)--(\l)}, second line={(\l M)--(\k)}) {$\k \l M$};
		\draw[->] (\k \l M)--(M);
		\draw[->] (\k \l M)--(\k \l );
		\draw[->] (\k \l M)--(\k M);
		\draw[->] (\k \l M)--(\l);
		\draw[->] (\k \l M)--(\l M);
		\draw[->] (\k \l M)--(\k);
	}
}
\draw[->] (K'L') to[bend right=10] (K');
\draw[->] (K'L') to[bend left=10] (L');
\draw[->] (KL') to[bend left=10] (K);
\draw[->] (KL') to[bend right=10] (L');
\draw[->] (K'L) to[bend left=10] (K');
\draw[->] (K'L) to[bend right=10] (L);
\draw[->] (KL) to[bend right=10] (K);
\draw[->] (KL) to[bend left=10] (L);
\end{tikzpicture}
\caption{Filling in $K$-$KL$-$L$-$K'L$-$K'$-$K'L'$-$L'$-$KL'$-$K$}
\label{zzz5}
\end{figure}

Continuing in this way we see that $D(a)$ is acyclic.
By Lemma \ref{lemma:vcones} we can fill in any cycle with a V-Cone.
\end{proof}

We are now in a position to apply the method of acyclic models to get a map
$\phi:G_* \to \cl{\cC_F}(Y)$.
We may assume that $\phi(a)$ has the form $(a, K) + r$, where $(a, K)$ is a 0-simplex
and $r$ is a sum of simplices of dimension 1 or higher.

We now show that $\phi\circ\psi$ and $\psi\circ\phi$ are homotopic to the identity.

First, $\psi\circ\phi$ is the identity on the nose:
\[
	\psi(\phi(a)) = \psi((a,K)) + \psi(r) = a + 0.
\]
Roughly speaking, $(a, K)$ is just $a$ chopped up into little pieces, and 
$\psi$ glues those pieces back together, yielding $a$.
We have $\psi(r) = 0$ since $\psi$ is zero on $(\ge 1)$-simplices.
 
Second, $\phi\circ\psi$ is the identity up to homotopy by another argument based on the method of acyclic models.
To each generator $(b, \ol{K})$ of $\cl{\cC_F}(Y)$ we associate the acyclic subcomplex $D(b)$ defined above.
Both the identity map and $\phi\circ\psi$ are compatible with this
collection of acyclic subcomplexes, so by the usual method of acyclic models argument these two maps
are homotopic.

This concludes the proof of Theorem \ref{thm:product}.
\end{proof}


If $Y$ has dimension $k-m$, then we have an $m$-category $\cC_{Y\times F}$ whose value at
a $j$-ball $X$ is either $\cE(X\times Y\times F)$ (if $j<m$) or $\bc_*(X\times Y\times F)$
(if $j=m$).
(See Example \ref{ex:blob-complexes-of-balls}.)
Similarly we have an $m$-category whose value at $X$ is $\cl{\cC_F}(X\times Y)$.
These two categories are equivalent, but since we do not define functors between
disk-like $n$-categories in this paper we are unable to say precisely
what ``equivalent" means in this context.
We hope to include this stronger result in a future paper.

\medskip

Taking $F$ in Theorem \ref{thm:product} to be a point, we obtain the following corollary.

\begin{cor}
\label{cor:new-old}
Let $\cE$ be a system of fields (with local relations) and let $\cC_\cE$ be the $A_\infty$
$n$-category obtained from $\cE$ by taking the blob complex of balls.
Then for all $n$-manifolds $Y$ the old-fashioned and new-fangled blob complexes are
homotopy equivalent:
\[
	\bc^\cE_*(Y) \htpy \cl{\cC_\cE}(Y) .
\]
\end{cor}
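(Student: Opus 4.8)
The plan is to derive Corollary \ref{cor:new-old} directly from Theorem \ref{thm:product} by specializing the fiber $F$ to be a single point. First I would note that when $F = \mathrm{pt}$, the $A_\infty$ $k$-category $\cC_F$ of Example \ref{ex:blob-complexes-of-balls} with $k = n$ becomes exactly the $A_\infty$ $n$-category $\cC_\cE$ obtained from the system of fields $\cE$ by taking blob complexes of $n$-balls: indeed, for an $m$-ball $X$ with $m < n$ we have $\cC_F(X) = \cE(X \times \mathrm{pt}) = \cE(X)$, and for an $n$-ball $X$ we have $\cC_F(X; c) = \bc_*(X \times \mathrm{pt}; \cE) = \bc_*(X; \cE)$, which is precisely the defining data of $\cC_\cE$. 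So the identification $\cC_F = \cC_\cE$ is essentially tautological once the notation is unwound.

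Next I would observe that the hypothesis of Theorem \ref{thm:product} requires $Y$ to admit a ball decomposition, but by the standing assumption in the paper that all manifolds are piecewise linear (or smooth), every $n$-manifold $Y$ in sight does admit such a decomposition (the pathological topological 4-manifolds mentioned in \S\ref{ss:ncat_fields} are explicitly excluded from our setting). Thus Theorem \ref{thm:product} applies to any $n$-manifold $Y$, giving a homotopy equivalence
\[
	\cB_*(Y \times F) \htpy \cl{\cC_F}(Y)
\]
which, upon setting $F = \mathrm{pt}$ and using $Y \times \mathrm{pt} \cong Y$ together with the identification $\cC_F = \cC_\cE$ from the previous paragraph, reads
\[
	\bc^\cE_*(Y) \htpy \cl{\cC_\cE}(Y) ,
\]
as desired. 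Here $\cl{\cC_\cE}(Y)$ is by definition (Definition in \S\ref{sec:ainfblob}) the blob complex $\bc_*(Y; \cC_\cE)$ of $Y$ with coefficients in the $A_\infty$ $n$-category $\cC_\cE$, so the corollary indeed says the two definitions of the blob complex agree up to homotopy.

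Since this is a pure specialization argument, there is essentially no obstacle to surmount — all the real work was done in proving Theorem \ref{thm:product} (in particular the acyclic-models construction of the comparison map $\phi$ and the verification via Lemma \ref{lem:d-a-acyclic} that $D(a)$ is acyclic). The only point requiring a moment's care is confirming that the degenerate fiber $F = \mathrm{pt}$ does not introduce any subtlety in the constructions of Example \ref{ex:blob-complexes-of-balls} and Theorem \ref{thm:product}; inspecting those constructions shows that they go through verbatim, with $X \times \mathrm{pt}$ canonically homeomorphic to $X$ at every stage, so the homotopy equivalence is natural and the corollary follows immediately.
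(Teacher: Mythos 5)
Your proposal is correct and is exactly the paper's own argument: the paper derives Corollary \ref{cor:new-old} by taking $F$ to be a point in Theorem \ref{thm:product}, just as you do. Your extra remark that PL manifolds always admit ball decompositions (so the hypothesis of Theorem \ref{thm:product} is automatic here) is a sensible clarification but does not change the route.
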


\medskip

Theorem \ref{thm:product} extends to the case of general fiber bundles
\[
	F \to E \to Y ,
\]
and indeed even to the case of general maps
\[
	M\to Y .
\]
We outline two approaches to these generalizations.
The first is somewhat tautological, while the second is more amenable to
calculation.

We can generalize the definition of a $k$-category by replacing the categories
of $j$-balls ($j\le k$) with categories of $j$-balls $D$ equipped with a map $p:D\to Y$
(c.f. \cite{MR2079378}).
Call this a {\it $k$-category over $Y$}.
A fiber bundle $F\to E\to Y$ gives an example of a $k$-category over $Y$:
assign to $p:D\to Y$ the blob complex $\bc_*(p^*(E))$, when $\dim(D) = k$,
or the fields $\cE(p^*(E))$, when $\dim(D) < k$.
(Here $p^*(E)$ denotes the pull-back bundle over $D$.)
Let $\cF_E$ denote this $k$-category over $Y$.
We can adapt the homotopy colimit construction (based on decompositions of $Y$ into balls) to
get a chain complex $\cl{\cF_E}(Y)$.

\begin{thm}
Let $F \to E \to Y$ be a fiber bundle and let $\cF_E$ be the $k$-category over $Y$ defined above.
Then
\[
	\bc_*(E) \simeq \cl{\cF_E}(Y) .
\]
\qed
\end{thm}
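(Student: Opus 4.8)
The plan is to mimic the proof of Theorem \ref{thm:product} essentially verbatim, replacing the constant fiber $F$ with the pull-back bundles $p^*(E)$ along maps $p:D\to Y$. The key point is that the proof of Theorem \ref{thm:product} never really used that the fiber was the same over each ball --- it only used (a) that a blob diagram on $Y\times F$, when its blobs are small, projects to a configuration supported in a disjoint union of balls in $Y$ (Lemma \ref{thm:small-blobs}), (b) that fields and blob diagrams on a piece $X_i\times F$ glue together correctly, which is just the gluing axiom for $\cE$, and (c) the acyclicity of the models $D(a)$, which rested on Lemma \ref{lemma:vcones} (equivalently Axiom \ref{axiom:splittings}) for the indexing poset $\cell(Y)$. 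None of these ingredients is sensitive to the fiber varying over $Y$, since over any single ball $D\subset Y$ the bundle is trivializable and $p^*(E)$ is just $D\times F$ up to homeomorphism.

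\textbf{Steps.} First I would set up the two maps exactly as before: a chain map $\psi: \cl{\cF_E}(Y)\to \bc_*(E)$ which on $0$-simplices of the homotopy colimit glues together the blob diagrams on the pieces $p^*(E)|_{X_i}$ (over a permissible decomposition $\{X_i\}$ of $Y$) to a blob diagram on $E$, and is zero on simplices of positive degree; checking it is a chain map is routine. Second, I would define the subcomplex $G_*\subset \bc_*(E)$ generated by blob diagrams $a$ for which there is a decomposition $K$ of $Y$ such that $a$ splits along the induced decomposition $K^*(E)$ of $E$; by Lemma \ref{thm:small-blobs} applied to a sufficiently fine open cover of $E$ pulled back from a cover of $Y$, the inclusion $G_*\into \bc_*(E)$ is a homotopy equivalence, and the image of $\psi$ is exactly $G_*$. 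Third, I would construct $\phi:G_*\to\cl{\cF_E}(Y)$ by acyclic models, associating to a generator $a$ the subcomplex $D(a)$ spanned by pairs $(b,\ol{K})$ with $b$ appearing in an iterated boundary of $a$ and $b$ splitting along $K_0$; the acyclicity of $D(a)$ in positive degrees is proved word-for-word as in Lemma \ref{lem:d-a-acyclic}, using that common refinements and V-cones exist in $\cell(Y)$ by Lemma \ref{lemma:vcones}. Finally, the same two acyclic-models computations as in Theorem \ref{thm:product} show $\psi\circ\phi = \id$ on the nose and $\phi\circ\psi\simeq \id$, giving the homotopy equivalence $\bc_*(E)\simeq\cl{\cF_E}(Y)$.

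\textbf{Main obstacle.} The genuinely new bookkeeping --- and the step I expect to require the most care --- is making precise the category $\cF_E$ ``over $Y$'': one must check that assigning $\bc_*(p^*(E))$ (for $\dim D = k$) or $\cE(p^*(E))$ (for $\dim D < k$) to an object $p:D\to Y$ really satisfies the $A_\infty$ $n$-category axioms \emph{relative to $Y$}, in particular that composition along a gluing $D = D_1\cup_Z D_2$ over $Y$ is well-defined (the maps $p_i$ must agree on $Z$) and strictly associative, and that the splittings axiom holds for these relative morphism spaces. Once the relative category and its homotopy colimit $\cl{\cF_E}(Y)$ are set up so that decompositions of $Y$ into balls pull back to decompositions of $E$, the rest of the argument is a direct transcription of the proof of Theorem \ref{thm:product}, and I would simply indicate the changes rather than repeat it. (For the still more general case of an arbitrary map $M\to Y$ one proceeds identically, noting only that a ``ball over $Y$'' $p:D\to Y$ now has $p^*$ of the map replaced by the restriction $M|_{p}$; the homotopy colimit construction and the acyclic models argument are unchanged.)
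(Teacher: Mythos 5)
Your proposal follows exactly the route the paper itself takes: define $\psi$ by gluing on $0$-simplices and sending higher simplices to zero, identify its image $G_*$ as a subcomplex onto which $\bc_*(E)$ retracts via the small blobs lemma, and build the homotopy inverse by acyclic models with the subcomplexes $D(a)$, just as in the proof of Theorem \ref{thm:product}. The only difference is that you spell out details the paper leaves as ``nearly identical,'' so the argument matches the paper's proof in substance.
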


\begin{proof}
The proof is nearly identical to the proof of Theorem \ref{thm:product}, so we will only give a sketch which 
emphasizes the few minor changes that need to be made.

As before, we define a map
\[
	\psi: \cl{\cF_E}(Y) \to \bc_*(E) .
\]
The 0-simplices of the homotopy colimit $\cl{\cF_E}(Y)$ are glued up to give an element of $\bc_*(E)$.
Simplices of positive degree are sent to zero.

Let $G_* \sub \bc_*(E)$ be the image of $\psi$.
By Lemma \ref{thm:small-blobs}, $\bc_*(Y\times F; \cE)$ 
is homotopic to a subcomplex of $G_*$.
We will define a homotopy inverse of $\psi$ on $G_*$, using acyclic models.
To each generator $a$ of $G_*$ we assign an acyclic subcomplex $D(a) \sub \cl{\cF_E}(Y)$ which consists of
0-simplices which map via $\psi$ to $a$, plus higher simplices (as described in the proof of Theorem \ref{thm:product})
which insure that $D(a)$ is acyclic.
\end{proof}

We can generalize this result still further by noting that it is not really necessary
for the definition of $\cF_E$ that $E\to Y$ be a fiber bundle.
Let $M\to Y$ be a map, with $\dim(M) = n$ and $\dim(Y) = k$.
Call a map $D^j\to Y$ ``good" with respect to $M$ if the fibered product
$D\widetilde{\times} M$ is a manifold of dimension $n-k+j$ with a collar structure along the boundary of $D$.
(If $D\to Y$ is an embedding then $D\widetilde{\times} M$ is just the part of $M$
lying above $D$.)
We can define a $k$-category $\cF_M$ based on maps of balls into $Y$ which are good with respect to $M$.
We can again adapt the homotopy colimit construction to
get a chain complex $\cl{\cF_M}(Y)$.
The proof of Theorem \ref{thm:product} again goes through essentially unchanged 
to show that
\[
	\bc_*(M) \simeq \cl{\cF_M}(Y) .
\]

\medskip

In the second approach we use a decorated colimit (as in \S \ref{ssec:spherecat}) 
and various sphere modules based on $F \to E \to Y$
or $M\to Y$, instead of an undecorated colimit with fancier $k$-categories over $Y$.
Information about the specific map to $Y$ has been taken out of the categories
and put into sphere modules and decorations.

Let $F \to E \to Y$ be a fiber bundle as above.
Choose a decomposition $Y = \cup X_i$
such that the restriction of $E$ to $X_i$ is homeomorphic to a product $F\times X_i$,
and choose trivializations of these products as well.

Let $\cF$ be the $k$-category associated to $F$.
To each codimension-1 face $X_i\cap X_j$ we have a bimodule ($S^0$-module) for $\cF$.
More generally, to each codimension-$m$ face we have an $S^{m-1}$-module for a $(k{-}m{+}1)$-category
associated to the (decorated) link of that face.
We can decorate the strata of the decomposition of $Y$ with these sphere modules and form a 
colimit as in \S \ref{ssec:spherecat}.
This colimit computes $\bc_*(E)$.

There is a similar construction for general maps $M\to Y$.


\subsection{A gluing theorem}
\label{sec:gluing}

Next we prove a gluing theorem. Throughout this section fix a particular $n$-dimensional system of fields $\cE$ and local relations. Each blob complex below is  with respect to this $\cE$.
Let $X$ be a closed $k$-manifold with a splitting $X = X'_1\cup_Y X'_2$.
We will need an explicit collar on $Y$, so rewrite this as
$X = X_1\cup (Y\times J) \cup X_2$.
Given this data we have:
\begin{itemize}
\item An $A_\infty$ $n{-}k$-category $\bc(X)$, which assigns to an $m$-ball
$D$ fields on $D\times X$ (for $m+k < n$) or the blob complex $\bc_*(D\times X; c)$
(for $m+k = n$).
(See Example \ref{ex:blob-complexes-of-balls}.)
\item An $A_\infty$ $n{-}k{+}1$-category $\bc(Y)$, defined similarly.
\item Two $\bc(Y)$ modules $\bc(X_1)$ and $\bc(X_2)$, which assign to a marked
$m$-ball $(D, H)$ either fields on $(D\times Y) \cup (H\times X_i)$ (if $m+k < n$)
or the blob complex $\bc_*((D\times Y) \cup (H\times X_i))$ (if $m+k = n$).
(See Example \ref{bc-module-example}.)
\item The tensor product $\bc(X_1) \otimes_{\bc(Y), J} \bc(X_2)$, which is
an $A_\infty$ $n{-}k$-category.
(See \S \ref{moddecss}.)
\end{itemize}

It is the case that the $n{-}k$-categories $\bc(X)$ and $\bc(X_1) \otimes_{\bc(Y), J} \bc(X_2)$
are equivalent for all $k$, but since we do not develop a definition of functor between $n$-categories
in this paper, we cannot state this precisely.
(It will appear in a future paper.)
So we content ourselves with

\begin{thm}
\label{thm:gluing}
Suppose $X$ is an $n$-manifold, and $X = X_1\cup (Y\times J) \cup X_2$ (i.e. take $k=n$ in the above discussion). 
Then $\bc(X)$ is homotopy equivalent to the $A_\infty$ tensor product $\bc(X_1) \otimes_{\bc(Y), J} \bc(X_2)$.
\end{thm}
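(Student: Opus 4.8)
The plan is to reduce the gluing theorem to the product formula (Theorem \ref{thm:product}) together with the local structure of the homotopy colimit defining $\cl{\cC}(M)$. First I would observe that by Theorem \ref{thm:product} applied to $F = \text{pt}$ (that is, Corollary \ref{cor:new-old}), both sides can be computed via homotopy colimits: $\bc(X) \htpy \cl{\cC_\cE}(X)$ where $\cC_\cE$ is the $A_\infty$ $n$-category of blob complexes of balls, and similarly the right-hand side is built from the decorated colimit construction of \S\ref{moddecss} applied to the 1-manifold $J$ with endpoints labeled by the modules $\bc(X_1)$ and $\bc(X_2)$ and interior labeled by $\bc(Y)$. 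So the theorem becomes a statement purely about homotopy colimits: the hocolimit over decompositions of $X$ is homotopy equivalent to the hocolimit over decompositions of $J$ with the module/category labels, which in turn unwinds (by the definition of the $\bc(X_i)$ as hocolimits of $\cE$ on $(D\times Y)\cup(H\times X_i)$) to a hocolimit over decompositions of $X$ adapted to the splitting $X = X_1 \cup (Y\times J)\cup X_2$.

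Concretely, here are the steps in order. (1) Use Lemma \ref{thm:small-blobs} (small blobs) to replace $\bc_*(X)$ by the quasi-isomorphic subcomplex $G_*$ of blob diagrams that split along some decomposition of $X$ compatible with the product structure $X = X_1 \cup (Y\times J) \cup X_2$ --- i.e.\ diagrams whose blobs, projected appropriately, lie in balls of the form $D\times Y$ or $D\times X_i$. (2) Identify $G_*$ with the image of a gluing map from the local homotopy colimit describing $\bc(X_1)\otimes_{\bc(Y),J}\bc(X_2)$: a 0-simplex of that hocolimit is a decomposition of $J$ into subintervals, a choice of module element on the two end intervals and a choice of $\bc(Y)$-morphism on each interior interval, and gluing these (via the composition and action maps of the modules, which come from Property \ref{property:gluing-map}) produces exactly a split blob diagram on $X$. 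Send positive-degree simplices to zero to get a chain map $\psi$ from the tensor product complex to $\bc(X)$ landing in $G_*$. (3) Construct a homotopy inverse $\phi: G_* \to \bc(X_1)\otimes_{\bc(Y),J}\bc(X_2)$ by the method of acyclic models, exactly as in the proof of Theorem \ref{thm:product}: to each generator $a$ of $G_*$ assign the subcomplex $D(a)$ generated by pairs $(b,\ol{K})$ with $b$ appearing in an iterated boundary of $a$ and $b$ split along $K_0$; show $D(a)$ is acyclic in positive degrees using Lemma \ref{lemma:vcones} (V-cones fill in cycles), which requires the splitting Axiom \ref{axiom:splittings} for $\cE$. (4) Check that $\psi\circ\phi = \id$ on the nose (gluing a chopped-up diagram back together recovers it, and $\psi$ kills higher simplices) and that $\phi\circ\psi \htpy \id$ by a second acyclic-models argument with the same collection of acyclic subcomplexes.

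The main obstacle, and the place where real care is needed, is Step (2): making precise the identification between the local hocolimit for the decorated 1-manifold $J$ and the space of blob diagrams on $X$ that are split along $Y$. The subtlety is that the modules $\bc(X_i)$ are themselves defined as (homotopy) colimits --- over decompositions of $D\times Y$ together with the collar piece $H\times X_i$ --- so one is really comparing a nested hocolimit (hocolimit over $J$-decompositions of hocolimits over $Y$-decompositions) with a single hocolimit over adapted decompositions of $X$. Reconciling these requires an Eilenberg--Zilber type subdivision / cofinality argument, analogous to the comparison between the usual and local homotopy colimits sketched at the end of \S\ref{ss:ncat_fields}, to see that the diagram of adapted decompositions of $X$ is cofinal (up to homotopy) in the combined indexing category, and that the local nature of the hocolimit lets one split any blob diagram into its $X_1$, $Y\times J$, and $X_2$ parts. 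Once this bookkeeping is in place, the acyclicity and acyclic-models arguments are essentially identical to those already carried out for Theorem \ref{thm:product}.
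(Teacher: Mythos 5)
Your proposal follows essentially the same route as the paper's proof: define $\psi$ from the tensor product complex to $\bc_*(X)$ by gluing on 0-simplices and killing higher simplices, identify its image with the subcomplex $G_*$ of diagrams split along a decomposition of $J$ (using the small blobs lemma), and build the homotopy inverse $\phi$ on $G_*$ by acyclic models exactly as in Theorem \ref{thm:product}. The only differences are inessential: the paper observes that acyclicity of the model subcomplexes is actually easier here, since any two decompositions of the interval $J$ admit a common refinement (so the full V-cone machinery is not needed), and your worry about nested homotopy colimits in Step (2) is moot because the modules $\bc(X_i)$ are defined directly as blob complexes of the corresponding pieces rather than as colimits.
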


\begin{proof}
The proof is similar to that of Theorem \ref{thm:product}.
We give a short sketch with emphasis on the differences from 
the proof of Theorem \ref{thm:product}.

Let $\cT$ denote the chain complex $\bc(X_1) \otimes_{\bc(Y), J} \bc(X_2)$.
Recall that this is a homotopy colimit based on decompositions of the interval $J$.

We define a map $\psi:\cT\to \bc_*(X)$.
On 0-simplices it is given
by gluing the pieces together to get a blob diagram on $X$.
On simplices of dimension 1 and greater $\psi$ is zero.

The image of $\psi$ is the subcomplex $G_*\sub \bc(X)$ generated by blob diagrams which split
over some decomposition of $J$.
It follows from Lemma \ref{thm:small-blobs} that $\bc_*(X)$ is homotopic to 
a subcomplex of $G_*$. 

Next we define a map $\phi:G_*\to \cT$ using the method of acyclic models.
As in the proof of Theorem \ref{thm:product}, we assign to a generator $a$ of $G_*$
an acyclic subcomplex which is (roughly) $\psi\inv(a)$.
The proof of acyclicity is easier in this case since any pair of decompositions of $J$ have
a common refinement.

The proof that these two maps are homotopy inverse to each other is the same as in
Theorem \ref{thm:product}.
\end{proof}

\medskip

\subsection{Reconstructing mapping spaces}
\label{sec:map-recon}

The next theorem shows how to reconstruct a mapping space from local data.
Let $T$ be a topological space, let $M$ be an $n$-manifold, 
and recall the $A_\infty$ $n$-category $\pi^\infty_{\leq n}(T)$ 
of Example \ref{ex:chains-of-maps-to-a-space}.
Think of $\pi^\infty_{\leq n}(T)$ as encoding everything you would ever
want to know about spaces of maps of $k$-balls into $T$ ($k\le n$).
To simplify notation, let $\cT = \pi^\infty_{\leq n}(T)$.

\begin{thm}
\label{thm:map-recon}
The blob complex for $M$ with coefficients in the fundamental $A_\infty$ $n$-category for $T$ 
is quasi-isomorphic to singular chains on maps from $M$ to $T$.
$$\cB^\cT(M) \simeq C_*(\Maps(M\to T)).$$
\end{thm}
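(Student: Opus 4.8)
The plan is to produce mutually inverse (up to homotopy) chain maps between $\cl{\cT}(M)$, which by definition is the homotopy colimit $\hocolim_{\cell(M)} \psi_{\cT;M}$, and $C_*(\Maps(M\to T))$. Throughout I would use the concrete (local homotopy colimit) description of $\cl{\cT}(M)$ from \S\ref{ss:ncat_fields}, whose generators are pairs $(a, \bar x)$ with $\bar x = (x_0 \le \cdots \le x_m)$ a chain of decompositions of $M$ into balls and $a$ an element of $\psi_{\cT;M}(x_0) \subset \prod_i C_*(\Maps(X_i \to T))$ satisfying the gluing compatibility conditions. The first map $\psi: \cl{\cT}(M) \to C_*(\Maps(M\to T))$ sends a $0$-simplex $(a, x_0)$ — a compatible tuple of singular chains of maps on the pieces $X_i$ — to the chain obtained by gluing the maps together (using that maps glue, i.e.\ $\Maps(M\to T)$ is literally a fibered product of the $\Maps(X_i\to T)$, combined with Eilenberg--Zilber to handle the product of simplices), and sends simplices of positive degree in the hocolimit direction to zero. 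One checks this is a chain map, the key point being that the hocolimit differential's ``gluing'' term $g$ becomes, after applying $\psi$, literally the identity, since gluing in $\cT$ is gluing of maps.

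Next I would go the other way. Following the pattern of Theorem \ref{thm:product} (and Theorem \ref{thm:gluing}), I would first cut down to a manageable subcomplex: by Lemma \ref{thm:small-blobs} (small blobs) applied in the guise appropriate to the hocolimit, or more simply because any singular chain of maps $P \to \Maps(M\to T)$ can, after subdivision of $P$, be assumed to take values that are ``split'' over a fixed ball decomposition of $M$, I reduce to a subcomplex $G_* \subset C_*(\Maps(M\to T))$ on which the splitting is available. Then I define $\phi: G_* \to \cl{\cT}(M)$ by the method of acyclic models: to a generator $a$ (a singular simplex of maps, together with a ball decomposition $K$ along which it splits) assign the subcomplex $D(a) \subset \cl{\cT}(M)$ spanned by all $(b, \bar K)$ where $b$ appears in an iterated boundary of $a$ and splits along $K_0$. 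The crucial lemma, exactly as Lemma \ref{lem:d-a-acyclic}, is that $D(a)$ is acyclic in positive degrees; its proof uses Axiom \ref{axiom:splittings} for $\cT$ (two ball decompositions need not have a common refinement, but any two admit a third sharing a common refinement with each), packaged via the V-Cone construction of Lemma \ref{lemma:vcones}, so that every cycle of decompositions can be filled. Acyclic models then produces $\phi$, well-defined up to homotopy.

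Finally I would check $\psi \circ \phi \simeq \id$ and $\phi \circ \psi = \id$ (or $\simeq \id$). As in Theorem \ref{thm:product}, $\psi\circ\phi$ is the identity essentially on the nose: $\phi(a) = (a, K) + (\text{higher simplices})$, and $\psi$ glues $(a,K)$ back to $a$ and kills the higher simplices. For $\phi\circ\psi \simeq \id$ on $\cl{\cT}(M)$, both maps are compatible with the same system of acyclic subcomplexes $D(b)$ (for $(b,\bar K)$ a generator of the hocolimit), so the acyclic models argument gives the homotopy. Combined with the reduction $C_*(\Maps(M\to T)) \simeq G_*$ (and noting everything consists of free complexes so quasi-isomorphism suffices), this yields $\cl{\cT}(M) \htpy C_*(\Maps(M\to T))$, which is the asserted $\bc_*(M;\cT) \simeq \CM{M}{T}$ since $\bc_*(M;\cT)$ is defined to be $\cl{\cT}(M)$.

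The main obstacle, I expect, is the acyclicity of $D(a)$ in the fully general (non-PL-nice) setting: because the pieces of a permissible decomposition can meet pathologically (the $\sin(1/x)$ phenomenon of Example \ref{sin1x-example}), one genuinely needs the V-Cone / Axiom \ref{axiom:splittings} machinery rather than naive common refinements, and one must verify that the mapping-space example actually satisfies the splitting axiom (it does, trivially, since maps restrict and glue freely) and that the Eilenberg--Zilber bookkeeping relating the usual and local hocolimits, and relating products of simplices in $P$-directions with the cell structure on decompositions, is consistent. A secondary technical point is making precise the ``small chains of maps split over a fixed decomposition'' reduction — this is where one invokes (the analogue for the hocolimit of) Lemma \ref{thm:small-blobs}, together with compactness of the parameter polyhedra.
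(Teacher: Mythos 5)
Your proposal follows essentially the same route as the paper's proof: the gluing map $\psi$ defined on $0$-simplices of the homotopy colimit and zero in higher simplicial degrees, reduction to the subcomplex $G_*$ of families supported in disjoint unions of balls, an acyclic-models construction of $\phi$ with targets $D(a)$ whose acyclicity is exactly Lemma \ref{lem:d-a-acyclic} (via Axiom \ref{axiom:splittings} and the V-cone Lemma \ref{lemma:vcones}), and then $\psi\circ\phi = \id$ on the nose with $\phi\circ\psi \simeq \id$ by a second acyclic-models argument. One correction to your reduction step: merely subdividing the parameter polyhedron $P$ does not make a family of maps split as a product over a ball decomposition of $M$ (the family must be homotoped so that its support lies in a disjoint union of balls), and the paper does this by invoking Lemma \ref{extension_lemma_c} on adapting families to open covers, which is the mapping-space analogue of the small-blobs lemma you gesture at rather than Lemma \ref{thm:small-blobs} itself.
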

\begin{rem}
Lurie has shown in \cite[Theorem 3.8.6]{0911.0018} that the topological chiral homology 
of an $n$-manifold $M$ with coefficients in a certain $E_n$ algebra constructed from $T$ recovers 
the same space of singular chains on maps from $M$ to $T$, with the additional hypothesis that $T$ is $n{-}1$-connected.
This extra hypothesis is not surprising, in view of the idea described in Example \ref{ex:e-n-alg} 
that an $E_n$ algebra is roughly equivalent data to an $A_\infty$ $n$-category which 
is trivial at levels 0 through $n-1$.
Ricardo Andrade also told us about a similar result.

Specializing still further, Theorem \ref{thm:map-recon} is related to the classical result that for connected spaces $T$
we have $HH_*(C_*(\Omega T)) \cong H_*(LT)$, that is, the Hochschild homology of based loops in $T$ is isomorphic
to the homology of the free loop space of $T$ (see \cite{MR793184} and \cite{MR842427}).
Theorem \ref{thm:map-recon} says that for any space $T$ (connected or not) we have
$\bc_*(S^1; C_*(\pi^\infty_{\le 1}(T))) \simeq C_*(LT)$.
Here $C_*(\pi^\infty_{\le 1}(T))$ denotes the singular chain version of the fundamental infinity-groupoid of $T$, 
whose objects are points in $T$ and morphism chain complexes are $C_*(\paths(t_1 \to t_2))$ for $t_1, t_2 \in T$.
If $T$ is connected then the $A_\infty$ 1-category $C_*(\pi^\infty_{\le 1}(T))$ is Morita equivalent to the
$A_\infty$ algebra $C_*(\Omega T)$; 
the bimodule for the equivalence is the singular chains of the space of paths which start at the base point of $T$.
Theorem \ref{thm:hochschild} holds for $A_\infty$ 1-categories (though we do not prove that in this paper),
which then implies that
\[
	Hoch_*(C_*(\Omega T)) \simeq Hoch_*(C_*(\pi^\infty_{\le 1}(T)))
			\simeq \bc_*(S^1; C_*(\pi^\infty_{\le 1}(T))) \simeq C_*(LT) .
\]
\end{rem}

\begin{proof}[Proof of Theorem \ref{thm:map-recon}]
The proof is again similar to that of Theorem \ref{thm:product}.

We begin by constructing a chain map $\psi: \cB^\cT(M) \to C_*(\Maps(M\to T))$.

Recall that 
the 0-simplices of the homotopy colimit $\cB^\cT(M)$ 
are a direct sum of chain complexes with the summands indexed by
decompositions of $M$ which have their $n{-}1$-skeletons labeled by $n{-}1$-morphisms
of $\cT$.
Since $\cT = \pi^\infty_{\leq n}(T)$, this means that the summands are indexed by pairs
$(K, \vphi)$, where $K$ is a decomposition of $M$ and $\vphi$ is a continuous
map from the $n{-}1$-skeleton of $K$ to $T$.
The summand indexed by $(K, \vphi)$ is
\[
	\bigotimes_b D_*(b, \vphi),
\]
where $b$ runs through the $n$-cells of $K$ and $D_*(b, \vphi)$ denotes
chains of maps from $b$ to $T$ compatible with $\vphi$.
We can take the product of these chains of maps to get chains of maps from
all of $M$ to $K$.
This defines $\psi$ on 0-simplices.

We define $\psi$ to be zero on $(\ge1)$-simplices.
It is not hard to see that this defines a chain map from 
$\cB^\cT(M)$ to $C_*(\Maps(M\to T))$.

The image of $\psi$ is the subcomplex $G_*\sub C_*(\Maps(M\to T))$ generated by 
families of maps whose support is contained in a disjoint union of balls.
It follows from Lemma \ref{extension_lemma_c} 
that $C_*(\Maps(M\to T))$ is homotopic to a subcomplex of $G_*$.

We will define a map $\phi:G_*\to \cB^\cT(M)$ via acyclic models.
Let $a$ be a generator of $G_*$.
Define $D(a)$ to be the subcomplex of $\cB^\cT(M)$ generated by all 
pairs $(b, \ol{K})$, where $b$ is a generator appearing in an iterated boundary of $a$
and $\ol{K}$ is an index of the homotopy colimit $\cB^\cT(M)$.
(See the proof of Theorem \ref{thm:product} for more details.)
The same proof as of Lemma \ref{lem:d-a-acyclic} shows that $D(a)$ is acyclic.
By the usual acyclic models nonsense, there is a (unique up to homotopy)
map $\phi:G_*\to \cB^\cT(M)$ such that $\phi(a)\in D(a)$.
Furthermore, we may choose $\phi$ such that for all $a$ 
\[
	\phi(a) = (a, K) + r
\]
where $(a, K)$ is a 0-simplex and $r$ is a sum of simplices of dimension 1 and greater.

It is now easy to see that $\psi\circ\phi$ is the identity on the nose.
Another acyclic models argument shows that $\phi\circ\psi$ is homotopic to the identity.
(See the proof of Theorem \ref{thm:product} for more details.)
\end{proof}


\section{Higher-dimensional Deligne conjecture}
\label{sec:deligne}
In this section we prove a higher dimensional version of the Deligne conjecture
about the action of the little disks operad on Hochschild cochains.
The first several paragraphs lead up to a precise statement of the result
(Theorem \ref{thm:deligne} below).
Then we give the proof.


The usual Deligne conjecture (proved variously in \cite{MR1805894, MR1328534, MR2064592, hep-th/9403055, MR1805923}) gives a map
\[
	C_*(LD_k)\otimes \overbrace{Hoch^*(C, C)\otimes\cdots\otimes Hoch^*(C, C)}^{\text{$k$ copies}}
			\to  Hoch^*(C, C) .
\]
Here $LD_k$ is the $k$-th space of the little disks operad and $Hoch^*(C, C)$ denotes Hochschild
cochains.

We now reinterpret $C_*(LD_k)$ and $Hoch^*(C, C)$ in such a way as to make the generalization to
higher dimensions clear.

The little disks operad is homotopy equivalent to configurations of little bigons inside a big bigon,
as shown in Figure \ref{delfig1}.
We can think of such a configuration as encoding a sequence of surgeries, starting at the bottommost interval
of Figure \ref{delfig1} and ending at the topmost interval.
\begin{figure}[t]
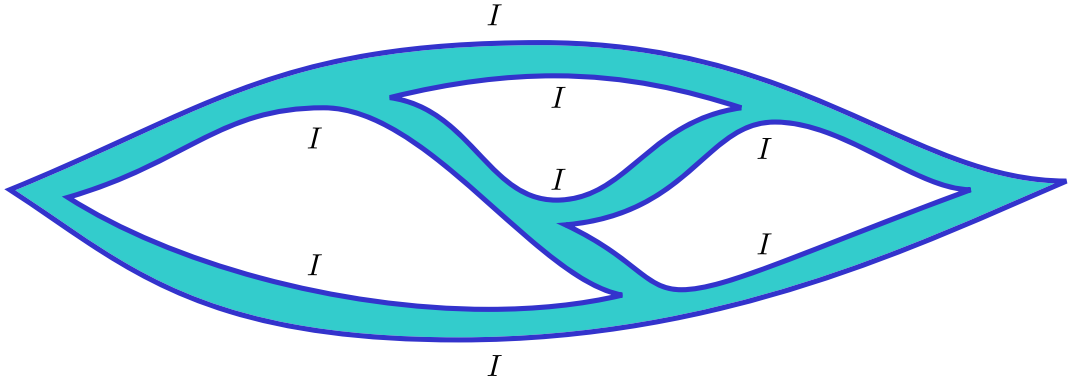

$$\mathfig{.9}{deligne/intervals}$$
\caption{Little bigons, thought of as encoding surgeries}\label{delfig1}\end{figure}
The surgeries correspond to the $k$ bigon-shaped ``holes".
We remove the bottom interval of each little bigon and replace it with the top interval.
To convert this topological operation to an algebraic one, we need, for each hole, an element of
$\hom(\bc^C_*(I_{\text{bottom}}), \bc^C_*(I_{\text{top}}))$, which is homotopy equivalent to $Hoch^*(C, C)$.
So for each fixed configuration we have a map
\[
	 \hom(\bc^C_*(I), \bc^C_*(I))\otimes\cdots
	\otimes \hom(\bc^C_*(I), \bc^C_*(I))  \to  \hom(\bc^C_*(I), \bc^C_*(I)) .
\]
If we deform the configuration, corresponding to a 1-chain in $C_*(LD_k)$, we get a homotopy
between the maps associated to the endpoints of the 1-chain.
Similarly, higher-dimensional chains in $C_*(LD_k)$ give rise to higher homotopies.

We emphasize that in $\hom(\bc^C_*(I), \bc^C_*(I))$ we are thinking of $\bc^C_*(I)$ as a module
for the $A_\infty$ 1-category associated to $\bd I$, and $\hom$ means the 
morphisms of such modules as defined in 
\S\ref{ss:module-morphisms}.

It should now be clear how to generalize this to higher dimensions.
In the sequence-of-surgeries description above, we never used the fact that the manifolds
involved were 1-dimensional.
So we will define, below, the operad of $n$-dimensional surgery cylinders, analogous to mapping
cylinders of homeomorphisms (Figure \ref{delfig2}).
\begin{figure}[t]
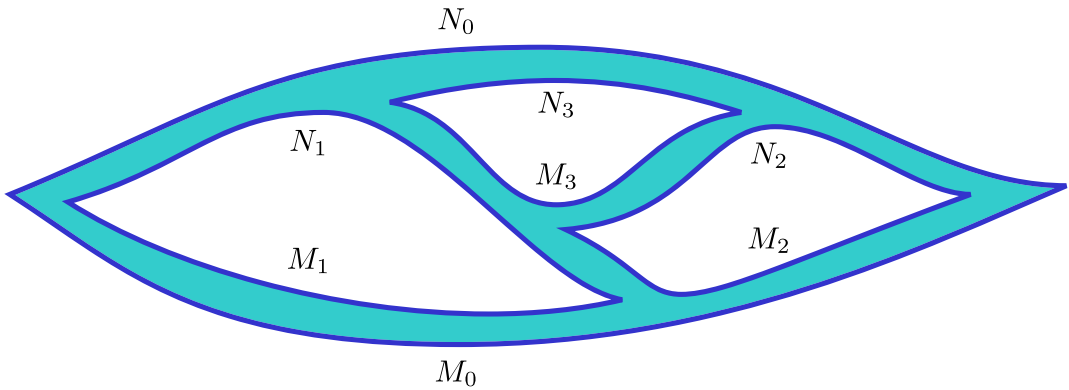

$$\mathfig{.9}{deligne/manifolds}$$
\caption{An $n$-dimensional surgery cylinder}\label{delfig2}
\end{figure}
(Note that $n$ is the dimension of the manifolds we are doing surgery on; the surgery cylinders
are $n{+}1$-dimensional.)

An $n$-dimensional surgery cylinder ($n$-SC for short) consists of:
\begin{itemize}
\item ``Lower" $n$-manifolds $M_0,\ldots,M_k$ and ``upper" $n$-manifolds $N_0,\ldots,N_k$,
with $\bd M_i = \bd N_i = E_i$ for all $i$.
We call $M_0$ and $N_0$ the outer boundary and the remaining $M_i$'s and $N_i$'s the inner
boundaries.
\item Additional manifolds $R_1,\ldots,R_{k}$, with $\bd R_i = E_0\cup \bd M_i = E_0\cup \bd N_i$.
\item Homeomorphisms 
\begin{eqnarray*}
	f_0: M_0 &\to& R_1\cup M_1 \\
	f_i: R_i\cup N_i &\to& R_{i+1}\cup M_{i+1}\;\; \mbox{for}\, 1\le i \le k-1 \\
	f_k: R_k\cup N_k &\to& N_0 .
\end{eqnarray*}
Each $f_i$ should be the identity restricted to $E_0$.
\end{itemize}
We can think of the above data as encoding the union of the mapping cylinders $C(f_0),\ldots,C(f_k)$,
with $C(f_i)$ glued to $C(f_{i+1})$ along $R_{i+1}$
(see Figure \ref{xdfig2}).
\begin{figure}[t]
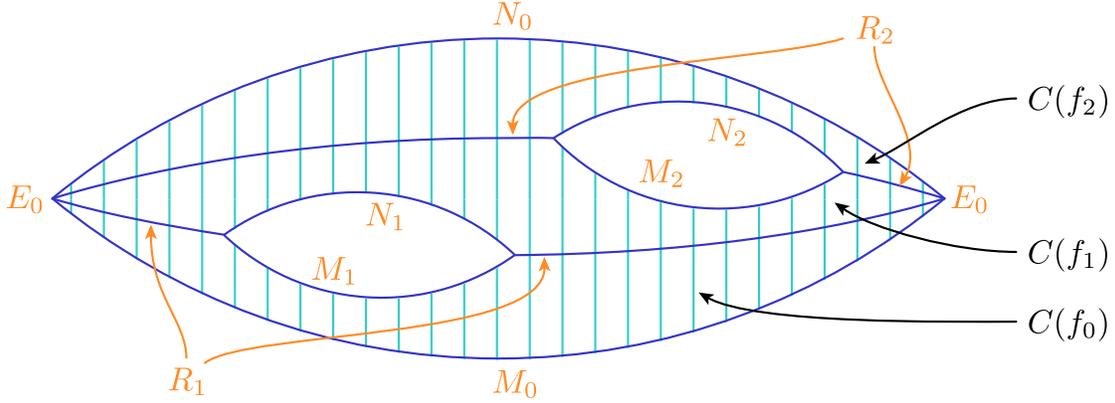

$$\mathfig{.9}{deligne/mapping-cylinders}$$
\caption{An $n$-dimensional surgery cylinder constructed from mapping cylinders}\label{xdfig2}
\end{figure}
We regard two such surgery cylinders as the same if there is a homeomorphism between them which is the 
identity on the boundary and which preserves the 1-dimensional fibers coming from the mapping
cylinders.
More specifically, we impose the following two equivalence relations:
\begin{itemize}
\item If $g: R_i\to R'_i$ is a homeomorphism which restricts to the identity on 
$\bd R_i = \bd R'_i = E_0\cup \bd M_i$, we can replace
\begin{eqnarray*}
	(\ldots, R_{i-1}, R_i, R_{i+1}, \ldots) &\to& (\ldots, R_{i-1}, R'_i, R_{i+1}, \ldots) \\
	(\ldots, f_{i-1}, f_i, \ldots) &\to& (\ldots, g\circ f_{i-1}, f_i\circ g^{-1}, \ldots),
\end{eqnarray*}
leaving the $M_i$ and $N_i$ fixed.
(Keep in mind the case $R'_i = R_i$.)
(See Figure \ref{xdfig3}.)
\begin{figure}[t]
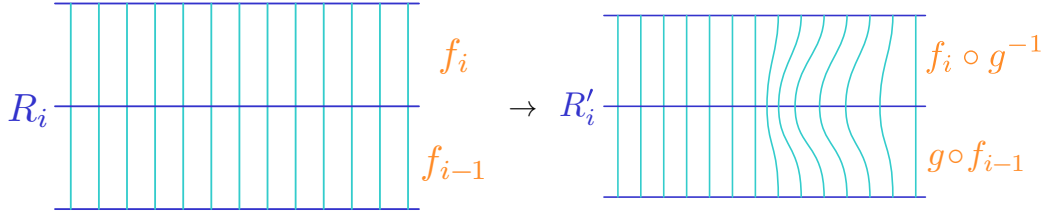

$$\mathfig{.4}{deligne/dfig3a} \to \mathfig{.4}{deligne/dfig3b} $$
\caption{Conjugating by a homeomorphism.}
\label{xdfig3}
\end{figure}
\item If $M_i = M'_i \du M''_i$ and $N_i = N'_i \du N''_i$ (and there is a
compatible disjoint union of $\bd M = \bd N$), we can replace
\begin{eqnarray*}
	(\ldots, M_{i-1}, M_i, M_{i+1}, \ldots) &\to& (\ldots, M_{i-1}, M'_i, M''_i, M_{i+1}, \ldots) \\
	(\ldots, N_{i-1}, N_i, N_{i+1}, \ldots) &\to& (\ldots, N_{i-1}, N'_i, N''_i, N_{i+1}, \ldots) \\
	(\ldots, R_{i-1}, R_i, R_{i+1}, \ldots) &\to& 
						(\ldots, R_{i-1}, R_i\cup M''_i, R_i\cup N'_i, R_{i+1}, \ldots) \\
	(\ldots, f_{i-1}, f_i, \ldots) &\to& (\ldots, f_{i-1}, {\rm{id}}, f_i, \ldots) .
\end{eqnarray*}
(See Figure \ref{xdfig1}.)
\begin{figure}[t]
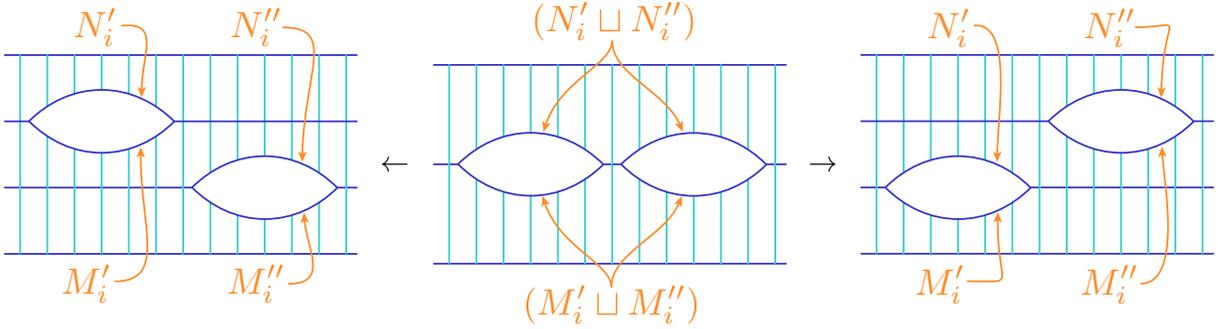

$$\mathfig{.3}{deligne/dfig1a} \leftarrow \mathfig{.3}{deligne/dfig1b} \rightarrow \mathfig{.3}{deligne/dfig1c}$$
\caption{Changing the order of a surgery.}\label{xdfig1}
\end{figure}
\end{itemize}

Note that the second equivalence increases the number of holes (or arity) by 1.
We can make a similar identification with the roles of $M'_i$ and $M''_i$ reversed.
In terms of the ``sequence of surgeries" picture, this says that if two successive surgeries
do not overlap, we can perform them in reverse order or simultaneously.

There is a colored operad structure on $n$-dimensional surgery cylinders, given by gluing the outer boundary
of one cylinder into one of the inner boundaries of another cylinder.
We leave it to the reader to work out a more precise statement in terms of $M_i$'s, $f_i$'s etc.

For fixed $\ol{M} = (M_0,\ldots,M_k)$ and $\ol{N} = (N_0,\ldots,N_k)$, we let
$SC^n_{\ol{M}\ol{N}}$ denote the topological space of all $n$-dimensional surgery cylinders as above.
(Note that in different parts of $SC^n_{\ol{M}\ol{N}}$ the $M_i$'s and $N_i$'s
are ordered differently.)
The topology comes from the spaces
\[
	\Homeo(M_0\to R_1\cup M_1)\times \Homeo(R_1\cup N_1\to R_2\cup M_2)\times
			\cdots\times \Homeo(R_k\cup N_k\to N_0)
\]
and the above equivalence relations.
We will denote the typical element of $SC^n_{\ol{M}\ol{N}}$ by $\ol{f} = (f_0,\ldots,f_k)$.

\medskip

The $n$-SC operad contains the little $n{+}1$-balls operad.
Roughly speaking, given a configuration of $k$ little $n{+}1$-balls in the standard
$n{+}1$-ball, we fiber the complement of the balls by vertical intervals
and let $M_i$ [$N_i$] be the southern [northern] hemisphere of the $i$-th ball.
More precisely, let $x_1,\ldots,x_{n+1}$ be the coordinates of $\r^{n+1}$.
Let $z$ be a point of the $k$-th space of the little $n{+}1$-balls operad, with
little balls $D_1,\ldots,D_k$ inside the standard $n{+}1$-ball.
We assume the $D_i$'s are ordered according to the $x_{n+1}$ coordinate of their centers.
Let $\pi:\r^{n+1}\to \r^n$ be the projection corresponding to $x_{n+1}$.
Let $B\sub\r^n$ be the standard $n$-ball.
Let $M_i$ and $N_i$ be $B$ for all $i$.
Identify $\pi(D_i)$ with $B$ (a.k.a.\ $M_i$ or $N_i$) via translations and dilations (no rotations).
Let $R_i = B\setmin \pi(D_i)$.
Let $f_i = \rm{id}$ for all $i$.
We have now defined a map from the little $n{+}1$-balls operad to the $n$-SC operad,
with contractible fibers.
(The fibers correspond to moving the $D_i$'s in the $x_{n+1}$ 
direction while keeping them disjoint.)

Another familiar subspace of the $n$-SC operad is $\Homeo(M_0\to N_0)$, which corresponds to 
case $k=0$ (no holes).
In this case the surgery cylinder is just a single mapping cylinder.

\medskip

Let $\ol{f} \in SC^n_{\ol{M}\ol{N}}$.
As usual, fix a system of field $\cF$ and let $\bc_*$ denote the blob complex construction based on $\cF$.
Let $\hom(\bc_*(M_i), \bc_*(N_i))$ denote the morphisms from $\bc_*(M_i)$ to $\bc_*(N_i)$,
as modules of the $A_\infty$ 1-category $\bc_*(E_i)$ (see \S\ref{ss:module-morphisms}).
We will define a map
\[
	p(\ol{f}): \hom(\bc_*(M_1), \bc_*(N_1))\ot\cdots\ot\hom(\bc_*(M_k), \bc_*(N_k))
				\to \hom(\bc_*(M_0), \bc_*(N_0)) .
\]
Given $\alpha_i\in\hom(\bc_*(M_i), \bc_*(N_i))$, we define 
$p(\ol{f})(\alpha_1\ot\cdots\ot\alpha_k)$ to be the composition
\[
	\bc_*(M_0)  \stackrel{f_0}{\to} \bc_*(R_1\cup M_1)
				 \stackrel{\id\ot\alpha_1}{\to} \bc_*(R_1\cup N_1)
				 \stackrel{f_1}{\to} \bc_*(R_2\cup M_2) \stackrel{\id\ot\alpha_2}{\to}
				 \cdots  \stackrel{\id\ot\alpha_k}{\to} \bc_*(R_k\cup N_k)
				 \stackrel{f_k}{\to} \bc_*(N_0)
\]
(Recall that the maps $\id\ot\alpha_i$ were defined in \S\ref{ss:module-morphisms}.)
It is easy to check that the above definition is compatible with the equivalence relations
and also the operad structure.
We can reinterpret the above as a chain map
\[
	p: C_0(SC^n_{\ol{M}\ol{N}})\ot \hom(\bc_*(M_1), \bc_*(N_1))\ot\cdots\ot\hom(\bc_*(M_k), \bc_*(N_k))
				\to \hom(\bc_*(M_0), \bc_*(N_0)) .
\]
The main result of this section is that this chain map extends to the full singular
chain complex $C_*(SC^n_{\ol{M}\ol{N}})$.

\begin{thm}
\label{thm:deligne}
There is a collection of chain maps
\[
	C_*(SC^n_{\ol{M}\ol{N}})\otimes \hom(\bc_*(M_1), \bc_*(N_1))\otimes\cdots\otimes 
\hom(\bc_*(M_{k}), \bc_*(N_{k})) \to  \hom(\bc_*(M_0), \bc_*(N_0))
\]
which satisfy the operad compatibility conditions, up to coherent homotopy.
On $C_0(SC^n_{\ol{M}\ol{N}})$ this agrees with the chain map $p$ defined above.
When $k=0$, this coincides with the $C_*(\Homeo(M_0\to N_0))$ action of \S\ref{sec:evaluation}.
\end{thm}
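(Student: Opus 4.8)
The plan is to build the action first on the alternative model $\btc_*$ of the blob complex introduced in \S\ref{ss:alt-def}, where the evaluation maps of Theorem \ref{thm:CH} are honest chain maps (not merely defined up to homotopy) and are \emph{strictly} associative and strictly compatible with gluing (see the proofs of Theorems \ref{thm:CH} and \ref{thm:CH-associativity}), and then to transport everything back to $\bc_*$ along the homotopy equivalence $\bc_*(X)\simeq\btc_*(X)$ of Lemma \ref{lem:bc-btc}. Accordingly I would first record the $\btc_*$-versions of the module morphism complexes of \S\ref{ss:module-morphisms} and of the associated action maps $\id\ot\alpha$, noting that these are likewise strictly associative in the $\btc_*$ setting; then $\hom(\btc_*(M_i),\btc_*(N_i))$ is a genuine chain complex on which everything acts on the nose.

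The construction itself takes place at the level of the homeomorphism spaces out of which $SC^n_{\ol{M}\ol{N}}$ is assembled. Writing a surgery cylinder as a tuple $(f_0,\dots,f_k)$ as in the definition, a product chain $\sigma=\sigma_0\times\cdots\times\sigma_k$ (with $\sigma_i$ a singular chain of the relevant homeomorphism space) together with module morphisms $\alpha_1,\dots,\alpha_k$ is sent to the composite
\[
e(\sigma_k)\circ(\id\ot\alpha_k)\circ\cdots\circ(\id\ot\alpha_1)\circ e(\sigma_0)\ \in\ \hom(\btc_*(M_0),\btc_*(N_0)),
\]
where $e$ is the $\btc_*$ evaluation map and $\id\ot\alpha_i$ is the module-morphism action of \S\ref{ss:module-morphisms}; a general, non-product chain of $SC^n_{\ol{M}\ol{N}}$ is reduced to this case by the Eilenberg--Zilber reduction to products of chains of homeomorphism spaces used in the proof of Lemma \ref{btc-prod}. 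Because every $f_i$ restricts to the identity on the outer boundary $E_0$, compatibility of $e$ with gluing (item 2 of Theorem \ref{thm:CH}) shows each $e(\sigma_i)$ is a map of $\btc_*(E_0)$-modules, hence so is the composite; that the assignment is itself a chain map into $\hom$ follows from the Leibniz rule together with the fact that $e$ and the action maps are chain maps.

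Next I would verify that this composite descends through the two equivalence relations defining $SC^n_{\ol{M}\ol{N}}$. For the relation conjugating some $R_i$ by a homeomorphism $g$, strict associativity of $e$ gives $e(g\circ f_{i-1})=e(g)\circ e(f_{i-1})$ and $e(f_i\circ g^{-1})=e(f_i)\circ e(g^{-1})$, while $e(g^{-1})\circ e(g)=e(\id)=\id$, so the inserted factors cancel. For the relation splitting an $M_i$ and $N_i$ and reordering the corresponding surgery, compatibility of $e$ with disjoint union, the identity $\id\ot(\alpha_i'\ot\alpha_i'')=(\id\ot\alpha_i'')\circ(\id\ot\alpha_i')$, and $e(\id)=\id$ for the inserted trivial mapping cylinder together show the two composites coincide. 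This is the step that forces us into the $\btc_*$ model: in $\bc_*$ these identities hold only up to homotopy and so would not define a map on the quotient. Operad compatibility on $\btc_*$ is then immediate, since gluing surgery cylinders corresponds to substituting one composite into another and $e$ and the action maps are strictly associative. Conjugating by the equivalences of Lemma \ref{lem:bc-btc} (and the induced equivalence on $\hom$ complexes) yields the asserted chain maps for $\bc_*$, now compatible with the operad structure only up to coherent homotopy, the coherence data being supplied by the contractible space of homotopy inverses in that lemma. On $C_0(SC^n_{\ol{M}\ol{N}})$ the map $e$ is the ordinary homeomorphism action, so we recover $p$; and for $k=0$ there are no module morphisms and the construction is simply $e(\sigma_0)\colon\btc_*(M_0)\to\btc_*(N_0)$, which is the $C_*(\Homeo(M_0\to N_0))$ action of \S\ref{sec:evaluation}.

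The main obstacle is organizational rather than a single hard estimate: one must arrange the descent through the equivalence relations and the transport between $\bc_*$ and $\btc_*$ so that \emph{all} the operad structure maps, across every arity and every composition, are simultaneously and coherently homotopy-compatible, and one must make precise the reduction of general chains of $SC^n_{\ol{M}\ol{N}}$ to the homeomorphism-space level. Making the homotopies coherent across the whole operad is where the real work lies, and I would handle it exactly as elsewhere in \S\ref{ss:product-formula} and \S\ref{sec:map-recon} --- either by invoking contractibility of the relevant spaces of choices or by a systematic acyclic models argument.
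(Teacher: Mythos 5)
Your proposal is correct and follows essentially the same route as the paper: the paper likewise builds the map on the products of homeomorphism spaces out of which $SC^n_{\ol{M}\ol{N}}$ is assembled, using the evaluation maps of Theorem \ref{thm:CH} together with Eilenberg--Zilber, and then checks compatibility with the defining relations via locality and associativity of the $C_*(\Homeo(\cdot\to\cdot))$ actions. Your explicit detour through the strict $\btc_*$ model is exactly what underlies the paper's (much terser) argument, since the cited Theorems \ref{thm:CH} and \ref{thm:CH-associativity} are proved there and the ``up to coherent homotopy'' qualifier is attributed to the contractible choices in Lemma \ref{lem:bc-btc}, just as you say.
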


The ``up to coherent homotopy" in the statement is due to the fact that the isomorphisms of 
\ref{lem:bc-btc} and \ref{thm:gluing} are only defined up to a contractible set of homotopies.

If, in analogy to Hochschild cochains, we define elements of $\hom(\bc_*(M), \bc_*(N))$
to be ``blob cochains", we can summarize the above proposition by saying that the $n$-SC operad acts on
blob cochains.
As noted above, the $n$-SC operad contains the little $n{+}1$-balls operad, so this constitutes
a higher dimensional version of the Deligne conjecture for Hochschild cochains and the little 2-disks operad.

\begin{proof}
As described above, $SC^n_{\ol{M}\ol{N}}$ is equal to the disjoint
union of products of homeomorphism spaces, modulo some relations.
By Theorem \ref{thm:CH} and the Eilenberg-Zilber theorem, we have for each such product $P$
a chain map
\[
	C_*(P)\otimes \hom(\bc_*(M_1), \bc_*(N_1))\otimes\cdots\otimes 
\hom(\bc_*(M_{k}), \bc_*(N_{k})) \to  \hom(\bc_*(M_0), \bc_*(N_0)) .
\]
It suffices to show that the above maps are compatible with the relations whereby
$SC^n_{\ol{M}\ol{N}}$ is constructed from the various $P$'s.
This in turn follows easily from the fact that
the actions of $C_*(\Homeo(\cdot\to\cdot))$ are local (compatible with gluing) and associative
(up to coherent homotopy).
\end{proof}

We note that even when $n=1$, the above theorem goes beyond an action of the little disks operad.
$M_i$ could be a disjoint union of intervals, and $N_i$ could connect the end points of the intervals
in a different pattern from $M_i$.
The genus of the surface associated to the surgery cylinder could be greater than zero.

\appendix


\section{The method of acyclic models}  \label{sec:moam}

In this section we recall the method of acyclic models for the reader's convenience. The material presented here is closely modeled on  \cite[Chapter 4]{MR0210112}.
We use this method throughout the paper (c.f. Theorem \ref{thm:product}, Theorem \ref{thm:gluing} and Theorem \ref{thm:map-recon}), as it provides a very convenient way to show the existence of a chain map with desired properties, even when many non-canonical choices are required in order to construct one, and further to show the up-to-homotopy uniqueness of such maps.

Let $F_*$ and $G_*$ be chain complexes.
Assume $F_k$ has a basis $\{x_{kj}\}$
(that is, $F_*$ is free and we have specified a basis).
(In our applications, $\{x_{kj}\}$ will typically be singular $k$-simplices or 
$k$-blob diagrams.)
For each basis element $x_{kj}$ assume we have specified a ``target" $D^{kj}_*\sub G_*$.

We say that a chain map $f:F_*\to G_*$ is {\it compatible} with the above data (basis and targets)
if $f(x_{kj})\in D^{kj}_*$ for all $k$ and $j$.
Let $\Compat(D^\bullet_*)$ denote the subcomplex of maps from $F_*$ to $G_*$
such that the image of each higher homotopy applied to $x_{kj}$ lies in $D^{kj}_*$.

\begin{thm}[Acyclic models]  \label{moam-thm}
Suppose 
\begin{itemize}
\item $D^{k-1,l}_* \sub D^{kj}_*$ whenever $x_{k-1,l}$ occurs in $\bd x_{kj}$
with non-zero coefficient;
\item $D^{0j}_0$ is non-empty for all $j$; and
\item $D^{kj}_*$ is $(k{-}1)$-acyclic (i.e.\ $H_{k-1}(D^{kj}_*) = 0$) for all $k,j$ .
\end{itemize}
Then $\Compat(D^\bullet_*)$ is non-empty.
If, in addition,
\begin{itemize}
\item $D^{kj}_*$ is $m$-acyclic for $k\le m \le k+i$ and for all $k,j$,
\end{itemize}
then $\Compat(D^\bullet_*)$ is $i$-connected.
\end{thm}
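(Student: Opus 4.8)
The plan is to prove both statements by induction on $k$, constructing the chain map (and, in the connectivity statement, the higher homotopies) cell by cell, using acyclicity of the targets $D^{kj}_*$ to make the required lifts.

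First I would prove non-emptiness of $\Compat(D^\bullet_*)$. We build $f$ on basis elements $x_{kj}$ inductively in $k$. For $k=0$: since $D^{0j}_0$ is non-empty, pick any element $f(x_{0j}) \in D^{0j}_0$; we may arrange $f(x_{0j})$ to be a cycle trivially since there is nothing in degree $-1$. For the inductive step, suppose $f$ has been defined on all basis elements of degree $<k$, compatibly with targets and commuting with $\bd$. Given $x_{kj}$, the element $f(\bd x_{kj})$ is defined (each $x_{k-1,l}$ appearing in $\bd x_{kj}$ has $D^{k-1,l}_*\sub D^{kj}_*$ by the first hypothesis, so $f(\bd x_{kj})\in D^{kj}_{k-1}$), and it is a cycle because $f$ commutes with $\bd$ on lower degrees and $\bd^2=0$. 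By the $(k{-}1)$-acyclicity of $D^{kj}_*$, every $(k{-}1)$-cycle in $D^{kj}_*$ bounds, so there exists $y\in D^{kj}_k$ with $\bd y = f(\bd x_{kj})$; set $f(x_{kj})=y$. Extending linearly over the basis gives a chain map compatible with the data. The main point to be careful about here is that the compatibility hypothesis $D^{k-1,l}_*\subset D^{kj}_*$ is exactly what makes $f(\bd x_{kj})$ land in the target where we need to find a preimage.

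Next I would prove the $i$-connectedness statement. Recall $\Compat(D^\bullet_*)$ is a subcomplex of the $\Hom$ complex $\Hom(F_*,G_*)$, so an element of degree $m$ is a degree-$m$ chain homotopy $h$ with $h(x_{kj})\in D^{kj}_{k+m}$ for all $k,j$. To show $\pi_i$ (or rather $H_i$, since $\Compat$ is a chain complex and we interpret ``$i$-connected'' as vanishing of homology in degrees $\le i$) vanishes for $0\le i$: given a degree-$m$ cycle $h$ in $\Compat(D^\bullet_*)$ with $m\le i$, i.e. $\bd h + h\bd = 0$ (with appropriate signs), we must produce a degree-$(m{+}1)$ element $g\in\Compat(D^\bullet_*)$ with $\bd g + g\bd = \pm h$. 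Again we construct $g$ on basis elements by induction on $k$: for each $x_{kj}$, the obstruction to defining $g(x_{kj})$ is an $(k{+}m)$-cycle in $D^{kj}_*$ built from $h(x_{kj})$ and the already-defined values $g(\bd x_{kj})$; by the $m$-acyclicity hypothesis applied with the degree $k+m$ in the range $k\le k+m\le k+i$ (which holds since $m\le i$), this cycle bounds in $D^{kj}_*$, giving the needed lift. Carrying this out for the distinguished basepoint case ($h$ being the difference of two compatible chain maps, $m=0$) shows any two compatible chain maps are chain homotopic through a compatible homotopy, and the higher-$m$ argument handles higher homotopies.

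The main obstacle, and the step that requires the most care, is bookkeeping the signs and the precise indexing in the connectivity argument: one must set up the $\Hom$-complex differential correctly so that ``cycle'' means ``chain homotopyّ'' in the expected sense, verify that the obstruction class at each inductive stage is genuinely a cycle (this uses the inductive hypothesis that $g$ is already a valid partial homotopy on $\bd x_{kj}$ together with $h$ being a cycle), and check that the acyclicity range $k\le k+m\le k+i$ is exactly matched by the hypothesis ``$D^{kj}_*$ is $m$-acyclic for $k\le m\le k+i$.'' None of this is deep, but it is the place where an off-by-one error would break the proof, so I would write it out with explicit degree subscripts throughout rather than suppressing them. The acyclicity lifts themselves are immediate once the obstruction is correctly identified as a cycle of the right degree.
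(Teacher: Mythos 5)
Your proposal is correct and follows essentially the same route as the paper's own (sketched) proof: construct the chain map on basis elements by induction on $k$, using the target inclusions to see that $f(\bd x_{kj})$ is a cycle in $D^{kj}_{k-1}$ and the $(k{-}1)$-acyclicity to lift it, and then run the same inductive lifting argument one homological degree higher to fill in compatible homotopies (and higher homotopies) for the connectivity statement. Your version just spells out the degree bookkeeping in the Hom-complex that the paper leaves implicit.
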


\begin{proof}
(Sketch)
This is a standard result; see, for example, \cite[Chapter 4]{MR0210112}.

We will build a chain map $f\in \Compat(D^\bullet_*)$ inductively.
Choose $f(x_{0j})\in D^{0j}_0$ for all $j$
(possible since $D^{0j}_0$ is non-empty).
Choose $f(x_{1j})\in D^{1j}_1$ such that $\bd f(x_{1j}) = f(\bd x_{1j})$
(possible since $D^{0l}_* \sub D^{1j}_*$ for each $x_{0l}$ in $\bd x_{1j}$
and $D^{1j}_*$ is 0-acyclic).
Continue in this way, choosing $f(x_{kj})\in D^{kj}_k$ such that $\bd f(x_{kj}) = f(\bd x_{kj})$
We have now constructed $f\in \Compat(D^\bullet_*)$, proving the first claim of the theorem.

Now suppose that $D^{kj}_*$ is $k$-acyclic for all $k$ and $j$.
Let $f$ and $f'$ be two chain maps (0-chains) in $\Compat(D^\bullet_*)$.
Using a technique similar to above we can construct a homotopy (1-chain) in $\Compat(D^\bullet_*)$
between $f$ and $f'$.
Thus $\Compat(D^\bullet_*)$ is 0-connected.
Similarly, if $D^{kj}_*$ is $(k{+}i)$-acyclic then we can show that $\Compat(D^\bullet_*)$ is $i$-connected.
\end{proof}


\section{Adapting families of maps to open covers}  \label{sec:localising}

In this appendix we prove some results about adapting families of maps to open covers.
These results are used in Lemma \ref{small-top-blobs} and Theorem \ref{thm:map-recon}.

Let $X$ and $T$ be topological spaces, with $X$ compact.
Let $\cU = \{U_\alpha\}$ be an open cover of $X$ which affords a partition of
unity $\{r_\alpha\}$.
(That is, $r_\alpha : X \to [0,1]$; $r_\alpha(x) = 0$ if $x\notin U_\alpha$;
for fixed $x$, $r_\alpha(x) \ne 0$ for only finitely many $\alpha$; and $\sum_\alpha r_\alpha = 1$.)
Since $X$ is compact, we will further assume that $r_\alpha = 0$ (globally) 
for all but finitely many $\alpha$.

Consider  $C_*(\Maps(X\to T))$, the singular chains on the space of continuous maps from $X$ to $T$.
$C_k(\Maps(X \to T))$ is generated by continuous maps
\[
	f: P\times X \to T ,
\]
where $P$ is some convex linear polyhedron in $\r^k$.
Recall that $f$ is {\it supported} on $S\sub X$ if $f(p, x)$ does not depend on $p$ when
$x \notin S$, and that $f$ is {\it adapted} to $\cU$ if 
$f$ is supported on the union of at most $k$ of the $U_\alpha$'s.
A chain $c \in C_*(\Maps(X \to T))$ is adapted to $\cU$ if it is a linear combination of 
generators which are adapted.

\begin{lemma} \label{basic_adaptation_lemma}
Let $f: P\times X \to T$, as above.
Then there exists
\[
	F: I \times P\times X \to T
\]
such that the following conditions hold.
\begin{enumerate}
\item $F(0, \cdot, \cdot) = f$.
\item We can decompose $P = \cup_i D_i$ so that
the restrictions $F(1, \cdot, \cdot) : D_i\times X\to T$ are all adapted to $\cU$.
\item If $f$ has support $S\sub X$, then
$F: (I\times P)\times X\to T$ (a $k{+}1$-parameter family of maps) also has support $S$.
Furthermore, if $Q$ is a convex linear subpolyhedron of $\bd P$ and $f$ restricted to $Q$
has support $S' \subset X$, then
$F: (I\times Q)\times X\to T$ also has support $S'$.
\item Suppose both $X$ and $T$ are smooth manifolds, metric spaces, or PL manifolds, and 
let $\cX$ denote the subspace of $\Maps(X \to T)$ consisting of immersions or of diffeomorphisms (in the smooth case), 
bi-Lipschitz homeomorphisms (in the metric case), or PL homeomorphisms (in the PL case).
 If $f$ is smooth, Lipschitz or PL, as appropriate, and $f(p, \cdot):X\to T$ is in $\cX$ for all $p \in P$
then $F(t, p, \cdot)$ is also in $\cX$ for all $t\in I$ and $p\in P$.
\end{enumerate}
\end{lemma}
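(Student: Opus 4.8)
The standard trick is to build $F$ by a finite iterated ``pushing'' construction, one step for each open set $U_\alpha$ that carries nontrivial partition-of-unity weight. Order the finitely many relevant indices as $\alpha_1,\dots,\alpha_N$. The plan is to construct a homotopy $F$ as a concatenation $F = F_N \circ \cdots \circ F_1$ (reparametrizing $I$ into $N$ subintervals), where the $j$-th stage $F_j$ uses the function $r_{\alpha_j}$ to deform the current family so that its dependence on the parameter gets ``concentrated'' near $U_{\alpha_j}$. Concretely, at stage $j$, working with a family $g:P\times X\to T$, one subdivides $P$ using the level sets of the composite $P\times X \xrightarrow{g} T$ pulled back against the relevant combinatorial data of $r_{\alpha_j}$; more simply, one uses the affine structure on $P$ and a triangulation of $P$ fine enough (depending on $g$ and $r_{\alpha_j}$) so that on each simplex $D_i$ the map $F_j(1,\cdot,\cdot)|_{D_i\times X}$ only varies in the $p$-direction over $U_{\alpha_1}\cup\cdots\cup U_{\alpha_j}$. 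The key formula for the push is the usual one: interpolate between $g(p,x)$ and $g(\rho_j(p,x),x)$ where $\rho_j$ collapses the parameter to a reference value outside a neighborhood of $U_{\alpha_j}$ weighted by $r_{\alpha_j}(x)$. After all $N$ stages, the support of the variation of $F(1,\cdot,\cdot)$ over each simplex of the common refinement of the $N$ triangulations lies in at most $k$ of the $U_\alpha$'s, because a $k$-simplex $D_i$ can only ``see'' parameter variation in the sets corresponding to at most $k$ of the collapse directions — this is the same dimension count that appears in the proof that the nerve of a good cover computes the homotopy type, and it is where one must be slightly careful.

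\emph{Step-by-step.} First I would fix the finite subset of indices $\alpha$ with $r_\alpha \not\equiv 0$ and set up the reference values and the single-step push $\rho_\alpha$, checking that $F_\alpha(0,\cdot,\cdot) = g$ and that $F_\alpha(t,p,\cdot)$ agrees with $g(p',\cdot)$ (for the collapsed $p'$) to the extent $r_\alpha$ is supported. Second, I would establish the support statement (3) for a single step: the push only moves $g(p,x)$ when $r_\alpha(x)\neq 0$, hence when $x\in U_\alpha$; since the original $f$ already has support $S$, and the collapse direction is along $P$, the composite still has support contained in $S$, and likewise when restricted to a face $Q\subset\bd P$. Third, I would iterate over $\alpha_1,\dots,\alpha_N$, tracking that support is preserved at every stage (each stage only moves points over some $U_{\alpha_j}\subseteq S$), and take the common refinement of the triangulations of $P$ produced at each stage to get the decomposition $P = \cup_i D_i$; then verify the $k$-at-a-time adaptedness by the dimension count. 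Fourth, conclude (4): each $F_j(t,p,\cdot)$ is a ``finite interpolation'' built by composing $g$ with a reparametrization of the domain $X$ — wait, no: the push is in the \emph{target}, so I instead argue that for $\cX$ a space closed under the relevant interpolation (which is automatic for immersions/diffeos/bi-Lipschitz/PL maps when the interpolation is performed through a straight-line or PL homotopy staying inside $\cX$, using convexity of $T$ locally, or more robustly by performing the push via precomposition with a homeomorphism of $X$ supported near $U_\alpha$ rather than postcomposition) the property is retained. This last point forces a choice: I would actually set up the push as \emph{precomposition} with a family of homeomorphisms of $X$ (a ``finger move'' expanding a collar of $U_\alpha$), which makes (4) transparent since $\cX$ is closed under composition with such homeomorphisms, and makes (1)--(3) equally clean.

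\textbf{Main obstacle.} The delicate point is the dimension count in (2): one must arrange the $N$ successive subdivisions of $P$ so that, on each simplex of the final common refinement, the parameter-dependence is genuinely confined to a union of at most $k = \dim P$ of the $U_\alpha$'s, not merely to \emph{some} finite union. This requires choosing each stage's triangulation compatibly with the previous ones (a ``general position for partitions of unity'' argument, essentially the barycentric-subdivision lemma underlying \v{C}ech--singular comparison), and checking that a push at stage $j$ does not reintroduce dependence on $U_{\alpha_i}$ for $i<j$ outside the simplices already adapted to it. I expect this bookkeeping — rather than any single analytic estimate — to be the technical heart of the proof; everything else (conditions (1), (3), (4)) follows formally once the push is defined as precomposition with compactly-supported homeomorphisms of $X$ and one checks naturality under restriction to faces of $P$.
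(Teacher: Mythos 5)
Your final choice of construction --- performing the ``push'' by precomposing $f(p,\cdot)$ with a family of homeomorphisms of $X$ supported near $U_\alpha$ --- cannot deliver condition (2), and this is a genuine gap rather than a bookkeeping issue. Adaptedness requires making $F(1,p,x)$ literally independent of $p$ for all $x$ outside a union of at most $k$ of the $U_\alpha$'s; but if $h_p$ is a finger move supported near $U_\alpha$, then for $x$ outside that support $F(1,p,x)=f(p,h_p(x))=f(p,x)$, i.e.\ the parameter dependence is left completely untouched exactly where you need to kill it. Postcomposition in $T$ is also unavailable, since $T$ is an arbitrary space with no linear or geometric structure. The only place where interpolation is possible is the parameter polyhedron $P$ itself, which is convex and linear: this is what the paper does, setting $F(t,p,x)=f(u(t,p,x),x)$ with $u(1,p,x)$ a weighted average $\sum_\alpha r_\alpha(x)\,p(D,\alpha)$ of reference points chosen in the top cells of general-position cell decompositions $K_\alpha$ of $P$ (one per cover index), extended over a handle decomposition of the common refinement. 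The ``at most $k$ sets'' count then falls out because on a handle of index $k-j$ only $j$ of the indices have varying reference points, so $F(1,\cdot,x)$ can depend on $p$ only when $x$ lies in the corresponding $j\le k$ sets $U_\beta$. Your first sketch (collapsing the parameter, weighted by $r_{\alpha_j}(x)$) was essentially this construction, and the iteration-order worry you flag is precisely what the simultaneous, general-position version avoids; but you abandoned it in favor of the precomposition variant.

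Second, even for the correct parameter-collapse construction, condition (4) is not formal and is not obtained from closure of $\cX$ under composition: the map $x\mapsto f(u(t,p,x),x)$ is not $f(p',\cdot)$ composed with a homeomorphism of $X$, because $u$ depends on $x$ through the partition of unity. In the smooth case one must estimate
\begin{equation*}
\frac{\partial F}{\partial x} \;=\; \frac{\partial f}{\partial x} + \frac{\partial f}{\partial p}\,\frac{\partial u}{\partial x},
\end{equation*}
using that $\partial f/\partial x$ is nonsingular and bounded away from zero and $\partial f/\partial p$ is bounded, and then make $\partial u/\partial x$ small by taking the decompositions $K_\alpha$ sufficiently fine (so that the reference points $p(D,\alpha)$ entering the weighted average are close together); the bi-Lipschitz and PL cases are handled by the analogous Lipschitz-constant estimates. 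So the quantitative ``fineness'' hypothesis on the decompositions of $P$ is an essential part of the proof of (4), not something that follows once the push is set up.
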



\begin{proof}
Our homotopy will have the form
\eqar{
    F: I \times P \times X &\to& X \\
    (t, p, x) &\mapsto& f(u(t, p, x), x)
}
for some function
\eq{
    u : I \times P \times X \to P .
}

First we describe $u$, then we argue that it makes the conclusions of the lemma true.

For each cover index $\alpha$ choose a cell decomposition $K_\alpha$ of $P$
such that the various $K_\alpha$ are in general position with respect to each other.
If we are in one of the cases of item 4 of the lemma, also choose $K_\alpha$
sufficiently fine as described below.

\def\jj{\tilde{L}}
Let $L$ be a common refinement of all the $K_\alpha$'s.
Let $\jj$ denote the handle decomposition of $P$ corresponding to $L$.
Each $i$-handle $C$ of $\jj$ has an $i$-dimensional tangential coordinate and,
more importantly for our purposes, a $k{-}i$-dimensional normal coordinate.
We will typically use the same notation for $i$-cells of $L$ and the 
corresponding $i$-handles of $\jj$.

For each (top-dimensional) $k$-cell $C$ of each $K_\alpha$, choose a point $p(C) \in C \sub P$.
If $C$ meets a subpolyhedron $Q$ of $\bd P$, we require that $p(C)\in Q$.
(It follows that if $C$ meets both $Q$ and $Q'$, then $p(C)\in Q\cap Q'$.
Ensuring this is possible corresponds to some mild constraints on the choice of the $K_\alpha$.)

Let $D$ be a $k$-handle of $\jj$.
For each $\alpha$ let $C(D, \alpha)$ be the $k$-cell of $K_\alpha$ which contains $D$
and let $p(D, \alpha) = p(C(D, \alpha))$.

For $p \in D$ we define
\eq{
    u(t, p, x) = (1-t)p + t \sum_\alpha r_\alpha(x) p(D, \alpha) .
}
(Recall that $P$ is a convex linear polyhedron, so the weighted average of points of $P$
makes sense.)

Thus far we have defined $u(t, p, x)$ when $p$ lies in a $k$-handle of $\jj$.
We will now extend $u$ inductively to handles of index less than $k$.

Let $E$ be a $k{-}1$-handle.
$E$ is homeomorphic to $B^{k-1}\times [0,1]$, and meets
the $k$-handles at $B^{k-1}\times\{0\}$ and $B^{k-1}\times\{1\}$.
Let $\eta : E \to [0,1]$, $\eta(x, s) = s$ be the normal coordinate
of $E$.
Let $D_0$ and $D_1$ be the two $k$-handles of $\jj$ adjacent to $E$.
There is at most one index $\beta$ such that $C(D_0, \beta) \ne C(D_1, \beta)$.
(If there is no such index, choose $\beta$
arbitrarily.)
For $p \in E$, define
\eq{
    u(t, p, x) = (1-t)p + t \left( \sum_{\alpha \ne \beta} r_\alpha(x) p(D_0, \alpha)
            + r_\beta(x) (\eta(p) p(D_0, \beta) + (1-\eta(p)) p(D_1, \beta)) \right) .
}

Now for the general case.
Let $E$ be a $k{-}j$-handle.
Let $D_0,\ldots,D_a$ be the $k$-handles adjacent to $E$.
There is a subset of cover indices $\cN$, of cardinality $j$, 
such that if $\alpha\notin\cN$ then
$p(D_u, \alpha) = p(D_v, \alpha)$ for all $0\le u,v \le a$.
For fixed $\beta\in\cN$ let $\{q_{\beta i}\}$ be the set of values of 
$p(D_u, \beta)$ for $0\le u \le a$.
Recall the product structure $E = B^{k-j}\times B^j$.
Inductively, we have defined functions $\eta_{\beta i}:\bd B^j \to [0,1]$ such that
$\sum_i \eta_{\beta i} = 1$ for all $\beta\in \cN$.
Choose extensions of $\eta_{\beta i}$ to all of $B^j$.
Via the projection $E\to B^j$, regard $\eta_{\beta i}$ as a function on $E$.
Now define, for $p \in E$,
\begin{equation}
\label{eq:u}
    u(t, p, x) = (1-t)p + t \left(
            \sum_{\alpha \notin \cN} r_\alpha(x) p(D_0, \alpha)
                + \sum_{\beta \in \cN} r_\beta(x) \left( \sum_i \eta_{\beta i}(p) \cdot q_{\beta i} \right)
             \right) .
\end{equation}

This completes the definition of $u: I \times P \times X \to P$. 
The formulas above are consistent: for $p$ at the boundary between a $k-j$-handle and 
a $k-(j+1)$-handle the corresponding expressions in Equation \eqref{eq:u} agree, 
since one of the normal coordinates becomes $0$ or $1$. 
Note that if $Q\sub \bd P$ is a convex linear subpolyhedron, then $u(I\times Q\times X) \sub Q$.

\medskip

Next we verify that $u$ affords $F$ the properties claimed in the statement of the lemma.

Since $u(0, p, x) = p$ for all $p\in P$ and $x\in X$, $F(0, p, x) = f(p, x)$ for all $p$ and $x$.
Therefore $F$ is a homotopy from $f$ to something.

\medskip

Next we show that for each handle $D$ of $J$, $F(1, \cdot, \cdot) : D\times X \to X$
is a singular cell adapted to $\cU$.
Let $k-j$ be the index of $D$.
Referring to Equation \eqref{eq:u}, we see that $F(1, p, x)$ depends on $p$ only if 
$r_\beta(x) \ne 0$ for some $\beta\in\cN$, i.e.\ only if
$x\in \bigcup_{\beta\in\cN} U_\beta$.
Since the cardinality of $\cN$ is $j$ which is less than or equal to $k$,
this shows that $F(1, \cdot, \cdot) : D\times X \to X$ is adapted to $\cU$.

\medskip

Next we show that $F$ does not increase supports.
If $f(p,x) = f(p',x)$ for all $p,p'\in P$,
then 
\[
	F(t, p, x) = f(u(t,p,x),x) = f(u(t',p',x),x) = F(t',p',x)
\]
for all $(t,p)$ and $(t',p')$ in $I\times P$.
Similarly, if $f(q,x) = f(q',x)$ for all $q,q'\in Q\sub \bd P$,
then 
\[
	F(t, q, x) = f(u(t,q,x),x) = f(u(t',q',x),x) = F(t',q',x)
\]
for all $(t,q)$ and $(t',q')$ in $I\times Q$.
(Recall that we arranged above that $u(I\times Q\times X) \sub Q$.)

\medskip

Now for claim 4 of the lemma.
Assume that $X$ and $T$ are smooth manifolds and that $f$ is a smooth family of diffeomorphisms.
We must show that we can choose the $K_\alpha$'s and $u$ so that $F(t, p, \cdot)$ is a 
diffeomorphism for all $t$ and $p$.
It suffices to 
show that the derivative $\pd{F}{x}(t, p, x)$ is non-singular for all $(t, p, x)$.
We have
\eq{
    \pd{F}{x} = \pd{f}{x} + \pd{f}{p} \pd{u}{x} .
}
Since $f$ is a family of diffeomorphisms and $X$ and $P$ are compact, 
$\pd{f}{x}$ is non-singular and bounded away from zero.
Also, since $f$ is smooth $\pd{f}{p}$ is bounded.
Thus if we can insure that $\pd{u}{x}$ is sufficiently small, we are done.
It follows from Equation \eqref{eq:u} above that $\pd{u}{x}$ depends on $\pd{r_\alpha}{x}$
(which is bounded)
and the differences amongst the various $p(D_0,\alpha)$'s and $q_{\beta i}$'s.
These differences are small if the cell decompositions $K_\alpha$ are sufficiently fine.
This completes the proof that $F$ is a homotopy through diffeomorphisms.

If we replace ``diffeomorphism" with ``immersion" in the above paragraph, the argument goes
through essentially unchanged.

Next we consider the case where $f$ is a family of bi-Lipschitz homeomorphisms.
Recall that we assume that $f$ is Lipschitz in the $P$ direction as well.
The argument in this case is similar to the one above for diffeomorphisms, with
bounded partial derivatives replaced by Lipschitz constants.
Since $X$ and $P$ are compact, there is a universal bi-Lipschitz constant that works for 
$f(p, \cdot)$ for all $p$.
By choosing the cell decompositions $K_\alpha$ sufficiently fine,
we can insure that $u$ has a small Lipschitz constant in the $X$ direction.
This allows us to show that $F(t, p, \cdot)$ has a bi-Lipschitz constant
close to the universal bi-Lipschitz constant for $f$.

Since PL homeomorphisms are bi-Lipschitz, we have established this last remaining case of claim 4 of the lemma as well.
\end{proof}


\noop { 

The above proof doesn't work for homeomorphisms which are merely continuous.
The $k=1$ case for plain, continuous homeomorphisms 
is more or less equivalent to Corollary 1.3 of \cite{MR0283802}.
The proof found in \cite{MR0283802} of that corollary can be adapted to many-parameter families of
homeomorphisms:

\begin{lemma} \label{basic_adaptation_lemma_2}
Lemma \ref{basic_adaptation_lemma} holds for continuous homeomorphisms
in item 4.
\end{lemma}

\begin{proof}
The proof is similar to the proof of Corollary 1.3 of \cite{MR0283802}.

Since $X$ is compact, we may assume without loss of generality that the cover $\cU$ is finite.
Let $\cU = \{U_\alpha\}$, $1\le \alpha\le N$.

We will need some wiggle room, so for each $\alpha$ choose $2N$ additional open sets
\[
	U_\alpha = U_\alpha^0 \supset U_\alpha^\frac12 \supset U_\alpha^1 \supset U_\alpha^\frac32 \supset \cdots \supset U_\alpha^N
\]
so that for each fixed $i$ the set $\cU^i = \{U_\alpha^i\}$ is an open cover of $X$, and also so that
the closure $\ol{U_\alpha^i}$ is compact and $U_\alpha^{i-\frac12} \supset \ol{U_\alpha^i}$.

Let $P$ be some $k$-dimensional polyhedron and $f:P\to \Homeo(X)$.
After subdividing $P$, we may assume that there exists $g\in \Homeo(X)$
such that $g^{-1}\circ f(P)$ is contained in a small neighborhood of the 
identity in $\Homeo(X)$.
The sense of ``small" we mean will be explained below.
It depends only on $\cU$ and the choice of $U_\alpha^i$'s.

Our goal is to homotope $P$, rel boundary, so that it is adapted to $\cU$.
By the local contractibility of $\Homeo(X)$ (Corollary 1.1 of \cite{MR0283802}), 
it suffices to find $f':P\to \Homeo(X)$ such that $f' = f$ on $\bd P$ and $f'$ is adapted to $\cU$.

We may assume, inductively, that the restriction of $f$ to $\bd P$ is adapted to $\cU^N$.
So $\bd P = \sum Q_\beta$, and the support of $f$ restricted to $Q_\beta$ is $V_\beta^N$, the union of $k-1$ of
the $U_\alpha^N$'s.  Define $V_\beta^i \sup V_\beta^N$ to be the corresponding union of $k-1$
of the $U_\alpha^i$'s.

Define
\[
	W_j^i = U_1^i \cup U_2^i \cup \cdots \cup U_j^i .
\]

By the local contractibility of $\Homeo(X)$ (Corollary 1.1 of \cite{MR0283802}), 

We will construct a sequence of maps $f_i : \bd P\to \Homeo(X)$, for $i = 0, 1, \ldots, N$, with the following properties:
\begin{itemize}
\item[(A)] $f_0 = f|_{\bd P}$;
\item[(B)] $f_i = g$ on $W_i^i$;
\item[(C)] $f_i$ restricted to $Q_\beta$ has support contained in $V_\beta^{N-i}$; and
\item[(D)] there is a homotopy $F_i : \bd P\times I \to \Homeo(X)$ from $f_{i-1}$ to $f_i$ such that the 
support of $F_i$ restricted to $Q_\beta\times I$ is contained in $U_i^i\cup V_\beta^{N-i}$.
\nn{check this when done writing}
\end{itemize}

Once we have the $F_i$'s as in (D), we can finish the argument as follows.
Assemble the $F_i$'s into a map $F: \bd P\times [0,N] \to \Homeo(X)$.
$F$ is adapted to $\cU$ by (D).
$F$ restricted to $\bd P\times\{N\}$ is constant on $W_N^N = X$ by (B).
We can therefore view $F$ as a map $f'$ from $\Cone(\bd P) \cong P$ to $\Homeo(X)$
which is adapted to $\cU$.

The homotopies $F_i$ will be composed of three types of pieces, $A_\beta$, $B_\beta$ and $C$, 
as illustrated in Figure \nn{xxxx}.
($A_\beta$, $B_\beta$ and $C$ also depend on $i$, but we are suppressing that from the notation.)
The homotopy $A_\beta : Q_\beta \times I \to \Homeo(X)$ will arrange that $f_i$ agrees with $g$
on $U_i^i \setmin V_\beta^{N-i+1}$.
The homotopy $B_\beta : Q_\beta \times I \to \Homeo(X)$ will extend the agreement with $g$ to all of $U_i^i$.
The homotopies $C$ match things up between $\bd Q_\beta \times I$ and $\bd Q_{\beta'} \times I$ when
$Q_\beta$ and $Q_{\beta'}$ are adjacent.

Assume inductively that we have defined $f_{i-1}$.

Now we define $A_\beta$.
Choose $q_0\in Q_\beta$.
Theorem 5.1 of \cite{MR0283802} implies that we can choose a homotopy $h:I \to \Homeo(X)$, with $h(0)$ the identity, such that
\begin{itemize}
\item[(E)] the support of $h$ is contained in $U_i^{i-1} \setmin W_{i-1}^{i-\frac12}$; and
\item[(F)] $h(1) \circ f_{i-1}(q_0) = g$ on $U_i^i$.
\end{itemize}
Define $A_\beta$ by
\[
	A_\beta(q, t) = h(t) \circ f_{i-1}(q) .
\]
It follows that
\begin{itemize}
\item[(G)] $A_\beta(q,1) = A(q',1)$, for all $q,q' \in Q_\beta$, on $X \setmin V_\beta^{N-i+1}$;
\item[(H)] $A_\beta(q, 1) = g$ on $(U_i^i \cup W_{i-1}^{i-\frac12})\setmin V_\beta^{N-i+1}$; and
\item[(I)] the support of $A_\beta$ is contained in $U_i^{i-1} \cup V_\beta^{N-i+1}$.
\end{itemize}

Next we define $B_\beta$.
Theorem 5.1 of \cite{MR0283802} implies that we can choose a homotopy $B_\beta:Q_\beta\times I\to \Homeo(X)$
such that
\begin{itemize}
\item[(J)] $B_\beta(\cdot, 0) = A_\beta(\cdot, 1)$;
\item[(K)] $B_\beta(q,1) = g$ on $W_i^i$;
\item[(L)] the support of $B_\beta(\cdot,1)$ is contained in $V_\beta^{N-i}$; and
\item[(M)] the support of $B_\beta$ is contained in $U_i^i \cup V_\beta^{N-i}$.
\end{itemize}

All that remains is to define the ``glue" $C$ which interpolates between adjacent $Q_\beta$ and $Q_{\beta'}$.
First consider the $k=2$ case.
(In this case Figure \nn{xxxx} is literal rather than merely schematic.)
Let $q = Q_\beta \cap Q_{\beta'}$ be a point on the boundaries of both $Q_\beta$ and $Q_{\beta'}$.
We have an arc of Homeomorphisms, composed of $B_\beta(q, \cdot)$, $A_\beta(q, \cdot)$, 
$A_{\beta'}(q, \cdot)$ and $B_{\beta'}(q, \cdot)$, which connects $B_\beta(q, 1)$ to $B_{\beta'}(q, 1)$.

\nn{Hmmmm..... I think there's a problem here}

\nn{resume revising here}

\nn{scraps:}

To apply Theorem 5.1 of \cite{MR0283802}, the family $f(P)$ must be sufficiently small,
and the subdivision mentioned above is chosen fine enough to insure this.

\end{proof}

} 

\begin{lemma} \label{extension_lemma_c}
Let $\cX_*$ be any of $C_*(\Maps(X \to T))$ or singular chains on the 
subspace of $\Maps(X\to T)$ consisting of immersions, diffeomorphisms, 
bi-Lipschitz homeomorphisms, or PL homeomorphisms.
Let $G_* \subset \cX_*$ denote the chains adapted to an open cover $\cU$
of $X$.
Then $G_*$ is a strong deformation retract of $\cX_*$.
\end{lemma}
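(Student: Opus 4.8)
The statement asserts that $G_*$, the subcomplex of chains in $\cX_*$ which are adapted to the open cover $\cU$, is a strong deformation retract of $\cX_*$. The natural strategy is to promote the one-simplex-at-a-time homotopy produced by Lemma \ref{basic_adaptation_lemma} into a single coherent chain homotopy $H : \cX_* \to \cX_{*+1}$ satisfying $\bd H + H\bd = \id - \rho$ for a retraction $\rho : \cX_* \to G_*$, with the additional properties that $\rho|_{G_*} = \id$ and $H|_{G_*} = 0$ (these last two are exactly what upgrades a homotopy equivalence to a \emph{strong deformation retract}). I would build $H$ by induction on the degree $k$ of a generating singular simplex $f : P^k \times X \to T$, using Lemma \ref{basic_adaptation_lemma} to supply, at each stage, a family $F : I \times P \times X \to T$ which is a homotopy rel boundary from $f$ to a chain adapted to $\cU$, and which moreover lies in whichever of the subspaces $\cX$ we are working with (smooth maps, immersions, diffeomorphisms, bi-Lipschitz or PL homeomorphisms) by item 4 of that lemma.

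\textbf{Key steps, in order.} First I would set up the inductive framework: assume $H$ has been defined on all generators of degree $< k$ so that $\bd H + H \bd = \id - \rho$ holds there, $H$ vanishes on adapted generators, and the support-control clause (item 3 of Lemma \ref{basic_adaptation_lemma}) is respected, so that supports are not enlarged. Then, given a generator $f$ of degree $k$, its boundary $\bd f$ is a sum of generators of degree $k-1$; the element $f - H(\bd f)$ is a cycle-like chain whose boundary is already adapted (because $\rho(\bd f)$ is adapted and $\bd(f - H\bd f) = \bd f - (\bd f - \rho\bd f - H\bd\bd f) = \rho \bd f$). Now apply Lemma \ref{basic_adaptation_lemma} to $f$, subdividing $P = \cup_i D_i$ as in its conclusion (2), to obtain $F : I \times P \times X \to T$; declare $H(f)$ to be (the chain represented by) $F$, suitably triangulated, and $\rho(f) := F(1,\cdot,\cdot) = \sum_i F(1,\cdot,\cdot)|_{D_i}$, which is adapted by construction. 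The boundary-compatibility clause of Lemma \ref{basic_adaptation_lemma} (item 3, the part about convex linear subpolyhedra $Q \subset \bd P$) is what guarantees that on $\bd P$ the homotopy $F$ restricts to the previously-chosen $H(\bd f)$, so that $\bd H(f) + H(\bd f) = f - \rho(f)$ as required. If $f$ is already adapted, take $F$ constant, so $H(f) = 0$ and $\rho(f) = f$; this gives the strong deformation retract property. Finally I would check that $\rho$ is a chain map and that $H$ is compatible with the simplicial face maps so that it assembles into a genuine chain homotopy (this is a bookkeeping verification using that $F$ on faces of $P$ agrees with the inductively-defined data).

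\textbf{The main obstacle.} The serious point is the coherent choice of the subdivisions $P = \cup_i D_i$ and of the homotopies $F$ across all faces of $P$ simultaneously: Lemma \ref{basic_adaptation_lemma} is applied one generator at a time, but for $H$ to be a chain homotopy the homotopy assigned to $f$ must restrict on each face $Q$ of $\bd P$ to \emph{exactly} the homotopy already assigned to that face in lower degree, not merely to something homotopic to it. This forces the induction to carry along enough rigidity --- essentially, one must run the construction of Lemma \ref{basic_adaptation_lemma} with the cell decompositions $K_\alpha$ on $P$ chosen to restrict to the previously fixed decompositions on $\bd P$, which the proof of that lemma explicitly permits (``Ensuring this is possible corresponds to some mild constraints on the choice of the $K_\alpha$''). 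So the work is in organizing the induction so that these compatibility constraints can be met at every stage, and in verifying that, in the smooth/immersion/bi-Lipschitz/PL cases, the ``sufficiently fine'' requirement on the $K_\alpha$ from item 4 can be imposed consistently with the boundary constraints. Once that is arranged, the identities $\bd H + H\bd = \id - \rho$, $\rho|_{G_*} = \id$, $H|_{G_*} = 0$, and $\bd\rho = \rho\bd$ all follow formally, and $G_*$ is a strong deformation retract of $\cX_*$.
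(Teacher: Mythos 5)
Your overall strategy (build the chain homotopy $H$ and retraction $\rho$ generator-by-generator from Lemma \ref{basic_adaptation_lemma}) founders on exactly the step you flag: you need the homotopy $F$ chosen for a $k$-simplex $f:P\times X\to T$ to restrict, on each face $Q\subset\bd P$, to the homotopy $H(f|_Q)$ chosen earlier for that face, and you assert that item 3 of Lemma \ref{basic_adaptation_lemma} ``guarantees'' this. It does not: item 3 controls only the \emph{support} of $F$ on $I\times Q$ (it is no larger than the support of $f|_Q$); it says nothing about the values of $F$ there. The homotopy produced by the lemma depends on auxiliary choices (the partition of unity, the cell decompositions $K_\alpha$ of $P$, the points $p(C)$, the interpolating functions $\eta$, and in the diffeomorphism/bi-Lipschitz/PL cases a fineness condition depending on $f$), and there is no reason these agree, on a shared face, for the many different $k$-simplices having that face. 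The remark you quote (``Ensuring this is possible corresponds to some mild constraints\dots'') only arranges $p(C)\in Q$, i.e.\ that the homotopy of a face stays within that face (support control again); it does not let you prescribe the homotopy on $\bd P$ to equal previously fixed data. To run your induction you would need a genuinely relative version of Lemma \ref{basic_adaptation_lemma} (extend a given homotopy from $I\times\bd P\times X$ over $I\times P\times X$, with the fineness issues of item 4 handled rel boundary), which is neither stated nor proved in the paper.

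The paper's proof sidesteps this coherence problem entirely. It reduces the strong-deformation-retract statement to the standard criterion that for every generator $f$ of $\cX_k$ with $\bd f\in G_{k-1}$ there exist $h\in\cX_{k+1}$ and $g\in G_k$ with $\bd h = f+g$, and then takes $h=F$ from Lemma \ref{basic_adaptation_lemma} applied to $f$ alone: $\bd F$ consists of $f$, of $F(1,\cdot,\cdot)$, and of the restrictions of $F$ to $I\times Q_i$; part 2 makes $F(1,\cdot,\cdot)$ adapted, and part 3 (support control) makes each side term adapted because $f|_{Q_i}$ was already adapted. The side terms need not vanish or match any previously chosen homotopies --- they are simply absorbed into $g$. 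If you want to retain your explicit $(H,\rho)$ picture, apply the adaptation lemma after correcting by $H(\bd f)$ and demand only adaptedness, not equality, of the prism's side faces; demanding strict face compatibility is both unsupported by the lemma and unnecessary.
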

\begin{proof}
It suffices to show that given a generator $f:P\times X\to T$ of $\cX_k$ with
$\bd f \in G_{k-1}$ there exists $h\in \cX_{k+1}$ with $\bd h = f + g$ and $g \in G_k$.
This is exactly what Lemma \ref{basic_adaptation_lemma}
gives us.
More specifically, let $\bd P = \sum Q_i$, with each $Q_i\in G_{k-1}$.
Let $F: I\times P\times X\to T$ be the homotopy constructed in Lemma \ref{basic_adaptation_lemma}.
Then $\bd F$ is equal to $f$ plus $F(1, \cdot, \cdot)$ plus the restrictions of $F$ to $I\times Q_i$.
Part 2 of Lemma \ref{basic_adaptation_lemma} says that $F(1, \cdot, \cdot)\in G_k$,
while part 3 of Lemma \ref{basic_adaptation_lemma} says that the restrictions to $I\times Q_i$ are in $G_k$.
\end{proof}

\medskip

Topological (merely continuous) homeomorphisms are conspicuously absent from the 
list of classes of maps for which the above lemma hold.
The $k=1$ case of Lemma \ref{basic_adaptation_lemma} for plain, continuous homeomorphisms 
is more or less equivalent to Corollary 1.3 of \cite{MR0283802}.
We suspect that the proof found in \cite{MR0283802} of that corollary can be adapted to many-parameter families of
homeomorphisms, but so far the details have alluded us.

\noop{

\medskip

\nn{do we want to keep the following?}

\nn{ack! not easy to adapt (pun) this old text to continuous maps (instead of homeos, as
in the old version); just delete (\\noop) it all for now}

The above lemmas remain true if we replace ``adapted" with ``strongly adapted", as defined below.
The proof of Lemma \ref{basic_adaptation_lemma} is modified by
choosing the common refinement $L$ and interpolating maps $\eta$
slightly more carefully.
Since we don't need the stronger result, we omit the details.

Let $X$, $T$ and $\cU$ be as above.
A $k$-parameter family of maps $f: P \times X \to T$ is
{\it strongly adapted to $\cU$} if there is a factorization
\eq{
    P = P_1 \times \cdots \times P_m
}
(for some $m \le k$)
and families of homeomorphisms
\eq{
    f_i :  P_i \times X \to T
}
such that
\begin{itemize}
\item each $f_i$ is supported on some connected $V_i \sub X$;
\item the sets $V_i$ are mutually disjoint;
\item each $V_i$ is the union of at most $k_i$ of the $U_\alpha$'s,
where $k_i = \dim(P_i)$; and
\item $f(p, \cdot) = g \circ f_1(p_1, \cdot) \circ \cdots \circ f_m(p_m, \cdot)$
for all $p = (p_1, \ldots, p_m)$, for some fixed $g:X\to T$.
\end{itemize}

}


\section{Comparing \texorpdfstring{$n$}{n}-category definitions}
\label{sec:comparing-defs}

In \S\ref{sec:example:traditional-n-categories(fields)} we showed how to construct
a disk-like  $n$-category from a traditional $n$-category; the morphisms of the 
disk-like  $n$-category are string diagrams labeled by the traditional $n$-category.
In this appendix we sketch how to go the other direction, for $n=1$ and 2.
The basic recipe, given a disk-like $n$-category $\cC$, is to define the $k$-morphisms
of the corresponding traditional $n$-category to be $\cC(B^k)$, where
$B^k$ is the {\it standard} $k$-ball.
One must then show that the axioms of \S\ref{ss:n-cat-def} imply the traditional $n$-category axioms.
One should also show that composing the two arrows (between traditional and disk-like $n$-categories)
yields the appropriate sort of equivalence on each side.
Since we haven't given a definition for functors between disk-like $n$-categories, we do not pursue this here.

We emphasize that we are just sketching some of the main ideas in this appendix ---
it falls well short of proving the definitions are equivalent.


\subsection{1-categories over \texorpdfstring{$\Set$ or $\Vect$}{Set or Vect}}
\label{ssec:1-cats}
Given a disk-like $1$-category $\cX$ we construct a $1$-category in the conventional sense, $c(\cX)$.
This construction is quite straightforward, but we include the details for the sake of completeness, 
because it illustrates the role of structures (e.g. orientations, spin structures, etc) 
on the underlying manifolds, and 
to shed some light on the $n=2$ case, which we describe in \S \ref{ssec:2-cats}.

Let $B^k$ denote the \emph{standard} $k$-ball.
Let the objects of $c(\cX)$ be $c(\cX)^0 = \cX(B^0)$ and the morphisms of $c(\cX)$ be $c(\cX)^1 = \cX(B^1)$.
The boundary and restriction maps of $\cX$ give domain and range maps from $c(\cX)^1$ to $c(\cX)^0$.

Choose a homeomorphism $B^1\cup_{pt}B^1 \to B^1$.
Define composition in $c(\cX)$ to be the induced map $c(\cX)^1\times c(\cX)^1 \to c(\cX)^1$ 
(defined only when range and domain agree).
By isotopy invariance in $\cX$, any other choice of homeomorphism gives the same composition rule.
Also by isotopy invariance, composition is strictly associative.

Given $a\in c(\cX)^0$, define $\id_a \deq a\times B^1$.
By extended isotopy invariance in $\cX$, this has the expected properties of an identity morphism.

We have now defined the basic ingredients for the 1-category $c(\cX)$.
As we explain below, $c(\cX)$ might have additional structure corresponding to the
unoriented, oriented, Spin, $\text{Pin}_+$ or $\text{Pin}_-$ structure on the 1-balls used to define $\cX$.

For 1-categories based on unoriented balls, 
there is a map $\dagger:c(\cX)^1\to c(\cX)^1$
coming from $\cX$ applied to an orientation-reversing homeomorphism (unique up to isotopy) 
from $B^1$ to itself.
(Of course our $B^1$ is unoriented, i.e.\ not equipped with an orientation.
We mean the homeomorphism which would reverse the orientation if there were one;
$B^1$ is not oriented, but it is orientable.)
Topological properties of this homeomorphism imply that 
$a^{\dagger\dagger} = a$ ($\dagger$ is order 2), $\dagger$ reverses domain and range, and $(ab)^\dagger = b^\dagger a^\dagger$
($\dagger$ is an anti-automorphism).
Recall that in this context 0-balls should be thought of as equipped with a germ of a 1-dimensional neighborhood.
There is a unique such 0-ball, up to homeomorphism, but it has a non-identity automorphism corresponding to reversing the
orientation of the germ.
Consequently, the objects of $c(\cX)$ are equipped with an involution, also denoted $\dagger$.
If $a:x\to y$ is a morphism of $c(\cX)$ then $a^\dagger: y^\dagger\to x^\dagger$.

For 1-categories based on oriented balls, there are no non-trivial homeomorphisms of 0- or 1-balls, and thus no 
additional structure on $c(\cX)$.

For 1-categories based on Spin balls,
the nontrivial spin homeomorphism from $B^1$ to itself which covers the identity
gives an order 2 automorphism of $c(\cX)^1$.
There is a similar involution on the objects $c(\cX)^0$.
In the case where there is only one object and we are enriching over complex vector spaces, this
is just a super algebra.
The even elements are the $+1$ eigenspace of the involution on $c(\cX)^1$, 
and the odd elements are the $-1$ eigenspace of the involution.

For 1-categories based on $\text{Pin}_-$ balls,
we have an order 4 antiautomorphism of $c(\cX)^1$.
For 1-categories based on $\text{Pin}_+$ balls,
we have an order 2 antiautomorphism and also an order 2 automorphism of $c(\cX)^1$,
and these two maps commute with each other.
In both cases there is a similar map on objects.

\noop{
\medskip

In the other direction, given a $1$-category $C$
(with objects $C^0$ and morphisms $C^1$) we will construct a disk-like
$1$-category $t(C)$.

If $X$ is a 0-ball (point), let $t(C)(X) \deq C^0$.
If $S$ is a 0-sphere, let $t(C)(S) \deq C^0\times C^0$.
If $X$ is a 1-ball, let $t(C)(X) \deq C^1$.
Homeomorphisms isotopic to the identity act trivially.
If $C$ has extra structure (e.g.\ it's a *-1-category), we use this structure
to define the action of homeomorphisms not isotopic to the identity
(and get, e.g., an unoriented disk-like 1-category).

The domain and range maps of $C$ determine the boundary and restriction maps of $t(C)$.

Gluing maps for $t(C)$ are determined by composition of morphisms in $C$.

For $X$ a 0-ball, $D$ a 1-ball and $a\in t(C)(X)$, define the product morphism 
$a\times D \deq \id_a$.
It is not hard to verify that this has the desired properties.

\medskip

The compositions of the constructions above, $$\cX\to c(\cX)\to t(c(\cX))$$ 
and $$C\to t(C)\to c(t(C)),$$ give back 
more or less exactly the same thing we started with.  

As we will see below, for $n>1$ the compositions yield a weaker sort of equivalence.
} 

\medskip

Similar arguments show that modules for disk-like 1-categories are essentially
the same thing as traditional modules for traditional 1-categories.

\subsection{Pivotal 2-categories}
\label{ssec:2-cats}
Let $\cC$ be a disk-like 2-category.
We will construct from $\cC$ a traditional pivotal 2-category $D$.
(The ``pivotal" corresponds to our assumption of strong duality for $\cC$.)

We will try to describe the construction in such a way that the generalization to $n>2$ is clear,
though this will make the $n=2$ case a little more complicated than necessary.

Before proceeding, we must decide whether the 2-morphisms of our
pivotal 2-category are shaped like rectangles or bigons.
Each approach has advantages and disadvantages.
For better or worse, we choose bigons here.

Define the $k$-morphisms $D^k$ of $D$ to be $\cC(B^k) \trans E$, where $B^k$ denotes the standard
$k$-ball, which we also think of as the standard bihedron (a.k.a.\ globe).
(For $k=1$ this is an interval, and for $k=2$ it is a bigon.)
Since we are thinking of $B^k$ as a bihedron, we have a standard decomposition of the $\bd B^k$
into two copies of $B^{k-1}$ which intersect along the ``equator" $E \cong S^{k-2}$.
Recall that the subscript in $\cC(B^k) \trans E$ means that we consider the subset of $\cC(B^k)$
whose boundary is splittable along $E$.
This allows us to define the domain and range of morphisms of $D$ using
boundary and restriction maps of $\cC$.

Choosing a homeomorphism $B^1\cup B^1 \to B^1$ defines a composition map on $D^1$.
This is not associative, but we will see later that it is weakly associative.

Choosing a homeomorphism $B^2\cup B^2 \to B^2$ defines a ``vertical" composition map 
on $D^2$ (Figure \ref{fzo1}).
Isotopy invariance implies that this is associative.
We will define a ``horizontal" composition later.

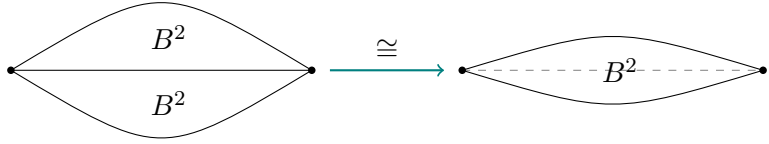
\begin{figure}[t]
\centering
\begin{tikzpicture}

\newcommand{\vertex}{node[circle,fill=black,inner sep=1pt] {}}
\newcommand{\nsep}{1.8}

\node[outer sep=\nsep](A) at (0,0) {
\begin{tikzpicture}
	\draw (0,0) coordinate (p1);
	\draw (4,0) coordinate (p2);
	\draw (2,1.2) coordinate (pu);
	\draw (2,-1.2) coordinate (pd);

	\draw (p1) .. controls (pu) .. (p2) .. controls (pd) .. (p1);
	\draw (p1)--(p2);

	\draw (p1) \vertex;
	\draw (p2) \vertex;
	
	\node at (2.1, .44) {$B^2$};
	\node at (2.1, -.44) {$B^2$};
	
\end{tikzpicture}
};

\node[outer sep=\nsep](B) at (6,0) {
\begin{tikzpicture}
	\draw (0,0) coordinate (p1);
	\draw (4,0) coordinate (p2);
	\draw (2,.6) coordinate (pu);
	\draw (2,-.6) coordinate (pd);

	\draw (p1) .. controls (pu) .. (p2) .. controls (pd) .. (p1);
	\draw[help lines, dashed] (p1)--(p2);

	\draw (p1) \vertex;
	\draw (p2) \vertex;
	
	\node at (2.1,0) {$B^2$};
	
\end{tikzpicture}
};

\draw[->, thick, blue!50!green] (A) -- node[black, above] {$\cong$} (B);

\end{tikzpicture}
\caption{Vertical composition of 2-morphisms}
\label{fzo1}
\end{figure}

Given $a\in D^1$, define $\id_a = a\times I \in D^2$ (pinched boundary).
Extended isotopy invariance for $\cC$ shows that this morphism is an identity for 
vertical composition.

Given $x\in C^0$, define $\id_x = x\times B^1 \in C^1$.
We will show that this 1-morphism is a weak identity.
This would be easier if our 2-morphisms were shaped like rectangles rather than bigons.

In showing that identity 1-morphisms have the desired properties, we will
rely heavily on the extended isotopy invariance of 2-morphisms in $\cC$.
Extended isotopy invariance implies that adding a product collar to a 2-morphism of $\cC$ has no effect,
and by cutting and regluing we can insert (or delete) product regions in the interior of 2-morphisms as well.
Figure \ref{fig:product-regions} shows some examples.
\begin{figure}[t]
\begin{align*}
\begin{tikzpicture}[baseline]
\node[draw] (c) at (0,0) [circle through = {(1,0)}] {$f$};
\node (d) at (c.east) [circle through = {(0.25,0)}] {};
\foreach \n in {1,2} {
	\node (p\n) at (intersection \n of c and d) {};
	\fill (p\n) circle (2pt);
}
\begin{scope}[decoration={brace,amplitude=10,aspect=0.5}]
	\draw[decorate] (p2.east) -- node[right=2ex] {$a$} (p1.east);
\end{scope}
\end{tikzpicture} & = 
\begin{tikzpicture}[baseline]
\node[draw] (c) at (0,0) [circle through = {(1,0)}] {};
\begin{scope}
\path[clip] (c) circle (1);
\node[draw,dashed] (d) at (c.east) [circle through = {(0.25,0)}] {};
\foreach \n in {1,2} {
	\node (p\n) at (intersection \n of c and d) {};
}
\node[left] at (c) {$f$};
\path[clip] (d) circle (0.75);
\foreach \y in {1,0.86,...,-1} {
	\draw[green!50!brown] (0,\y)--(1,\y);
}
\end{scope}
\draw[->,blue] (1.5,-1) node[below] {$a \times I$} -- (0.75,0);
\end{tikzpicture} &
\begin{tikzpicture}[baseline]
\node[draw] (c) at (0,0) [ellipse, minimum height=2cm,minimum width=2.5cm] {};
\draw[dashed] (c.north) -- (c.south);
\node[right=6] at (c) {$g$};
\node[left=6] at (c) {$f$};
\end{tikzpicture} & =
\begin{tikzpicture}[baseline]
\node[draw] (c) at (0,0) [ellipse, minimum height=2cm,minimum width=2.5cm] {};
\node[right=9] at (c) {$g$};
\node[left=9] at (c) {$f$};
\draw[dashed] (c.north) to[out=-115,in=115] (c.south) to[out=65,in=-65] (c.north);
\begin{scope}
\path[clip] (c.north) to[out=-115,in=115] (c.south) to[out=65,in=-65] (c.north);
\foreach \y in {1,0.86,...,-1} {
	\draw[green!50!brown] (-1,\y)--(1,\y);
}
\end{scope}
\draw[->,blue] (.75,-1.25) node[below] {$a \times I$} -- (0,-0.25);
\end{tikzpicture} \\
\begin{tikzpicture}[baseline]
\node[draw] (c) at (0,0) [ellipse, minimum height=2cm,minimum width=2.5cm] {};
\draw[dashed] (c.north) -- (c.south);
\node[right=18] at (c) {$g$};
\node[left=10] at (c) {$f$};
\fill (0,0.4) node (p1) {} circle (2pt);
\fill (0,-0.4) node (p2) {} circle (2pt);
\begin{scope}[decoration={brace,amplitude=5,aspect=0.5}]
	\draw[decorate] (p1.east) -- node[right=0.5ex] {\scriptsize $a$} (p2.east);
\end{scope}
\end{tikzpicture} & =
\begin{tikzpicture}[baseline]
\node[draw] (c) at (0,0) [ellipse, minimum height=2cm,minimum width=2.5cm] {};
\node[draw,dashed] (d) at (0,0) [circle, minimum height=1cm,minimum width=1cm] {};
\draw[dashed] (c.north) -- (d.north) (d.south) -- (c.south);
\node[right=18] at (c) {$g$};
\node[left=18] at (c) {$f$};
\draw[->,blue] (.75,-1.25) node[below] {$a \times I$} -- (0,-0.25);
\clip (0,0) circle (0.5cm);
\foreach \y in {1,0.86,...,-1} {
	\draw[green!50!brown] (-1,\y)--(1,\y);
}
\end{tikzpicture} &
\begin{tikzpicture}[baseline]
\begin{scope}
\clip (-1.3,-2) rectangle (0,2);
\node[draw] (c) at (0,0) [ellipse, minimum height=2cm,minimum width=2.5cm] {};
\end{scope}
\begin{scope}
\clip (1,-2) rectangle (0,2);
\node[draw] (d) at (0,-0.4) [ellipse, minimum height=1.2cm, minimum width=1.5cm] {};
\end{scope}
\draw (c.north) -- (d.north);
\draw[dashed] (d.north) -- (d.south);
\node[right=8,below=4] at (c) {$g$};
\node[left=10] at (c) {$f$};
\end{tikzpicture} & =
\begin{tikzpicture}[baseline]
\begin{scope}
	\clip (-1.3,-2) rectangle (0,2);
	\node[draw] (c) at (0,0) [ellipse, minimum height=2cm,minimum width=2.5cm] {};
\end{scope}
\draw[dashed] (c.north) to[out=-120,in=120] (c.south);
\draw[dashed] (c.south) to[out=60,in=-90] (0.35,0) node (m) {};
\draw (m.center) to[out=90,in=-60] (c.north);
\begin{scope}
	\clip (c.north) to[out=-120,in=120] (c.south) to[out=60,in=-90] (m) to[out=90,in=-60] (c.north);
	\foreach \y in {1,0.86,...,-1} {
		\draw[green!50!brown] (-1,\y)--(1,\y);
	}
\end{scope}
\draw (m.center) .. controls +(1,0) and +(1,0) .. (c.south);
\node[right=15,below=8] at (c) {$g$};
\node[left=12] at (c) {$f$};
\draw[->,blue] (.65,-1.25) node[below] {$a \times I$} -- (0,-0.25);
\end{tikzpicture}
\end{align*}
\caption{Examples of inserting or deleting product regions.}
\label{fig:product-regions}
\end{figure}
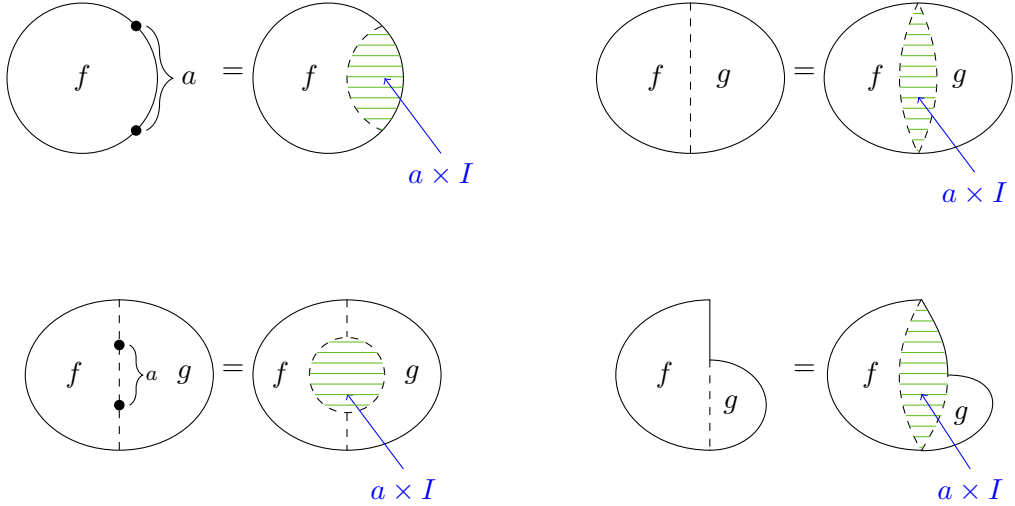

Let $a: y\to x$ be a 1-morphism.
Define 2-morphsims $a \to a\bullet \id_x$ and $a\bullet \id_x \to a$
as shown in Figure \ref{fzo2}.
\begin{figure}[t]
\centering
\begin{tikzpicture}
\newcommand{\rr}{6}
\newcommand{\vertex}{node[circle,fill=black,inner sep=1pt] {}}
\newcommand{\namedvertex}[1]{node[circle,fill=black,inner sep=1pt] (#1) {}}

\node(A) at (0,0) {
\begin{tikzpicture}
\node[red,left] at (0,0)  {$y$};
\draw (0,0) \vertex arc (-120:-105:\rr) node[red,below] {$a$} arc(-105:-90:\rr) \vertex node[red,below](x2) {$x$};
\draw (0,0) \vertex arc (120:105:\rr) node[red,above] {$a$} arc (105:90:\rr) \vertex node[red,above](x1) {$x$} -- (x2);
\begin{scope}
	\path[clip] (0,0) arc (-120:-60:\rr) arc (60:120:\rr);
	\foreach \x in {0,0.24,...,3} {
		\draw[green!50!brown] (\x,1) -- (\x,-1);
	}
\end{scope}
\draw[red, decorate,decoration={brace,amplitude=5pt}] ($(x1)+(0.2,-0.2)$) -- ($(x2)+(0.2,0.2)$) node[midway, xshift=0.7cm] {$x \times I$};
\end{tikzpicture}
};

\node(B) at (-4,-4) {
\begin{tikzpicture}
\node[red,left] at (0,0) {$y$};
\draw (0,0) \vertex 
	arc (120:105:\rr) node[red,above] {$a$}
	arc (105:90:\rr) node[red,above] {$x$} \namedvertex{x1};
\draw (0,0)
	arc (-120:-90:\rr) node[red,below] {$a$}
	arc (-90:-61:\rr) \namedvertex{x2} node[red,right] {$x$};
\draw (x1) -- node[red, above=3pt] {$x \times I$} (x2);
\begin{scope}
	\path[clip] (0,0) arc (-120:-60:\rr) arc (60:120:\rr);
	\foreach \x in {0,0.48,...,9} {
		\draw[green!50!brown] (\x/4,1) -- (\x,-1);
	}
\end{scope}
\end{tikzpicture}
};

\node(C) at (4,-4) {
\begin{tikzpicture}[y=-1cm]
\node[red,left] at (0,0) {$y$};
\draw (0,0) \vertex 
	arc (120:105:\rr) node[red,below] {$a$}
	arc (105:90:\rr) node[red,below] {$x$} \namedvertex{x1};
\draw (0,0)
	arc (-120:-90:\rr) node[red,above] {$a$}
	arc (-90:-61:\rr) \namedvertex{x2} node[red,right] {$x$};
\draw (x1) -- node[red, below=3pt] {$x \times I$} (x2);
\begin{scope}
	\path[clip] (0,0) arc (-120:-60:\rr) arc (60:120:\rr);
	\foreach \x in {0,0.48,...,9} {
		\draw[green!50!brown] (\x/4,1) -- (\x,-1);
	}
\end{scope}
\end{tikzpicture}
};

\draw[->] (A) -- (B);
\draw[->] (A) -- (C);
\end{tikzpicture}
\caption{Producing weak identities from half pinched products}
\label{fzo2}
\end{figure}
As suggested by the figure, these are two different reparameterizations
of a half-pinched version of $a\times I$
(i.e.\ two different homeomorphisms from the half-pinched $I\times I$ to the standard bigon).
We must show that the two compositions of these two maps give the identity 2-morphisms
on $a$ and $a\bullet \id_x$, as defined above.
Figure \ref{fzo3} shows one case.
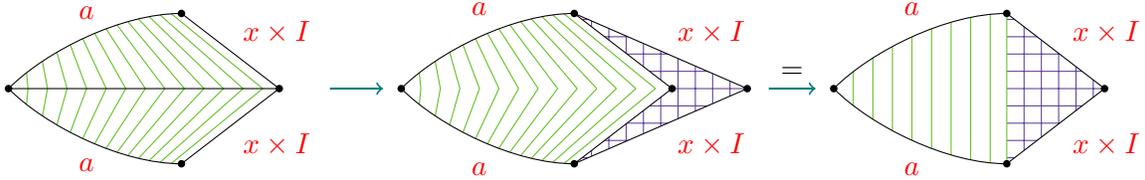
\begin{figure}[t]
\centering
\begin{tikzpicture}

\newcommand{\vertex}{node[circle,fill=black,inner sep=1pt] {}}
\newcommand{\nsep}{1.8}

\node(A) at (0,0) {
\begin{tikzpicture}

	\draw (0,0) coordinate (p1);
	\draw (3.6,0) coordinate (p2);
	\draw (2.3,1) coordinate (p3);
	\draw (2.3,-1) coordinate (p4);
	
	\begin{scope}
		\clip (p1) 	.. controls +(.5,-.5) and +(-.8,0)  .. (p4) -- 
				(p2) -- (p3) .. controls +(-.8,0) and +(.5,.5) .. (p1);
		\foreach \x in {0,0.26,...,4} {
			\draw[green!50!brown] (\x,0) -- (intersection cs: first line={(p2)--(p3)}, second line={(0,0)--(0,1)});
			\draw[green!50!brown] (\x,0) -- (intersection cs: first line={(p2)--(p4)}, second line={(0,0)--(0,1)});
		}
	\end{scope}
	
	\draw (p1) .. controls ($(p1) + (.5,.5)$) and ($(p3) + (-.8,0)$) .. node[red, above=3pt] {$a$} (p3);
	\draw (p1) .. controls ($(p1) + (.5,-.5)$) and ($(p4) + (-.8,0)$) .. node[red, below=3pt] {$a$} (p4);
	\draw (p3) -- node[red, above=7pt, right=1pt] {$x \times I$} (p2);
	\draw (p4) -- node[red, below=7pt, right=1pt] {$x \times I$} (p2);
	\draw (p1) -- (p2);
	
	\draw (p1) \vertex;
	\draw (p2) \vertex;
	\draw (p3) \vertex;
	\draw (p4) \vertex;

\end{tikzpicture}
};

\node[outer sep=\nsep](B) at (5.5,0) {
\begin{tikzpicture}

	\draw (0,0) coordinate (p1);
	\draw (3.6,0) coordinate (p2);
	\draw (2.3,1) coordinate (p3);
	\draw (2.3,-1) coordinate (p4);
	\draw (4.6,0) coordinate (p2b);
	
	\begin{scope}
		\clip (p1) 	.. controls +(.5,-.5) and +(-.8,0)  .. (p4) -- 
				(p2) -- (p3) .. controls +(-.8,0) and +(.5,.5) .. (p1);
		\foreach \x in {0,0.26,...,4} {
			\draw[green!50!brown] (\x,0) -- (intersection cs: first line={(p2)--(p3)}, second line={(0,0)--(0,1)});
			\draw[green!50!brown] (\x,0) -- (intersection cs: first line={(p2)--(p4)}, second line={(0,0)--(0,1)});
		}
	\end{scope}
	
	\begin{scope}
		\clip (p3)--(p2)--(p4)--(p2b)--cycle;
		\draw[blue!50!brown, step=.23] ($(p4)+(0,-1)$) grid +(3,3);
	\end{scope}
	
	\draw (p1) .. controls ($(p1) + (.5,.5)$) and ($(p3) + (-.8,0)$) .. node[red, above=3pt] {$a$} (p3);
	\draw (p1) .. controls ($(p1) + (.5,-.5)$) and ($(p4) + (-.8,0)$) .. node[red, below=3pt] {$a$} (p4);
	\draw (p3) -- (p2);
	\draw (p4) -- (p2);
	\draw (p3) -- node[red, above=7pt, right=1pt] {$x \times I$} (p2b);
	\draw (p4) -- node[red, below=7pt, right=1pt] {$x \times I$} (p2b);
	
	\draw (p1) \vertex;
	\draw (p2) \vertex;
	\draw (p3) \vertex;
	\draw (p4) \vertex;
	\draw (p2b) \vertex;

\end{tikzpicture}
};

\node[outer sep=\nsep](C) at (11,0) {
\begin{tikzpicture}

	\draw (0,0) coordinate (p1);
	\draw (2.3,0) coordinate (p2);
	\draw (2.3,1) coordinate (p3);
	\draw (2.3,-1) coordinate (p4);
	\draw (3.6,0) coordinate (p2b);
	
	\begin{scope}
		\clip (p1) 	.. controls +(.5,-.5) and +(-.8,0)  .. (p4) -- 
				(p2) -- (p3) .. controls +(-.8,0) and +(.5,.5) .. (p1);
		\foreach \x in {0,0.26,...,4} {
			\draw[green!50!brown] (\x,-1) -- (\x,1);
		}
	\end{scope}
	
	\begin{scope}
		\clip (p3)--(p2)--(p4)--(p2b)--cycle;
		\draw[blue!50!brown, step=.23] ($(p4)+(0,-1)$) grid +(3,3);
	\end{scope}
	
	\draw (p1) .. controls ($(p1) + (.5,.5)$) and ($(p3) + (-.8,0)$) .. node[red, above=3pt] {$a$} (p3);
	\draw (p1) .. controls ($(p1) + (.5,-.5)$) and ($(p4) + (-.8,0)$) .. node[red, below=3pt] {$a$} (p4);
	\draw[green!50!brown] (p3) -- (p4);
	\draw (p3) -- node[red, above=7pt, right=1pt] {$x \times I$} (p2b);
	\draw (p4) -- node[red, below=7pt, right=1pt] {$x \times I$} (p2b);
	
	\draw (p1) \vertex;
	\draw (p3) \vertex;
	\draw (p4) \vertex;
	\draw (p2b) \vertex;

\end{tikzpicture}
};

\draw[->, thick, blue!50!green] (A) -- (B);
\draw[->, thick, blue!50!green] (B) -- node[black, above] {$=$} (C);

\end{tikzpicture}
\caption{Composition of weak identities, 1}
\label{fzo3}
\end{figure}
In the first step we have inserted a copy of $(x\times I)\times I$.
Figure \ref{fzo4} shows the other case.

\newcommand{\vertex}{node[circle,fill=black,inner sep=1pt] {}}

\begin{figure}[t]
\centering
\begin{tikzpicture}

\newcommand{\nsep}{1.8}

\clip (-4,-1.25)--(12,-1.25)--(16,1.25)--(-1,1.25)--cycle;

\node[outer sep=\nsep](A) at (0,0) {
\begin{tikzpicture}
	\draw (0,0) coordinate (p1);
	\draw (4,0) coordinate (p2);
	\draw (2.4,0) coordinate (p2a);
	\draw (2,1.2) coordinate (pu);
	\draw (2,-1.2) coordinate (pd);

	\begin{scope}
		\clip (p1) .. controls (pu) .. (p2) .. controls (pd) .. (p1);
		\foreach \t in {0,.065,...,1} {
			\draw[green!50!brown] ($(p1)!\t!(p2a)$) -- +(90 - \t*90 + \t*6 : 4);
			\draw[green!50!brown] ($(p1)!\t!(p2a)$) -- +(-90 + \t*90 - \t*6 : 4);
		}
		\draw[dashed] ($(p2a) + (-.6,3)$)--(p2a)--($(p2a) + (-.6,-3)$);
	\end{scope}

	\draw (p1) .. controls (pu) .. (p2) .. controls (pd) .. (p1);
	\draw (p1)--(p2);

	\draw (p1) \vertex;
	\draw (p2) \vertex;
	\draw (p2a) \vertex;
\end{tikzpicture}
};

\node[outer sep=\nsep](B) at (5,0) {
\begin{tikzpicture}
	\draw (0,0) coordinate (p1);
	\draw (4,0) coordinate (p2);
	\draw (2.4,0) coordinate (p2a);
	\draw (2,1.2) coordinate (pu);
	\draw (2,-1.2) coordinate (pd);

	\begin{scope}
	\clip (-.1,3)--($(p2a) + (-.6,3)$)--(p2a)--($(p2a) + (-.6,-3)$)--(-.1,-3)--cycle;

	\begin{scope}
		\clip (p1) .. controls (pu) .. (p2) .. controls (pd) .. (p1);
		\foreach \t in {0,.065,...,1} {
			\draw[green!50!brown] ($(p1)!\t!(p2a)$) -- +(90 - \t*90 + \t*6 : 4);
			\draw[green!50!brown] ($(p1)!\t!(p2a)$) -- +(-90 + \t*90 - \t*6 : 4);
		}
		\draw ($(p2a) + (-.6,3)$)--(p2a)--($(p2a) + (-.6,-3)$);
	\end{scope}

	\draw (p1) .. controls (pu) .. (p2) .. controls (pd) .. (p1);
	\end{scope}

	\begin{scope}
		\clip (p1) .. controls (pu) .. (p2) .. controls (pd) .. (p1);
		\draw ($(p2a) + (-.6,3)$)--(p2a)--($(p2a) + (-.6,-3)$);
	\end{scope}

	\draw (p1) \vertex;
	\draw (p2a) \vertex;
\end{tikzpicture}
};

\node[outer sep=\nsep](C) at (9,0) {
\begin{tikzpicture}
	\draw (0,0) coordinate (p1);
	\draw (4,0) coordinate (p2);
	\draw (2,1.2) coordinate (pu);
	\draw (2,-1.2) coordinate (pd);

	\begin{scope}
		\clip (p1) .. controls (pu) .. (p2) .. controls (pd) .. (p1);
		\foreach \t in {0,.045,...,1} {
			\draw[green!50!brown] ($(p1)!\t!(p2) + (0,2)$) -- +(0,-4);
		}
	\end{scope}

	\draw (p1) .. controls (pu) .. (p2) .. controls (pd) .. (p1);

	\draw (p1) \vertex;
	\draw (p2) \vertex;
\end{tikzpicture}
};

\draw[->, thick, blue!50!green] (A) -- (B);
\draw[->, thick, blue!50!green] ($(B) + (1,0)$) -- node[black, above] {$=$} (C);

\end{tikzpicture}
\caption{Composition of weak identities, 2}
\label{fzo4}
\end{figure}
We notice that a certain subset of the disk is a product region and remove it.

Given 2-morphisms $f$ and $g$, we define the horizontal composition $f *_h g$ to be any of the four
equal 2-morphisms in Figure \ref{fzo5}.
Figure \ref{fig:horizontal-compositions-equal} illustrates part of the proof that these four 2-morphisms are equal.
Similar arguments show that horizontal composition is associative.
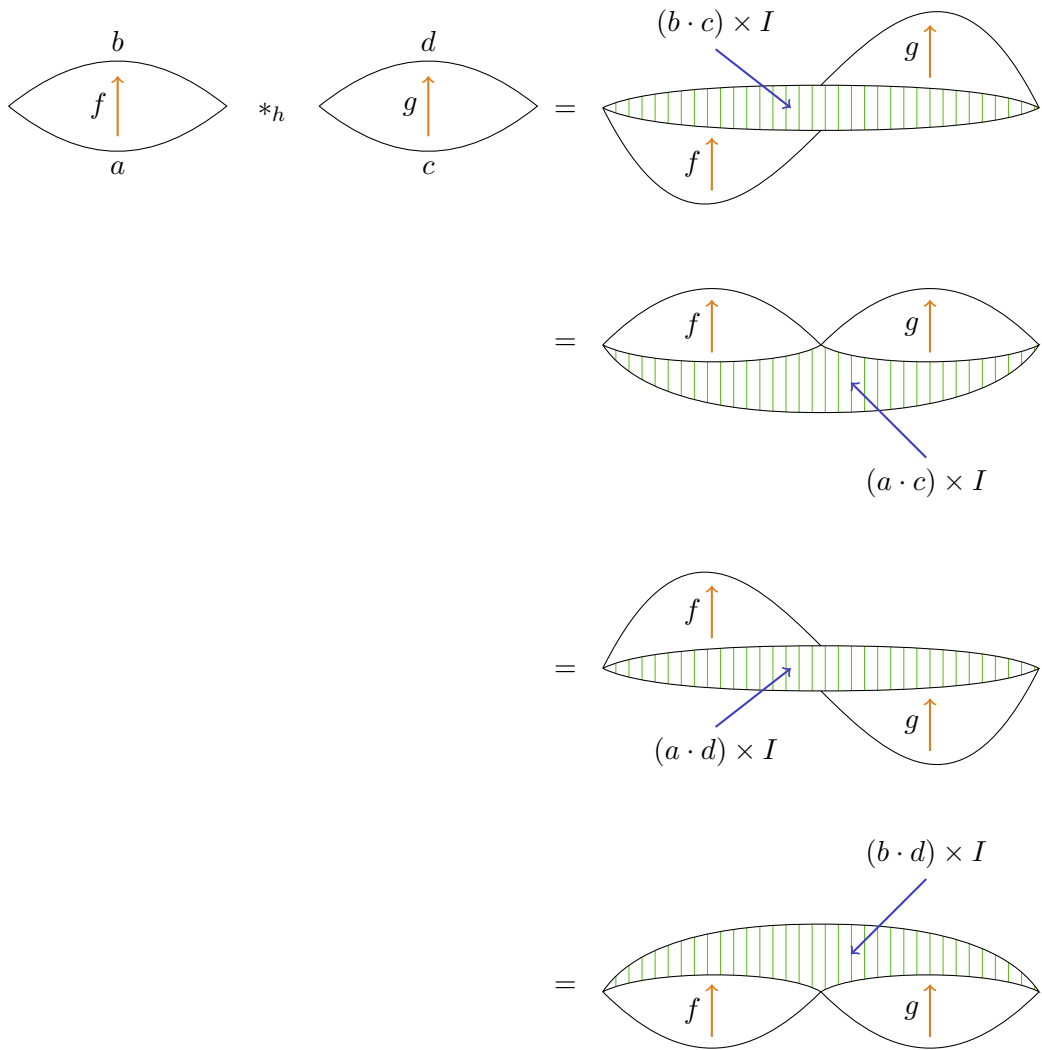
\begin{figure}[t]
\begin{align*}
\raisebox{-.9cm}{
\begin{tikzpicture}
	\draw (0,0) .. controls +(1,.8) and +(-1,.8) .. node[above] {$b$} (2.9,0)
				.. controls +(-1,-.8) and +(1,-.8) .. node[below] {$a$} (0,0);
	\draw[->, thick, orange!50!brown] (1.45,-.4)--  node[left, black] {$f$} +(0,.8);
\end{tikzpicture}}
\;\;\;*_h\;\;
\raisebox{-.9cm}{
\begin{tikzpicture}
	\draw (0,0) .. controls +(1,.8) and +(-1,.8) .. node[above] {$d$} (2.9,0)
				.. controls +(-1,-.8) and +(1,-.8) .. node[below] {$c$} (0,0);
	\draw[->, thick, orange!50!brown] (1.45,-.4)--  node[left, black] {$g$} +(0,.8);
\end{tikzpicture}}
\;&=\;
\raisebox{-1.9cm}{
\begin{tikzpicture}
	\draw (0,0) coordinate (p1);
	\draw (5.8,0) coordinate (p2);
	\draw (2.9,.3) coordinate (pu);
	\draw (2.9,-.3) coordinate (pd);
	\begin{scope}
		\clip (p1) .. controls +(.6,.3) and +(-.5,0) .. (pu)
					.. controls +(.5,0) and +(-.6,.3) .. (p2)
					.. controls +(-.6,-.3) and +(.5,0) .. (pd)
					.. controls +(-.5,0) and +(.6,-.3) .. (p1);
		\foreach \t in {0,.03,...,1} {
			\draw[green!50!brown] ($(p1)!\t!(p2) + (0,2)$) -- +(0,-4);
		}
	\end{scope}
	\draw (p1) .. controls +(.6,.3) and +(-.5,0) .. (pu)
				.. controls +(.5,0) and +(-.6,.3) .. (p2)
				.. controls +(-.6,-.3) and +(.5,0) .. (pd)
				.. controls +(-.5,0) and +(.6,-.3) .. (p1);
	\draw (p1) .. controls +(1,-2) and +(-1,-1) .. (pd);
	\draw (p2) .. controls +(-1,2) and +(1,1) .. (pu);
	\draw[->, thick, orange!50!brown] (1.45,-1.1)--  node[left, black] {$f$} +(0,.7);
	\draw[->, thick, orange!50!brown] (4.35,.4)--  node[left, black] {$g$} +(0,.7);
	\draw[->, thick, blue!75!yellow] (1.5,.78) node[black, above] {$(b\cdot c)\times I$} -- (2.5,0);
\end{tikzpicture}} \displaybreak[1] \\
\;&=\;
\raisebox{-2.1cm}{
\begin{tikzpicture}
	\draw (0,0) coordinate (p1);
	\draw (5.8,0) coordinate (p2);
	\draw (2.9,0) coordinate (pu);
	\draw (2.9,-.9) coordinate (pd);
	\begin{scope}
		\clip (p1) .. controls +(.6,-.3) and +(-.5,-.3) .. (pu)
					.. controls +(.5,-.3) and +(-.6,-.3) .. (p2)
					.. controls +(-.6,-.9) and +(.5,0) .. (pd)
					.. controls +(-.5,0) and +(.6,-.9) .. (p1);
		\foreach \t in {0,.03,...,1} {
			\draw[green!50!brown] ($(p1)!\t!(p2) + (0,2)$) -- +(0,-4);
		}
	\end{scope}
	\draw  (p1) .. controls +(.6,-.3) and +(-.5,-.3) .. (pu)
					.. controls +(.5,-.3) and +(-.6,-.3) .. (p2)
					.. controls +(-.6,-.9) and +(.5,0) .. (pd)
					.. controls +(-.5,0) and +(.6,-.9) .. (p1);
	\draw (p1) .. controls +(1,1) and +(-1,1) .. (pu);
	\draw (p2) .. controls +(-1,1) and +(1,1) .. (pu);
	\draw[->, thick, orange!50!brown] (1.45,-0.1)--  node[left, black] {$f$} +(0,.7);
	\draw[->, thick, orange!50!brown] (4.35,-0.1)--  node[left, black] {$g$} +(0,.7);
	\draw[->, thick, blue!75!yellow] (4.3,-1.5) node[black, below] {$(a\cdot c)\times I$} -- (3.3,-0.5);
\end{tikzpicture}} \displaybreak[1] \\
\;&=\;
\raisebox{-1.9cm}{
\begin{tikzpicture}[y=-1cm]
	\draw (0,0) coordinate (p1);
	\draw (5.8,0) coordinate (p2);
	\draw (2.9,.3) coordinate (pu);
	\draw (2.9,-.3) coordinate (pd);
	\begin{scope}
		\clip (p1) .. controls +(.6,.3) and +(-.5,0) .. (pu)
					.. controls +(.5,0) and +(-.6,.3) .. (p2)
					.. controls +(-.6,-.3) and +(.5,0) .. (pd)
					.. controls +(-.5,0) and +(.6,-.3) .. (p1);
		\foreach \t in {0,.03,...,1} {
			\draw[green!50!brown] ($(p1)!\t!(p2) + (0,2)$) -- +(0,-4);
		}
	\end{scope}
	\draw (p1) .. controls +(.6,.3) and +(-.5,0) .. (pu)
				.. controls +(.5,0) and +(-.6,.3) .. (p2)
				.. controls +(-.6,-.3) and +(.5,0) .. (pd)
				.. controls +(-.5,0) and +(.6,-.3) .. (p1);
	\draw (p1) .. controls +(1,-2) and +(-1,-1) .. (pd);
	\draw (p2) .. controls +(-1,2) and +(1,1) .. (pu);
	\draw[<-, thick, orange!50!brown] (1.45,-1.1)--  node[left, black] {$f$} +(0,.7);
	\draw[<-, thick, orange!50!brown] (4.35,.4)--  node[left, black] {$g$} +(0,.7);
	\draw[->, thick, blue!75!yellow] (1.5,.78) node[black, below] {$(a\cdot d)\times I$} -- (2.5,0);
\end{tikzpicture}} \displaybreak[1] \\
\;&=\;
\raisebox{-1.0cm}{
\begin{tikzpicture}[y=-1cm]
	\draw (0,0) coordinate (p1);
	\draw (5.8,0) coordinate (p2);
	\draw (2.9,0) coordinate (pu);
	\draw (2.9,-.9) coordinate (pd);
	\begin{scope}
		\clip (p1) .. controls +(.6,-.3) and +(-.5,-.3) .. (pu)
					.. controls +(.5,-.3) and +(-.6,-.3) .. (p2)
					.. controls +(-.6,-.9) and +(.5,0) .. (pd)
					.. controls +(-.5,0) and +(.6,-.9) .. (p1);
		\foreach \t in {0,.03,...,1} {
			\draw[green!50!brown] ($(p1)!\t!(p2) + (0,2)$) -- +(0,-4);
		}
	\end{scope}
	\draw  (p1) .. controls +(.6,-.3) and +(-.5,-.3) .. (pu)
					.. controls +(.5,-.3) and +(-.6,-.3) .. (p2)
					.. controls +(-.6,-.9) and +(.5,0) .. (pd)
					.. controls +(-.5,0) and +(.6,-.9) .. (p1);
	\draw (p1) .. controls +(1,1) and +(-1,1) .. (pu);
	\draw (p2) .. controls +(-1,1) and +(1,1) .. (pu);
	\draw[<-, thick, orange!50!brown] (1.45,-0.1)--  node[left, black] {$f$} +(0,.7);
	\draw[<-, thick, orange!50!brown] (4.35,-0.1)--  node[left, black] {$g$} +(0,.7);
	\draw[->, thick, blue!75!yellow] (4.3,-1.5) node[black, above] {$(b\cdot d)\times I$} -- (3.3,-0.5);
\end{tikzpicture}} 
\end{align*}
\caption{Horizontal composition of 2-morphisms}
\label{fzo5}
\end{figure}
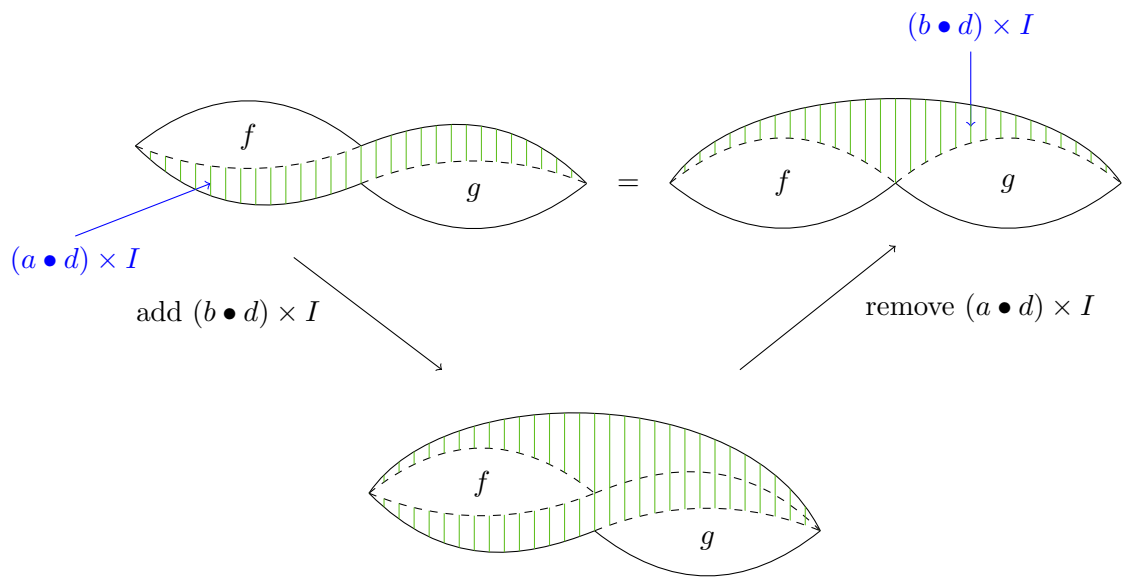
\begin{figure}[t]
$$
\begin{tikzpicture}
\node (fg1) at (0,0) {
\begin{tikzpicture}[baseline=-0.6cm]
\path (0,0) coordinate (f1);
\path (3,0) coordinate (f2);
\path (3,-0.5) coordinate (g1);
\path (6,-0.5) coordinate (g2);
\node at (1.5,0.125) {$f$};
\node at (4.5,-0.625) {$g$};
\draw (f1) .. controls +(1,.8) and +(-1,.8) .. (f2);
\draw[dashed] (f1) .. controls +(1,-.4) and +(-1,-.4) .. (f2);
\draw (f1) .. controls +(1,-1) and +(-1,-.4) .. (g1);
\draw (g1) .. controls +(1,-.8) and +(-1,-.8) .. (g2);
\draw[dashed] (g1) .. controls +(1,.4) and +(-1,.4) .. (g2);
\draw (f2) .. controls +(1,.4) and +(-1,1) .. (g2);
\draw[blue,->] (-0.8,-1.2) node[below] {$(a \bullet d) \times I$} -- (1,-0.5) ;
\path[clip] (f1) .. controls +(1,-.4) and +(-1,-.4) .. (f2)
                    .. controls +(1,.4) and +(-1,1) .. (g2)
                    .. controls +(-1,.4) and +(1,.4) .. (g1)
                    .. controls +(-1,-.4) and +(1,-1) .. (f1);
\foreach \x in {0,0.2, ..., 6} {
	\draw[green!50!brown] (\x,-2) -- + (0,4);
}
\end{tikzpicture}
};
\node (fg2) at (4,-4) {
\begin{tikzpicture}[baseline=-0.1cm]
\path (0,0) coordinate (f1);
\path (3,0) coordinate (f2);
\path (3,-0.5) coordinate (g1);
\path (6,-0.5) coordinate (g2);
\node at (1.5,0.125) {$f$};
\node at (4.5,-0.625) {$g$};
\draw[dashed] (f1) .. controls +(1,.8) and +(-1,.8) .. (f2);
\draw[dashed] (f1) .. controls +(1,-.4) and +(-1,-.4) .. (f2);
\draw (f1) .. controls +(1,-1) and +(-1,-.4) .. (g1);
\draw (g1) .. controls +(1,-.8) and +(-1,-.8) .. (g2);
\draw[dashed] (g1) .. controls +(1,.4) and +(-1,.4) .. (g2);
\draw[dashed] (f2) .. controls +(1,.4) and +(-1,1) .. (g2);
\draw (f1) .. controls +(1,1.5) and +(-1,2)..(g2);
\begin{scope}
\path[clip] (f1) .. controls +(1,-.4) and +(-1,-.4) .. (f2)
                    .. controls +(1,.4) and +(-1,1) .. (g2)
                    .. controls +(-1,.4) and +(1,.4) .. (g1)
                    .. controls +(-1,-.4) and +(1,-1) .. (f1);
\foreach \x in {0,0.2, ..., 6} {
	\draw[green!50!brown] (\x,-2) -- + (0,4);
}
\end{scope}
\begin{scope}
\path[clip] (f1) ..  controls +(1,1.5) and +(-1,2).. (g2)
		      .. controls +(-1,1) and +(1,.4) .. (f2)
		      .. controls +(-1,.8) and + (1,.8) .. (f1);
\foreach \x in {0,0.2, ..., 6} {
	\draw[green!50!brown] (\x,-2) -- + (0,4);
}
\end{scope}
\end{tikzpicture}
};
\node (fg3) at (8,0) {
\begin{tikzpicture}[baseline=-2.45cm]
\path (0,0) coordinate (f1);
\path (3,0) coordinate (f2);
\path (3,0) coordinate (g1);
\path (6,0) coordinate (g2);
\node at (1.5,0) {$f$};
\node at (4.5,0) {$g$};
\draw[dashed] (f1) .. controls +(1,.8) and +(-1,.8) .. (f2);
\draw (f1) .. controls +(1,-.8) and +(-1,-.8) .. (f2);
\draw (g1) .. controls +(1,-.8) and +(-1,-.8) .. (g2);
\draw[dashed] (g1) .. controls +(1,.8) and +(-1,.8) .. (g2);
\draw (f1) .. controls +(1,1.5) and +(-1,1.5)..(g2);
\draw[blue,->] (4,1.75) node[above] {$(b \bullet d) \times I$}-- + (0,-1);
\begin{scope}
\path[clip] (f1) ..  controls +(1,1.5) and +(-1,1.5).. (g2)
		      .. controls +(-1,.8) and +(1,.8) .. (f2)
		      .. controls +(-1,.8) and + (1,.8) .. (f1);
\foreach \x in {0,0.2, ..., 6} {
	\draw[green!50!brown] (\x,-2) -- + (0,4);
}
\end{scope}
\end{tikzpicture}
};
\draw[->] ($(fg1.south)+(0,0.5)$) -- node[left=0.5cm] {add $(b \bullet d) \times I$} (fg2);
\draw[->] (fg2) -- node[right=0.5cm] {remove $(a \bullet d) \times I$} ($(fg3.south)+(0,1.75)$);
\path (fg1) -- node {$=$} (fg3);
\end{tikzpicture}
$$
\caption{Part of the proof that the four different horizontal compositions of 2-morphisms are equal.}
\label{fig:horizontal-compositions-equal}
\end{figure}

Given 1-morphisms $a$, $b$ and $c$ of $D$, we define the associator from $(a\bullet b)\bullet c$ to $a\bullet(b\bullet c)$
as in Figure \ref{fig:associator}.
This is just a reparameterization of the pinched product $(a\bullet b\bullet c)\times I$ of $\cC$.
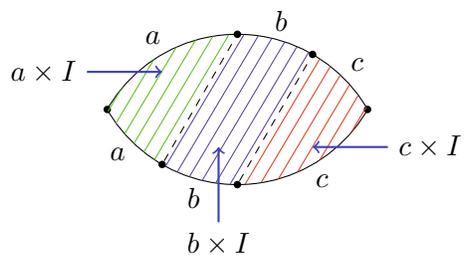
\begin{figure}[t]
$$
\begin{tikzpicture}
\node[circle,fill=black,inner sep=1pt] at (1.73,0) {};
\node[circle,fill=black,inner sep=1pt] at (-1.73,0) {};
\begin{scope}[yshift=-1cm]
\path[clip] (0,0) circle (2);
\begin{scope}[yshift=2cm]
\draw (0,0) circle (2);
\node[circle,fill=black,inner sep=1pt] (L2) at (-90:2) {};
\node[circle,fill=black,inner sep=1pt] (L1) at (-120:2) {};
\node at (-60:2.25) {$c$};
\node at (-105:2.25) {$b$};
\node at (-135:2.25) {$a$};
\end{scope}
\end{scope}
\begin{scope}[yshift=1cm]
\path[clip] (0,0) circle (2);
\begin{scope}[yshift=-2cm]
\node at (120:2.25) {$a$};
\node at (75:2.25) {$b$};
\node at (45:2.25) {$c$};
\draw (0,0) circle (2);
\node[circle,fill=black,inner sep=1pt] (U1) at (90:2) {};
\node[circle,fill=black,inner sep=1pt] (U2) at (60:2) {};
\end{scope}
\end{scope}
\draw[dashed] (L1) -- (U1);
\draw[dashed] (L2) -- (U2);
\begin{scope}
\path[clip] (0,1) circle (2);
\path[clip] (0,-1) circle (2);
		\foreach \t in {-2.5,-2.3,...,-1.2} {
			\draw[green!50!brown] (\t,-1) -- +(1.19,2);
		}
		\foreach \t in {-1.1,-0.9,...,0} {
			\draw[blue!50!brown] (\t,-1) -- +(1.19,2);
		}
		\foreach \t in {0.1,0.3,...,2.5} {
			\draw[red!50!brown] (\t,-1) -- +(1.19,2);
		}
\end{scope}
\draw[->, thick, blue!75!yellow] (-2,0.5) node[black, left] {$a\times I$} -- (-1,0.5);
\draw[->, thick, blue!75!yellow] (-0.25,-1.5) node[black, below] {$b\times I$} -- (-0.25,-0.5);
\draw[->, thick, blue!75!yellow] (2,-0.5) node[black, right] {$c\times I$} -- (1,-0.5);
\end{tikzpicture}
$$
\caption{An associator.}
\label{fig:associator}
\end{figure}

Let $x,y,z$ be objects of $D$ and let $a:x\to y$ and $b:y\to z$ be 1-morphisms of $D$.
We have already defined above 
structure maps $u:a\bullet \id_y\to a$ and $v:\id_y\bullet b\to b$, as well as an associator
$\alpha: (a\bullet \id_y)\bullet b\to a\bullet(\id_y\bullet b)$, as shown in
Figure \ref{fig:ingredients-triangle-axiom}.
(See also Figures \ref{fzo2} and \ref{fig:associator}.)
We now show that $D$ satisfies the triangle axiom, which states that $u\bullet\id_b$ 
is equal to the vertical composition of $\alpha$ and $\id_a\bullet v$.
(Both are 2-morphisms from $(a\bullet \id_y)\bullet b$ to $a\bullet b$.)
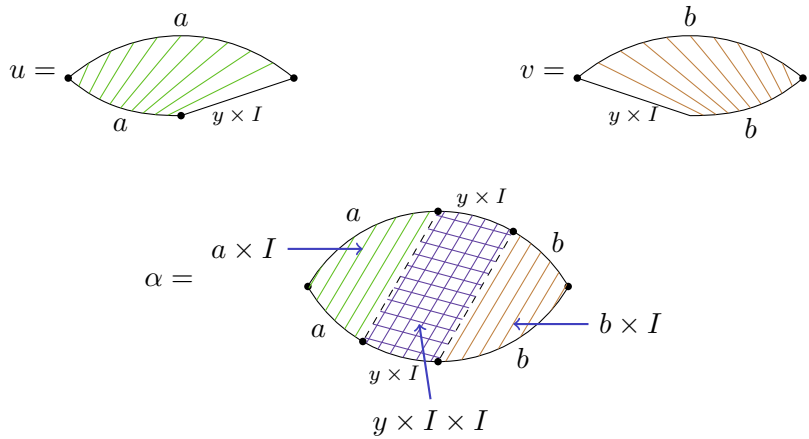
\begin{figure}[t]
\begin{align*}
u & =
\begin{tikzpicture}[baseline]
\coordinate (P) at (0,0);
\coordinate (Q) at (3,0);
\coordinate (R) at (1.5,-0.5);
\draw (P) \vertex to[out=40,in=140] node[above] {$a$} (Q) \vertex -- node[below] {\scriptsize $y \times I$} (R) \vertex to[out=-180,in=-40] node[below] {$a$} (P);
\clip (P) to[out=40,in=140] (Q) -- (R) to[out=-180,in=-40] (P);
\foreach \x in {1,...,9} {
	\path (P) to[out=40,in=140] node[pos=\x/10] (PQ\x) {} (Q);
	\path (P) to[out=-40,in=-180] node[pos=\x/10] (PR\x) {} (R);
	\draw[green!50!brown] (PQ\x.center) -- (PR\x.center);
}
\end{tikzpicture} &
v & = 
\begin{tikzpicture}[baseline]
\coordinate (P) at (0,0);
\coordinate (Q) at (3,0);
\coordinate (R) at (1.5,-0.5);
\draw (P) \vertex to[out=40,in=140] node[above] {$b$} (Q) \vertex to[out=-140,in=0] node[below] {$b$} (R) -- node[below] {\scriptsize $y \times I$}  (P);
\clip (P) to[out=40,in=140] (Q) to[out=-140,in=0] (R) -- (P);
\foreach \x in {1,...,9} {
	\path (P) to[out=40,in=140] node[pos=\x/10] (PQ\x) {} (Q);
	\path (R) to[out=0,in=-140] node[pos=\x/10] (RQ\x) {} (Q);
	\draw[brown] (PQ\x.center) -- (RQ\x.center);
}
\end{tikzpicture}
\end{align*}
\vspace{-2cm}
\begin{align*}
\alpha & = 
\begin{tikzpicture}[baseline]
\node[circle,fill=black,inner sep=1pt] at (1.73,0) {};
\node[circle,fill=black,inner sep=1pt] at (-1.73,0) {};
\begin{scope}[yshift=-1cm]
\path[clip] (0,0) circle (2);
\begin{scope}[yshift=2cm]
\draw (0,0) circle (2);
\node[circle,fill=black,inner sep=1pt] (L2) at (-90:2) {};
\node[circle,fill=black,inner sep=1pt] (L1) at (-120:2) {};
\node at (-60:2.25) {$b$};
\node at (-105:2.25) {\scriptsize $y \times I$};
\node at (-135:2.25) {$a$};
\end{scope}
\end{scope}
\begin{scope}[yshift=1cm]
\path[clip] (0,0) circle (2);
\begin{scope}[yshift=-2cm]
\node at (120:2.25) {$a$};
\node at (75:2.25) {\scriptsize $y \times I$};
\node at (45:2.25) {$b$};
\draw (0,0) circle (2);
\node[circle,fill=black,inner sep=1pt] (U1) at (90:2) {};
\node[circle,fill=black,inner sep=1pt] (U2) at (60:2) {};
\end{scope}
\end{scope}
\draw[dashed] (L1) -- (U1);
\draw[dashed] (L2) -- (U2);
\begin{scope}
\path[clip] (0,1) circle (2);
\path[clip] (0,-1) circle (2);
		\foreach \t in {-2.5,-2.3,...,-1.2} {
			\draw[green!50!brown] (\t,-1) -- +(1.19,2);
		}
		\foreach \t in {-1.1,-0.9,...,0} {
			\draw[blue!50!brown] (\t,-1) -- +(1.19,2);
		}
		\foreach \t in {0.1,0.3,...,2.5} {
			\draw[brown] (\t,-1) -- +(1.19,2);
		}
	\foreach \x in {0,1,...,10} {
		\path (L1) to[out=60,in=-120] node[pos=\x/10] (p1\x) {}(U1);
		\path (L2) to[out=60,in=-120] node[pos=\x/10] (p2\x) {} (U2);
		\draw[blue!50!brown] (p1\x.center) -- (p2\x.center);
	}
\end{scope}
\draw[->, thick, blue!75!yellow] (-2,0.5) node[black, left] {$a\times I$} -- (-1,0.5);
\draw[->, thick, blue!75!yellow] (-0.1,-1.5) node[black, below] {$y\times I \times I$} -- (-0.25,-0.5);
\draw[->, thick, blue!75!yellow] (2,-0.5) node[black, right] {$b\times I$} -- (1,-0.5);
\end{tikzpicture}
\end{align*}
\vspace{-1cm}
\caption{Ingredients for the triangle axiom.}
\label{fig:ingredients-triangle-axiom}
\end{figure}

The horizontal compositions $u *_h \id_b$ and $\id_a *_h  v$ are shown in Figure \ref{fig:horizontal-composition}
(see also Figure \ref{fzo5}).
The vertical composition of $\alpha$ and $\id_a *_h  v$ is shown in Figure \ref{fig:vertical-composition}.
Figure \ref{fig:adding-a-collar} shows that we can add a collar to $u *_h \id_b$ so that the result differs from
Figure  \ref{fig:vertical-composition} by an isotopy rel boundary.
Note that here we have used in an essential way the associativity of product morphisms (Axiom \ref{axiom:product}.3) 
as well as compatibility of product morphisms with fiber-preserving maps (Axiom \ref{axiom:product}.1).
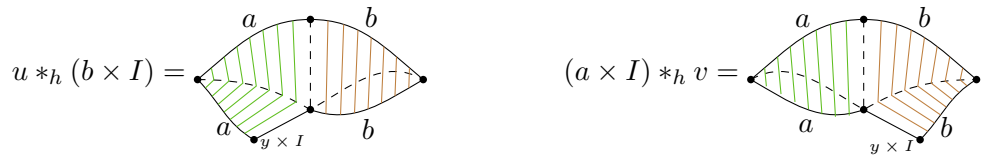
\begin{figure}[t]
\begin{align*}
u *_h (b \times I) & = 
\begin{tikzpicture}[baseline]
\coordinate (L) at (0,0);
\coordinate (R) at (3,0);
\coordinate (T) at (1.5,0.8);
\coordinate (M) at (1.5,-0.4);
\coordinate (B) at (0.75,-0.8);
\path (L) \vertex (T) \vertex (R) \vertex (M) \vertex (B) \vertex;
\draw (L)  to[out=40,in=180] node[above] {$a$} (T) 
		to[out=0,in=140] node[above] {$b$} (R) 
		to[out=-150,in=-20] node[below] {$b$} (M) 
		-- node[below] {\tiny $y \times I$} (B) 
		to[out=150,in=-45] node[below] {$a$} (L);
\draw[dashed] (L) to[out=0,in=150] (M)
			     to[out=30,in=150] (R);
\draw[dashed] (T) -- (M);
\foreach \n in {0,...,7} {
	\path (L) to[out=-45,in=150] node[coordinate,pos=\n/8] (LB\n) {} (B);
	\path (L) to[out=0,in=150] node[coordinate,pos=\n/8] (LM\n) {} (M);
	\path (L) to[out=40,in=180] node[coordinate,pos=\n/8] (LT\n) {} (T);
	\draw[green!50!brown] (LB\n) -- (LM\n) -- (LT\n);
}
\foreach \n in {1,...,7} {
	\path (M) to[out=-20,in=-150] node[coordinate,pos=\n/8] (MR\n) {} (R);
	\path (T) to[out=0,in=140] node[coordinate,pos=\n/8] (TR\n) {} (R);
	\draw[brown] (MR\n) -- (TR\n);
}
\end{tikzpicture} &
(a \times I) *_h v & = 
\begin{tikzpicture}[baseline]
\coordinate (L) at (0,0);
\coordinate (R) at (3,0);
\coordinate (T) at (1.5,0.8);
\coordinate (M) at (1.5,-0.4);
\coordinate (B) at (2.25,-0.8);
\path (L) \vertex (T) \vertex (R) \vertex (M) \vertex (B) \vertex;
\draw (L)  to[out=40,in=180] node[above] {$a$} (T) 
		to[out=0,in=140] node[above] {$b$} (R)
		to[out=-150,in=45] node[below] {$b$} (B) 
		-- node[below=2] {\tiny $y \times I$} (M) 
		to[out=-160,in=-30] node[below] {$a$} (L);
\draw[dashed] (L) to[out=30,in=150] (M)
			     to[out=30,in=180] (R);
\draw[dashed] (T) -- (M);
\foreach \n in {1,...,7} {
	\path (B) to[out=45,in=-150] node[coordinate,pos=\n/8] (BR\n) {} (R);
	\path (M) to[out=30,in=180] node[coordinate,pos=\n/8] (MR\n) {} (R);
	\path (T) to[out=0,in=140] node[coordinate,pos=\n/8] (TR\n) {} (R);
	\draw[brown] (BR\n) -- (MR\n) -- (TR\n);
}
\foreach \n in {1,...,7} {
	\path (L) to[out=-20,in=-150] node[coordinate,pos=\n/8] (LM\n) {} (M);
	\path (L) to[out=40,in=180] node[coordinate,pos=\n/8] (LT\n) {} (T);
	\draw[green!50!brown] (LM\n) -- (LT\n);
}
\end{tikzpicture} 
\end{align*}
\caption{Horizontal compositions in the triangle axiom.}
\label{fig:horizontal-composition}
\end{figure}
\begin{figure}[t]
\vspace{-1.5cm}
\begin{align*}
\begin{tikzpicture}
\node[circle,fill=black,inner sep=1pt] (A) at (1.73,0) {};
\node[circle,fill=black,inner sep=1pt] (B) at (-1.73,0) {};
\draw[dashed] (A) -- (B);
\node[circle,fill=black,inner sep=1pt] (C) at (0,0) {};
\node[circle,fill=black,inner sep=1pt] (D) at (0.8,0) {};
\begin{scope}[yshift=-1cm]
\path[clip] (0,0) circle (2);
\begin{scope}[yshift=2cm]
\draw (0,0) circle (2);
\node[circle,fill=black,inner sep=1pt] (L2) at (-90:2) {};
\node[circle,fill=black,inner sep=1pt] (L1) at (-120:2) {};
\end{scope}
\end{scope}
\begin{scope}[yshift=1cm]
\path[clip] (0,0) circle (2);
\begin{scope}[yshift=-2cm]
\draw (0,0) circle (2);
\node[circle,fill=black,inner sep=1pt] (U) at (90:2) {};
\end{scope}
\end{scope}
\begin{scope}
\path[clip] (0,1) circle (2);
\path[clip] (0,-1) circle (2);
\foreach \n in {1,...,6} {
	\path (B) to[out=0,in=-180] node[coordinate,pos=\n/6] (BC\n) {} (C);
	\draw[green!50!brown] (BC\n) -- +(0,1);
	\draw[green!50!brown] (BC\n) -- +(-1.5,-1);
}
\foreach \n in {0,...,5} {
	\path (C) to[out=0,in=-180] node[coordinate,pos=\n/5] (CD\n) {} (D);
	\path (L1) to[out=-30,in=-180] node[coordinate,pos=\n/5] (L1L2\n) {} (L2);
	\draw[blue!50!brown] (CD\n) -- (L1L2\n);
}
\foreach \n in {0,...,5} {
	\path (L1) to[out=40,in=-140] node[coordinate,pos=\n/5] (L1C\n) {} (C);
	\path (L2) to[out=50,in=-130] node[coordinate,pos=\n/5] (L2D\n) {} (D);
	\draw[blue!50!brown] (L1C\n) -- (L2D\n);
}
\foreach \n in {1,3,...,9} {
	\path(C) to[out=30,in=150] node[coordinate,pos=\n/10] (CA\n) {} (A);
	\path(D) to[out=0,in=180] node[coordinate,pos=\n/10] (DA\n) {} (A);
	\path(L2) to[out=0,in=-120] node[coordinate,pos=\n/10] (L2A\n) {} (A);
	\draw[brown] (CA\n) -- +(0,1);
	\draw[brown] (CA\n) to[out=0] (DA\n) -- (L2A\n);
}
\draw[dashed] (U) -- (C) -- (L1) (L2) -- (D);
\end{scope}
\end{tikzpicture}
\end{align*}
\vspace{-2.5cm}
\caption{Vertical composition in the triangle axiom.}
\label{fig:vertical-composition}
\end{figure}
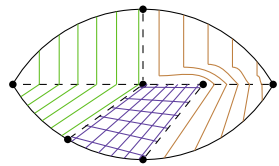
\begin{figure}[t]
\begin{align*}
\begin{tikzpicture}[baseline]
\coordinate (L) at (0,0);
\coordinate (R) at (3,0);
\coordinate (T) at (1.5,0.8);
\coordinate (M) at (1.5,-0.4);
\coordinate (B) at (0.75,-0.8);
\path (L) \vertex (T) \vertex (R) \vertex (M) \vertex (B) \vertex;
\draw (L)  to[out=40,in=180]  (T) 
		to[out=0,in=140]  (R) 
		to[out=-150,in=-20]  (M) 
		-- (B) 
		to[out=150,in=-45]  (L);
\draw[dashed] (T) -- (M);
\foreach \n in {0,...,7} {
	\path (L) to[out=-45,in=150] node[coordinate,pos=\n/8] (LB\n) {} (B);
	\path (L) to[out=0,in=150] node[coordinate,pos=\n/8] (LM\n) {} (M);
	\path (L) to[out=40,in=180] node[coordinate,pos=\n/8] (LT\n) {} (T);
	\draw[green!50!brown] (LB\n) -- (LM\n) -- (LT\n);
}
\foreach \n in {1,...,7} {
	\path (M) to[out=-20,in=-150] node[coordinate,pos=\n/8] (MR\n) {} (R);
	\path (T) to[out=0,in=140] node[coordinate,pos=\n/8] (TR\n) {} (R);
	\draw[brown] (MR\n) -- (TR\n);
}
\coordinate (B') at ($(B)+(0.125,-0.25)$);
\coordinate (M') at ($(M)+(0.125,-0.25)$);
\coordinate (R') at ($(R)+(0.125,-0.25)$);
\coordinate (X) at ($(M')+(0,-0.6)$);
\path (X) \vertex;
\draw[dashed] (M') -- (X);
\draw[clip] (B') to[out=-30,in=180] (X) to[out=0,in=-135] (R') to[out=-150,in=-20] (M') -- cycle;
\foreach \n in {1,...,7} {
	\path (M') to[out=-20,in=-150] node[coordinate,pos=\n/8] (M'R'\n) {} (R');
	\draw[brown] (M'R'\n) -- +(0,-1);
}
\foreach \n in {1,...,5} {
	\path (B') to[out=-30,in=180] node[coordinate,pos=\n/6] (B'X\n) {} (X);
	\path (M') to[out=-90,in=90] node[coordinate,pos=\n/6] (M'X\n) {} (X);
	\path (B') to[out=30,in=-150] node[coordinate,pos=\n/6] (B'M'\n) {} (M');
	\draw[blue!50!brown] (B'X\n) -- (M'X\n);
	\draw[blue!50!brown] (B'M'\n) -- (B'X\n);
}
\end{tikzpicture} 
\xrightarrow{\text{collar}}
\begin{tikzpicture}[baseline]
\coordinate (L) at (0,0);
\coordinate (R) at (3,0);
\coordinate (T) at (1.5,0.8);
\coordinate (M) at (1.5,-0.4);
\coordinate (B) at (0.75,-0.8);
\path (L) \vertex (T) \vertex (R) \vertex (M) \vertex (B) \vertex;
\draw (L)  to[out=40,in=180]  (T) 
		to[out=0,in=140]  (R);
\draw[dashed] (R) to[out=-150,in=-20]  (M) 
		-- (B); 
\draw	(B)	to[out=150,in=-45]  (L);
\draw[dashed] (T) -- (M);
\foreach \n in {0,...,7} {
	\path (L) to[out=-45,in=150] node[coordinate,pos=\n/8] (LB\n) {} (B);
	\path (L) to[out=0,in=150] node[coordinate,pos=\n/8] (LM\n) {} (M);
	\path (L) to[out=40,in=180] node[coordinate,pos=\n/8] (LT\n) {} (T);
	\draw[green!50!brown] (LB\n) -- (LM\n) -- (LT\n);
}
\foreach \n in {1,...,7} {
	\path (M) to[out=-20,in=-150] node[coordinate,pos=\n/8] (MR\n) {} (R);
	\path (T) to[out=0,in=140] node[coordinate,pos=\n/8] (TR\n) {} (R);
	\draw[brown] (MR\n) -- (TR\n);
}
\coordinate (B') at (B);
\coordinate (M') at (M);
\coordinate (R') at (R);
\coordinate (X) at ($(M')+(0,-0.6)$);
\path (X) \vertex;
\draw[dashed] (M') -- (X);
\draw (B') to[out=-30,in=180] (X) to[out=0,in=-135] (R');
\path[clip] (B') to[out=-30,in=180] (X) to[out=0,in=-135] (R') to[out=-150,in=-20] (M') -- cycle;
\foreach \n in {1,...,7} {
	\path (M') to[out=-20,in=-150] node[coordinate,pos=\n/8] (M'R'\n) {} (R');
	\draw[brown] (M'R'\n) -- +(0,-1);
}
\foreach \n in {1,...,5} {
	\path (B') to[out=-30,in=180] node[coordinate,pos=\n/6] (B'X\n) {} (X);
	\path (M') to[out=-90,in=90] node[coordinate,pos=\n/6] (M'X\n) {} (X);
	\path (B') to[out=30,in=-150] node[coordinate,pos=\n/6] (B'M'\n) {} (M');
	\draw[blue!50!brown] (B'X\n) -- (M'X\n);
	\draw[blue!50!brown] (B'M'\n) -- (B'X\n);
}
\end{tikzpicture} 
\end{align*}
\caption{Adding a collar in the proof of the triangle axiom.}
\label{fig:adding-a-collar}
\end{figure}
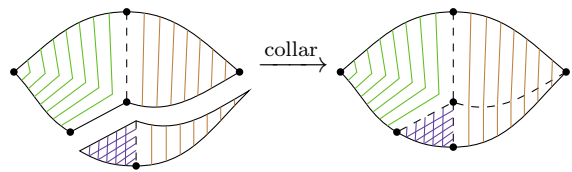

\subsection{\texorpdfstring{$A_\infty$}{A-infinity} 1-categories}
\label{sec:comparing-A-infty}
In this section, we make contact between the usual definition of an $A_\infty$ category 
and our definition of a disk-like $A_\infty$ $1$-category, from \S \ref{ss:n-cat-def}.

\medskip

Given a disk-like $A_\infty$ $1$-category $\cC$, we define an ``$m_k$-style" 
$A_\infty$ $1$-category $A$ as follows.
The objects of $A$ are $\cC(pt)$.
The morphisms of $A$, from $x$ to $y$, are $\cC(I; x, y)$
($\cC$ applied to the standard interval with boundary labeled by $x$ and $y$).
For simplicity we will now assume there is only one object and suppress it from the notation.
Henceforth $A$ will also denote its unique morphism space.

A choice of homeomorphism $I\cup I \to I$ induces a chain map $m_2: A\otimes A\to A$.
We now have two different homeomorphisms $I\cup I\cup I \to I$, but they are isotopic.
Choose a specific 1-parameter family of homeomorphisms connecting them; this induces
a degree 1 chain homotopy $m_3:A\ot A\ot A\to A$.
Proceeding in this way we define the rest of the $m_i$'s.
It is straightforward to verify that they satisfy the necessary identities.

\medskip

In the other direction, we start with an alternative conventional definition of an $A_\infty$ algebra:
an algebra $A$ for the $A_\infty$ operad.
(For simplicity, we are assuming our $A_\infty$ 1-category has only one object.)
We are free to choose any operad with contractible spaces, so we choose the operad
whose $k$-th space is the space of decompositions of the standard interval $I$ into $k$
parameterized copies of $I$.
Note in particular that when $k=1$ this implies a $C_*(\Homeo(I))$ action on $A$.
(Compare with Example \ref{ex:e-n-alg} and the discussion which precedes it.)
Given a non-standard interval $J$, we define $\cC(J)$ to be
$(\Homeo(I\to J) \times A)/\Homeo(I\to I)$,
where $\beta \in \Homeo(I\to I)$ acts via $(f, a) \mapsto (f\circ \beta\inv, \beta_*(a))$.
Note that $\cC(J) \cong A$ (non-canonically) for all intervals $J$.
We define a $\Homeo(J)$ action on $\cC(J)$ via $g_*(f, a) = (g\circ f, a)$.
The $C_*(\Homeo(J))$ action is defined similarly.

Let $J_1$ and $J_2$ be intervals, and let $J_1\cup J_2$ denote their union along a single boundary point.
We must define a map $\cC(J_1)\ot\cC(J_2)\to\cC(J_1\cup J_2)$.
Choose a homeomorphism $g:I\to J_1\cup J_2$.
Let $(f_i, a_i)\in \cC(J_i)$.
We have a parameterized decomposition of $I$ into two intervals given by
$g\inv \circ f_i$, $i=1,2$.
Corresponding to this decomposition the operad action gives a map $\mu: A\ot A\to A$.
Define the gluing map to send $(f_1, a_1)\ot (f_2, a_2)$ to $(g, \mu(a_1\ot a_2))$.
Operad associativity for $A$ implies that this gluing map is independent of the choice of
$g$ and the choice of representative $(f_i, a_i)$.

It is straightforward to verify the remaining axioms for a disk-like $A_\infty$ 1-category.

\noop { 

That definition associates a chain complex to every interval, and we begin by giving an alternative definition that is entirely in terms of the chain complex associated to the standard interval $[0,1]$. 
\begin{defn}
A \emph{topological $A_\infty$ category on $[0,1]$} $\cC$ has a set of objects $\Obj(\cC)$, 
and for each $a,b \in \Obj(\cC)$, a chain complex $\cC_{a,b}$, along with
\begin{itemize}
\item an action of the operad of $\Obj(\cC)$-labeled cell decompositions
\item and a compatible action of $\CD{[0,1]}$.
\end{itemize}
\end{defn}
Here the operad of cell decompositions of $[0,1]$ has operations indexed by a finite set of 
points $0 < x_1< \cdots < x_k < 1$, cutting $[0,1]$ into subintervals.
An $X$-labeled cell decomposition labels $\{0, x_1, \ldots, x_k, 1\}$ by $X$.
Given two cell decompositions $\cJ^{(1)}$ and $\cJ^{(2)}$, and an index $m$, we can compose 
them to form a new cell decomposition $\cJ^{(1)} \circ_m \cJ^{(2)}$ by inserting the points 
of $\cJ^{(2)}$ linearly into the $m$-th interval of $\cJ^{(1)}$.
In the $X$-labeled case, we insist that the appropriate labels match up.
Saying we have an action of this operad means that for each labeled cell decomposition 
$0 < x_1< \cdots < x_k < 1$, $a_0, \ldots, a_{k+1} \subset \Obj(\cC)$, there is a chain 
map $$\cC_{a_0,a_1} \tensor \cdots \tensor \cC_{a_k,a_{k+1}} \to \cC_{a_0,a_{k+1}}$$ and these 
chain maps compose exactly as the cell decompositions.
An action of $\CD{[0,1]}$ is compatible with an action of the cell decomposition operad 
if given a decomposition $\pi$, and a family of diffeomorphisms $f \in \CD{[0,1]}$ which 
is supported on the subintervals determined by $\pi$, then the two possible operations 
(glue intervals together, then apply the diffeomorphisms, or apply the diffeormorphisms 
separately to the subintervals, then glue) commute (as usual, up to a weakly unique homotopy).

Translating between this notion and the usual definition of an $A_\infty$ category is now straightforward.
To restrict to the standard interval, define $\cC_{a,b} = \cC([0,1];a,b)$.
Given a cell decomposition $0 < x_1< \cdots < x_k < 1$, we use the map (suppressing labels)
$$\cC([0,1])^{\tensor k+1} \to \cC([0,x_1]) \tensor \cdots \tensor \cC[x_k,1] \to \cC([0,1])$$
where the factors of the first map are induced by the linear isometries $[0,1] \to [x_i, x_{i+1}]$, and the second map is just gluing.
The action of $\CD{[0,1]}$ carries across, and is automatically compatible.
Going the other way, we just declare $\cC(J;a,b) = \cC_{a,b}$, pick a diffeomorphism 
$\phi_J : J \isoto [0,1]$ for every interval $J$, define the gluing map 
$\cC(J_1) \tensor \cC(J_2) \to \cC(J_1 \cup J_2)$ by the first applying 
the cell decomposition map for $0 < \frac{1}{2} < 1$, then the self-diffeomorphism of $[0,1]$ 
given by $\frac{1}{2} (\phi_{J_1} \cup (1+ \phi_{J_2})) \circ \phi_{J_1 \cup J_2}^{-1}$.
You can readily check that this gluing map is associative on the nose. \todo{really?}

From a topological $A_\infty$ category on $[0,1]$ $\cC$ we can produce a `conventional' 
$A_\infty$ category $(A, \{m_k\})$ as defined in, for example, \cite{MR1854636}.
We'll just describe the algebra case (that is, a category with only one object), 
as the modifications required to deal with multiple objects are trivial.
Define $A = \cC$ as a chain complex (so $m_1 = d$).
Define $m_2 : A\tensor A \to A$ by $f_{\{(0,\frac{1}{2}),(\frac{1}{2},1)\}}$.
To define $m_3$, we begin by taking the one parameter family $\phi_3$ of diffeomorphisms 
of $[0,1]$ that interpolates linearly between the identity and the piecewise linear 
diffeomorphism taking $\frac{1}{4}$ to $\frac{1}{2}$ and $\frac{1}{2}$ to $\frac{3}{4}$, and then define
\begin{equation*}
m_3(a,b,c) = ev(\phi_3, m_2(m_2(a,b), c)).
\end{equation*}

It's then easy to calculate that
\begin{align*}
d(m_3(a,b,c)) & = ev(d \phi_3, m_2(m_2(a,b),c)) - ev(\phi_3 d m_2(m_2(a,b), c)) \\
 & = ev( \phi_3(1), m_2(m_2(a,b),c)) - ev(\phi_3(0), m_2 (m_2(a,b),c)) - \\ & \qquad - ev(\phi_3, m_2(m_2(da, b), c) + (-1)^{\deg a} m_2(m_2(a, db), c) + \\ & \qquad \quad + (-1)^{\deg a+\deg b} m_2(m_2(a, b), dc) \\
 & = m_2(a , m_2(b,c)) - m_2(m_2(a,b),c) - \\ & \qquad - m_3(da,b,c) + (-1)^{\deg a + 1} m_3(a,db,c) + \\ & \qquad \quad + (-1)^{\deg a + \deg b + 1} m_3(a,b,dc), \\
\intertext{and thus that}
m_1 \circ m_3 & =  m_2 \circ (\id \tensor m_2) - m_2 \circ (m_2 \tensor \id) - \\ & \qquad - m_3 \circ (m_1 \tensor \id \tensor \id) - m_3 \circ (\id \tensor m_1 \tensor \id) - m_3 \circ (\id \tensor \id \tensor m_1)
\end{align*}
as required (c.f. \cite[p. 6]{MR1854636}).
\todo{then the general case.}
We won't describe a reverse construction (producing a topological $A_\infty$ category 
from a ``conventional" $A_\infty$ category), but we presume that this will be easy for the experts.

} 


\bibliographystyle{alpha}
\bibliography{bibliography/bibliography}

This paper is available online at \arxiv{1009.5025}, and at
\url{http://tqft.net/blobs},
and at \url{http://canyon23.net/math/}.

\end{document}